\documentclass[11pt,a4 paper,oneside, reqno]{amsart}
\usepackage[utf8]{inputenc}
\usepackage[OT1]{fontenc}
\usepackage[english]{babel}
\usepackage{dsfont}
\usepackage{amsfonts}
\usepackage{amsmath}
\usepackage{bbm}
\usepackage{wasysym}
\usepackage{amsthm}
\usepackage{amssymb}
\usepackage{float}
\usepackage{textcomp}
\usepackage{xcolor}
\usepackage{graphicx}
\usepackage{fancybox}
\usepackage{fancyhdr}
\usepackage{mathrsfs}
\usepackage{tikz}
\usepackage{centernot}
\usepackage{listings}
\usepackage{faktor}
\usepackage{bm}
\usepackage{setspace}
\usepackage{hyperref}
\usepackage{newlfont}
\usepackage{geometry}
\usepackage{ccaption}
\usepackage{tipa}
\usepackage[autostyle,italian=guillemets]{csquotes}
\usepackage{comment}

\usepackage{guit}
\usepackage{tikz-cd}
\usepackage{enumerate}

\everymath{\displaystyle}

\theoremstyle{definition}
\newtheorem{Def}{Definition}[section]

\newtheorem{es}[Def]{Example}
\newtheorem{ese}[Def]{Examples}

\theoremstyle{remark}
\newtheorem{obs}[Def]{Remark}
\theoremstyle{plain}
\newtheorem{prop}[Def]{Proposition}

\newtheorem{lema}[Def]{Lemma}

\newtheorem{cor}[Def]{Corollary}

\newtheorem{teo}[Def]{Theorem}

\newcommand{\bo}{\mathbf}
\newcommand{\A}{{\mathcal A}}
\newcommand{\B}{{\mathcal B}}
\newcommand{\C}{{\mathcal C}}
\newcommand{\D}{{\mathcal D}}
\newcommand{\E}{{\mathcal E}}
\newcommand{\F}{{\mathcal F}}
\newcommand{\G}{{\mathcal G}}

\newcommand{\I}{{\mathcal I}}

\newcommand{\K}{{\mathcal K}}

\newcommand{\M}{{\mathcal M}}

\renewcommand{\P}{{\mathcal P}}

\renewcommand{\S}{{\mathcal S}}

\newcommand{\V}{{\mathcal V}}
\newcommand{\W}{{\mathcal W}}

\newcommand{\mt}{\mathscr}
\newcommand{\tx}{\textnormal}

\newcommand{\colim}{\operatornamewithlimits{colim}}

\newcommand{\todot}{%
	\mathrel{\ooalign{\hfil$\vcenter{
				\hbox{$\scriptscriptstyle\bullet$}}$\hfil\cr$\to$\cr}
	}%
}

\geometry{a4paper,top=3.4cm,bottom=3cm,left=3cm,right=3cm,%
heightrounded,bindingoffset=0mm}

\makeatletter
\newcommand{\changeoperator}[1]{%
	\csletcs{#1@saved}{#1@}%
	\csdef{#1@}{\changed@operator{#1}}%
}
\newcommand{\changed@operator}[1]{%
	\mathop{%
		\mathchoice{\textstyle\csuse{#1@saved}}
		{\csuse{#1@saved}}
		{\csuse{#1@saved}}
		{\csuse{#1@saved}}%
	}%
}
\makeatother

\makeatletter
\def\@tocline#1#2#3#4#5#6#7{\relax
	\ifnum #1>\c@tocdepth 
	\else
	\par \addpenalty\@secpenalty\addvspace{#2}%
	\begingroup \hyphenpenalty\@M
	\@ifempty{#4}{%
		\@tempdima\csname r@tocindent\number#1\endcsname\relax
	}{%
		\@tempdima#4\relax
	}%
	\parindent\z@ \leftskip#3\relax \advance\leftskip\@tempdima\relax
	\rightskip\@pnumwidth plus4em \parfillskip-\@pnumwidth
	#5\leavevmode\hskip-\@tempdima
	\ifcase #1
	\or\or \hskip 1em \or \hskip 2em \else \hskip 3em \fi%
	#6\nobreak\relax
	\hfill\hbox to\@pnumwidth{\@tocpagenum{#7}}\par
	\nobreak
	\endgroup
	\fi}
\makeatother

\DeclareUnicodeCharacter{221E}{$\infty$}

\changeoperator{sum}
\changeoperator{prod}
\changeoperator{coprod}

\title{Flat vs. filtered colimits in the enriched context}
\author{Stephen Lack and Giacomo Tendas}
\address{School of Mathematical and Physical Sciences, Macquarie University NSW 2109, 
	Australia}
\email{steve.lack@mq.edu.au}
\address{School of Mathematical and Physical Sciences, Macquarie University NSW 2109, 
	Australia}
\email{giacomo.tendas@mq.edu.au}
\date{\today}
\thanks{The first-named author acknowledges with gratitude the support of an Australian Research Council Discovery Project DP190102432. The second-named author gratefully acknowledges the support of an International Macquarie University Research Excellence Scholarship.}

\begin{document}
	
\begin{abstract}
	The importance of accessible categories has been widely recognized; they can be described as those freely generated in some precise sense by a small set of objects and, because of that, satisfy many good properties. More specifically {\em finitely accessible} categories can be characterized as: (a) free cocompletions of small categories under filtered colimits, and (b) categories of flat presheaves on some small category.\\
	The equivalence between (a) and (b) is what makes the theory so general and fruitful.
	
	Notions of enriched accessibility have also been considered in the literature for various bases of enrichment, such as $\bo{Ab},\bo{SSet},\bo{Cat}$ and $\bo{Met}$. The problem in this context is that the equivalence between (a) and (b) is no longer true in general. The aim of this paper is then to:\begin{enumerate}
		\item give sufficient conditions on $\V$ so that (a) $\Leftrightarrow$ (b) holds;
		\item give sufficient conditions on $\V$ so that (a) $\Leftrightarrow $ (b) holds up to Cauchy completion;
		\item explore some examples not covered by (1) or (2).
	\end{enumerate} 
\end{abstract}	
	
\maketitle
	
\tableofcontents

\section{Introduction}

The idea of flatness comes from homological algebra, but has since been incorporated into category theory in many contexts \cite{BS1983, Car2012, CV98:articolo, ObRo70:articolo}, perhaps most importantly in the theory of accessible categories \cite{AR94:libro,Lai81:articolo,MP89:libro}, which in turn builds on and extends the theory of locally presentable categories \cite{GU71:libro}. The theory of accessible categories involves aspects of category theory, universal algebra, logic, and model theory. It has also been heavily used in abstract homotopy theory, for example in the context of Smith's theorem \cite{Bek00} or Dugger's work on presentations for model categories \cite{DD01b,DD01a},  and also via its generalization to $\infty$-categories \cite{Lur09:libro}.

A functor $M\colon  \C^{op}\to\bo{Set}$ is called {\em flat} if its left Kan extension $\tx{Lan}_Y M \colon [\C,\bo{Set}]\to\bo{Set}$ along the Yoneda embedding preserves finite limits. Equivalently $ M $ is flat if and only if its category of elements is filtered; or even: $ M $ is flat if and only if it is a filtered colimit of representable functors. This outlines a deep connection between flatness and filtered colimits. 

This connection plays a key role in the theory of accessible categories: a category is finitely accessible by definition if it is the free cocompletion of a small category $\C$ under filtered colimits; by the observation above this is the same as saying that $\A$ is equivalent to the category $\tx{Flat}(\C^{op},\bo{Set})$ of flat presheaves on a small category $\C$. In particular, if $\A$ is finitely accessible, we can take $\C$ to be the full subcategory $\A_f$ of finitely presentable objects in $\A$. This is a fundamental step in the characterization of accessible categories as models of sketches and of first order theories.

The situation becomes rather more complicated when we move to enriched category theory. Let us fix a base of enrichment $\V=(\V_0,\otimes,I)$ which is symmetric monoidal closed and locally finitely presentable as a closed category \cite{Kel82:articolo}. In this setting a weighted notion of finite limit has been introduced by Kelly \cite{Kel82:articolo}; then conical filtered colimits commute in $\V$ with these finite weighted limits. However conical colimits are not generally enough when enrichment is involved; this means that there might be a wider class of weighted colimits which commute with finite weighted limits in $\V$. That is exactly where the notion of flat $\V$-functor comes into play:

\begin{Def}[\cite{Kel82:articolo}]
	We say that a $\V$-functor $ M \colon \C^{op}\to \V$ is {\em flat} if $\tx{Lan}_Y M \colon [\C,\V]\to\V$ preserves all finite weighted limits.
\end{Def}

Equivalently, $ M $ is flat if and only if $ M $-weighted colimits commute with finite limits in $\V$. If $\V=\bo{Ab}$ and $\C$ is a one object $\bo{Ab}$-category, then $R:=\C(*,*)\in\bo{Ab}$ is a ring and a $\V$-functor $ M \colon\C^{op}\to \bo{Ab}$ is just an $R$-module $M$, so that $[\C,\bo{Ab}]\cong R\tx{-Mod}$. Moreover $\tx{Lan}_YM\cong M\otimes-$; since right exact additive functors between abelian categories are left exact if and only if they preserve monomorphisms, we recover the algebraic notion of flatness introduced by Serre in \cite{Serre1956}.

We can now talk about flat-weighted colimits for $\V$-categories; these include, but do not reduce to, the conical filtered ones. For instance every absolute weighted colimit is flat but need not be filtered. An explicit example can be given for $\V=\bo{Ab}$: finite direct sums are absolute, and hence flat, but they are not filtered \cite[Example~9.2]{BQR98}. Therefore, depending on which class one decides to work with, two different notions of accessibility can be introduced. Historically, the first to be considered was based on flat weights.

\begin{Def}[\cite{BQ96:articolo}]
	A $\V$-category $\A$ is called {\em finitely accessible} if it is the free cocompletion of a small $\V$-category under flat-weighted colimits.
\end{Def}

On the bright side, this captures many of the characterization theorems from the ordinary setting. For instance a $\V$-category $\A$ is finitely accessible if and only if it is equivalent to $\tx{Flat}(\C^{op},\V)$, the full subcategory of $[\C^{op},\V]$ spanned by the flat $\V$-functors, for some small $\C$; while, moving to the infinitary setting, a $\V$-category is accessible if and only if it is the $\V$-category of models of a $\V$-sketch \cite[Corollary~7.9]{BQR98}. The problem with this notion, however, is that flat $\V$-functors can be hard to describe, and so it can be difficult to tell whether or not an enriched category is accessible in this sense. Perhaps for this reason, the theory remained relatively undeveloped for some time.

More recently, however, various authors have used accessibility in the enriched setting. An early example, involving the additive case $\V=\bo{Ab}$, was the work of Prest on the model theory of modules, as in \cite{Pre11:libro}. There followed various homotopical examples \cite{Bou2021:articolo, BLV:articolo,LR12:articolo}, involving $\V=\bo{Cat}$ and $\V=\bo{SSet}$ as bases of enrichment. Each of these cases was based on filtered colimits rather than flat ones, and implicitly or explicitly relied on the following notion.

\begin{Def}
	A $\V$-category $\A$ is {\em conically finitely accessible} if it is the free cocompletion of a small $\V$-category under conical filtered colimits.
\end{Def}

This is more straightforward to work with but lacks the connection with sketches which was after all the original motivation for the notion of accessibility.

Thus it is natural to ask what is the relationship between conically finitely accessible enriched categories and finitely accessible enriched categories. If they were in fact the same, or if at least we could understand well the relationship between them, then we would in some sense have the best of both worlds: both the relationship with sketches and other good theoretical properties, as well as the use of the simpler filtered colimits rather than flat-weighted ones.

Recall that an accessible category is complete if and only if it is cocomplete, in which case it is said to be locally presentable. Many of the trickiest aspects of accessible categories disappear under the assumption of completeness/cocompleteness, and this remains true in the enriched setting; in particular if $\A$ is a cocomplete or complete $\V$-category, then $\A$ is finitely accessible if and only if it is conically finitely accessible; this is essentially the content of \cite[Theorem~6.3]{BQR98}. One context in which the homotopical examples of enriched accessible categories referred to above might arise is when one ``should" have a locally presentable category, but in fact the limits in question are only homotopical ones. This is the case for a number of the examples considered in \cite{Bou2021:articolo, BLV:articolo,LR12:articolo}.
  
{\em The aim of the paper is precisely to address these problems by giving an explicit description of flat-weighted colimits for certain classes of base of enrichment, including most of the important examples of locally presentable bases. We use this to characterize accessible $\V$-categories in many cases.}

This is a 40-year-old problem: in \cite[Section~6.4]{Kel82:libro}, Kelly poses the question of whether, for any locally finitely presentable base, every flat presheaf is a filtered colimit of representables, and states his inability to prove this. As observed in \cite{BQR98} and above, this is actually false for the case $\V=\bo{Ab}$; but solving it in full generality is probably out of reach at this stage. The situation is analogous to the related hard problem of describing the absolute colimits over a given base: see \cite{Law73:cauchy,Nik17:articolo,NST2020cauchy,Str1983:articolo} for various instances of this.

In fact we give a positive answer to Kelly's question in a large number of examples, including the key cases $\V=\bo{Cat}$ and $\V=\bo{SSet}$ mentioned above. It follows that in these cases the two notions of accessibility agree. And in a still larger class of examples, we are nonetheless able to give a full characterization of flat $\V$-functors: of the examples of locally presentable base considered by Kelly in \cite{Kel82:libro}, the only one we are not able to cover is the category $\E:=\tx{Shv}(\S)$ of sheaves over a site $\S$ (that is, a Grothendieck topos), although we can cover it if the functor $\E(1,-)$ is weakly cocontinuous in the sense of Section~\ref{con=flat}, and so in particular if $1$ is connected, finitely presentable, and projective.

The paper \cite{Bou2021:articolo} contains some powerful techniques for proving that a wide range of 2-categories of categories with structure are conically accessible as $\bo{Cat}$-enriched categories; the structures in question should contain ``no equations between objects”, and the morphisms are functors which preserve the structure up to coherent isomorphism. One typical example is the 2-category of monoidal categories, strong monoidal functors, and monoidal natural transformations; another is the 2-category of regular categories, regular functors, and natural transformations. In \cite{BL21accessible}, it was shown how to adapt these techniques to the simplicially enriched case, and in particular to show that most of the key examples of $\infty$-cosmoi studied in \cite{riehl_verity_2022} are conically accessible as $\bo{SSet}$-enriched categories. By the results proved here, these conically accessible $\bo{Cat}$-enriched or $\bo{SSet}$-enriched categories are in fact accessible; thus they are also sketchable, and the whole theory of enriched accessible categories applies. This in turn allows us, for example, to consider models of the corresponding enriched sketches in other (suitable) enriched categories than $\V$, and deduce the accessibility of the resulting enriched categories of models.

Given any complete and cocomplete $\V$ and any small $\V$-category $\C$, we can form the underlying ordinary category $[\C^{op},\V]_0$ of the presheaf $\V$-category: this consists of the $\V$-enriched presheaves and $\V$-enriched natural transformations. We can also form the presheaf category $[\C^{op}_0,\bo{Set}]$ on the underlying ordinary category. There is an adjunction
\begin{center}
	
	\begin{tikzpicture}[baseline=(current  bounding  box.south), scale=2]

		\node (f) at (0,0.4) {$[\C^{op},\V]_0$};
		\node (g) at (1.7,0.4) {$[\C^{op}_0,\bo{Set}]$};
		\node (h) at (0.84, 0.45) {$\perp$};
		\path[font=\scriptsize]

		([yshift=-1.3pt]f.east) edge [->] node [below] {$\mt U$} ([yshift=-1.3pt]g.west)
		([yshift=2pt]f.east) edge [bend left,<-] node [above] {$\mt F$} ([yshift=2pt]g.west);
	\end{tikzpicture}
	
\end{center}
between these, induced by the underlying functor $Y_0\colon \C_0\to[\C^{op},\V]_0$ of the enriched Yoneda embedding: here $\mt U$ sends $ M \colon \C^{op}\to\V$ to $[C^{op},\V]_0(Y_0-, M )\cong \V_0(I,M_0-)$ and $\mt F$ sends $ N $ to the colimit $ N *Y_0$ of $Y_0$ weighted by $ N $ (this can also be seen as the colimit of $\tx{El}( N )\stackrel{\pi}{\longrightarrow}\C_0\stackrel{Y_0}{\longrightarrow}[\C^{op},\V]_0$). 

It is true in general that $\mt F$ sends a flat (in the ordinary sense) presheaf on $\C_0$ to a flat (in the enriched sense) presheaf on $\C$, essentially because filtered colimits of flat presheaves are flat. It is not necessarily true that $\mt U$ preserves flatness, but it is so in many examples, as we shall see. Our basic strategy will be to show in particular cases, sometimes under further assumptions on $\C$, that\begin{enumerate}
	\item[(I)] $\mt U$ does preserve flatness;
	\item[(II)] if $ M $ is a flat presheaf on $\C$, then the component $\epsilon_ M \colon \mt F\mt U M \to M $ of the counit is invertible. 
\end{enumerate}

Given (II), any flat presheaf $ M $ on $\C$ is an $\mt F\mt U M $-weighted colimit of representables. Given (I), this colimit can be calculated as a filtered colimit. Combining these, it follows that the flat presheaves on $\C$ are the closure of the representables under filtered colimits. 

In Section~\ref{con=flat}, we give conditions on $\V$ under which this is the case for {\em all} small $\C$, and deduce that for such $\V$, the existence and preservation of flat weighted colimits is equivalent to that of filtered colimits (Theorem~\ref{flat-preservation}). As a consequence the notions of $\alpha$-accessibility and conical $\alpha$-accessibility agree (Theorem~\ref{2-cataccessibility}). 

Examples of $\V$ satisfying these conditions include the cartesian closed categories $\bo{Set}$ of sets, $\bo{Cat}$ of small categories, $\bo{SSet}$ of simplicial sets, $\mathbbm{2}$ of the free-living arrow, $\bo{Pos}$ of partially ordered sets, and many others.

In Section~\ref{flat=absolute+filtered}, we give conditions on $\V$ under which this (I) and (II) hold provided that $\C$ has certain $\V$-enriched absolute colimits (Proposition~\ref{flat+absolute=filtered}). It then follows easily (Theorem \ref{acc=conacc+cauchy}) that a $\V$-category is $\alpha$-accessible if and only if it is conically $\alpha$-accessible and has these absolute colimits. (An $\alpha$-accessible category always has absolute colimits.) For the $\V$ which we study in Section~\ref{flat=absolute+filtered}, the absolute colimits in question are finite direct sums and copowers by dualizable objects. 

Examples of $\V$ satisfying these conditions include the monoidal categories $\bo{CMon}$ of commutative monoids, $\bo{Ab}$ of abelian groups, $R\tx{-}\bo{Mod}$ of $R$-modules for a commutative ring $R$, and $\bo{GAb}$ of graded abelian groups, and more generally $\bo{G}\tx{-}\bo{Gr}(R\tx{-}\bo{Mod})$ of $\bo{G}$-graded $R$-modules for an abelian group $\bo{G}$ and a commutative ring $R$.

In Section~\ref{counterexample}, we investigate the case where $\V$ is the cartesian closed category $\bo{Set}^G$ of $G$-sets, for a non-trivial finite group $G$, and show that in this case $\alpha$-accessibility is strictly stronger than conical $\alpha$-accessibility and the existence of absolute colimits (Corollary \ref{acc-not-conacc+absolute}). 

In Section~\ref{DG}, we investigate one further class of examples related to those in Section~\ref{flat=absolute+filtered}, and including for example $\V=\bo{DGAb}$. In this case (II) still holds when $\C$ has some finite direct sums and copowers by dualizable objects, while (I) doesn't seem to be true even with this further assumption. What is true is that, when $\C$ has those absolute colimits and $M$ is flat, the ordinary functor $\mt U M $ will be part of what we call a protofiltered diagram. Then we prove that flat colimits are generated by the absolute ones plus these protofiltered colimits (Theorem~\ref{flat=protofilt+abs}).

\section{Background notions}

From now on $\V=(\V_0,\otimes,I)$ is taken to be a symmetric monoidal closed category which is moreover locally $\alpha$-presentable as a closed category \cite{Kel82:articolo} for a regular cardinal $\alpha$, meaning that $I$ is $\alpha$-presentable and whenever $X,Y$ are $\alpha$-presentable then $X\otimes Y$ is so.

We follow the notations of \cite{Kel82:libro}, with the only change that ``indexed'' colimits are here called ``weighted'', as is now standard. In particular, given a $\V$-category $\A$ we denote by $\A_0$ its underlying ordinary category; similarly if $F\colon \A\to\B$ is a $\V$-functor we denote by $F_0\colon \A_0\to\B_0$ the corresponding ordinary functor underlying $F$. For any ordinary category $\K$ we denote by $\K_\V$ the free $\V$-category over $\K$. Our $\V$-categories are allowed to have a large set of objects, unless specified otherwise.

Concerning weighted limits and colimits, a {\em weight} is a $\V$-functor $ M \colon \C^{op}\to\V$ where $\C$ is a small $\V$-category. Given such a weight $ M $ and a $\V$-functor $H\colon \C\to\A$, we denote by $ M *H$ (if it exists) the colimit of $H$ weighted by $ M $; dually weighted limits are denoted by $\{ N ,K\}$ for $ N \colon \C\to\V$ and $K\colon \C\to\A$. When $\C$ is the unit $\V$-category $\I=\{*\}$ the weighted colimit is called the {\em copower} of $A\in\A$ by $X\in \V$ and is denoted by $X\cdot A$ ; dually we denote powers by $X\pitchfork A$. Conical limits and colimits are special cases of weighted ones; they coincide with those weighted by $\Delta I\colon \B_\V^{op}\to\V$ for some ordinary category $\B$. The conical colimit of a $\V$-functor $T_{\V}\colon \B_\V\to\A$, if it exists, will also be the ordinary colimit of the transpose $T\colon \B\to\A_0$ in $\A_0$. Conversely, the conical colimit of $T_\V$ exists when the ordinary limit of $T$ exists and is preserved by each representable $\A(-,C)_0\colon \A^{op}_0\to\V_0$. This latter preservation condition is automatic if $\A$ has powers by all objects in a strong generator for $\V_0$, but not in general.

\begin{Def}[\cite{Kel82:articolo}]
	We say that a weight $ M \colon \C^{op}\to\V$ is {\em $\alpha$-small} if $\C$ has less than $\alpha$ objects, $\C(C,D)\in\V_{\alpha}$ for any $C,D\in\C$, and $ M (C)\in\V_\alpha$ for any $C\in\C$. An $\alpha$-small (weighted) limit is one taken along an $\alpha$-small weight. We say that a $\V$-category $\C$ is $\alpha$-complete if it has all $\alpha$-small limits; we say that a $\V$-functor is $\alpha$-continuous if it preserves all $\alpha$-small limits.
\end{Def}

Both conical $\alpha$-small limits and powers by $\alpha$-presentable objects are examples of $\alpha$-small limits and together are enough to generate all $\alpha$-small weighted limits \cite[Section~4]{Kel82:articolo}. 

\begin{Def}[\cite{Kel82:articolo}]
	A weight $ M \colon\C^{op}\to\V$ is called $\alpha$-flat if its left Kan extension $\tx{Lan}_Y M \colon[\C,\V]\to\V$ along the Yoneda embedding is $\alpha$-continuous. An $\alpha$-flat colimit is one taken along an $\alpha$-flat weight.
\end{Def}

Equivalently, a weight $ M $ is $\alpha$-flat if $ M $-weighted colimits commute with $\alpha$-small limits in $\V$ (this is because $\tx{Lan}_Y M \cong  M *-$). A weight which is $\alpha$-flat for any $\alpha$ is called {\em Cauchy}; these can be described as those $ M \colon\C^{op}\to\V$ for which $ M *-$ is continuous \cite{Str1983:articolo}.

The following summarizes a few properties of $\alpha$-flat $\V$-functors:

\begin{prop}[\cite{Kel82:articolo}]$ $
	\begin{enumerate}\setlength\itemsep{0.25em}
		\item The $\alpha$-flat $\V$-functors are closed in $[\C^{op},\V]$ under $\alpha$-flat colimits.
		\item Conical $\alpha$-filtered colimits are $\alpha$-flat.
		\item Cauchy (weighted) colimits are $\alpha$-flat.
	\end{enumerate}
\end{prop}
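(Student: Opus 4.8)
The plan is to dispatch the three items in order of increasing substance. Item (3) is essentially a restatement of definitions: a Cauchy weight $M$ is precisely one for which $M*-\colon[\C,\V]\to\V$ preserves \emph{all} limits, so in particular it preserves the $\alpha$-small ones, and $M$ is $\alpha$-flat. (Equivalently, Cauchy means $\alpha'$-flat for every regular cardinal $\alpha'$, hence for our fixed $\alpha$.)

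For item (2), recall that a conical $\alpha$-filtered colimit is the one weighted by $\Delta I\colon\B_\V^{op}\to\V$ for an $\alpha$-filtered ordinary category $\B$, and that under the identification of $\V$-functors $\B_\V\to\V$ with ordinary functors $\B\to\V_0$ the functor $(\Delta I)*-$ becomes the ordinary colimit $\colim_\B\colon[\B,\V_0]\to\V_0$. So I must show $\colim_\B$ is $\alpha$-continuous, i.e.\ that $\alpha$-filtered colimits commute in $\V$ with $\alpha$-small weighted limits (using that weighted limits in $[\B_\V,\V]$ are pointwise). By \cite[Section~4]{Kel82:articolo} it suffices to check commutation with conical $\alpha$-small limits and with powers $X\pitchfork-=[X,-]$ for $X\in\V_\alpha$. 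The first is the standard fact that $\alpha$-filtered colimits commute with $\alpha$-small limits in any locally $\alpha$-presentable category. For the second, since the tensor of two $\alpha$-presentable objects is again $\alpha$-presentable, a short computation testing against the $\alpha$-presentable objects (a strong generator of $\V_0$) shows that $[X,-]$ preserves $\alpha$-filtered colimits whenever $X\in\V_\alpha$.

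Item (1) is where the real work lies. Let $\Phi\colon\D^{op}\to\V$ be an $\alpha$-flat weight (with $\D$ small) and $H\colon\D\to[\C^{op},\V]$ a $\V$-functor with each $Hd$ $\alpha$-flat; the colimit $M:=\Phi*H$ exists since $[\C^{op},\V]$ is cocomplete, and we must show it is $\alpha$-flat. The key tool is the interchange of iterated weighted colimits (Fubini for coends): for every $A\in[\C,\V]$ there is a $\V$-natural isomorphism
\[
M*A \;=\; (\Phi*H)*A \;\cong\; \Phi*\bigl(d\mapsto Hd*A\bigr),
\]
so $M*-$ factors as $[\C,\V]\xrightarrow{\ \Psi\ }[\D,\V]\xrightarrow{\ \Phi*-\ }\V$, where $\Psi A=(d\mapsto Hd*A)$. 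Now $\Phi*-$ is $\alpha$-continuous because $\Phi$ is $\alpha$-flat, and $\Psi$ is $\alpha$-continuous because $\alpha$-small limits in $[\D,\V]$ are computed pointwise and each component $Hd*-\cong\tx{Lan}_Y(Hd)$ is $\alpha$-continuous by flatness of $Hd$. A composite of $\alpha$-continuous $\V$-functors is $\alpha$-continuous, so $M*-$ is $\alpha$-continuous and $M$ is $\alpha$-flat; since the $\alpha$-flat weights form a full subcategory of $[\C^{op},\V]$, this is at the same time the colimit taken there.

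I expect the only genuine obstacle to lie in item (1), and specifically in the bookkeeping needed to make the displayed interchange isomorphism precise and $\V$-natural: one has to verify that $A\mapsto(d\mapsto Hd*A)$ really underlies a $\V$-functor $\Psi\colon[\C,\V]\to[\D,\V]$ and to invoke carefully the Fubini theorem for weighted colimits together with the pointwise computation of $\V$-limits in functor $\V$-categories. Everything in items (2) and (3) reduces to standard facts about locally $\alpha$-presentable categories and about the fixed base $\V$.
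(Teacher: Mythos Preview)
The paper does not prove this proposition; it is stated with attribution to \cite{Kel82:articolo} and no argument is supplied. Your proposal is therefore not being compared against a proof in the paper but against a citation.

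That said, your argument is correct and is essentially the standard one. Item~(3) is indeed immediate from the definitions. Item~(2) is handled exactly as one would expect: reduce to commutation of $\alpha$-filtered colimits with conical $\alpha$-small limits (a property of any locally $\alpha$-presentable category) and with powers by $X\in\V_\alpha$, the latter following from the closedness hypothesis that $\V_\alpha$ is stable under $\otimes$, so that testing against the strong generator $\V_\alpha$ gives $[X,-]$ preserving $\alpha$-filtered colimits. Item~(1) is dispatched by the Fubini interchange $(\Phi*H)*A\cong\Phi*(d\mapsto Hd*A)$, which is a straightforward manipulation of coends using cocontinuity of $\otimes$ in each variable; your factorisation of $M*-$ through $[\D,\V]$ and the observation that both factors are $\alpha$-continuous is exactly right. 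The one point you flag as needing care, namely that $A\mapsto(d\mapsto Hd*A)$ underlies a $\V$-functor, follows by transposing the composite $\D\otimes[\C,\V]\xrightarrow{H\otimes\mathrm{id}}[\C^{op},\V]\otimes[\C,\V]\xrightarrow{-*-}\V$. Your closing sentence about the colimit ``taken there'' is superfluous: closure under a class of colimits in an ambient category is precisely the statement that the colimit, computed in the ambient category, lands in the subcategory, which is what you have shown.
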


In the result below, as well as in the following sections, we consider the category of elements associated to an ordinary functor $F\colon \C^{op}\to\bo{Set}$. Since there are two (dual) notions of such in the literature, we recall the definition below. 

\begin{Def}
	Let $F\colon \C^{op}\to\bo{Set}$ be an ordinary functor. The category of elements $\tx{El}(F)$ of $F$ can be described as follows:\begin{itemize}\setlength\itemsep{0.25em}
		\item an object is a pair $(C\in\C,x\in FC)$;
		\item a morphism $f\colon (C,x)\to (D,y)$ is an arrow $f\colon C\to D$ in $\C$ for which $F(f)(y)=x$.
		\end{itemize}
	Identities and composition are as in $\C$. We denote by $\pi\colon \tx{El}(F)\to\C$ the projection.
\end{Def}

Note that we will always take the categories of element of a contravariant functor as above.

\begin{Def}
	For any $\V$-functor $H\colon \C\to\V$, we define 
	$$H_I:=\V_0(I,H_0-)\colon \C_0\to\bo{Set}.$$ 
\end{Def}

This notation just introduced is used below and will occur also later on in the paper.

\begin{prop}[\cite{Kel82:articolo}]
	Let $ M \colon \C^{op}\to\V$ be a weight; the following are equivalent:\begin{enumerate}\setlength\itemsep{0.25em}
		\item $ M $ is $\alpha$-flat;
		\item $ M $ is an $\alpha$-flat colimit of representables.
	\end{enumerate}
	If $\C$ is $\alpha$-cocomplete they are further equivalent to: \begin{enumerate}\setlength\itemsep{0.25em}
		\item[(3)] $ M $ is $\alpha$-continuous;
		\item[(4)] $ M $ is a conical $\alpha$-filtered colimit of representables. 
	\end{enumerate}
	In that case the following isomorphism holds
	$$  M \cong\tx{colim} (\tx{El}( M _I)_{\V} \stackrel{\pi_\V}{\longrightarrow} \C \stackrel{Y}{\longrightarrow} [\C^{op},\V])$$
	where $\tx{El}( M _I)$ is $\alpha$-filtered.
\end{prop}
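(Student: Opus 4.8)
The plan is to establish $(1)\Leftrightarrow(2)$ in general, and then, under the additional hypothesis that $\C$ is $\alpha$-cocomplete, to run the cycle $(1)\Rightarrow(3)\Rightarrow(4)\Rightarrow(2)$; the displayed isomorphism will emerge from the proof of $(3)\Rightarrow(4)$. For $(2)\Rightarrow(1)$: each representable $\C(-,C)$ is $\alpha$-flat because $\tx{Lan}_Y\C(-,C)\cong\tx{ev}_C\colon[\C,\V]\to\V$ is continuous, so the claim follows from the closure of the $\alpha$-flat weights under $\alpha$-flat colimits (first part of the previous Proposition). Conversely $(1)\Rightarrow(2)$ is just the density of the Yoneda embedding: one always has $M\cong M*Y$, and if $M$ is $\alpha$-flat this displays $M$ as an $\alpha$-flat colimit of representables.

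Now assume $\C$ is $\alpha$-cocomplete. For $(1)\Rightarrow(3)$: write $Y'\colon\C^{op}\to[\C,\V]$ for the embedding $C\mapsto\C(C,-)$; since $\tx{Lan}_YM\cong M*-$ and $M*\C(C,-)\cong MC$, we get $M\cong(\tx{Lan}_YM)\circ Y'$. As $\C^{op}$ is $\alpha$-complete, $Y'$ preserves $\alpha$-small (weighted) limits, and $\tx{Lan}_YM$ is $\alpha$-continuous by hypothesis, so $M$ is $\alpha$-continuous. For $(4)\Rightarrow(2)$: a conical $\alpha$-filtered colimit of representables is in particular an $\alpha$-flat colimit (by the second part of the previous Proposition) of $\alpha$-flat weights, hence of representables.

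It remains to prove $(3)\Rightarrow(4)$ together with the displayed formula. Suppose $M$ is $\alpha$-continuous. Since $\C$ is $\alpha$-cocomplete, its conical $\alpha$-small colimits are computed as in $\C_0$, $M$ preserves the corresponding (conical, hence $\alpha$-small weighted) limits, and conical limits in $\V$ are ordinary limits in $\V_0$; hence $M_0\colon\C_0^{op}\to\V_0$ preserves $\alpha$-small limits, and therefore so does $M_I=\V_0(I,M_0-)\colon\C_0^{op}\to\bo{Set}$. Because $\C_0$ has $\alpha$-small colimits, the classical unenriched result now gives that $\tx{El}(M_I)$ is $\alpha$-filtered. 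Set $N:=\tx{colim}(\tx{El}(M_I)_\V\stackrel{\pi_\V}{\longrightarrow}\C\stackrel{Y}{\longrightarrow}[\C^{op},\V])$: this exists because $[\C^{op},\V]$ is cocomplete, it is a conical $\alpha$-filtered colimit of representables, and its value at $C$ is the $\alpha$-filtered colimit $\tx{colim}_{(D,x)\in\tx{El}(M_I)}\C(C,D)$ in $\V_0$. There is a canonical comparison $\epsilon_M\colon N\to M$ (the counit at $M$ of the adjunction $\mt F\dashv\mt U$ of the introduction), and it suffices to show $\epsilon_M$ is invertible; then $M\cong N$ has the form required by $(4)$ and the displayed isomorphism holds.

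To see $\epsilon_M$ is invertible it is enough, since the $\alpha$-presentable objects form a strong generator of $\V_0$, to check that $\V_0(X,\epsilon_{M,C})$ is bijective for every $\alpha$-presentable $X$ and every $C\in\C$. As $X\in\V_\alpha$ and $\C$ is $\alpha$-cocomplete, the copower $X\cdot C$ exists in $\C$ and $\C(X\cdot C,D)\cong[X,\C(C,D)]$, so, using that $X$ is $\alpha$-presentable and the colimit defining $NC$ is $\alpha$-filtered, $\V_0(X,NC)\cong\tx{colim}_{(D,x)}\V_0(X,\C(C,D))\cong\tx{colim}_{(D,x)}\C_0(X\cdot C,D)$, which by unenriched density of representables is $M_I(X\cdot C)$. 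On the other side, $\alpha$-continuity of $M$ gives $M(X\cdot C)\cong[X,MC]$ (the copower $X\cdot C$ in $\C$ being a power by $X$ in $\C^{op}$), whence $M_I(X\cdot C)=\V_0(I,M(X\cdot C))\cong\V_0(I,[X,MC])\cong\V_0(X,MC)$. The main obstacle is checking that the composite bijection $\V_0(X,NC)\cong\V_0(X,MC)$ so obtained is indeed $\V_0(X,\epsilon_{M,C})$ --- equivalently, tracking the unenriched density comparison and the copower/closedness adjunction isomorphisms naturally enough that they assemble into the counit; the rest is bookkeeping together with the cited facts about $\alpha$-flat weights and conical (co)limits.
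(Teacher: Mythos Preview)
The paper does not supply its own proof of this proposition: it is stated in the background section and attributed to Kelly \cite{Kel82:articolo}, with no proof environment following it. Consequently there is nothing in the paper to compare your argument against directly.

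That said, your proof is correct, and the strategy you use for $(3)\Rightarrow(4)$ --- reducing to invertibility of the counit $\epsilon_M$ and testing this by applying $\V_0(X,-)$ for $X$ ranging over a strong generator of $\alpha$-presentable objects, then exploiting the existence of copowers $X\cdot C$ in $\C$ together with $\alpha$-continuity of $M$ --- is precisely the template the paper employs later to prove its own generalizations (see the proof of Proposition~\ref{flatimpliesfiltered} and especially the chain of isomorphisms in the proof of Proposition~\ref{flat+absolute=filtered}, where the identical manipulation $\V_0(P,MC)\cong\V_0(I,M(P\cdot C))\cong M_I(P\cdot C)\cong\ldots$ appears). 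So while your argument cannot be compared against a proof in this paper, it is fully in line with the techniques the paper itself relies on. The naturality check you flag as the ``main obstacle'' is genuine bookkeeping but presents no real difficulty: all the isomorphisms involved are instances of adjunction units/counits and Yoneda, and they assemble as required.
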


Point $(1)$ of the following result is \cite[Corollary~4.3]{BQR98}, but we do not know of a reference for $(2)$. 

\begin{lema}\label{flat-restriction}
	Let $J\colon \B\to\C$ be a $\V$-functor, and $ M \colon \B^{op}\to\V$ a weight; then:\begin{enumerate}\setlength\itemsep{0.25em}
		\item if $ M $ is $\alpha$-flat then $\tx{Lan}_{J^{op}} M$ is;
		\item if $J$ is fully faithful and $\tx{Lan}_{J^{op}} M $ is $\alpha$-flat then $ M $ is $\alpha$-flat as well.
	\end{enumerate}
\end{lema}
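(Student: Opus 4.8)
The plan is to deduce both parts from the characterization of $\alpha$-flatness in terms of commutation with $\alpha$-small limits, together with the standard interaction between left Kan extensions along $J^{op}$ and the Yoneda/restriction adjunction. Write $J^{op}\colon\B^{op}\to\C^{op}$, let $\tx{Lan}_{J^{op}}\colon[\B^{op},\V]\to[\C^{op},\V]$ be left Kan extension along $J^{op}$, and recall that it is left adjoint to restriction $[J^{op},\V]\colon[\C^{op},\V]\to[\B^{op},\V]$. The key formula I would use is that for $M\colon\B^{op}\to\V$ one has an isomorphism of $\V$-functors
$$\tx{Lan}_Y(\tx{Lan}_{J^{op}}M)\;\cong\;(\tx{Lan}_Y M)\circ[J^{op},\V]\colon[\C,\V]\longrightarrow\V,$$
i.e.\ the weighted-colimit functor $(\tx{Lan}_{J^{op}}M)*(-)$ on $[\C,\V]$ agrees, up to the restriction functor $[J^{op},\V]$, with the weighted-colimit functor $M*(-)$ on $[\B,\V]$. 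This is essentially the associativity/Fubini formula for weighted colimits: $(\tx{Lan}_{J^{op}}M)*H\cong M*(H\circ J)$ for $H\colon\C\to\V$, naturally in $H$, and $H\circ J = [J^{op},\V](H)$ after the usual identification of $[\C,\V]$-objects with $\C\to\V$ $\V$-functors.

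For part (1): assume $M$ is $\alpha$-flat, so $\tx{Lan}_Y M$ is $\alpha$-continuous. The restriction functor $[J^{op},\V]\colon[\C,\V]\to[\B,\V]$ is a right adjoint (to $\tx{Lan}_{J}$), hence continuous, and in particular $\alpha$-continuous. Composing two $\alpha$-continuous functors, $\tx{Lan}_Y(\tx{Lan}_{J^{op}}M)\cong(\tx{Lan}_Y M)\circ[J^{op},\V]$ is $\alpha$-continuous; therefore $\tx{Lan}_{J^{op}}M$ is $\alpha$-flat. (Alternatively, one could invoke the earlier Proposition: $M$ is an $\alpha$-flat colimit of representables, $\tx{Lan}_{J^{op}}$ is cocontinuous and sends representables to representables, and the $\alpha$-flat weights are closed under $\alpha$-flat colimits — but the direct argument via the adjoint is cleaner.)

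For part (2): now $J$ is fully faithful, and we assume $\tx{Lan}_{J^{op}}M$ is $\alpha$-flat, i.e.\ $(\tx{Lan}_Y M)\circ[J^{op},\V]$ is $\alpha$-continuous. The point is that full faithfulness of $J$ forces the restriction functor $[J^{op},\V]$ to be, up to the closed-category structure, \emph{essentially surjective onto a reflective piece that detects $\alpha$-small limits}: more precisely, $J$ fully faithful implies that the composite $[J^{op},\V]\circ\tx{Lan}_{J^{op}}\cong\tx{id}$ and, crucially, that $\tx{Lan}_{J^{op}}\colon[\B,\V]\to[\C,\V]$ is fully faithful and sends Yoneda to Yoneda, so that $\tx{Lan}_Y M$ is recovered as the composite of $\tx{Lan}_{J^{op}}M$-weighted colimit with the fully faithful cocontinuous $\tx{Lan}_{J^{op}}$. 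Concretely: the unit $H\to[J^{op},\V]\tx{Lan}_{J^{op}}H$ need not be invertible for general $H\colon\C\to\V$, so I cannot simply say $[J^{op},\V]$ is surjective on objects; instead I would argue that $\tx{Lan}_Y M$ factors as
$$[\B,\V]\xrightarrow{\ \tx{Lan}_{J^{op}}\ }[\C,\V]\xrightarrow{\ \tx{Lan}_Y(\tx{Lan}_{J^{op}}M)\ }\V,$$
using that $\tx{Lan}_{J^{op}}$ is fully faithful (as $J$ is) and preserves representables, hence $M*H \cong (\tx{Lan}_{J^{op}}M)*(\tx{Lan}_{J^{op}}H)$ naturally in $H$. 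Now $\tx{Lan}_{J^{op}}$, being a left adjoint between presheaf $\V$-categories that is moreover fully faithful, reflects all limits that exist in $[\C,\V]$; combined with $\alpha$-continuity of $\tx{Lan}_Y(\tx{Lan}_{J^{op}}M)$, this yields that the composite $\tx{Lan}_Y M$ preserves $\alpha$-small limits. Hence $M$ is $\alpha$-flat.

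The main obstacle is the second part, and specifically the claim that it suffices to test $\alpha$-continuity of $\tx{Lan}_Y M$ on diagrams in the image of the fully faithful $\tx{Lan}_{J^{op}}$. The honest way to handle this is: (i) check that $\tx{Lan}_{J^{op}}\colon[\B,\V]\to[\C,\V]$ is fully faithful precisely because $J$ is fully faithful (standard, via the pointwise colimit formula and the density of $Y$); (ii) check it preserves $\alpha$-small limits — here one must be a little careful, since a left adjoint need not be $\alpha$-continuous, but a \emph{fully faithful} left adjoint is also a right adjoint onto its reflective image and thus does preserve the limits that exist, and in any case one only needs that it sends an $\alpha$-small limit cone in $[\B,\V]$ to one in $[\C,\V]$, which follows from fully-faithfulness plus the fact that restriction $[J^{op},\V]$ is continuous and is a retraction of $\tx{Lan}_{J^{op}}$; and (iii) assemble: given an $\alpha$-small weighted-limit cone in $[\B,\V]$, apply $\tx{Lan}_{J^{op}}$ to land in $[\C,\V]$, apply the $\alpha$-continuous $\tx{Lan}_Y(\tx{Lan}_{J^{op}}M)$, and identify the result with $\tx{Lan}_Y M$ applied to the original cone using the Fubini isomorphism. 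I expect steps (i)–(ii) to be the genuinely delicate bookkeeping; (iii) is then formal.
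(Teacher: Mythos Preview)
Your argument for part~(1) is correct and is exactly the paper's: the identity $(\tx{Lan}_{J^{op}}M)*(-)\cong (M*-)\circ[J,\V]$ together with continuity of the restriction functor does the job.

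Part~(2) has a genuine gap. You factor $M*-$ as $(\tx{Lan}_{J^{op}}M*-)\circ\tx{Lan}_J$ and then need $\tx{Lan}_J\colon[\B,\V]\to[\C,\V]$ to preserve $\alpha$-small limits. But a fully faithful \emph{left} adjoint need not preserve limits computed in the ambient category: it makes $[\B,\V]$ a \emph{co}reflective subcategory of $[\C,\V]$, and coreflective subcategories are closed under colimits, not limits. Concretely, take $\V=\bo{Set}$, $\B=\{*\}$, $\C$ the free parallel pair $a\rightrightarrows b$, and $J(*)=a$; then $\tx{Lan}_J(1)=(1\rightrightarrows 2)$, which is not terminal in $[\C,\bo{Set}]$. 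Your justification that ``a fully faithful left adjoint is also a right adjoint onto its reflective image'' only says that the image of a limit cone is a limit cone \emph{in the coreflective subcategory}, whereas you need it to be one in $[\C,\V]$ so that the $\alpha$-flat functor $(\tx{Lan}_{J^{op}}M)*-$ preserves it. The retraction argument via $[J,\V]$ does not help either, since reflecting a limit cone back does not make the intermediate cone limiting.

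The paper's fix is a one-line change of direction: instead of $\tx{Lan}_J$, use the \emph{right} Kan extension. Full faithfulness of $J$ equally gives $[J,\V]\circ\tx{Ran}_J\cong\tx{id}_{[\B,\V]}$, so
\[
M*-\ \cong\ (M*-)\circ[J,\V]\circ\tx{Ran}_J\ \cong\ (\tx{Lan}_{J^{op}}M*-)\circ\tx{Ran}_J,
\]
and now $\tx{Ran}_J$ is a right adjoint, hence continuous, so the composite is $\alpha$-continuous as required. Your factorization was through the wrong adjoint.
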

\begin{proof}
	By definition a functor $ M $ is $\alpha$-flat if its left Kan extension along the Yoneda embedding, which is the functor $ M *-\colon [\B,\V]\to\V$, is $\alpha$-continuous. 
	Note that the triangle below commutes;
	\begin{center}
		\begin{tikzpicture}[baseline=(current  bounding  box.south), scale=2]
			
			\node (a) at (0.7,0.6) {$[\B,\V]$};
			\node (c) at (0, 0) {$[\C,\V]$};
			\node (d) at (1.6, 0) {$\V$};
			
			\path[font=\scriptsize]
			
			(c) edge [->] node [above] {$[J,\V]\ \ \ \ \ \ \ \ $} (a)
			(a) edge [->] node [above] {$\ \ \ \ \ \  M *-$} (d)
			(c) edge [->] node [below] {$(\tx{Lan}_{J^{op}}M)*-$} (d);
			
		\end{tikzpicture}	
	\end{center} 
	indeed $(\tx{Lan}_{J^{op}}M)*F\cong  M *FJ$ by \cite[4.19]{Kel82:libro}. As a consequence if $ M $ is $\alpha$-flat then so is $\tx{Lan}_{J^{op}} M$ since $[J,\V]$ is continuous.
	Conversely, if $J$ is fully faithful and $\tx{Lan}_{J^{op}}M $ is $\alpha$-flat, then 
	\begin{equation*}
		\begin{split}
			 M *-&\cong ( M *-)\circ id_{[\B,\V]} \\
			&\cong ( M *-)\circ [J,\V]\circ \tx{Ran}_{J}\\
			&\cong (\tx{Lan}_{J^{op}}M*-)\circ \tx{Ran}_{J}\\
		\end{split}
	\end{equation*}
	where $id_{[\B,\V]}\cong[J,\V]\circ \tx{Ran}_{J}$ since $J$ is fully faithful. It follows that $ M *-$ is $\alpha$-continuous because $\tx{Lan}_{J^{op}}M*-$ is and $\tx{Ran}_{J}$ is continuous.
\end{proof}

\begin{obs}\label{final-filtered-subcat}
	In more familiar terms, this generalizes an easy-to-check fact about filtered categories:\begin{itemize}\setlength\itemsep{0.25em}
		\item if $J\colon \B\to\A$ is final and $\B$ is filtered, then $\A$ is filtered as well;
		\item if $J\colon \B\to\A$ is fully faithful and final, and $\A$ is filtered, then $\B$ is filtered as well.
	\end{itemize} 
	In the second point, we cannot drop the assumption that $J$ is fully faithful as the following example shows. Take the inclusion of the free-living pair into the free-living split pair; then the codomain is filtered (it is actually absolute) and the inclusion is final, but coequalizers are not filtered colimits.
\end{obs}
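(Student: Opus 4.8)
The plan is to deduce both statements from Lemma~\ref{flat-restriction} applied with $\V=\bo{Set}$ and $\alpha=\aleph_0$, so that the only genuinely new content is the verification of the counterexample. The first step is to set up the translation. Recall that a $\bo{Set}$-valued presheaf is $\aleph_0$-flat precisely when its category of elements is filtered; since for any ordinary category $\D$ the terminal presheaf $\Delta 1\colon\D^{op}\to\bo{Set}$ has $\tx{El}(\Delta 1)\cong\D$, we get: $\Delta 1$ is $\aleph_0$-flat if and only if $\D$ is filtered. Next, by the pointwise formula for left Kan extensions, $(\tx{Lan}_{J^{op}}\Delta 1)(A)\cong\pi_0(A/J)$ for each $A\in\A$, where $A/J$ is the comma category of pairs $(B,A\to JB)$; hence $\tx{Lan}_{J^{op}}\Delta 1\cong\Delta 1$ if and only if every $A/J$ is connected, that is, if and only if $J$ is final.

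Given this dictionary the two bullets fall out immediately. For the first: if $\B$ is filtered then $\Delta 1_{\B^{op}}$ is $\aleph_0$-flat, hence so is $\tx{Lan}_{J^{op}}\Delta 1_{\B^{op}}$ by Lemma~\ref{flat-restriction}(1); if in addition $J$ is final this presheaf is $\Delta 1_{\A^{op}}$, so $\A$ is filtered. For the second: if $J$ is fully faithful and final and $\A$ is filtered, then $\Delta 1_{\A^{op}}\cong\tx{Lan}_{J^{op}}\Delta 1_{\B^{op}}$ is $\aleph_0$-flat, hence so is $\Delta 1_{\B^{op}}$ by Lemma~\ref{flat-restriction}(2), that is, $\B$ is filtered. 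One can instead argue directly: nonemptiness and the binary cocone condition transfer in both directions using (non)emptiness of the appropriate comma categories; the coequalising condition in the first bullet follows by viewing $uf,ug\colon A\to JB_0$ as objects of the connected category $A/J$ (for a chosen $u\colon A'\to JB_0$) and using that a filtered $\B$ admits a cocone over the zig-zag joining them, while in the second bullet full faithfulness of $J$ lets one transport arrows $JB\to JB'$ and equations $Jh\circ Jf=Jh\circ Jg$ back along $J$.

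For the counterexample I would take $\B$ to be the free-living parallel pair $X\rightrightarrows Y$ and $\A$ the free-living split coequalizer: the category with objects $X,Y,Q$, generators $a,b\colon X\to Y$, $q\colon Y\to Q$, $s\colon Q\to Y$, $t\colon Y\to X$ and relations $qa=qb$, $qs=1_Q$, $at=1_Y$, $bt=sq$, with $J$ the evident inclusion. A finite computation of the hom-sets of $\A$ shows that $Q$ is a terminal object, so $\A$ is filtered and moreover every colimit over $\A$ is evaluation at $Q$, hence absolute; that each comma category $A/J$ (for $A\in\{X,Y,Q\}$) is connected, so $J$ is final---it is the splittings $s,t$ that force the two objects in the image of $J$ to connect up; that $J$ is not full, since for instance $sqa\colon X\to Y$ is not among $a,b$; and that $\B$ is not filtered, since $a$ and $b$ admit no common postcomposition equalising them (the only arrow out of $Y$ in $\B$ is $1_Y$). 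This produces a final, non-fully-faithful functor with filtered codomain and non-filtered domain, so full faithfulness cannot be dropped from the second bullet.

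I expect the main obstacle to lie in this last step: enumerating the finitely many hom-sets of the free-living split coequalizer from its generators and relations and checking terminality of $Q$ and connectedness of the three comma categories is elementary but fiddly. Everything else is purely formal once the dictionary relating terminal presheaves, $\aleph_0$-flatness and finality is in place.
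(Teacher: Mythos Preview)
Your approach is exactly what the paper intends: the remark is stated immediately after Lemma~\ref{flat-restriction} precisely because it is the specialization of that lemma to $\V=\bo{Set}$, $\alpha=\aleph_0$, via the dictionary you describe (terminal presheaf, $\tx{Lan}_{J^{op}}\Delta 1\cong\Delta 1$ iff $J$ is final). The paper itself offers no proof beyond calling the facts ``easy-to-check'' and naming the counterexample, so your detailed verification of the split-coequalizer example --- that $Q$ is terminal, that each comma category $A/J$ is connected, and that $J$ fails to be full --- goes well beyond what the paper supplies; your computations there are correct, though ``finite computation of the hom-sets'' undersells the bookkeeping needed to see, e.g., that $\A(X,X)=\{1_X,ta,tb,tsqa\}$ rather than the free monoid on $ta,tb$.
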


Next we recall the two notions of accessibility considered in the literature. The first is a straightforward generalization of the notion considered for $\V=\bo{Set}$: 

\begin{Def}
	Let $\alpha$ be a regular cardinal and $\A$ a $\V$-category with conical $\alpha$-filtered colimits; an object $A$ of $\A$ is called {\em conically $\alpha$-presentable} if $\A(A,-)\colon \A\to\V$ preserves conical $\alpha$-filtered colimits; we write $\A_\alpha^c$ for the full subcategory of $\A$ spanned by the conically $\alpha$-presentable objects. 
\end{Def}	

\begin{Def}	
	We say that $\A$ is {\em conically $\alpha$-accessible} if it has (conical) $\alpha$-filtered colimits and there exists a small $\C\subseteq\A_\alpha^c$ such that every object of $\A$ can be written as an $\alpha$-filtered colimit of objects from $\C$.
\end{Def}

Equivalently: a $\V$-category is conically $\alpha$-accessible if it is the free cocompletion of a small $\V$-category under $\alpha$-filtered colimits.

With this notion we are not able to recover the usual characterizations theorem from the ordinary case (such as the characterization of accessible categories as models of sketches). This can be solved if $\alpha$-filtered colimits are replaced by the $\alpha$-flat ones:

\begin{Def}{\cite{BQR98}}
	Let $\alpha$ be a regular cardinal and $\A$ a $\V$-category with $\alpha$-flat colimits; an object $A$ of $\A$ is called {\em $\alpha$-presentable} if $\A(A,-)\colon\A\to\V$ preserves $\alpha$-flat colimits; we write $\A_\alpha$ for the full subcategory of $\A$ spanned by the $\alpha$-presentable objects. 
\end{Def}	

\begin{Def}{\cite{BQR98}}	
	We say that $\A$ is {\em $\alpha$-accessible} if it has $\alpha$-flat colimit and there exists a small $\C\subseteq\A_\alpha$ such that every object of $\A$ can be written as an $\alpha$-flat colimit of objects from $\C$.
\end{Def}

Equivalently: a $\V$-category is $\alpha$-accessible if it is the free cocompletion of a small $\V$-category under $\alpha$-flat colimits.

These two notions of accessibility are not equivalent: conical accessibility doesn't in general imply accessibility; the reason being simply that having $\alpha$-flat colimits is stronger than having $\alpha$-filtered ones. On the other hand it is not known whether every $\alpha$-accessible $\V$-category is conically $\alpha$-accessible; this would require an explicit characterization of $\alpha$-flat colimits in terms of the $\alpha$-filtered ones. What is known is that every $\alpha$-accessible $\V$-category $\A$ is conically $\gamma$-accessible for some regular cardinal $\gamma$ greater than $\alpha$ (it's enough to consider $\gamma$ as in \cite[Theorem~7.10]{BQR98} and use points $(2)$ and $(3)$ of that Theorem~to show that $\A$ is conically $\gamma$-accessible).

\section{When flat equals filtered}\label{con=flat}

In this section we give sufficient conditions on the base $\V$ for $\alpha$-flat colimits to reduce to the usual $\alpha$-filtered ones. These conditions hold in many of the most important examples of locally presentable bases of enrichment, as Example~\ref{weaklstrongexamples} below illustrates. For most but not all of the bases which appear in Example~\ref{weaklstrongexamples}, the canonical functor $\V_0(I,-):\V_0\to\bo{Set}$ is cocontinuous and strong monoidal, which in turn easily implies our conditions. One example for which this is not the case, but for which our sufficient conditions still hold, is the category of pointed sets, equipped as usual with the smash product. 

Throughout this section we assume that $\V$ is locally $\alpha$-presentable as a closed category and that the unit $I$ satisfies the following conditions:\begin{enumerate}\setlength\itemsep{0.25em}
	\item[(a)] $\V_0(I,-)\colon \V_0\to\bo{Set}$ is {\em weakly cocontinuous}: for any diagram $H\colon \C\to\V_0$ the comparison map $\colim\V_0(I,H-)\twoheadrightarrow\V_0(I,\colim H)$ is a surjection.
	\item[(b)] $\V_0(I,-)\colon \V_0\to\bo{Set}$ is {\em weakly strong monoidal}: the comparison maps for the tensor product are surjections; in other words for each $X,Y\in\V_0$ the function $\V_0(I,X)\times\V_0(I,Y)\to \V_0(I,X\otimes Y)$, sending $(x,y)$ to $x\otimes y$, is a surjection.
\end{enumerate}

\begin{obs}
	Note that condition $(a)$ is equivalent to the fact that $\V_0(I,-)$ preserves cocones which are jointly a regular epimorphism; or even to the request that $I$ is regular projective and $\V_0(I,-)$ weakly preserve coproducts. Therefore $(a)$ is certainly satisfied whenever $\V_0(I,-)$ preserves coproducts and regular epimorphisms, and in particular when it is cocontinuous (most of our examples). Condition $(b)$ comes for free when $\V_0(I,-)$ is strong monoidal (most of our examples), and in particular when $\V_0$ is endowed with the cartesian closed structure. 
\end{obs}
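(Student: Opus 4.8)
The plan is to verify the clauses one at a time; write $G$ for the functor $\V_0(I,-)\colon\V_0\to\bo{Set}$, and recall that $G$, being representable, preserves all limits (so in particular monomorphisms, equalizers and products). Two standing observations will carry most of the argument. First, every colimit of a diagram $H\colon\C\to\V_0$ is presented as a coequalizer $\coprod_{f\colon i\to j}H_i\rightrightarrows\coprod_i H_i\xrightarrow{\,q\,}\colim H$, so the colimit cocone induces the regular epimorphism $q$; moreover the comparison map $c\colon\colim GH\to G(\colim H)$ is, by construction, the factorisation of the composite $\coprod_i GH_i\to G(\coprod_i H_i)\xrightarrow{\,Gq\,}G(\colim H)$ through the canonical quotient $\coprod_i GH_i\twoheadrightarrow\colim GH$. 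Second, surjections are closed under composition, and a map is surjective as soon as it is postcomposed to a surjection by another. Hence, to prove $c$ surjective it suffices to know that $Gq$ and the coproduct comparison $\coprod_i GH_i\to G(\coprod_i H_i)$ are surjective.

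\emph{The equivalence of $(a)$ with ``$I$ is regular projective and $G$ weakly preserves coproducts''.} If $(a)$ holds, then applying it to discrete diagrams gives that $G$ weakly preserves coproducts, while applying it to a parallel pair $p_1,p_2\colon X\rightrightarrows Y$ exhibiting a regular epimorphism $q\colon Y\twoheadrightarrow Z$ as a coequalizer gives that $\mathrm{coeq}(Gp_1,Gp_2)\to GZ$ is surjective; precomposing with the quotient $GY\twoheadrightarrow\mathrm{coeq}(Gp_1,Gp_2)$ then shows $Gq$ is surjective, i.e. $I$ is regular projective. Conversely, given those two properties, the observations of the previous paragraph show $c$ is surjective for every $H$, which is exactly $(a)$.

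\emph{The equivalence of $(a)$ with ``$G$ preserves cocones which are jointly a regular epimorphism''.} Read a cocone $(\lambda_i\colon X_i\to A)_i$ as jointly a regular epimorphism when the induced $[\lambda_i]\colon\coprod_i X_i\to A$ is one, and as preserved by $G$ when $(G\lambda_i)_i$ is jointly epimorphic in $\bo{Set}$, i.e. $\coprod_i GX_i\to GA$ is surjective. If $G$ preserves such cocones, then applying this to the colimit cocone of an arbitrary $H$ makes $\coprod_i GH_i\to G(\colim H)$ surjective, hence $c$ surjective; this is $(a)$. Conversely, from $(a)$ we have just seen that $G$ weakly preserves coproducts and that $I$ is regular projective, so for a jointly-regular-epimorphic cocone $[\lambda_i]=q\colon\coprod_i X_i\twoheadrightarrow A$ the composite $\coprod_i GX_i\to G(\coprod_i X_i)\xrightarrow{\,Gq\,}GA$ is a composite of two surjections, hence surjective. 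The remaining implications are immediate from the second characterisation: preservation of coproducts and of regular epimorphisms implies a fortiori weak preservation of coproducts and regular projectivity of $I$, hence $(a)$; and cocontinuity implies both.

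For $(b)$: the functor $G$ carries a canonical lax monoidal structure whose binary comparison $\V_0(I,X)\times\V_0(I,Y)\to\V_0(I,X\otimes Y)$ is precisely $(x,y)\mapsto x\otimes y$ (using $I\cong I\otimes I$), so if $G$ is strong monoidal this map is invertible, in particular surjective, which is $(b)$; and for the cartesian closed structure on $\V_0$ this comparison is the canonical isomorphism $\V_0(1,X\times Y)\cong\V_0(1,X)\times\V_0(1,Y)$, so $G$ is then automatically strong monoidal and $(b)$ holds. None of this is hard; the only step that is not purely formal is the derivation that $(a)$ forces $I$ to be regular projective, obtained by feeding the coequalizer presentation of a regular epimorphism into weak cocontinuity, and the only real decision is pinning down the intended reading of ``cocone which is jointly a regular epimorphism''.
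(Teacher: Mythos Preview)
Your argument is correct. The paper states this remark without proof, leaving the verification to the reader; your proposal supplies precisely the routine details one would expect, and they are all sound. The key steps---presenting an arbitrary colimit as a coequalizer of coproducts, reading off regular projectivity of $I$ by feeding a coequalizer diagram into weak cocontinuity, and checking that the lax monoidal comparison for $\V_0(I,-)$ is exactly the map $(x,y)\mapsto x\otimes y$---are exactly what is needed, and your interpretation of ``cocone which is jointly a regular epimorphism'' matches the paper's intended meaning (as confirmed by its later use in Lemma~\ref{goodunit}).
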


\begin{obs}
	In condition $(b)$ it might be natural to ask for the map $1_I\colon 1\to\V_0(I,I)$ to be an epimorphism as well (and hence bijective), but it is not required for the results below. Nonetheless, this condition is satisfied for almost all our examples.
\end{obs}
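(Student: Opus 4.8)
The statement to be justified is that the further hypothesis that $1_I\colon 1\to\V_0(I,I)$ be an epimorphism — equivalently, since the domain is a singleton, that $\V_0(I,I)$ be itself a singleton, i.e.\ that $I$ admit no endomorphism other than the identity — plays no role in the results of this section. The plan is to establish this in two complementary ways: first by tracing exactly how condition $(b)$ is used in the arguments below, and then by exhibiting a base of enrichment for which $(a)$ and $(b)$ hold while $1_I$ fails to be epic, so that the conclusions of the section apply to it regardless.

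First I would check that in every argument of this section condition $(b)$ enters \emph{only} through the tensor-comparison surjection $\V_0(I,X)\times\V_0(I,Y)\twoheadrightarrow\V_0(I,X\otimes Y)$, which is what is needed to control copowers and hence the interaction of $\mt F$ and $\mt U$ with the monoidal structure. The unit-comparison map $1_I$ never intervenes: because $\V_0(I,-)$ is represented by the unit $I$ itself, the distinguished element $\mathrm{id}_I\in\V_0(I,I)$ is always at our disposal, and all that the monoidal part of the hypothesis is ever called upon to provide is the joint surjectivity of tensoring. Consequently the proof of Theorem~\ref{flat-preservation}, and of the results depending on it, goes through verbatim without any assumption on $1_I$.

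To confirm that the condition can genuinely fail under $(a)$ and $(b)$, I would point to the category $\bo{Set}_*$ of pointed sets with the smash product as a witness. Here the unit is $S^0$, and $\V_0(I,-)=\bo{Set}_*(S^0,-)$ is the forgetful functor to $\bo{Set}$; a direct computation shows that $\V_0(I,I)=\bo{Set}_*(S^0,S^0)$ has exactly two elements, the identity and the constant map at the basepoint, so that $1_I\colon 1\to\V_0(I,I)$ is not surjective and hence not an epimorphism. Nevertheless $(a)$ and $(b)$ hold, as already noted in the introduction to this section: $(b)$ because $(x,y)\mapsto x\wedge y$ hits every element of $X\wedge Y$, and $(a)$ because the colimit-comparison maps for the forgetful functor are surjective (basepoints get identified but nothing is lost). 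Thus pointed sets satisfy all the hypotheses of the section, and its conclusions — in particular that $\alpha$-flat and $\alpha$-filtered colimits coincide — apply to this base even though $1_I$ is not epic.

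The only delicate point is the first step: since the claim is precisely that a hypothesis is superfluous, one must be certain that no argument in the section secretly appeals to the unit comparison. I expect this to be routine rather than genuinely hard, precisely because $\V_0(I,-)$ is corepresented by $I$, so the identity endomorphism is available for free and there is never any need to surject onto $\V_0(I,I)$; the pointed-sets example then serves as a concrete sanity check that failure of the $1_I$-condition is fully compatible with every conclusion drawn below.
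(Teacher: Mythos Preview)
Your justification is correct, and indeed more thorough than what the paper itself provides: in the paper this is a bare Remark with no accompanying proof, simply asserting that the unit-comparison condition is not needed and is satisfied in ``almost all'' examples. Your two-part argument --- inspecting how condition~$(b)$ is actually invoked, and exhibiting $\bo{Set}_*$ as a witness where $1_I$ fails to be epic --- is exactly the right way to substantiate such a remark.

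One minor point of alignment: the paper does flag $\bo{Set}_*$ explicitly as the exceptional example (item~(12) in the list of bases), noting there that $\V_0(I,-)$ ``does not preserve the unit''; so your choice of witness agrees with the paper's own, though you spell out the computation of $\bo{Set}_*(S^0,S^0)$ more explicitly than the paper does. Your first step --- tracing through the proofs to confirm that only the tensor-comparison surjection is ever used --- is not carried out in the paper at all; it is left implicit in the phrase ``not required for the results below''. If you want to make this watertight, the key places to check are Lemma~\ref{goodunit} and Proposition~\ref{flatimpliesfiltered}, where condition~$(b)$ enters: in each case one needs only to split a map $I\to A\otimes B$ as $x\otimes y$, never to control $\V_0(I,I)$.
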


\begin{ese}\label{weaklstrongexamples}
	Here is a list of examples of such bases of enrichment. In the following group each base $\V$ is endowed with the cartesian structure and $\V_0(I,-)$ is cocontinuous: \begin{enumerate}\setlength\itemsep{0.25em}
		\item $(\bo{Set},\times,1)$ for ordinary categories.
		\item $(\bo{Cat},\times,1)$ for 2-categories.
		\item $(\bo{SSet},\times,1)$ for simplicial categories.
		\item $(\mathbbm{2},\times,1)$ for posets. 
		\item The categories $\bo{Gpd}$ of groupoids, $\bo{Ord}$ of total orders, $\bo{Pos}$ of posets, and $\bo{rGra}$ of reflexive graphs with their cartesian closed structures.
		\item Any presheaf category $([\C^{op},\bo{Set}],\times, 1)$ for which $\C$ has a terminal object: the unit $1$ is representable in $[\C^{op},\bo{Set}]$ and hence homming out of it preserves all colimits.
		\item $(2\tx{-}\bo{Cat}_Q,\times, 1)$ the cartesian closed category of algebraically cofibrant 2-categories, see \cite{Cam21:articolo}. This base is locally presentable and the forgetful $V\colon 2\tx{-}\bo{Cat}_Q\to 2\tx{-}\bo{Cat}$ is a faithful left adjoint and is full on morphisms out of the terminal object, so that $2\tx{-}\bo{Cat}_Q(1,-)\cong 2\tx{-}\bo{Cat}(1,V-)$ is cocontinuous.
	\end{enumerate}
	The following are examples of bases for which $\V_0(I,-)$ is cocontinuous and strong monoidal but the monoidal structure is not (necessarily) cartesian: \begin{enumerate}\setlength\itemsep{0.25em}
		\item[(8)] $(\V\tx{-}\bo{Cat},\otimes,\I)$ with the tensor product inherited from $\V$, whenever $\V_0$ is locally presentable: $\V\tx{-}\bo{Cat}$ is locally presentable by \cite{KL2001:articolo}, and $\V\tx{-}\bo{Cat}(\I,-)\cong \tx{Ob}(-)$ is the functor that takes the underlying objects of a category; therefore it is cocontinuous and strong monoidal ($\tx{Ob}(\A\otimes \B)=\tx{Ob}(\A)\times\tx{Ob}(\B)$).
		\item [(9)] $(2\tx{-}\bo{Cat},\square ,1)$ with the ``funny tensor product'' \cite[Section~2]{Str96categorical}: same reasons as above.
		\item [(10)] $(2\tx{-}\bo{Cat},\boxtimes,1)$ with the pseudo Gray tensor product: same reasons as above (see \cite[Section~6]{Str96categorical}).
		\item [(11)] $(\bo{Met},\otimes,1)$ of generalized metric spaces (see \cite{AR20:articolo} and \cite{LR17:articolo}).
	\end{enumerate}
	In the next example the monoidal structure is not cartesian, the unit is not the terminal object, and $\V_0(I,-)$ is only weakly cocontinuous and weakly strong monoidal:
	\begin{enumerate}\setlength\itemsep{0.25em}
		\item[(12)] $(\bo{Set}_*,\wedge,I)$ the category of pointed sets endowed with the smash product. This is locally presentable being the co-slice  $1/\bo{Set}$. Since the unit is given by the pointed set $I=(\{0,1\},0)$, it follows that the functor $U:=\bo{Set}_*(I,-)\colon \bo{Set}_*\to\bo{Set}$ is just the underlying set functor. Note now that the tensor product $(A,a)\wedge (B,b)$ is defined as a quotient of the set $A\times B$; thus $U$ is weakly strong monoidal but not strong monoidal, nor does it preserve the unit. Moreover, it's easy to see that epimorphisms in $\bo{Set}_*$ are just surjections, and that the coproduct of a family $(A_i,a_i)_{i\in I}$ is the quotient of $\textstyle\sum_{i\in I} A_i$ obtained identifying all he $a_i$'s. As a consequence the functor $U$ preserves all regular epimorphisms and weakly preserves all coproducts, but doesn't preserve coproducts in the usual sense.
	\end{enumerate}
\end{ese}

\begin{obs}
	The last example can be generalized as follows. Assume that $\V=(\V_0,\otimes,I)$ satisfies the conditions $(a)$ and $(b)$ above. Then $1/\V_0$ is still locally presentable and symmetric monoidal closed with monoidal structure given by the smash product $\wedge$ induced by the tensor product on $\V$ (see for example \cite[Lemma~4.20]{EM2009permutative}); moreover the forgetful functor $U\colon 1/\V_0\to\V_0$ is monoidal and has a strong monoidal left adjoint $F$. Note now that, since the unit of $1/\V$ is $FI$, the functor $(1/\V_0)(FI,-)\cong \V_0(I,U-)$ weakly preserves the same colimits that both $U$ and $\V_0(I,-)$  weakly preserves. Since the comparison map $UX\otimes UY\to U(X\wedge Y)$ is a regular epimorphism in $\V_0$ (by construction) and $U$ is weakly cocontinuous, it follows that $(1/\V_0)(FI,-)$ is weakly cocontinuous and weakly strong monoidal as well. In conclusion $1/\V=(1/\V_0,\wedge,FI)$ still satisfies the conditions $(a)$ and $(b)$.
\end{obs}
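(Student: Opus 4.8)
The plan is to transfer the conditions (a) and (b) from $\V$ to $1/\V$ along the monoidal adjunction $F\dashv U$ between $1/\V_0$ and $\V_0$, the point being that it supplies a natural isomorphism $(1/\V_0)(FI,-)\cong\V_0(I,U-)$, so that the functor whose behaviour governs (a) and (b) for $1/\V$ is just $\V_0(I,-)$ precomposed with $U$. First I would assemble the structural input: the coslice of a locally presentable category under any object is again locally presentable \cite{AR94:libro}; the smash product $\wedge$ induced by $\otimes$ makes $1/\V_0$ symmetric monoidal closed \cite[Lemma~4.20]{EM2009permutative}, with forgetful functor $U\colon1/\V_0\to\V_0$ lax monoidal and with strong monoidal left adjoint $F$; since $F$ preserves the unit, $FI$ is the unit of $1/\V$; and $F\dashv U$ gives the displayed isomorphism.

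The main work is then a careful analysis of $U$. I would show that $U$ preserves connected colimits (it creates them, being a forgetful functor out of a coslice) and weakly preserves nonempty coproducts: the underlying object of $\coprod_j^{1/\V_0}(X_j,x_j)$ is the pushout of the fold map $\coprod_j I\to I$ along $\coprod_j x_j\colon\coprod_j I\to\coprod_j UX_j$, so the comparison $\coprod_j UX_j\to U(\coprod_j^{1/\V_0}X_j)$ is a pushout of the split (hence regular) epimorphism $\coprod_j I\to I$, and is therefore a regular epimorphism, regular epimorphisms being stable under pushout. Writing a colimit as a coequalizer of a pair of maps between coproducts, and using that $U$ preserves the coequalizer while regular epimorphisms compose in the locally presentable category $\V_0$, it follows that $U$ sends colimit cocones to cocones that are jointly regular epimorphic in $\V_0$. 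Separately, from the construction of $\wedge$ in \cite{EM2009permutative} the object $U(X\wedge Y)$ is a quotient of $UX\otimes UY$, i.e. the comparison $UX\otimes UY\to U(X\wedge Y)$ is a regular epimorphism.

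With these in hand, (a) and (b) for $1/\V$ become composites of surjections. For (a): $U$ turns a colimit cocone in $1/\V_0$ into a jointly regular epimorphic cocone in $\V_0$, and then $\V_0(I,-)$ --- which, by (a) for $\V$, preserves regular epimorphisms and sends jointly regular epimorphic cocones to jointly surjective ones --- turns this into a jointly surjective cocone; hence $(1/\V_0)(FI,-)\cong\V_0(I,-)\circ U$ is weakly cocontinuous. For (b): under the adjunction the comparison map for $\wedge$ becomes the composite $\V_0(I,UX)\times\V_0(I,UY)\to\V_0(I,UX\otimes UY)\to\V_0(I,U(X\wedge Y))$, whose first factor is surjective by (b) for $\V$ and whose second is surjective because $UX\otimes UY\to U(X\wedge Y)$ is a regular epimorphism and $I$ is regular projective. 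Thus $1/\V=(1/\V_0,\wedge,FI)$ satisfies (a) and (b).

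I expect the genuine obstacle to be the middle step: showing that $U$ is ``weakly cocontinuous'' in the exact sense required, namely that it sends colimit cocones to \emph{regular} epimorphic --- not merely epimorphic --- cocones, so that the regular projectivity of $I$ can be brought to bear. This rests on unwinding how colimits in the coslice $1/\V_0$ are built from coproducts and coequalizers and on the stability and composability of regular epimorphisms in $\V_0$, together with knowing the construction of the smash product precisely enough to exhibit $UX\otimes UY\to U(X\wedge Y)$ as a regular epimorphism. Everything else is a formal manipulation of surjections.
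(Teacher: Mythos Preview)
Your proposal is correct and follows essentially the same route as the paper's remark, which itself only sketches the argument: it simply asserts that $U$ is weakly cocontinuous and that the comparison $UX\otimes UY\to U(X\wedge Y)$ is a regular epimorphism ``by construction'', and then concludes. You unpack both assertions, which is exactly what is needed.

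One small point of caution in your middle step: you invoke that ``regular epimorphisms compose in the locally presentable category $\V_0$'' to conclude that $U$ sends colimit cocones to \emph{jointly regular epimorphic} cocones. Composition of regular epimorphisms is not automatic in an arbitrary locally presentable category. But you do not actually need it: to verify condition~(a) for $1/\V$, given $x\colon I\to U(\colim H)$ you can lift $x$ through the regular epimorphism $U(\coprod H_b)\twoheadrightarrow U(\colim H)$ (using that $I$ is regular projective), then through the regular epimorphism $\coprod UH_b\twoheadrightarrow U(\coprod H_b)$, and finally use that $\V_0(I,-)$ weakly preserves coproducts to land in some $\V_0(I,UH_b)$. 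This two-step lift uses exactly the ingredients you have already isolated and sidesteps the composability question entirely. With that adjustment your argument goes through, and it is precisely the content the paper leaves implicit.
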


The following is an easy consequence of the two conditions above, but is also the foundation of the results of this section.

\begin{lema}\label{goodunit}
	Let $ M \colon \C^{op}\to\V$ and $H\colon \C\to\V$ be two $\V$-functors. Then for each arrow $x\colon I\to  M *H$ there exist $C\in \C$, $y\colon I\to M  C$, and $z\colon I\to HC$ for which the triangle
	\begin{center}
		\begin{tikzpicture}[baseline=(current  bounding  box.south), scale=2]

			\node (b0) at (1.3,0.7) {$ M  C\otimes HC$};
			\node (c0) at (0,0.7) {$I$};
			\node (d0) at (1.3,0) {$ M *H$};
			
			\path[font=\scriptsize]
			
			(c0) edge [dashed, ->] node [above] {$ y\otimes z$} (b0)
			(b0) edge [->] node [right] {$\rho_C$} (d0)
			(c0) edge [->] node [below] {$x\ $} (d0);
			
		\end{tikzpicture}	
	\end{center}
	commutes; where the vertical map is taken from the colimiting cocone defining $ M *H$.
\end{lema}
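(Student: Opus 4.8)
The plan is to exploit the standard presentation of the weighted colimit $M*H$ as a coequalizer of coproducts, and then push the element $x\colon I\to M*H$ through this presentation using conditions (a) and (b). Recall that $M*H$ can be computed as the coequalizer
\begin{center}
	\begin{tikzpicture}[baseline=(current  bounding  box.south), scale=2]
		\node (a) at (0,0) {$\displaystyle\coprod_{C,D} MD\otimes\C(C,D)\otimes HC$};
		\node (b) at (2.2,0) {$\displaystyle\coprod_{C} MC\otimes HC$};
		\node (c) at (3.4,0) {$M*H$};
		\path[font=\scriptsize]
		([yshift=2pt]a.east) edge [->] node [above] {} ([yshift=2pt]b.west)
		([yshift=-2pt]a.east) edge [->] node [below] {} ([yshift=-2pt]b.west)
		(b) edge [->] node [above] {$q$} (c);
	\end{tikzpicture}
\end{center}
where the two parallel maps are built from the left action of $\C$ on $M$ and on $H$ respectively, and $q$ restricted to the $C$-summand is precisely the colimiting injection $\rho_C\colon MC\otimes HC\to M*H$.

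First I would apply $\V_0(I,-)$ to this coequalizer. By condition (a), the canonical map $\colim\V_0(I,-)\twoheadrightarrow\V_0(I,\colim)$ is a surjection; since coequalizers and coproducts are both colimits, it follows in particular that the composite
$$\coprod_{C}\V_0(I,MC\otimes HC)\longrightarrow \V_0\Bigl(I,\coprod_C MC\otimes HC\Bigr)\xrightarrow{\ \V_0(I,q)\ }\V_0(I,M*H)$$
is surjective (the first arrow is the comparison for the coproduct, the second is $\V_0(I,q)$, which is surjective because $q$ is a regular epi and $I$ is regular projective by the remark following the hypotheses). Concretely this says: the given $x\colon I\to M*H$ factors as $x=\rho_C\circ w$ for some $C\in\C$ and some $w\colon I\to MC\otimes HC$.

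Next I would apply condition (b) to split $w$. The comparison map $\V_0(I,MC)\times\V_0(I,HC)\to\V_0(I,MC\otimes HC)$ sending $(y,z)$ to $y\otimes z$ is a surjection, so there exist $y\colon I\to MC$ and $z\colon I\to HC$ with $w=(y\otimes z)\circ\lambda$ (where $\lambda\colon I\cong I\otimes I$ is the coherence iso, which I will suppress as the paper does). Then $x=\rho_C\circ(y\otimes z)$, which is exactly the commuting triangle claimed. I do not anticipate a genuine obstacle here: the only subtlety is bookkeeping the coherence isomorphism $I\cong I\otimes I$ and making sure the presentation of $M*H$ as an ordinary coequalizer in $\V_0$ is the one to which condition (a) applies — and that is routine since coproducts and coequalizers are ordinary colimits in $\V_0$. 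The one point worth stating carefully is that (a) only gives a surjection onto $\V_0(I,-)$ of the \emph{colimit}, not an isomorphism, which is all we need and is precisely why the conclusion asserts only the existence of a factorization rather than any uniqueness.
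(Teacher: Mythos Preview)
Your proposal is correct and follows essentially the same route as the paper's proof: use condition~(a) to lift $x$ through some $\rho_C$ to a map $w\colon I\to MC\otimes HC$, then use condition~(b) to write $w=y\otimes z$. The only cosmetic difference is that the paper invokes the reformulation of~(a) (stated in the remark immediately after the hypotheses) that $\V_0(I,-)$ preserves families which are jointly regular epimorphisms, and applies this directly to the family $(\rho_C)_C$, whereas you unfold the coequalizer-of-coproducts presentation explicitly; the content is the same.
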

\begin{proof}
	The counit of a colimit $ M * H$ determines a family
	$$ (\rho_C\colon  M  C\otimes HC \longrightarrow  M *H)_{C\in\C}$$
	which is jointly a regular epimorphism in $\V_0$. Since $\V_0(I,-)$ preserves such families it follows that $x$ factors through $\rho_C$ via a map $h\colon I\to  M  C\otimes HC$, for some $C\in\C$. Finally, by condition $(2)$ on the unit, $h=y\otimes z$ for some $y\colon I\to M  C$ and $z\colon I\to HC$. The claim then follows.
\end{proof}

\begin{obs}
	Given a pair $ M \colon \C^{op}\to\V$ and $H\colon \C\to\V$, we can consider the weighted colimit $ M *H$ in $\V$ and the ordinary weighted colimit $ M _I*H_I$ in $\bo{Set}$. Note that there is always a comparison map
	$$ c\colon  M _I*H_I\longrightarrow \V_0(I, M *H).$$
	It's easy to see that if conditions $(1)$ and $(2)$ hold then the map $c$ is a surjection (use the Lemma~above); thus in a certain sense the functor $\V_0(I,-)$ weakly preserves weighted colimits --- this can be made precise using change of base for composition of profunctors.\\
	When $\V_0(I,-)$ is moreover cocontinuous and strong monoidal the comparison map $c$ is actually an isomorphism: $\V_0(I, M *H)\cong  M _I*H_I$ (this is a general fact about cocontinuous functors and strong monoidal change of base).
\end{obs}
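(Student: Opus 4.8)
The plan is to read both assertions off the standard coend presentation of $M*H$. Write $G:=\V_0(I,-)\colon\V_0\to\bo{Set}$, which is always lax monoidal, and note that $M_I=G\circ M_0$ and $H_I=G\circ H_0$. First I would make the comparison map $c$ explicit: an element of $M_I*H_I=\int^{C}M_IC\times H_IC$ is represented by a pair $(y\colon I\to MC,\,z\colon I\to HC)$, and $c$ sends its class to the composite $\rho_C\circ(y\otimes z)\colon I\to M*H$, where $(\rho_C\colon MC\otimes HC\to M*H)_{C\in\C}$ is the colimiting cocone; the dinaturality of $\rho$ is exactly what makes this assignment well defined on the coend.

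For the surjectivity claim under $(a)$ and $(b)$ there is then essentially nothing left to do: given $x\colon I\to M*H$, Lemma~\ref{goodunit} produces $C$, $y\colon I\to MC$ and $z\colon I\to HC$ with $\rho_C\circ(y\otimes z)=x$, so $x=c[(y,z)]$ and $c$ is onto. (Indeed Lemma~\ref{goodunit} packages precisely $(a)$ --- that $G$ carries the jointly regular-epimorphic cocone $(\rho_C)$ to a jointly surjective family --- together with $(b)$, that $G$ is weakly strong monoidal.)

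For the isomorphism claim I would instead present $M*H$ as a coequalizer in $\V_0$,
$$\coprod_{C,D}\C(D,C)\otimes MC\otimes HD\;\rightrightarrows\;\coprod_{C}MC\otimes HC\;\longrightarrow\;M*H,$$
whose two legs are built from the actions of $M$ and of $H$. A cocontinuous $G$ preserves the two coproducts and this coequalizer; a strong monoidal $G$ satisfies $G(X\otimes Y)\cong GX\times GY$, and $G(\C(D,C))=\C_0(D,C)$ by the very definition of the underlying category; so applying $G$ produces exactly the coequalizer in $\bo{Set}$ computing the coend $\int^{C}M_IC\times H_IC=M_I*H_I$. Hence $G(M*H)\cong M_I*H_I$, and since both this isomorphism and $c$ are compatible with the images of the cocone $(\rho_C)$, the universal property of the colimit forces $c$ to be this isomorphism.

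The whole argument is formal and I do not expect a genuine obstacle; the only points needing a little care are the cocone-compatibility check identifying the canonical comparison $c$ with the isomorphism obtained from preservation of colimits, and verifying that strong monoidality of $G$ supplies coherent isomorphisms simultaneously on all the tensor factors occurring in the coend presentation --- in particular on the variable hom-object $\C(D,C)$.
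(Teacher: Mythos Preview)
Your proposal is correct and is precisely the unpacking of what the paper gestures at: the surjectivity claim is literally Lemma~\ref{goodunit} applied as you say, and your coequalizer argument is exactly the ``general fact about cocontinuous functors and strong monoidal change of base'' the remark invokes. There is nothing to add; the care you flag about identifying $c$ with the induced isomorphism is routine and goes through as you indicate.
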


The following is a generalization of \cite[Proposition~6.6]{Kel82:articolo} to our context; the proof is very similar to that and is based on the lemma above.

\begin{cor}\label{filtered-elements}
	Let $ M \colon \C^{op}\to\V$ be a weight for which $ M *-\colon [\C,\V]\to \V$ preserves $\alpha$-small conical limits of representables and let $ M _I:=\V_0(I, M _0-)\colon \C^{op}_0\to\bo{Set}$; then the ordinary category $\tx{El}( M _I)$ is $\alpha$-filtered, and so $M_I$ is $\alpha$-flat.
\end{cor}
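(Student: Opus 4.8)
The plan is to mimic the classical argument showing that the category of elements of a flat presheaf is filtered, using Lemma~\ref{goodunit} in place of the usual factorisation of a point through a colimiting cocone. Recall that an ordinary category is $\alpha$-filtered precisely when it admits cocones over all diagrams of size less than $\alpha$; equivalently, when it is nonempty, every family of fewer than $\alpha$ objects admits an upper bound, and every family of fewer than $\alpha$ parallel arrows admits a ``coequalizing'' map. Each of these conditions will be reformulated as a statement about a conical $\alpha$-small limit in $\V$, and then handled by the hypothesis that $M*-$ preserves such limits of representables, after applying the weak-preservation principle for $\V_0(I,-)$ encoded in Lemma~\ref{goodunit}.

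First I would set up the translation. For a family $(C_i)_{i<\beta}$ of objects of $\C$ (with $\beta<\alpha$), together with a compatible family of elements $y_i\in M_I(C_i)$, an upper bound in $\tx{El}(M_I)$ is a cone over the $C_i$ in $\C$ whose legs carry the $y_i$ back to a single element. The key point is that $M C_i \cong M*\C(-,C_i) = M*Y_0 C_i$, and the product of representables $\prod_i \C(-,C_i)$ computes the conical limit of the $Y C_i$ in $[\C,\V]$; since this is an $\alpha$-small conical limit of representables, $M*-$ preserves it, so $M*(\prod_i Y C_i) \cong \prod_i M C_i$. An element of $\prod_i M C_i$ (given by the $y_i$) thus corresponds to a map $I \to M*(\prod_i Y C_i)$; applying Lemma~\ref{goodunit} to the pair $M$ and $H := \prod_i Y C_i$ yields an object $D\in\C$, a point $z\colon I\to HD = \prod_i\C(D,C_i)$ — i.e. a cone $(D\to C_i)_i$ in $\C$ — and a point $I\to MD$, together with the commuting triangle that says exactly that this cone sends the new element of $MD$ to each $y_i$. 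That is the required upper bound; taking $\beta=0$ gives nonemptiness of $\tx{El}(M_I)$ (here one uses that $\C$ is small and nonempty, or that the empty product, i.e. the terminal weight, is handled the same way).

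Next, for the arrow condition: given parallel arrows $f_j\colon (C,x)\to (D,y)$ in $\tx{El}(M_I)$ for $j<\beta<\alpha$ — so $f_j\colon C\to D$ in $\C$ with $M_I(f_j)(y)=x$ for all $j$ — one must find $g\colon D\to E$ in $\C$ with $g f_j$ independent of $j$ and $M_I(g)(y') = y$ for a suitable $y'\in M_I(E)$. This is again a conical $\alpha$-small limit of representables in $[\C,\V]$, namely the limit of the diagram built from $YD$ with the $\beta$ parallel maps $Yf_j\colon YC\to YD$ (a ``wide equalizer'' shape of size $<\alpha$); preservation by $M*-$ identifies $M$ applied to this limit with the corresponding limit in $\V$, and the element $y$ of $MD$ that equalizes all the $M f_j$ (which holds since $M_I(f_j)(y)=x$ for all $j$, so the $M f_j$ agree on $y$ as points; one must be slightly careful to promote the equality of points to a statement about the $\V$-valued limit, which is where $\alpha$-presentability of $I$ and the fact that the limit is conical get used) gives a point $I\to M*(\lim)$. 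Lemma~\ref{goodunit} then produces $E\in\C$ with a cone leg $g\colon D\to E$ satisfying $gf_j = gf_{j'}$ for all $j,j'$, plus a point $y'\colon I\to ME$ with $M_I(g)(y')=y$; this is the coequalizing arrow in $\tx{El}(M_I)$.

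Finally, once $\tx{El}(M_I)$ is shown $\alpha$-filtered, the last clause — $M_I$ is $\alpha$-flat — is immediate: a $\bo{Set}$-valued presheaf is $\alpha$-flat iff its category of elements is $\alpha$-filtered, which is the classical characterisation recalled in the introduction. The main obstacle I anticipate is not the filtered-ness bookkeeping but the careful handling of the equalizer-type step: turning the pointwise equalities $M_I(f_j)(y)=x$ into the statement that $y$ factors through the relevant $\V$-enriched limit, so that Lemma~\ref{goodunit} can be applied to the correct weight $H$. This requires identifying the $\alpha$-small conical limit of representables whose image under $M*-$ has underlying set exactly the equalizer of the $M_I(f_j)$, and checking the comparison maps line up — essentially the only place where local $\alpha$-presentability of $\V$ (in particular $\alpha$-presentability of $I$) does real work beyond conditions $(a)$ and $(b)$.
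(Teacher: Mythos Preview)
Your approach is correct and follows the same core idea as the paper: apply Lemma~\ref{goodunit} to a weighted colimit $M*H$ where $H$ is an $\alpha$-small conical limit of representables, and read off the required cocone in $\tx{El}(M_I)$ from the resulting factorisation. The only organisational difference is that the paper handles all three of your cases in a single stroke: given an arbitrary $\alpha$-small diagram $H\colon\B\to\tx{El}(M_I)$, it forms the limit $\lim_{b\in\B}\C(\pi Hb,-)$ in $[\C,\V]$, uses preservation by $M*-$ to identify $M*\lim_b\C(\pi Hb,-)\cong\lim_b M(\pi Hb)$, notes that the elements $Hb\in M_I(\pi Hb)$ assemble into a single map $x\colon I\to\lim_b M(\pi Hb)$, and applies Lemma~\ref{goodunit} once to produce the cocone. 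Your decomposition into nonemptiness, upper bounds, and wide coequalizers is the standard reduction and works fine, but the single-diagram argument is shorter and avoids having to treat the arrow case separately.

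Your stated worry about the equalizer step --- promoting the pointwise equalities $M_I(f_j)(y)=x$ to a factorisation of $y$ through the $\V$-valued limit --- is a non-issue and does not require $\alpha$-presentability of $I$. Conical limits in $\V$ are preserved by the representable $\V_0(I,-)$ simply because it is continuous; so $y\colon I\to M(D)$ factors through the wide equalizer of the $M(f_j)$ in $\V$ precisely when the composites $M(f_j)\circ y$ all agree, which they do (each equals $x$). No further hypothesis is needed there.
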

\begin{proof}
	Let $\B$ be an $\alpha$-small category and $H\colon \B\to \tx{El}( M _I)$ be a functor; we need to prove that $H$ has a cocone in $\tx{El}( M _I)$. Denote by $\pi\colon \tx{El}( M _I)\to \C_0$ the projection; then $H$ induces a cone $(I\stackrel{Hb}{\longrightarrow} M (\pi Hb))_{b\in\B}$, which is the same as an arrow 
	$$x\colon I\longrightarrow\lim_{b\in\B} M (\pi Hb).$$ 
	Now, since $ M *-$ preserves $\alpha$-small limits of representables, we obtain the following isomorphisms
	$$ \lim_{b\in\B} M (\pi Hb)\cong \lim_{b\in\B} ( M *\C(\pi Hb,-))\cong  M *(\lim_{b\in\B} \C(\pi Hb,-)). $$
	Therefore, by the previous Lemma, there exist $C\in \C$, $y\colon I\to M  C$, and $z\colon I\to \lim \C(\pi H-,C)$ which map down to $x$ when taking the colimit. Note now that to give $z$ is the same as to give a cone $\Delta I\to \C(\pi H-,C)$, which then corresponds to a cocone $(\eta_b\colon \pi Hb\to C)_{b\in\B}$ in $\C$. Finally the fact that $y\otimes z$ gets mapped down to $x$ means that $\eta$ is actually a cocone $(\eta_b\colon Hb\to (C,y))_{b\in\B}$ in $\tx{El}( M _I)$.
\end{proof}

This shows that in the current setting condition (I) from the Introduction holds. Next we turn to (II) --- such an $ M $ is actually an $\alpha$-filtered colimit of representables --- but for that we need some work. 

\begin{Def}\label{genequalizer}
	Let $h\colon Y\to X$ be a morphism in $\V$. Denote by $\mathbbm{2}_Y$ the $\V$-category with two objects $*_1$ and $*_2$, and with hom-objects $\mathbbm{2}_Y(*_i,*_i)=I$, $\mathbbm{2}_Y(*_2,*_1)=0$, and $\mathbbm{2}_Y(*_1,*_2)=Y$. Let $ N _h\colon \mathbbm{2}_Y\to\V$ be the weight for which $ N _h(*_1)=I$, $ N _h(*_2)=X$ and determined on the hom-objects by $( N _h)_{*_1,*_2}=h$.\\
	When $h$ is the co-diagonal $\nabla\colon X+X\to X$ we write $ N _X$ for $ N _\nabla$.
\end{Def}

Note that, when $X$ and $Y$ are $\alpha$-presentable, the weight $ N _h$ is $\alpha$-small.

\begin{es}
	When $\V=\bo{Cat}$ and $h\colon \mathbbm{2}_2\to \mathbbm{2}$ is the projection from the free category on a parallel pair to $\mathbbm{2}$, then limits weighted by $N_h$ correspond to equifiers (see \cite{lack20102}). 
\end{es}

Consider the case of $ N _X$. To give a diagram $H\colon \mathbbm{2}_{X+X}\to\K$ is the same as to give two objects $D_1,D_2$ and arrows $g_1,g_2\colon X\to \K(D_1,D_2)$. In that case, if $\K$ has enough limits, $\{ N _X,H\}$ can be seen as the equalizer:
\begin{center}
	\begin{tikzpicture}[baseline=(current  bounding  box.south), scale=2]
		
		\node (a) at (0,0) {$\{ N _{X},H\}$};
		\node (b) at (1.1,0) {$D_1$};
		\node (c) at (2.2,0) {$X\pitchfork D_2$};
		
		\path[font=\scriptsize]
		
		(a) edge [>->] node [above] {} (b)
		([yshift=1.5pt]b.east) edge [->] node [above] {$g_1^t$} ([yshift=1.5pt]c.west)
		([yshift=-1.5pt]b.east) edge [->] node [below] {$g_2^t$} ([yshift=-1.5pt]c.west);
		
	\end{tikzpicture}	
\end{center} 
where $g_1^t$ and $g_2^t$ are the transposes of $g_1$ and $g_2$. When $\K=\V$, to give an arrow $I\to \{ N _X,H\}$ is then equivalent to giving $x\colon I\to D_1$ for which the diagram
\begin{center}
	\begin{tikzpicture}[baseline=(current  bounding  box.south), scale=2]
		
		\node (a) at (-0.3,0) {$X$};
		\node (b) at (1,0.4) {$[D_1,D_2]\otimes D_1$};
		\node (c) at (1,-0.4) {$[D_1,D_2]\otimes D_1$};
		\node (d) at (2.3,0) {$D_2$};
		
		\path[font=\scriptsize]
		
		(a) edge [->] node [above] {$g_1\otimes x\ \ $} (b)
		(a) edge [->] node [below] {$g_2\otimes x\ \ $} (c)
		(c) edge [->] node [below] {ev} (d)
		(b) edge [->] node [above] {ev} (d);
		
	\end{tikzpicture}	
\end{center} 
commutes. We are now ready to prove the following:

\begin{prop}\label{flatimpliesfiltered}
	Let $ M \colon \C^{op}\to\V$ be a weight for which $ M *-\colon [\C,\V]\to \V$ preserves $\alpha$-small limits of representables. Then
	$$  M \cong\tx{colim} \left(\tx{El}( M _I)_{\V} \stackrel{\pi_\V}{\longrightarrow} \C \stackrel{Y}{\longrightarrow} [\C^{op},\V]\right) $$
	and so $ M $ is an $\alpha$-filtered colimit of representables.
\end{prop}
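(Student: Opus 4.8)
The plan is to prove that the canonical comparison map onto $M$ from the displayed colimit is invertible. Write
\[
P:=\colim\big(\tx{El}(M_I)_{\V}\xrightarrow{\pi_\V}\C\xrightarrow{Y}[\C^{op},\V]\big),
\]
which exists and is computed pointwise since $[\C^{op},\V]$ is cocomplete with powers. Each object $(D,y)$ of $\tx{El}(M_I)$ determines, through $y\colon I\to MD$ and the enriched Yoneda lemma, a $\V$-natural transformation $\C(-,D)\to M$; these form a cocone under $Y\pi_\V$ and so induce a map $\phi\colon P\to M$ (it is the counit $\epsilon_M$ of the adjunction $\mt F\dashv\mt U$ from the Introduction). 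As $\tx{El}(M_I)$ is $\alpha$-filtered by Corollary~\ref{filtered-elements}, $P$ is an $\alpha$-filtered colimit of representables, so it suffices to prove $\phi$ invertible; being a map of presheaves, this may be tested at each $C\in\C$, and since $\V$ is locally $\alpha$-presentable a morphism of $\V$ is invertible as soon as $\V_0(X,-)$ sends it to a bijection for every $\alpha$-presentable $X$. Thus the goal becomes: for all $C$ and all $\alpha$-presentable $X$, the map $\V_0(X,\phi_C)\colon\V_0(X,PC)\to\V_0(X,MC)$ is a bijection, where $PC=\colim_{(D,y)}\C(C,D)$ and hence, $X$ being $\alpha$-presentable and the colimit $\alpha$-filtered, $\V_0(X,PC)\cong\colim_{(D,y)}\V_0(X,\C(C,D))$.

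For surjectivity, fix $C$ and an $\alpha$-presentable $X$. The power $X\pitchfork\C(C,-)$ is an $\alpha$-small limit of a representable in $[\C,\V]$, so by hypothesis $M*(X\pitchfork\C(C,-))\cong X\pitchfork(M*\C(C,-))\cong[X,MC]$. An element of $\V_0(X,MC)\cong\V_0(I,[X,MC])$ is therefore a map $x\colon I\to M*(X\pitchfork\C(C,-))$, and by Lemma~\ref{goodunit} it factors through some colimit leg $\rho_D$ as $y\otimes z$, with $y\colon I\to MD$ (so $(D,y)\in\tx{El}(M_I)$) and $z\colon I\to(X\pitchfork\C(C,-))(D)=[X,\C(C,D)]$. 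Tracing through the identifications, $x$ equals $\V_0(X,\phi_C)$ applied to $\iota_{(D,y)}\circ z^t\in\V_0(X,PC)$, where $z^t\colon X\to\C(C,D)$ is the transpose of $z$ and $\iota_{(D,y)}$ is the colimit coprojection; so $x$ lies in the image.

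For injectivity, suppose $a,b\in\V_0(X,PC)$ satisfy $\phi_Ca=\phi_Cb$. Using the description of $\V_0(X,PC)$ as an $\alpha$-filtered colimit we may assume $a,b$ are represented at a single stage $(D,y)$ by maps $a',b'\colon X\to\C(C,D)$; the assumption then says that the two maps $X\to MC$ obtained by acting on $y$ with $a'$ and with $b'$ agree. View $a',b'$ as $g_1,g_2\colon X\to[\C,\V](\C(D,-),\C(C,-))$ and form the weighted limit $\{N_X,H\}$ of the associated diagram $H\colon\mathbbm{2}_{X+X}\to[\C,\V]$. Since $X$, hence $X+X$, is $\alpha$-presentable and $\C(D,-),\C(C,-)$ are representable, $\{N_X,H\}$ is an $\alpha$-small limit of representables, so $M*-$ preserves it; by the description recalled before the statement, $M*\{N_X,H\}$ is the equalizer inside $MD$ of the two maps $MD\rightrightarrows[X,MC]$ coming from $a'$ and $b'$. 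The assumption on $a,b$ is exactly that $y\colon I\to MD$ equalizes these, so it yields a point $I\to M*\{N_X,H\}$; by Lemma~\ref{goodunit} again this point factors through a leg $\rho_E$ as $w\otimes\zeta$ with $w\colon I\to ME$ and $\zeta\colon I\to\{N_X,H\}(E)$. As $\{N_X,H\}$ is a subobject of $\C(D,-)$, $\zeta$ names a morphism $k\colon D\to E$ of $\C_0$ lying in the pointwise equalizer, i.e.\ with $k\cdot a'=k\cdot b'\colon X\to\C(C,E)$; and pushing the factorisation forward along $M*(\{N_X,H\}\hookrightarrow\C(D,-))$ shows $M(k)\circ w=y$, so $k$ is a morphism $(D,y)\to(E,w)$ in $\tx{El}(M_I)$. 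Hence $a$ and $b$, being $\iota_{(D,y)}(a')$ and $\iota_{(D,y)}(b')$, already coincide at the stage $(E,w)$, so $a=b$. Therefore $\V_0(X,\phi_C)$ is a bijection for all $C$ and all $\alpha$-presentable $X$; thus $\phi$ is invertible, $M\cong P$, and since $\tx{El}(M_I)$ is $\alpha$-filtered, $M$ is an $\alpha$-filtered colimit of representables.

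The main obstacle is the injectivity step. Lemma~\ref{goodunit} (which is where conditions (a) and (b) on $\V$ are used) only yields information about points $I\to(-)$, so to deduce a genuine equality in $\V$ one needs a device for turning such set-level data into equalities of morphisms; this is precisely the role of the generalized equalizers $N_X$, whose explicit description of arrows $I\to\{N_X,H\}$ is exactly what lets the hypothesis that $M*-$ preserves $\alpha$-small limits of representables be brought to bear. Everything else is a matter of unwinding the identifications coming from the Yoneda lemma and the defining coend of $M*-$.
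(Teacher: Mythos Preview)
Your proof is correct and follows essentially the same strategy as the paper's: reduce to showing that $\V_0(X,\phi_C)$ is a bijection for suitable $X$, establish surjectivity by applying Lemma~\ref{goodunit} to the power $X\pitchfork\C(C,-)$, and establish injectivity by applying Lemma~\ref{goodunit} to the $\alpha$-small weighted limit $\{N_X,H\}$ of representables. The only cosmetic difference is that the paper tests invertibility against a fixed strong generator $\G\subseteq\V_\alpha$, while you test against all $\alpha$-presentable $X$; both suffice since the $\alpha$-presentable objects form a strong generator of $\V$.
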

\begin{proof}
	The category $\tx{El}( M _I)$ is $\alpha$-filtered by Corollary~\ref{filtered-elements}; thus we only need to prove that the canonical map $\colim(Y\circ \pi_\V)\to  M $ is invertible. Since colimits are computed pointwise it's enough to show that the canonical map 
	$c\colon \colim \C(C,\pi_\V-)\to M (C)$ is invertible for any $C\in\C$. 
	
	Consider now a strong generator $\G$ of $\V$ made of $\alpha$-presentable objects; then the morphism $c$ above is invertible if and only if $\V_0(X,c)$ is invertible for any $X\in\G$. Since every object of $\G$ is $\alpha$-presentable and $\tx{El}( M _I)$ is $\alpha$-filtered, the functor $\V_0(X,-)$ preserves the colimit $\colim \C(C,\pi_\V-)$. Therefore it suffices to show that the canonical map 
	$$ c_X\colon \tx{colim} \left(\tx{El}( M _I) \stackrel{\pi}{\xrightarrow{\hspace*{0.4cm}}} \C_0^{op} \stackrel{\V_0(X,\C(C,-)_0)}{\xrightarrow{\hspace*{1.9cm}}} \bo{Set}\right)\longrightarrow \V_0(X, M (C)) $$
	is an isomorphism for any $X\in\G$. Below we are going to consider the elements of the colimit on the left as equivalence classes defined in the standard way.\\ 
	Consider $f\colon X\to M  C$; then, since $X$-powers are $\alpha$-small limits, we have 
	$$ \V_0(X, M (C))\cong \V_0(I,[X, M (C)])\cong\V_0(I, M *(X\pitchfork \C(C,-)))\ ;$$
	by Lemma~\ref{goodunit} we find $D\in\C$, $x\colon I\to M  D$, and $g\colon X\to\C(C,D)$ such that $f$ coincides with
	$$ X\stackrel{g\otimes x}{\xrightarrow{\hspace*{0.9cm}}}\C(C,D)\otimes M  D\stackrel{ M \otimes id }{\xrightarrow{\hspace*{0.9cm}}}[ M  D, M  C]\otimes  M  D\stackrel{ev}{\longrightarrow} M  C. $$ 
	In other words $c_X[g,x]=f$, so that $c_X$ is surjective.
	To prove the injectivity of $c_X$ we need the $\alpha$-small weight $ N _X$ introduced in Definition \ref{genequalizer}. Consider $g_i\colon X\to\C(C,D_i)$ and $x_i\colon I\to M  D_i$, for $i=1,2$, such that $c_X[g_1,x_1]=c_X[g_2,x_2]$; we need to prove that $[g_1,x_1]=[g_1,x_1]$. First, since $\tx{El}( M _I)$ is filtered we can assume that $D=D_1=D_2$ and $x=x_1=x_2$. Now $g_1,g_2\colon X\to\C(C,D)$ determine a diagram $H\colon \mathbbm{2}_{X+X}\to\C$, and $x$ corresponds to an arrow $\bar{x}\colon I\to \{ N _X, M  H\}$ (see comments above). Since $ M *-$ preserves $\alpha$-small limits of representables we obtain
	$$ \{ N _X, M  H^{op}\}\cong \{ N _X, M * YH\}\cong  M *\{ N _X,YH\}\ ;$$
	then, using Lemma~\ref{goodunit} again, we find that $\bar{x}$ factors through $y\colon I\to M  E$ and $h\in\C_0(D,E)$, for some $E\in\C$. This means that $ M (h)y=x$ and $ \C(C,h)\circ g_1= \C(C,h)\circ g_2$. Thus $[g_1,x_1]= [g_1,x_1]$ as desired and $c_X$ is an isomorphism.
\end{proof}

\begin{obs}
	When $\V_0(I,-)$ is cocontinuous and strong monoidal, as in most of the examples, the proof becomes simpler. Consider any $\alpha$-presentable $X$ in $\V_0$; then 
	\begin{equation*}
		\begin{split}
			\V_0(X, M (C))&\cong \V_0(I,[X, M *\C(C,-)])\\
			&\cong \V_0(I, M *(X\pitchfork\C(C,-)))\\
			&\cong  M _I*(X\pitchfork\C(C,-)_I)\\
			&\cong  M _I*\V_0(X,\C(C,-)_0)\\
			&\cong \tx{colim} \ \left( \tx{El}( M _I)\stackrel{\pi}{\xrightarrow{\hspace*{0.6cm}}} \C_0\stackrel{\C(C,-)_0}{\xrightarrow{\hspace*{1cm}}} \V_0\stackrel{\V_0(X,-)}{\xrightarrow{\hspace*{1cm}}}\bo{Set}\right)\\
			&\cong \V_0\left(X,\tx{colim} \left(\tx{El}( M _I)\stackrel{ M }{\xrightarrow{\hspace*{0.6cm}}} \C_0^{op}\stackrel{\C(C,-)_0}{\xrightarrow{\hspace*{1cm}}} \V_0\right)\right).\\
		\end{split}
	\end{equation*}
	Since the $\alpha$-presentable objects form a strongly generating family the result follows.
\end{obs}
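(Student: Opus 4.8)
\emph{The plan is to verify the displayed chain of isomorphisms and read the conclusion off its ends.} As in Proposition~\ref{flatimpliesfiltered}, since colimits in $[\C^{op},\V]$ are pointwise it suffices to show that for each $C\in\C$ the canonical comparison
$$ c\colon \colim\left(\tx{El}(M_I)\xrightarrow{\ \pi\ }\C_0\xrightarrow{\C(C,-)_0}\V_0\right)\longrightarrow M(C) $$
is invertible. Because $\V$ is locally $\alpha$-presentable, the $\alpha$-presentable objects form a strong generator of $\V_0$, so it is enough to prove that $\V_0(X,c)$ is a bijection for every $\alpha$-presentable $X$; the idea is to realise $\V_0(X,c)$ as precisely the composite of the six isomorphisms written above.

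First I would rewrite the codomain using the density (coYoneda) isomorphism $M(C)\cong M*\C(C,-)$ together with the closed-category adjunction $\V_0(X,Z)\cong\V_0(I,[X,Z])$; this gives the first isomorphism. For the second, note that since $X$ is $\alpha$-presentable the power $X\pitchfork\C(C,-)$ is an $\alpha$-small weighted limit of the single representable $\C(C,-)$, so the hypothesis that $M*-$ preserves $\alpha$-small limits of representables lets the internal hom $[X,-]=X\pitchfork-$ commute past $M*-$. The third isomorphism is the identity $\V_0(I,M*H)\cong M_I*H_I$ recorded just before Corollary~\ref{filtered-elements}, which holds here exactly because $\V_0(I,-)$ is assumed cocontinuous and strong monoidal; I apply it with $H=X\pitchfork\C(C,-)$. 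The fourth isomorphism computes $(X\pitchfork\C(C,-))_I\cong\V_0(X,\C(C,-)_0)$ by closedness, and the fifth is the standard expression of a $\bo{Set}$-weighted colimit $M_I*F$ as the ordinary colimit of $F\circ\pi$ over $\tx{El}(M_I)$. Finally, since $\tx{El}(M_I)$ is $\alpha$-filtered by Corollary~\ref{filtered-elements} and $X$ is $\alpha$-presentable, $\V_0(X,-)$ preserves this colimit, yielding the sixth isomorphism and identifying the full composite with $\V_0(X,-)$ applied to $\colim(\C(C,\pi-)_0)$.

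Concatenating these shows $\V_0(X,M(C))\cong\V_0\big(X,\colim(\C(C,\pi-)_0)\big)$ for all $\alpha$-presentable $X$; since such objects strongly generate $\V_0$, the comparison $c$ is invertible for each $C$, so $M\cong\colim(Y\circ\pi_\V)$ with $\tx{El}(M_I)$ being $\alpha$-filtered, which is the desired conclusion. \emph{The hard part} is not any individual isomorphism but the coherence bookkeeping: I must check that each step is natural in $X$ and compatible with the colimiting cocones, so that the composite genuinely equals $\V_0(X,c)$ for the canonical $c$ --- only then does invertibility of the composite force invertibility of $c$. A smaller point needing care is the second isomorphism, where one has to confirm that powering a representable by an $\alpha$-presentable object really counts as an ``$\alpha$-small limit of representables'', so that the hypothesis on $M$ applies.
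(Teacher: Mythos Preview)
Your proposal is correct and follows exactly the approach of the paper: the Remark itself \emph{is} the proof, consisting of the displayed chain of isomorphisms together with the strong-generator argument, and you have accurately justified each step (coYoneda plus closedness, preservation of the $\alpha$-small power $X\pitchfork\C(C,-)$ by $M*-$, the identity $\V_0(I,M*H)\cong M_I*H_I$ from the remark preceding Corollary~\ref{filtered-elements}, the category-of-elements description of a $\bo{Set}$-weighted colimit, and $\alpha$-presentability of $X$ against the $\alpha$-filtered $\tx{El}(M_I)$). Your two cautionary notes about naturality and about powers of a representable being $\alpha$-small limits of representables are well-placed but routine.
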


Recall that we assume $\V$ to satisfy the conditions $(a)$ and $(b)$ from the beginning of this section; in this context we can characterize the $\alpha$-flat $\V$-functors as follows:

\begin{prop}\label{flat=filtered}
	Let $ M \colon \C^{op}\to\V$ be a weight; the following are equivalent:\begin{enumerate}\setlength\itemsep{0.25em}
		\item $ M $ is $\alpha$-flat;
		\item $ M *-\colon [\C,\V]\to \V$ preserves $\alpha$-small limits of representables;
		\item $ M $ is a (conical) $\alpha$-filtered colimit of representables.
	\end{enumerate}
\end{prop}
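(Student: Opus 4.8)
The plan is to prove Proposition~\ref{flat=filtered} by establishing the cycle $(1)\Rightarrow(2)\Rightarrow(3)\Rightarrow(1)$, reusing machinery already in place. The implication $(1)\Rightarrow(2)$ is immediate: if $M$ is $\alpha$-flat, then by definition $M*-\cong\tx{Lan}_Y M$ preserves \emph{all} $\alpha$-small limits, and in particular the $\alpha$-small conical limits of representables. (Note that $\alpha$-small limits of representables in $[\C,\V]$ are genuinely $\alpha$-small weighted limits, since $\C$ is small and the hom-objects and the weights involved are $\alpha$-presentable; so this is just a restriction of the hypothesis.) The implication $(2)\Rightarrow(3)$ is precisely the content of Proposition~\ref{flatimpliesfiltered}, which under hypothesis (2) exhibits
$$M\cong\tx{colim}\left(\tx{El}(M_I)_\V\stackrel{\pi_\V}{\longrightarrow}\C\stackrel{Y}{\longrightarrow}[\C^{op},\V]\right)$$
with $\tx{El}(M_I)$ $\alpha$-filtered (the latter by Corollary~\ref{filtered-elements}); so there is essentially nothing to do here beyond citing it.

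For $(3)\Rightarrow(1)$, suppose $M$ is a conical $\alpha$-filtered colimit of representables. Each representable $\C(-,C)$ is $\alpha$-flat since $\C(-,C)*-\cong\tx{ev}_C$ preserves all limits. By Proposition~1 (the one labelled \cite{Kel82:articolo} listing properties of $\alpha$-flat functors), the $\alpha$-flat weights are closed in $[\C^{op},\V]$ under $\alpha$-flat colimits, and conical $\alpha$-filtered colimits are themselves $\alpha$-flat; hence an $\alpha$-filtered colimit of $\alpha$-flat weights is $\alpha$-flat, and in particular $M$ is $\alpha$-flat. This closes the cycle.

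I would organize the proof exactly as: ``$(1)\Rightarrow(2)$ is trivial from the definition; $(2)\Rightarrow(3)$ is Proposition~\ref{flatimpliesfiltered} together with Corollary~\ref{filtered-elements}; $(3)\Rightarrow(1)$ follows since representables are $\alpha$-flat and the $\alpha$-flat weights are closed under $\alpha$-filtered colimits by Proposition~[ref to the Kelly properties proposition].'' The only subtlety worth a sentence is making sure that in $(1)\Rightarrow(2)$ the ``$\alpha$-small conical limits of representables'' in $[\C,\V]$ that appear in the hypothesis of the cited results are indeed among the $\alpha$-small weighted limits preserved by an $\alpha$-flat functor --- this is where the standing assumption that $\C$ is small and that $\C(C,D)$, and the weights $\Delta I$ over $\alpha$-small index categories, are $\alpha$-presentable gets used. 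The genuine mathematical work has all been done in the preceding results (Lemma~\ref{goodunit}, Corollary~\ref{filtered-elements}, Proposition~\ref{flatimpliesfiltered}), so this Proposition is really just the bookkeeping step assembling them into the desired characterization; there is no real obstacle, only the need to invoke each piece in the right direction.
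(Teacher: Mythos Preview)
Your proposal is correct and follows essentially the same cycle $(1)\Rightarrow(2)\Rightarrow(3)\Rightarrow(1)$ as the paper, citing the same earlier results. One small quibble: your parenthetical about $(1)\Rightarrow(2)$ requiring $\C(C,D)$ to be $\alpha$-presentable is misplaced --- an ``$\alpha$-small limit of representables'' is a limit $\{N,H\}$ where the \emph{weight} $N$ is $\alpha$-small and $H$ lands in the representables; there is no size constraint on the values $\C(C,-)$ of $H$, so the implication really is immediate from the definition with nothing further to check.
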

\begin{proof}
	$(1)\Rightarrow (2)$ is trivial, $(2)\Rightarrow (3)$ is a direct consequence of the previous Proposition, while $(3)\Rightarrow (1)$ follows from the fact that representable functors are $\alpha$-flat and these are closed under $\alpha$-filtered colimits in $[\C,\V]$.
\end{proof}

And as a consequence:

\begin{teo}\label{flat-preservation}
	A $\V$-category $\A$ has $\alpha$-flat colimits if and only if it has $\alpha$-filtered colimits. A $\V$-functor from such an $\A$ preserves $\alpha$-flat colimits if and only if it preserves $\alpha$-filtered colimits.
\end{teo}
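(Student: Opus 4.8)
The plan is to deduce this theorem directly from Proposition~\ref{flat=filtered}, which identifies the $\alpha$-flat weights with the conical $\alpha$-filtered colimits of representables. The key point is that weighted colimits can be built up from conical colimits and copowers, and that both flatness and $\alpha$-filteredness of a weight are detected through its behaviour against these building blocks. So the strategy is: first translate the statement ``$\A$ has $\alpha$-flat colimits'' into ``$\A$ has $M*H$ for every $\alpha$-flat weight $M\colon\C^{op}\to\V$ and every $H\colon\C\to\A$'', and then use that by Proposition~\ref{flat=filtered} such an $M$ is a conical $\alpha$-filtered colimit of representables.

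First I would recall the general fact (from \cite{Kel82:libro}) that if $M\cong\colim_i Y C_i$ is a colimit of representables in $[\C^{op},\V]$, indexed by a category $\D$, then for any $H\colon\C\to\A$ one has $M*H\cong\colim_i (YC_i * H)\cong\colim_i HC_i$ whenever the latter conical colimit exists, since colimits in the weight variable are taken to colimits and $YC_i * H\cong HC_i$. Applying this with $\D=\tx{El}(M_I)$, which is $\alpha$-filtered by Proposition~\ref{flat=filtered}, we see that $M*H$ exists in $\A$ and is computed as an $\alpha$-filtered conical colimit of the diagram $\tx{El}(M_I)_{\V}\xrightarrow{\pi_\V}\C\xrightarrow{H}\A$; hence if $\A$ has $\alpha$-filtered colimits it has all $\alpha$-flat colimits. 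Conversely, conical $\alpha$-filtered colimits are themselves $\alpha$-flat colimits (Proposition before Lemma~\ref{flat-restriction}, point (2)), so having $\alpha$-flat colimits trivially entails having $\alpha$-filtered ones. That settles the first sentence.

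For the second sentence, let $F\colon\A\to\B$ be a $\V$-functor, with $\A$ having $\alpha$-flat (equivalently $\alpha$-filtered) colimits. If $F$ preserves $\alpha$-flat colimits it preserves conical $\alpha$-filtered ones since those are instances of $\alpha$-flat colimits. For the converse, suppose $F$ preserves $\alpha$-filtered colimits and let $M\colon\C^{op}\to\V$ be $\alpha$-flat and $H\colon\C\to\A$. By the computation above, $M*H\cong\colim(\tx{El}(M_I)_\V\xrightarrow{\pi_\V}\C\xrightarrow{H}\A)$, an $\alpha$-filtered conical colimit; applying $F$ and using that $F$ preserves this colimit, together with the same formula now in $\B$ (valid because $M$ is $\alpha$-flat so $M*(FH)$ is likewise computed as the $\alpha$-filtered colimit of $FH\pi_\V$), we get $F(M*H)\cong M*(FH)$, i.e.\ $F$ preserves the $\alpha$-flat colimit $M*H$.

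The one point requiring a little care — and the main potential obstacle — is the interchange of the weighted colimit with the colimit presenting the weight, i.e.\ justifying $M*H\cong\colim_{\tx{El}(M_I)}(H\circ\pi_\V)$. This is where one must invoke that colimits of $\V$-functors $[\C^{op},\V]\to\A$ in the weight argument are computed as in $[\C^{op},\V]$ (cocontinuity of $-*H$) and that $YC*H\cong HC$; both are standard (\cite[\S3]{Kel82:libro}), but one should be slightly attentive that the conical colimit indexing category $\tx{El}(M_I)$ is exactly the one furnished by Proposition~\ref{flat=filtered}, so that the resulting diagram in $\A$ is genuinely $\alpha$-filtered and the argument goes through uniformly for existence and for preservation. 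No further assumptions on $\A$ beyond those in the statement are needed, since the conical colimits appearing are the honest ones (transposes of conical weighted colimits), and representables in the weight contribute evaluation, not powers.
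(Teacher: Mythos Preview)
Your proof is correct and follows essentially the same route as the paper: use Proposition~\ref{flat=filtered} to write an $\alpha$-flat weight $M$ as a conical $\alpha$-filtered colimit $\colim(YK)$ of representables, then apply the chain $M*H\cong(\colim YK)*H\cong\colim(YK*H)\cong\colim(HK)$, with the ``either side existing if the other does'' clause handling both existence and preservation at once. The paper's version is slightly more compact in the preservation step (it simply notes that $F$ preserves $M*H$ iff it preserves $\colim(HK)$, rather than re-deriving the formula in $\B$), but the argument is the same.
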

\begin{proof}
	One direction is trivial. Conversely assume that $\A$ has all $\alpha$-filtered colimits and let $ M \colon \C^{op}\to\V$ be an $\alpha$-flat weight. By Proposition~\ref{flat=filtered} we can write $ M \cong\colim(YK)$ where $Y\colon \C\to[\C^{op},\V]$ is Yoneda and $K\colon \D_V\to\C$ in indexed on an $\alpha$-filtered category $\D$. Therefore, given any $H\colon \C\to\A$, we obtain a chain of isomorphisms (either side existing if the other does):
	\begin{equation*}
		\begin{split}
			 M *H&\cong (\tx{colim}YK)* H\\
			&\cong \tx{colim} (YK*H)\\
			&\cong \tx{colim} (HK).\\
		\end{split}
	\end{equation*}
	Thus the $ M *H$ exists since $\A$ has $\alpha$-filtered colimits. For the same reason a functor $F\colon \A\to\K$ preserves $ M *H$ if and only if it preserves $\tx{colim}(HK)$.
\end{proof}

\begin{teo}\label{2-cataccessibility}
	A $\V$-category $\A$ is $\alpha$-accessible if and only if it is conically $\alpha$-accessible.
\end{teo}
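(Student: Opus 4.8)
The plan is to read this off almost immediately from Theorem~\ref{flat-preservation} and Proposition~\ref{flat=filtered}; the only work is bookkeeping. The key preliminary observation is that for any $\V$-category $\A$ admitting conical $\alpha$-filtered colimits (equivalently, by Theorem~\ref{flat-preservation}, $\alpha$-flat colimits) one has $\A_\alpha=\A_\alpha^c$. Indeed $\A(A,-)\colon\A\to\V$ is a $\V$-functor out of such an $\A$, so by the second assertion of Theorem~\ref{flat-preservation} it preserves $\alpha$-flat colimits if and only if it preserves conical $\alpha$-filtered ones; that is, $A$ is $\alpha$-presentable exactly when it is conically $\alpha$-presentable.

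For the direction ``conically $\alpha$-accessible $\Rightarrow$ $\alpha$-accessible'', I would argue as follows. Suppose $\A$ has conical $\alpha$-filtered colimits and there is a small $\C\subseteq\A_\alpha^c$ with every object of $\A$ an $\alpha$-filtered colimit of objects of $\C$. By Theorem~\ref{flat-preservation}, $\A$ has $\alpha$-flat colimits, and by the observation above $\C\subseteq\A_\alpha^c=\A_\alpha$. Since conical $\alpha$-filtered colimits are in particular $\alpha$-flat, every object of $\A$ is an $\alpha$-flat colimit of objects of $\C$, so $\A$ is $\alpha$-accessible (witnessed by the same $\C$).

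For the converse, suppose $\A$ has $\alpha$-flat colimits and a small $\C\subseteq\A_\alpha$ such that each $A\in\A$ can be written $A\cong M*H$ with $H\colon\D\to\A$ taking values in $\C$ and $M\colon\D^{op}\to\V$ an $\alpha$-flat weight. By Theorem~\ref{flat-preservation} $\A$ has conical $\alpha$-filtered colimits, and by the preliminary observation $\C\subseteq\A_\alpha=\A_\alpha^c$. By Proposition~\ref{flat=filtered}, $M$ is a conical $\alpha$-filtered colimit of representables, say $M\cong\colim(YK)$ for a $\V$-functor $K\colon\E_\V\to\D$ with $\E$ an ordinary $\alpha$-filtered category and $Y$ the Yoneda embedding of $\D$. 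Then, exactly as in the proof of Theorem~\ref{flat-preservation},
$$A\cong M*H\cong(\colim YK)*H\cong\colim(YK*H)\cong\colim(HK),$$
which exhibits $A$ as the conical $\alpha$-filtered colimit of the objects $H(Ke)$, $e\in\E$, all of which lie in $\C$. Hence every object of $\A$ is an $\alpha$-filtered colimit of objects of $\C\subseteq\A_\alpha^c$, and $\A$ is conically $\alpha$-accessible.

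The only point that needs a moment's care is the reindexing in the last paragraph: one must check that passing from the $\alpha$-flat presentation $M*H$ to the conical $\alpha$-filtered colimit $\colim(HK)$ keeps the diagram inside the chosen small subcategory $\C$. This is immediate since $HK$ factors through $H$, hence through $\C$, so the same $\C$ serves as witness. I do not expect a genuine obstacle anywhere: the substantive content has already been established in Theorem~\ref{flat-preservation} and Proposition~\ref{flat=filtered}, and what remains is purely formal.
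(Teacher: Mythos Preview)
Your proof is correct and follows essentially the same approach as the paper: both first deduce $\A_\alpha=\A_\alpha^c$ from Theorem~\ref{flat-preservation}, then reindex an $\alpha$-flat colimit $M*H$ as a conical $\alpha$-filtered colimit $\colim(HK)$ via Proposition~\ref{flat=filtered}, noting that $HK$ still lands in the same small subcategory of presentables. The paper's proof is simply a terser version of yours, referring back to the argument of Theorem~\ref{flat-preservation} rather than spelling it out again.
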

\begin{proof}
	By the theorem above an object $A$ of $\A$ is $\alpha$-presentable if and only if is conically $\alpha$-presentable, so that $\A_{\alpha}=\A_\alpha^c$. Arguing as above, for any $\alpha$-flat $ M \colon \C^{op}\to\V$ and a diagram $H\colon \C\to\A_{\alpha}\subseteq\A$, the colimit $ M *H$ can be replaced by an $\alpha$-filtered one $\tx{colim} (HK)$, which still lands in $\A_\alpha$. Thus an object is an $\alpha$-flat colimit of $\alpha$-presentables if and only if it is an $\alpha$-filtered colimit of (conically) $\alpha$-presentables. The result then follows.
\end{proof}

A notion of $\V$-sketch was introduced in \cite[Section~7]{BQR98}; there it was proven that being accessible is equivalent to being the $\V$-category of models of a $\V$-sketch. Putting this together with our result above we obtain: 

\begin{teo}
	Let $\A$ be a $\V$-category; the following are equivalent:\begin{enumerate}
		\item $\A$ is accessible;
		\item $\A$ is conically accessible;
		\item $\A$ is equivalent to the $\V$-category of models of a $\V$-sketch.
	\end{enumerate}
\end{teo}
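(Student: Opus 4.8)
The plan is to deduce the theorem by assembling two facts already available: Theorem~\ref{2-cataccessibility} and the sketch characterization of accessibility from \cite[Section~7]{BQR98}. No genuinely new argument is needed; the only work is to keep careful track of the cardinal parameter.

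First I would establish $(1)\Leftrightarrow(2)$. By definition, $\A$ is accessible precisely when it is $\alpha$-accessible for \emph{some} regular cardinal $\alpha$, and likewise $\A$ is conically accessible precisely when it is conically $\alpha$-accessible for some regular cardinal $\alpha$. Since $\V$ is assumed to satisfy conditions $(a)$ and $(b)$, Theorem~\ref{2-cataccessibility} applies and tells us that for each fixed regular cardinal $\alpha$ the $\V$-category $\A$ is $\alpha$-accessible if and only if it is conically $\alpha$-accessible. Quantifying this equivalence existentially over all regular cardinals $\alpha$ immediately yields that $\A$ is accessible if and only if it is conically accessible. I would emphasise that here the witnessing cardinal is literally the same on the two sides: the well-known phenomenon in the unenriched setting, whereby an $\alpha$-accessible category may only be conically $\gamma$-accessible for some larger $\gamma$, does not occur, precisely because Theorem~\ref{flat-preservation} makes $\alpha$-flat and $\alpha$-filtered colimits coincide for these $\V$.

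Next I would handle $(1)\Leftrightarrow(3)$. A notion of $\V$-sketch is introduced in \cite[Section~7]{BQR98}, where it is proved that a $\V$-category is accessible (in the flat-weighted sense, which is exactly the sense used throughout this paper) if and only if it is equivalent to the $\V$-category of models of a $\V$-sketch. Since our notion of accessibility coincides with the one in \cite{BQR98}, this gives the equivalence of $(1)$ and $(3)$ directly, with no further hypotheses on $\V$ and without using conditions $(a)$ and $(b)$. Transitivity of the two equivalences then completes the proof.

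The only point requiring any care — and hence the closest thing to an obstacle — is the bookkeeping of the cardinal parameter in $(1)\Leftrightarrow(2)$: one must read ``accessible'' and ``conically accessible'' as the existential statements ``$\alpha$-accessible (resp. conically $\alpha$-accessible) for some $\alpha$'', so that the per-$\alpha$ assertion of Theorem~\ref{2-cataccessibility} can be lifted to them. Everything else is a direct appeal to results already in hand.
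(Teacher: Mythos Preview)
Your proposal is correct and follows exactly the approach the paper takes: the paper presents this theorem with no separate proof, merely remarking that it is obtained by combining Theorem~\ref{2-cataccessibility} with the sketch characterization of accessibility from \cite[Section~7]{BQR98}. Your write-up is simply a more explicit version of that same assembly, including the cardinal bookkeeping the paper leaves implicit.
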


\begin{cor}
	A $\V$-category is Cauchy complete if and only if idempotents split.
\end{cor}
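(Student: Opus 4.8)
The heart of the matter is that, for $\V$ as in this section, the \emph{Cauchy} weights (those $M\colon\C^{op}\to\V$ which are $\alpha$-flat for every regular $\alpha$) are exactly the retracts of representables; once this is known, being Cauchy complete amounts to being closed under such retracts, which is precisely the splitting of idempotents. I would first establish the non-trivial implication: assume idempotents split in $\A$, let $M\colon\C^{op}\to\V$ be a Cauchy weight with $\C$ small, and let $H\colon\C\to\A$ be any $\V$-functor; the goal is to show $M\ast H$ exists. Since $M$ is Cauchy, $M\ast-$ preserves all limits, hence in particular $\alpha$-small limits of representables for \emph{every} $\alpha$, so Proposition~\ref{flatimpliesfiltered} (via Corollary~\ref{filtered-elements}) gives $M\cong\colim\bigl(\tx{El}(M_I)_\V\xrightarrow{\pi_\V}\C\xrightarrow{Y}[\C^{op},\V]\bigr)$ with $\tx{El}(M_I)$ being $\alpha$-filtered for every regular $\alpha$. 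As $\C$ is small and $\V_0$ has small hom-sets, $\tx{El}(M_I)$ is a small category; choosing $\alpha$ larger than its size, the identity diagram on $\tx{El}(M_I)$ admits a cocone $(\lambda_d\colon d\to t)_d$ with some vertex $t=(C_0,x_0)$. A routine diagram chase shows that $\lambda_t\colon t\to t$ is idempotent and that any colimit of a diagram equipped with such a cocone is a retract of its value at $t$; hence $M$ is a retract of the representable $YC_0=\C(-,C_0)$. By the enriched Yoneda lemma this retract corresponds to an idempotent $\bar e$ on $C_0$ in $\C$, and $M$ is the splitting of the idempotent $\C(-,\bar e)$ on $\C(-,C_0)$ inside $[\C^{op},\V]$.

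Now split the idempotent $H\bar e\colon HC_0\to HC_0$ in $\A$ — possible by hypothesis — obtaining $j\colon Q\rightleftarrows HC_0\colon q$ with $qj=1$ and $jq=H\bar e$. Since splittings of idempotents are absolute colimits, applying the representable $\A(-,A)\colon\A^{op}\to\V$ exhibits $\A(Q,A)$ as the splitting in $\V$ of the idempotent $\A(H\bar e,A)$ on $\A(HC_0,A)$. On the other hand, computing $[\C^{op},\V](M,\A(H-,A))$ by first using that $M$ is the splitting of $\C(-,\bar e)$ and then Yoneda ($[\C^{op},\V](YC_0,\A(H-,A))\cong\A(HC_0,A)$) yields the splitting of exactly the same idempotent $\A(H\bar e,A)$. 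These identifications are natural in $A$, so $\A(Q,-)\cong[\C^{op},\V](M,\A(H-,-))$, i.e. $Q\cong M\ast H$; thus $\A$ has all Cauchy colimits.

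For the converse, I would observe that the splitting of an idempotent is itself a Cauchy colimit: given an idempotent $e\colon A\to A$ in a $\V$-category $\A$, let $\C$ be the full sub-$\V$-category of $\A$ on $A$ with inclusion $H\colon\C\hookrightarrow\A$, and inside the complete and cocomplete $[\C^{op},\V]$ split the idempotent $\C(-,e)$ on $\C(-,A)$ to get a weight $M$; being a retract of a representable, $M$ is Cauchy (the functor $M\ast-$ is then a retract of an evaluation functor, which is continuous). If $\A$ is Cauchy complete then $M\ast H$ exists, and the computation of the previous paragraph identifies it with a splitting of $e$ in $\A$; hence idempotents split. I would note in passing that conditions $(a)$ and $(b)$ on $\V$ are used only in the forward direction (through Proposition~\ref{flatimpliesfiltered}), the converse being valid over any $\V$. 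The main obstacle is precisely the first half of the forward direction: turning an abstract Cauchy weight into a concrete retract of a representable. This is where the results of this section do the work, via the elementary fact that a small category which is $\alpha$-filtered for all $\alpha$ carries a cocone under its own identity, hence a ``weakly terminal idempotent''; everything after that is formal bookkeeping with split idempotents, relying on their absoluteness.
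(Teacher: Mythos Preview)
Your argument is correct and rests on the same key step as the paper's: a Cauchy weight $M$ is $\alpha$-flat for every $\alpha$, so by Corollary~\ref{filtered-elements} and Proposition~\ref{flatimpliesfiltered} the category $\tx{El}(M_I)$ is $\alpha$-filtered for every $\alpha$, hence ordinary-absolute. From there the two proofs diverge slightly in presentation. The paper simply observes that a $\V$-category with split idempotents has all conical colimits indexed on ordinary-absolute categories (such colimits being preserved by every functor, in particular by each $\A(-,A)_0\colon\A_0^{op}\to\V_0$, so the ordinary colimit is automatically enriched-conical), and then reduces $M*H$ to such a conical colimit exactly as in Theorem~\ref{flat-preservation}. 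You instead make the idempotent explicit: since $\tx{El}(M_I)$ is small and $\alpha$-filtered for large $\alpha$, its identity diagram admits a cocone, whence $M$ is a retract of a representable $YC_0$; you then split the corresponding idempotent in $\A$ and verify directly that the splitting computes $M*H$.

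Both routes are equivalent unpackings of the fact that ordinary-absolute colimits are generated by idempotent splittings. Your version is longer but yields the sharper intermediate statement that, under the hypotheses of this section, every Cauchy weight is a retract of a representable---something the paper leaves implicit here. Your handling of the converse is also more careful than the paper's one word ``certainly'', though that direction needs no hypothesis on $\V$.
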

\begin{proof}
	If $\C$ is Cauchy complete as a $\V$-category then it certainly has splittings of idempotents. Conversely, if $\C$ has splittings of idempotents then it has all those conical colimits indexed on ordinary absolute diagrams. Let $ M \colon\B^{op}\to\V$ be a Cauchy weight; this means that $ M $ is $\alpha$-flat for each $\alpha$. By Proposition~\ref{flatimpliesfiltered} the ordinary category $\tx{El}( M _I)$ is then $\alpha$-filtered for each $\alpha$, and hence absolute in the ordinary sense. Arguing as above, $ M $-weighted colimits in $\C$ can be reduced to conical colimits indexed on $\tx{El}( M _I)$. It follows then that $\C$ has $ M $-weighted colimits and therefore is Cauchy complete.
\end{proof}

\begin{obs}
	When $\V_0(I,-)$ is strong monoidal, the result above is given by \cite[Proposition~3.2]{Nik17:articolo}.
\end{obs}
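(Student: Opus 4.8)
The plan is to show that the hypothesis of \cite[Proposition~3.2]{Nik17:articolo} is a special case of the standing conditions $(a)$ and $(b)$ of this section, and then to read off that, on this common ground, the statement of the Corollary coincides with that of Nikolaus. So the content of the remark is a subsumption rather than a fresh argument: I want to check that Nikolaus's hypothesis implies ours and that the two conclusions literally agree.

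First I would recall the precise form of \cite[Proposition~3.2]{Nik17:articolo}: it is proved under the assumption that $\V_0(I,-)\colon \V_0\to\bo{Set}$ is strong monoidal (and, as holds in that setting, cocontinuous), and its conclusion is exactly that a $\V$-category is Cauchy complete if and only if its idempotents split. Two things then need checking, and I would treat them separately.

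For the inclusion of hypotheses, if $\V_0(I,-)$ is strong monoidal then the comparison maps $\V_0(I,X)\times\V_0(I,Y)\to\V_0(I,X\otimes Y)$ are bijections, in particular surjections, so condition $(b)$ holds; and since $\V_0(I,-)$ is cocontinuous in this setting each comparison map $\colim\V_0(I,H-)\to\V_0(I,\colim H)$ is an isomorphism, hence a surjection, so condition $(a)$ holds as well, exactly as already noted in the remark following the statements of $(a)$ and $(b)$. Thus the strong monoidal, cocontinuous base is an instance of the weakly strong monoidal, weakly cocontinuous bases covered by the Corollary.

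For the agreement of conclusions, both statements assert Cauchy completeness $\Leftrightarrow$ splitting of idempotents; this identification is immediate once one recalls that Cauchy completeness is precisely closure under all absolute (that is, Cauchy) weighted colimits, and that over such a $\V$ the only absolute colimits are the conical ones indexed on ordinary absolute diagrams, which in turn reduce to idempotent splittings. This is exactly what the proof of the Corollary establishes, via the $\alpha$-filteredness of $\tx{El}(M_I)$ for every $\alpha$. Hence on the overlap of hypotheses the Corollary reduces to Nikolaus's proposition. The only real obstacle is bookkeeping: confirming that no auxiliary assumption of \cite{Nik17:articolo} falls outside conditions $(a)$ and $(b)$, and matching Nikolaus's possibly different but equivalent phrasing of ``absolute colimits are idempotent splittings'' with the formulation ``Cauchy complete iff idempotents split''.
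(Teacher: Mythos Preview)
The paper gives no proof for this remark; it is purely an attribution, noting that under the stronger hypothesis that $\V_0(I,-)$ is strong monoidal the preceding Corollary was already established by Nikoli\'c. Your verification that strong monoidality implies condition~$(b)$, and that the conclusions coincide, is correct and is more than the paper itself provides.

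One small point of care: your parenthetical ``(and, as holds in that setting, cocontinuous)'' reads as though cocontinuity follows from strong monoidality, which it does not. What is true is that Nikoli\'c's hypotheses separately guarantee what is needed for condition~$(a)$; you should state this as an additional assumption of \cite{Nik17:articolo} rather than as a consequence of strong monoidality. With that clarification your argument is fine.
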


\subsection{The cartesian closed case}

We now give a more explicit characterization of $\alpha$-flat $\V$-functors in the case where $\V_0$ is endowed with the cartesian monoidal structure and satisfies condition $(a)$ from before (condition $(b)$ is automatic).

In this case, for each $X\in\V_0$ the functor $\V_0(X,-)$ is (strong) monoidal; therefore induces a change of base 2-functor $ M_X\colon \V\tx{-}\bo{Cat}\longrightarrow\bo{Cat}$.
Hence for every weight $ M \colon \C^{op}\to\V$ we obtain an ordinary functor 
$$ M _X\colon M_X\C^{op}\stackrel{M_X M }{\xrightarrow{\hspace*{1cm}}}\M_X\V\stackrel{M_X\C(I,-)}{\xrightarrow{\hspace*{1.3cm}}}\bo{Set}$$
generalizing an earlier notation $ M _I$ for $\V_0(I, M _0-)\colon \C_0^{op}\to\bo{Set}$.

\begin{obs}\label{X-elements-0}
	For each $X\in\V$ and $ M \colon \C^{op}\to\V$ we consider the category of elements $\tx{El}( M _X)$; this can be described explicitly as follows:\begin{itemize}\setlength\itemsep{0.25em}
		\item an object is a pair $(C\in\C,x\colon X\to M  C)$;
		\item a morphism $f\colon (C,x)\to (D,y)$ is an arrow $f\colon X\to\C(C,D)$ for which the triangle
		\begin{center}
			\begin{tikzpicture}[baseline=(current  bounding  box.south), scale=2]

				\node (b0) at (1.5,0.8) {$\C(C,D)\times M (D)$};
				\node (c0) at (0,0.8) {$X$};
				\node (d0) at (1.5,0) {$ M (C)$};
				
				\path[font=\scriptsize]
				
				(c0) edge [->] node [above] {$(f,y)$} (b0)
				(b0) edge [->] node [right] {$\tx{ev}_ M $} (d0)
				(c0) edge [->] node [below] {$x\ \ $} (d0);
				
			\end{tikzpicture}	
		\end{center}
		commutes (remember that $ M $ is contravariant);
		\item for each $(C,x)$ the identity is given by $\tx{id}_{(C,x)}\colon X\stackrel{!}{\to} 1 \stackrel{1_C}{\longrightarrow}\C(C,C)$;
		\item composition is as follows: given $f\colon (C,x)\to (D,y)$ and $g\colon (D,y)\to (E,z)$ the composite $g\circ f$ is 
		$$  X \stackrel{(g,f)}{\xrightarrow{\hspace*{0.9cm}}} \C(D,E)\times\C(C,D)\stackrel{M}{\xrightarrow{\hspace*{0.4cm}}}\C(C,E) $$ 
		where $M$ is the composition map in $\C$.
	\end{itemize}
\end{obs}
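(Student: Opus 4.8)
This is essentially a definition-chase, so the plan is to compute $\tx{El}(M_X)$ directly from the definition of the change-of-base $2$-functor and the standard description of the category of elements of a presheaf. First I would recall that, $\V_0$ being cartesian, each $\V_0(X,-)\colon\V_0\to\bo{Set}$ preserves finite products and the terminal object and so is strong monoidal, whence the change-of-base $2$-functor $M_X\colon\V\tx{-}\bo{Cat}\to\bo{Cat}$; spelling this out gives that $M_X\C$ has the objects of $\C$, hom-sets $(M_X\C)(C,D)=\V_0(X,\C(C,D))$, identity on $C$ the composite $X\xrightarrow{!}1\xrightarrow{1_C}\C(C,C)$, and composite of $f\colon X\to\C(C,D)$ with $g\colon X\to\C(D,E)$ the map
\[
X\xrightarrow{(g,f)}\C(D,E)\times\C(C,D)\xrightarrow{M}\C(C,E),
\]
$M$ being the composition map of $\C$ (the strong monoidality of $\V_0(X,-)$ being what lets us form $(g,f)$ and makes this functorial). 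Since $\tx{El}(M_X)$ inherits identities and composition from $M_X\C$, the last two bullet points of the Remark are then immediate.

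Next I would identify the presheaf $M_X\colon(M_X\C)^{op}\to\bo{Set}$. Because $(M_X\V)(A,B)=\V_0(X,[A,B])$, we have $(M_X\V)(I,-)\cong\V_0(X,[I,-])\cong\V_0(X,-)$, so $(M_X\V)(I,-)\circ(M_X M)$ sends $C$ to $\V_0(X,MC)$; hence objects of $\tx{El}(M_X)$ are pairs $(C,x\colon X\to MC)$. For a morphism $f\colon C\to D$ of $M_X\C$ (i.e. $f\colon X\to\C(C,D)$), the presheaf action on $f$ is the function $\V_0(X,MD)\to\V_0(X,MC)$ gotten by applying $\V_0(X,-)$ to the relevant hom-component of $M$ --- which, as $M$ is contravariant, is the map $\C(C,D)\to[MD,MC]$ --- and then composing with $y$ in $M_X\V$. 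Transposing through the closed structure of $\V$, this function is $y\mapsto\tx{ev}_M\circ(f,y)$, so that a morphism $(C,x)\to(D,y)$ of $\tx{El}(M_X)$ is exactly an $f\colon X\to\C(C,D)$ for which the triangle in the Remark commutes; this is the asserted description.

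I do not expect any genuine obstacle: the argument is bookkeeping with the two definitions involved. The one place that needs a little care is variance --- since $M$ is a weight, hence contravariant, one must check that the triangle comes out oriented with $\tx{ev}_M\colon\C(C,D)\times MD\to MC$ (and not the reverse) and that the presheaf action runs $\V_0(X,MD)\to\V_0(X,MC)$. Once that is pinned down, every clause of the Remark reads off from the formulas for $M_X\C$ and $M_X M$ recorded above.
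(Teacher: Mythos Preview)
Your verification is correct: the description of $\tx{El}(M_X)$ follows by unwinding the change-of-base $2$-functor $M_X$ and the usual category-of-elements construction, and you have traced through the identifications (in particular the transpose $\C(C,D)\to[MD,MC]$ and the resulting $\tx{ev}_M$) accurately. The paper itself offers no proof here --- the statement is presented as a Remark and the description is simply asserted --- so your argument supplies precisely the routine definition-chase the paper leaves to the reader, and there is nothing further to compare.
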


Note that $\tx{El}( M _X)$ is not in general $\tx{El}(\V_0(X, M _0-))$: they have same objects but a morphism in $\tx{El}(\V_0(X, M _0-))$ from $(C,x)$ to $(D,y)$ is a morphism $1\to\C(C,D)$ such that the induced $X\to1\to\C(C,D)$ defines a morphism in $\tx{El}( M _X)$. Finally observe that for each $X$ there is an induced functor 
$$ J_X\colon \tx{El}( M _1)\longrightarrow \tx{El}( M _X)$$
which acts by precomposition with the unique morphism $!\colon X\to 1$.

\begin{prop}\label{cartesian-flat-characterization}
	Let $\V=(\V_0,\times,1)$ be as above and $\G\subseteq\V_\alpha$ a strong generator. A $\V$-functor $ M \colon \C^{op}\to\V$ is $\alpha$-flat if and only if:\begin{enumerate}\setlength\itemsep{0.25em}
		\item $\tx{El}( M _1)$ is $\alpha$-filtered;
		\item for each $X\in\G$ the functor $J_X\colon \tx{El}( M _1)\longrightarrow \tx{El}( M _X)$ is final.
	\end{enumerate}
\end{prop}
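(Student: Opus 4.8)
The plan is to combine the general machinery already developed in this section with the extra structure available in the cartesian case, namely the change-of-base functors $\mathrm{M}_X$ for $X\in\V_0$. By Proposition~\ref{flat=filtered}, flatness of $M$ is equivalent to $M*-$ preserving $\alpha$-small limits of representables, so both directions will be phrased in terms of that condition.

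For the ``only if'' direction, suppose $M$ is $\alpha$-flat. Then $M_1=M_I$ and Corollary~\ref{filtered-elements} immediately gives that $\tx{El}(M_1)$ is $\alpha$-filtered, establishing (1). For (2), fix $X\in\G$. I would check finality of $J_X$ directly using the classical criterion: for each object $(D,y)\in\tx{El}(M_X)$, the comma category $J_X/(D,y)$ must be nonempty and connected. An object of this comma category amounts to an object $(C,x)\in\tx{El}(M_1)$ together with a map $X\to\C(C,D)$ commuting suitably with $x$ and $y$; unwinding, this is essentially the data of an arrow $X\to\C(C,D)\times_{M C} M D$ together with a point $1\to M C$ over it, i.e.\ a point $1\to M*(X\pitchfork\C(-,D))$ — the same sort of object manipulated in the surjectivity part of the proof of Proposition~\ref{flatimpliesfiltered}. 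Nonemptiness then follows from Lemma~\ref{goodunit} applied to the $\alpha$-small weight $X\pitchfork\C(-,D)$ (using that $X$ is $\alpha$-presentable so $X$-powers are $\alpha$-small limits, and that $M*-$ preserves $\alpha$-small limits of representables). Connectedness follows the same way, using the $\alpha$-small weight $N_X$ from Definition~\ref{genequalizer} exactly as in the injectivity argument: any two objects of the comma category are connected by a zig-zag obtained by filteredness of $\tx{El}(M_1)$ to equalize the $M$-components and then Lemma~\ref{goodunit} with $N_X$ to coequalize the $\C$-components.

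For the ``if'' direction, assume (1) and (2). I want to show $M*-$ preserves $\alpha$-small limits of representables, and then invoke Proposition~\ref{flat=filtered}. Since $\G\subseteq\V_\alpha$ is a strong generator, it suffices to show $\V_0(X,-)$ carries the canonical comparison map for such a limit to an isomorphism for every $X\in\G$. The key computation is that, for $C\in\C$, one has
$$\V_0(X,M C)\cong \tx{El}(M_X)\text{-colimit of }\V_0(X,\C(C,\pi_X-)_0),$$
and since $J_X$ is final by (2), this colimit may be computed over the $\alpha$-filtered category $\tx{El}(M_1)$ instead. In other words $M$ is recovered pointwise, after applying each $\V_0(X,-)$, as an $\alpha$-filtered colimit; running this through the limit-preservation computation as in the simplified Remark after Proposition~\ref{flatimpliesfiltered} (with $\tx{El}(M_X)$ and its finality replacing the strong-monoidal hypothesis there) shows that $M$ is a conical $\alpha$-filtered colimit of representables, hence $\alpha$-flat by Proposition~\ref{flat=filtered}.

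The main obstacle I anticipate is the bookkeeping in the ``if'' direction: identifying $\V_0(X,MC)$ with a colimit over $\tx{El}(M_X)$ requires carefully matching the comma-category description of $\tx{El}(M_X)$ in Remark~\ref{X-elements-0} against the copower weight $X\pitchfork\C(C,-)$, and then using finality of $J_X$ to transfer to $\tx{El}(M_1)$. Getting the coherence of these identifications compatible simultaneously for all $X\in\G$ — so that the resulting isomorphisms assemble into a genuine isomorphism of $\V$-functors rather than merely a family of bijections — is where the argument needs care; the strong-generator hypothesis on $\G$ is precisely what makes this last assembly step go through.
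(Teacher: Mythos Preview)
Your proposal is correct and uses essentially the same ingredients as the paper, but the organization differs. The paper does not split into two directions: it observes that by Propositions~\ref{flatimpliesfiltered} and~\ref{flat=filtered}, $M$ is $\alpha$-flat if and only if $\tx{El}(M_1)$ is $\alpha$-filtered \emph{and} the canonical map $\colim(Y\pi_\V)\to M$ is invertible; then, testing invertibility via $\V_0(X,-)$ for $X\in\G$, it simply recognizes that the resulting surjectivity and injectivity conditions (already written out in the proof of Proposition~\ref{flatimpliesfiltered}) are \emph{literally} the standard nonempty-and-connected criterion for finality of $J_X$. This gives both directions at once and is shorter than your separate treatments, though your ``if'' direction via the co-Yoneda formula $\V_0(X,MC)\cong\colim_{\tx{El}(M_X)}\V_0(X,\C(C,-))$ followed by finality is a clean alternative.

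Two minor points. First, for finality you need the comma categories $(D,y)\!\downarrow\! J_X$ (objects under $(D,y)$), not $J_X/(D,y)$; your description of the objects has the roles of $C$ and $D$ swapped accordingly, though the substance is right. Second, your closing worry about ``assembling'' the isomorphisms for different $X\in\G$ into a $\V$-functor isomorphism is misplaced: there is a single canonical comparison $\colim(Y\pi_\V)\to M$ in $[\C^{op},\V]$, and the strong-generator hypothesis is used only to detect its invertibility componentwise via $\V_0(X,-)$. No coherence or assembly argument is needed.
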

\begin{proof}
	By Propositions \ref{flatimpliesfiltered} and \ref{flat=filtered} above, $ M $ is $\alpha$-flat if and only if $\tx{El}( M _1)$ is $\alpha$-filtered and for any $C\in\C$ the following isomorphism holds
	$$  M (C)\cong\tx{colim} \left(\tx{El}( M _1) \stackrel{\pi}{\xrightarrow{\hspace*{0.5cm}}} \C_0 \stackrel{\C(C,-)_0}{\xrightarrow{\hspace*{1cm}}} \V_0\right). $$
	Equivalently, since $\G$ is a strong generator, $ M $ is $\alpha$-flat if and only if $\tx{El}( M _1)$ is $\alpha$-filtered and for any $C\in\C$ and $X\in\G$ we have
	$$ \V_0(X, M (C))\cong\tx{colim} \left(\tx{El}( M _1) \stackrel{\pi}{\xrightarrow{\hspace*{0.5cm}}} \C_0 \stackrel{\V_0(X,\C(C,-)_0)}{\xrightarrow{\hspace*{1.9cm}}} \bo{Set}\right). $$
	Bearing in mind that $\tx{El}( M _1)$ is $\alpha$-filtered, the isomorphism above holds if and only if for all $X\in\G$ and $C\in\C$: \begin{itemize}\setlength\itemsep{0.25em}
		\item for any $x\colon X\to M  C$ there exist $D\in\C$, $y\colon 1\to M  D$, and $g\colon X\to\C(C,D)$ such that $x$ coincides with the composite
		$$ X\stackrel{g\times x}{\xrightarrow{\hspace*{0.9cm}}}\C(C,D)\times M  D\stackrel{\tx{ev}_ M }{\xrightarrow{\hspace*{0.9cm}}} M  C. $$
		In other words,  such that $g$ defines a morphism $(C,x)\to J_X(D,y)$ in $\tx{El}( M _X)$. 
		\item for any $(y_1,D_1,g_1)$ and $(y_2,D_2,g_2)$ as above there exist $E\in\C$, $z\colon 1\to M  E$, and maps $h_i\colon D_i\to E$ such that $ M _I(h_i)(z)=x_i$ and $ \C(h_1,C)\circ g_1= \C(h_2,C)\circ g_2$. In other words, such that we have a commutative square
		\begin{center}
			\begin{tikzpicture}[baseline=(current  bounding  box.south), scale=2]
				
				\node (a) at (-0.3,0) {$(C,x)$};
				\node (b) at (1,0.4) {$J_X(D_1,y_1)$};
				\node (c) at (1,-0.4) {$J_X(D_2,y_2)$};
				\node (d) at (2.3,0) {$J_X(E,z)$};
				
				\path[font=\scriptsize]
				
				(a) edge [->] node [above] {$g_1$} (b)
				(a) edge [->] node [below] {$g_2$} (c)
				(c) edge [->] node [below] {$h_1$} (d)
				(b) edge [->] node [above] {$h_2$} (d);
				
			\end{tikzpicture}	
		\end{center} 
		in $\tx{El}( M _X)$.
	\end{itemize} 
	These conditions are easily seen to be equivalent to $(2)$.
\end{proof}

As a consequence all the categories $\tx{El}( M _X)$, for $X\in\G$, are also $\alpha$-filtered (see Remark~\ref{final-filtered-subcat}).

\subsection{2-categories}

We now further specialize to the case $\V=\bo{Cat}$ and give a characterization of $\alpha$-flat 2-functors in terms of a certain double category.

\begin{Def}[\cite{GP1999:articolo}]
	Let $ M \colon \C^{op}\to\bo{Cat}$ be a 2-functor. The double category of elements $\mathbb{E}\tx{l}( M )$ of $ M $ is the one with:\begin{itemize}\setlength\itemsep{0.25em}
		\item objects: pairs $(C,x)$ with $C\in\C$ and $x\in M (C)$;
		\item horizontal arrows $f\colon (C,x)\to(D,y)$: morphisms $f\colon C\to D$ in $\C$ with $ M (f)(y)=x$;
		\item vertical arrows $\xi\colon (C,x)\todot(C,x')$: morphisms $\xi\colon x\to x'$ in $ M (C)$.
		\item double cells:
		\begin{center}
			\begin{tikzpicture}[baseline=(current  bounding  box.south), scale=2]
				
				\node (a) at (0,0) {$(C,x)$};
				\node (b) at (1.1,0) {$(D,y)$};
				\node (a'') at (0,-0.45) {$\bullet$};
				\node (b'') at (1.1,-0.45) {$\bullet$};
				\node (c') at (0.6,-0.45) {$\Downarrow \alpha$};
				\node (a') at (0,-0.9) {$(C,x')$};
				\node (b') at (1.1,-0.9) {$(D,y')$};

				\path[font=\scriptsize]
				
				(a) edge[->] node [above] {$f$} (b)
				(a') edge[->] node [below] {$g$} (b')
				(a) edge[->] node [left] {$\xi$} (a')
				(b) edge[->] node [right] {$\mu$} (b');
				
			\end{tikzpicture}
		\end{center}
		2-cells $\alpha\colon f\Rightarrow g$ in $\C$ for which $ M (\alpha)(\mu)=\xi$.
	\end{itemize}
\end{Def}

{\bf Notation:} Let $\mathbb{D}$ be a double category.\begin{itemize}
	\item We denote by $H(\mathbb{D})$ the {\em horizontal} category of $\mathbb{D}$; this has the same objects as $\mathbb{D}$ and the horizontal arrows as morphisms. 
	\item We denote by $H_1(\mathbb{D})$ the category with vertical arrows as objects and double cells as morphisms, endowed with the horizontal composition of cells. 
\end{itemize} 

It follows that the horizontal category $H(\mathbb{E}\tx{l}( M ))$ of $\mathbb{E}\tx{l}( M )$ corresponds to $\tx{El}( M _1)$, where $ M _1:=\bo{Cat}_0(1, M _0-)\colon \C_0\to\bo{Set}$ as usual.
For any $\mathbb{D}$ there is a functor $1_H\colon \colon H(\mathbb{D})\to H_1(\mathbb{D})$ which sends an object $D$ to the vertical identity $D\todot D$, and an arrow $f\colon D\to C$ to the identity 2-cell $1_f$. 

\begin{Def}\label{final-horizontal}
	Let $\mathbb{D}$ be a double category; we say that the horizontal category $H(\mathbb{D})$ of $\mathbb{D}$ is {\em final} in $\mathbb{D}$ if the ordinary functor $1_H\colon H(\mathbb{D})\to H_1(\mathbb{D})$ is final. 
\end{Def}

\begin{obs}
	If $H(\mathbb{D})$ is final in $\mathbb{D}$, then double colimits indexed on $\mathbb{D}$ are the same as ordinary colimits on $H(\mathbb{D})$ in the following sense:
	
	Let $\C$ be a 2-category and $\mathbb{K}\colon \mathbb{D}\to\C$ a double functor (where $\C$ is seen as a double category with identity vertical morphisms). Let $K\colon H(\mathbb{D})\to\C$ be the induced horizontal functor; then the double colimit of $\mathbb{K}$ exists in $\C$ if and only if the conical colimit of $K$ does so, and in that case they coincide.
\end{obs}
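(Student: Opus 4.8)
The plan is to compare, for each object $c$ of $\C$, the category of double cocones from $\mathbb{K}$ to $c$ with the category of ordinary cocones from $K$ to $c$, and to use the finality hypothesis to show the obvious forgetful functor between them is invertible, $2$-naturally in $c$. The statement then follows by Yoneda, since the double (resp.\ conical) colimit is by definition a representation of the double‑cocone (resp.\ cocone) functor: one of them is representable exactly when the other is, by the same object.

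First I would recall (from \cite{GP1999:articolo}) the explicit description of a \emph{double cocone} from $\mathbb{K}$ to $c$. Since $\C$ is viewed as a double category with only identity vertical arrows, $\mathbb{K}$ sends every vertical arrow $u\colon d\todot d'$ of $\mathbb{D}$ to an identity, forcing $\mathbb{K}d=\mathbb{K}d'$; so such a cocone unwinds to an ordinary cocone $(\lambda_d\colon \mathbb{K}d\to c)_{d}$ on $K\colon H(\mathbb{D})\to\C$ (i.e.\ $\lambda_{d'}\circ\mathbb{K}f=\lambda_d$ for each horizontal $f\colon d\to d'$), together with a $2$-cell $\tilde\lambda_u\colon \lambda_d\Rightarrow\lambda_{d'}$ for each vertical arrow $u\colon d\todot d'$, functorial in $u$, and subject to $\tilde\lambda_u=\tilde\lambda_v\ast\mathbb{K}\alpha$ for every double cell $\alpha$ with vertical boundaries $u,v$ and horizontal boundaries $f\colon d\to e$, $g\colon d'\to e'$ (the right‑hand side being the $2$-cell $\lambda_d=\lambda_e\circ\mathbb{K}f\Rightarrow\lambda_{e'}\circ\mathbb{K}g=\lambda_{d'}$). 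A morphism of double cocones over $c$ is a modification, i.e.\ a family $\sigma_d\colon\lambda_d\Rightarrow\lambda'_d$ compatible both with the cocone structure and with the $\tilde\lambda$'s. Forgetting the family $(\tilde\lambda_u)$ gives a functor $\tx{DblCocone}(\mathbb{K},c)\to\tx{Cocone}(K,c)$, $2$-natural in $c$; the task is to prove it is an isomorphism of categories.

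The heart of the matter is that finality of $1_H\colon H(\mathbb{D})\to H_1(\mathbb{D})$ reconstructs the $(\tilde\lambda_u)$ uniquely from $(\lambda_d)$. Given $(\lambda_d)$ and a vertical arrow $u\colon d\todot d'$, finality provides an object of the comma category $u\downarrow 1_H$, that is, a double cell $\phi$ with left boundary $u$, right boundary $1_a$, and horizontal boundaries $p\colon d\to a$, $q\colon d'\to a$; one sets $\tilde\lambda_u:=\lambda_a\ast\mathbb{K}\phi$, a $2$-cell $\lambda_a\circ\mathbb{K}p\Rightarrow\lambda_a\circ\mathbb{K}q$, i.e.\ $\lambda_d\Rightarrow\lambda_{d'}$. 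Connectedness of $u\downarrow 1_H$ together with the cocone equations makes this independent of $(a,\phi)$: a morphism $(a,\phi)\to(a',\phi')$ there is a horizontal $h\colon a\to a'$ with $\phi'=1_H(h)\circ\phi$, whence $\lambda_{a'}\ast\mathbb{K}\phi'=(\lambda_{a'}\circ\mathbb{K}h)\ast\mathbb{K}\phi=\lambda_a\ast\mathbb{K}\phi$. Functoriality in $u$ and the double‑cell compatibility are then direct checks; uniqueness is forced since the compatibility applied to $\phi\colon u\to 1_a$ itself gives $\tilde\lambda_u=\tilde\lambda_{1_a}\ast\mathbb{K}\phi=\lambda_a\ast\mathbb{K}\phi$ (using $\tilde\lambda_{1_a}=1_{\lambda_a}$ by functoriality), and an analogous interchange computation, using $\sigma_d=\sigma_a\ast\mathbb{K}p$, shows every modification of the underlying cocones automatically respects the $\tilde\lambda$'s. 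Hence $\tx{DblCocone}(\mathbb{K},c)\to\tx{Cocone}(K,c)$ is bijective on objects and fully faithful, and $2$-natural in $c$.

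By Yoneda applied to the resulting $2$-natural isomorphism $\tx{DblCocone}(\mathbb{K},-)\cong\tx{Cocone}(K,-)$, the double colimit of $\mathbb{K}$ exists if and only if the conical colimit of $K$ exists, and when they do they coincide. I expect the only step needing genuine work to be the well‑definedness and functoriality of the assignment $(\lambda_d)\mapsto(\tilde\lambda_u)$ — that is precisely the content of the finality of $1_H$ — while the rest is bookkeeping with the definition of double cocone and standard $2$-categorical interchange. (For completeness one should also note why finality is really needed: without it the vertical arrows contribute genuine extra $2$-cells, and $\tx{colim}\,\mathbb{K}$ becomes a coinserter‑type modification of $\tx{colim}\,K$ rather than $\tx{colim}\,K$ itself.)
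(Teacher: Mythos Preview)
The paper states this as a remark and gives no proof at all; it is offered as motivation for the definition of $\alpha$-filtered double category that follows, with Definition~\ref{final-horizontal} as the only input. So there is no argument in the paper to compare your attempt to.

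That said, your approach is the natural one and is sound. The description of a double cocone in a 2-category (viewed as a double category with only identity vertical arrows) is correct, as is the observation that $\mathbb{K}$ must collapse vertical arrows. The key step --- reconstructing the 2-cells $\tilde\lambda_u$ from the cocone $(\lambda_d)$ via a choice of cell $\phi\colon u\to 1_a$ in $H_1(\mathbb{D})$, and using connectedness of $u\downarrow 1_H$ to show independence of that choice --- is exactly the content of finality of $1_H$, and your uniqueness argument (forcing $\tilde\lambda_u=\tilde\lambda_{1_a}\ast\mathbb{K}\phi=\lambda_a\ast\mathbb{K}\phi$ from the compatibility condition and functoriality at identities) is correct. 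The only things you leave as ``direct checks'' --- functoriality of $u\mapsto\tilde\lambda_u$ under vertical composition, and the double-cell compatibility for general $\alpha$ --- do go through by pasting the given cell with a witness $\phi$ for its right boundary and invoking well-definedness; it would be worth writing out at least one of these explicitly, since this is where connectedness (rather than mere nonemptiness) of the comma categories is used a second time.
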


\begin{Def}
	We say that a double category $\mathbb{D}$ is {\em $\alpha$-filtered} if the horizontal category $H(\mathbb{D})$ is $\alpha$-filtered and final in $\mathbb{D}$. Equivalently, $\mathbb{D}$ is $\alpha$-filtered if and only if:\begin{enumerate}
		\setlength\itemsep{0.25em}
		\item $H(\mathbb{D})$ is $\alpha$-filtered;
		\item for any vertical morphism $\xi\colon C\todot D$ there exists a square as below;
		\begin{center}
			\begin{tikzpicture}[baseline=(current  bounding  box.south), scale=2]
				
				\node (a) at (0,0) {$C$};
				\node (b) at (1.1,0) {$B$};
				\node (a'') at (0,-0.45) {$\bullet$};
				\node (b'') at (1.1,-0.45) {$\bullet$};
				\node (c') at (0.6,-0.45) {$\Downarrow \alpha$};
				\node (a') at (0,-0.9) {$D$};
				\node (b') at (1.1,-0.9) {$B$};

				\path[font=\scriptsize]
				
				(a) edge[->] node [above] {$f$} (b)
				(a') edge[->] node [below] {$g$} (b')
				(a) edge[->] node [left] {$\xi$} (a')
				(b) edge[->] node [right] {$id$} (b');
				
			\end{tikzpicture}
		\end{center}
		\item for any pair of double cells $\alpha$ and $\beta$
		\begin{center}
			\begin{tikzpicture}[baseline=(current  bounding  box.south), scale=2]
				
				\node (a) at (0,0) {$C$};
				\node (b) at (1.1,0) {$B$};
				\node (a'') at (0,-0.45) {$\bullet$};
				\node (b'') at (1.1,-0.45) {$\bullet$};
				\node (c') at (0.62,-0.45) {$\Downarrow \alpha,\beta$};
				\node (a') at (0,-0.9) {$D$};
				\node (b') at (1.1,-0.9) {$B$};

				\path[font=\scriptsize]
				
				(a) edge[->] node [above] {$f$} (b)
				(a') edge[->] node [below] {$g$} (b')
				(a) edge[->] node [left] {$\xi$} (a')
				(b) edge[->] node [right] {$id$} (b');
				
			\end{tikzpicture}
		\end{center}
		there exists a horizontal arrow $h\colon B\to A$ such that $\alpha h=\beta h$.
	\end{enumerate}
\end{Def}

\begin{prop}
	Let $\C$ be a 2-category and $ M \colon \C\to\bo{Cat}$ a 2-functor; the following are equivalent:\begin{enumerate}\setlength\itemsep{0.25em}
		\item $ M $ is $\alpha$-flat;
		\item the double category $\mathbb{E}\tx{l}( M )$ is $\alpha$-filtered.
	\end{enumerate}
\end{prop}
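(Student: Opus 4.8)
The plan is to apply the cartesian-closed characterization of Proposition~\ref{cartesian-flat-characterization} to the case $\V=\bo{Cat}$, using as strong generator $\G=\{1,\mathbbm{2}\}$ of $\alpha$-presentable objects (for any $\alpha$; these do generate $\bo{Cat}$ strongly since $1$ detects objects and $\mathbbm{2}$ detects morphisms). By that proposition, $M$ is $\alpha$-flat if and only if $\tx{El}(M_1)$ is $\alpha$-filtered and the functors $J_1\colon \tx{El}(M_1)\to\tx{El}(M_1)$ (trivially final) and $J_{\mathbbm{2}}\colon \tx{El}(M_1)\to\tx{El}(M_{\mathbbm{2}})$ are final. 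So the entire content reduces to identifying $\tx{El}(M_{\mathbbm{2}})$ and the functor $J_{\mathbbm{2}}$ with the category $H_1(\mathbb{E}\tx{l}(M))$ of vertical arrows and double cells and the comparison functor $1_H\colon H(\mathbb{E}\tx{l}(M))\to H_1(\mathbb{E}\tx{l}(M))$ respectively, and then matching the three-part unpacking of ``$\alpha$-filtered double category'' against ``$\tx{El}(M_1)$ $\alpha$-filtered'' plus ``$J_{\mathbbm{2}}$ final.''

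The first key step is to construct an isomorphism of categories $\tx{El}(M_{\mathbbm{2}})\cong H_1(\mathbb{E}\tx{l}(M))$. An object of $\tx{El}(M_{\mathbbm{2}})$ is a pair $(C,x\colon\mathbbm{2}\to MC)$, i.e.\ $C\in\C$ together with a morphism $x\colon x_0\to x_1$ in $MC$, which is exactly a vertical arrow $(C,x_0)\todot(C,x_1)$ of $\mathbb{E}\tx{l}(M)$. A morphism $(C,x)\to(D,y)$ in $\tx{El}(M_{\mathbbm{2}})$ is, by Remark~\ref{X-elements-0}, a functor $f\colon\mathbbm{2}\to\C(C,D)$ — that is, a 2-cell $\alpha\colon f_0\Rightarrow f_1$ in $\C$ — satisfying the compatibility $x = \tx{ev}_M\circ(f,y)\circ\,!_{\mathbbm{2}}$ at each of the two objects and the one arrow of $\mathbbm{2}$; unwinding the three components of this equation gives precisely $M(f_0)(y_0)=x_0$, $M(f_1)(y_1)=x_1$, and $M(\alpha)(y)=x$, i.e.\ a double cell of $\mathbb{E}\tx{l}(M)$ from $x$ to $y$ with horizontal sides $f_0,f_1$ and vertical sides $x,y$. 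Checking that this bijection respects composition and identities is routine (composition in $\tx{El}(M_{\mathbbm{2}})$ via the composition map of $\C$ corresponds to horizontal composition of double cells). Under this identification, the functor $J_{\mathbbm{2}}$, which precomposes with $!\colon\mathbbm{2}\to 1$, sends $(C,x_0)\in\tx{El}(M_1)$ to the vertical identity on $(C,x_0)$ and a morphism to an identity 2-cell — which is exactly $1_H$; and $\tx{El}(M_1)\cong H(\mathbb{E}\tx{l}(M))$ by the remark already made in the excerpt.

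With these identifications in hand, the second step is purely formal: condition (1) of Proposition~\ref{cartesian-flat-characterization} ($\tx{El}(M_1)$ $\alpha$-filtered) becomes condition (1) of the definition of $\alpha$-filtered double category; condition (2) for $X=1$ is automatic; and condition (2) for $X=\mathbbm{2}$ ($J_{\mathbbm{2}}$ final) translates, via the standard criterion for finality of a functor, into conditions (2) and (3) of the definition — the existence of a double cell filling a given vertical arrow corresponds to the non-emptiness/connectedness comma-category condition on objects, and the coequalizing of a parallel pair of double cells corresponds to the connectedness on morphisms. One should spell out that a vertical morphism $\xi\colon C\todot D$ of $\mathbb{E}\tx{l}(M)$ is an object of $H_1(\mathbb{E}\tx{l}(M))$, an object of the comma category $\xi/1_H$ is a double cell from $\xi$ into a vertical identity, which is exactly the square in condition (2), and similarly for (3). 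Since $J_{\mathbbm{2}}$ final then says this comma category is connected (nonempty plus any two objects joined by a zigzag), and one checks the zigzag can be taken of the displayed one-step form using that $H(\mathbb{E}\tx{l}(M))$ is $\alpha$-filtered, we recover (2) and (3) exactly.

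The main obstacle I expect is the bookkeeping in the isomorphism $\tx{El}(M_{\mathbbm{2}})\cong H_1(\mathbb{E}\tx{l}(M))$: one must be careful that Remark~\ref{X-elements-0} describes morphisms of $\tx{El}(M_X)$ as arrows $f\colon X\to\C(C,D)$ whose associated triangle commutes, and for $X=\mathbbm{2}$ the commuting triangle is an equation \emph{in the functor category} $[\mathbbm{2},MC]$, hence amounts to three scalar equations (two at objects, one at the arrow of $\mathbbm{2}$), and one must verify that the object-level equations are not automatically implied and genuinely correspond to the horizontal-arrow compatibilities of a double cell. A secondary subtlety is confirming that finality of $J_{\mathbbm{2}}$ as stated (via the connectedness of comma categories) can be reduced, using $\alpha$-filteredness of $\tx{El}(M_1)$, to exactly the one-step square conditions (2) and (3) rather than longer zigzags; this is the same manoeuvre used implicitly in Proposition~\ref{cartesian-flat-characterization} and should be invoked rather than re-proved.
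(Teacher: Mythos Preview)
Your proposal is correct and follows essentially the same approach as the paper's own proof: apply Proposition~\ref{cartesian-flat-characterization}, identify $\tx{El}(M_1)$ and $\tx{El}(M_{\mathbbm{2}})$ with $H(\mathbb{E}\tx{l}(M))$ and $H_1(\mathbb{E}\tx{l}(M))$ respectively, and observe that $J_{\mathbbm{2}}$ corresponds to $1_H$. The only cosmetic difference is that the paper takes $\G=\{\mathbbm{2}\}$ rather than $\{1,\mathbbm{2}\}$ (which suffices since $\mathbbm{2}$ alone is already a strong generator of $\bo{Cat}$), making your condition for $X=1$ unnecessary; the paper also leaves the bookkeeping of the identification $\tx{El}(M_{\mathbbm{2}})\cong H_1(\mathbb{E}\tx{l}(M))$ implicit, whereas you spell it out.
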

\begin{proof}
	Note that, by Corollary~\ref{cartesian-flat-characterization} applied to the strong generator $\{\mathbbm{2}\}$, the 2-functor $ M $ is $\alpha$-flat if and only if $\tx{El}( M _1)$ is $\alpha$-filtered and the functor  $J\colon \tx{El}( M _1)\longrightarrow \tx{El}( M _\mathbbm{2})$, induced by precomposition with $!\colon \mathbbm{2}\to 1$, is final. But $\tx{El}( M _1)=H(\mathbb{E}\tx{l}( M ))$ is the underlying category spanned by the horizontal arrows of $\mathbb{E}\tx{l}( M )$, and $\tx{El}( M _\mathbbm{2})=H_1(\mathbb{E}\tx{l}( M ))$ is the ordinary category with vertical arrows as objects and double cells as morphisms. Under this interpretation the functor $J$ is the same as $1_H\colon H(\mathbb{E}\tx{l}( M ))\to H_1(\mathbb{E}\tx{l}( M ))$; therefore the result follows by the definition of $\alpha$-filtered double category.
\end{proof}

\section{When flat equals filtered plus absolute}\label{flat=absolute+filtered}

In Section~\ref{dualizable} we briefly recall the notion of dualizable object, then in Section~\ref{locallydualiza} we introduce what we are calling the {\em locally dualizable categories}. Finally in Section~\ref{theorem-locallydualiz} we show that if $\V$ is locally dualizable then $\alpha$-flat colimits are generated by absolute colimits and the usual $\alpha$-filtered ones. Key examples of locally dualizable categories can be found in Example~\ref{locduaexample} below. 

\subsection{Dualizable objects}\label{dualizable}

\begin{Def}
	We say that an object $P$ of $\V$ is {\em dualizable} if there exist $P^*\in\V$ and morphisms $\eta_P\colon I\to P\otimes P^*$ and $\epsilon_P\colon P^*\otimes P\to I$, called unit and counit respectively, satisfying the triangle equalities.
	In that case $P^*$ is unique up to isomorphism and is called the {\em dual} of $P$. 
\end{Def}

Note that the unit $I$ is always dualizable and that if $P$ is dualizable then $P^*$ is too with $(P^*)^*\cong P$. When, as we assume here, $\V$ is symmetric monoidal closed, $P$ is dualizable if and only if there exists $P^*\in\V$ such that $ [P,-] \cong P^*\otimes -\colon \V_0\to\V_0$.

The following is a well-known result about powers and copowers by dualizable objects:

\begin{prop}
	Powers and copowers by dualizable objects are absolute; moreover for any $A\in\A$ and any dualizable $P$ we have $P\pitchfork \A\cong P^*\cdot A$ (either side existing if the other does).
\end{prop}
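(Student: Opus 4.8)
The plan is to prove the two claims in turn, deducing everything from the characterisation of dualizable objects recalled just above: $P$ is dualizable with dual $P^*$ if and only if $[P,-]\cong P^*\otimes-\colon\V_0\to\V_0$ (equivalently $P\pitchfork-\cong P^*\cdot-$ when these operations are computed in $\V$ itself).

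First I would establish the formula $P\pitchfork A\cong P^*\cdot A$ for a general $\V$-category $\A$. Both sides are characterised by universal properties in terms of hom-objects: for any $B\in\A$,
\[
\A(B,P\pitchfork A)\cong [P,\A(B,A)]\cong P^*\otimes\A(B,A)\cong\A(P^*\cdot A,B)^{\text{(appropriate variance)}},
\]
where the middle isomorphism is the defining property of dualizability applied in $\V$, and the outer ones are the defining properties of the power and the copower. More carefully: the copower $P^*\cdot A$ is defined by $\A(P^*\cdot A,B)\cong [P^*,\A(A,B)]\cong P\otimes\A(A,B)$ — wait, one must be careful which of $P,P^*$ appears where; using $(P^*)^*\cong P$ one gets $\A(P^*\cdot A,B)\cong[P^*,\A(A,B)]\cong P\otimes\A(A,B)$ naturally in $B$. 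On the other hand $\A(B,P\pitchfork A)\cong[P,\A(B,A)]\cong P^*\otimes\A(B,A)$. To compare the two I would instead argue that $P^*\cdot A$ represents the weight $\V(I,[P,-])\cong\V(I,P^*\otimes-)$ appropriately, or more cleanly: show directly that $P^*\cdot A$ has the universal property of $P\pitchfork A$, i.e. $\A(B,P^*\cdot A)\cong[P,\A(B,A)]$, by computing $\A(B,P^*\cdot A)$ via the copower formula composed with the hom-tensor adjunction and dualizability. Since all isomorphisms are $\V$-natural in $B$, the enriched Yoneda lemma gives $P^*\cdot A\cong P\pitchfork A$, and "either side existing if the other does" follows because each universal property exhibits the same object.

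Second I would treat absoluteness. A weighted (co)limit is absolute if it is preserved by every $\V$-functor. For copowers by $P^*$: given $F\colon\A\to\B$, I must show $F(P^*\cdot A)\cong P^*\cdot FA$. The cleanest route is to use that copowers, like all colimits, can be detected through representables, and that the copower $P^*\cdot A$ satisfies $\A(P^*\cdot A,-)\cong P\otimes\A(A,-)$ (using $(P^*)^*\cong P$). Since $\V$-functors preserve the tensoring of hom-objects by a fixed object $P\in\V$ — this is exactly the statement that the endofunctor $P\otimes-$ on $\V$ commutes with the action of $\V$-functors on homs, which holds because $P$ is dualizable so $P\otimes-\cong[P^*,-]$ is itself a "$\V$-natural" operation — one deduces $\B(F(P^*\cdot A),-)\cong P\otimes\B(FA,-)$, whence $F(P^*\cdot A)$ has the universal property of $P^*\cdot FA$. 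Powers are handled dually, or simply by invoking the first part: $P\pitchfork A\cong P^*\cdot A$ and a copower by a dualizable object is again a copower by a dualizable object (namely $(P^*)^*\cong P$ is dualizable), so absoluteness of copowers by dualizable objects immediately gives absoluteness of powers by dualizable objects.

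I expect the main obstacle to be purely bookkeeping: getting the variances and the placement of $P$ versus $P^*$ consistent across the power, the copower, and the dualizability isomorphism, and making precise the claim that "$\V$-functors commute with tensoring homs by a dualizable object." The substantive point is that for dualizable $P$ the operation $P\otimes-$ on $\V_0$ underlies a $\V$-enriched functor isomorphic to $[P^*,-]$, and $\V$-functoriality of $F\colon\A\to\B$ then forces $F$ to respect it; I would phrase this via the isomorphism $\A(P^*\cdot A,B)\cong P\otimes\A(A,B)$ being $\V$-natural and appeal to enriched Yoneda rather than chasing elements. Everything else — existence "if the other side exists", the triangle-identity manipulations — is routine once the representable-level isomorphisms are in place.
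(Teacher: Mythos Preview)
Your plan has the two parts in the wrong order, and this is not merely bookkeeping. The copower universal property gives you $\A(P^*\cdot A,-)\cong[P^*,\A(A,-)]$, \emph{not} $\A(-,P^*\cdot A)$; there is no ``copower formula'' for $\A(B,P^*\cdot A)$ to compose with dualizability. The paper obtains that isomorphism precisely by first proving absoluteness: once copowers by $P^*$ are absolute, the covariant representable $\A(B,-)\colon\A\to\V$ preserves them, giving $\A(B,P^*\cdot A)\cong P^*\otimes\A(B,A)\cong[P,\A(B,A)]$, and the isomorphism $P\pitchfork A\cong P^*\cdot A$ follows by Yoneda. So the isomorphism depends on absoluteness, not the other way round.

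Your direct argument for absoluteness also has a gap. Knowing $\A(P^*\cdot A,-)\cong P\otimes\A(A,-)$ tells you nothing about $\B(F(P^*\cdot A),-)$ for an arbitrary $\V$-functor $F$: a $\V$-functor gives you comparison maps $\A(X,Y)\to\B(FX,FY)$, but no control over the hom-objects $\B(FX,-)$ out of an image object. The sentence ``$\V$-functors preserve the tensoring of hom-objects by a fixed object $P$'' does not encode a usable statement here. The paper sidesteps this entirely by invoking the characterisation of Cauchy weights already recorded in the background section (Street \cite{Str1983:articolo}, or \cite[Theorem~6.22]{KS05:articolo}): a weight $M$ is absolute iff $M*-$ is continuous. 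For copowers by $P$, this functor is $P\otimes-\cong[P^*,-]$, which is continuous as a right adjoint. That one line gives absoluteness, after which the isomorphism and the case of powers drop out as in your final paragraph.
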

\begin{proof}
	Let $P$ be dualizable in $\V$; then $P\otimes -\cong [P^*,-]$ is continuous and hence copowers by $P$ are dualizable by \cite[Theorem~6.22]{KS05:articolo}. About the last statement, consider the following isomorphisms natural in $B\in\A$
	\begin{equation*}
		\begin{split}
			\A(B,P\pitchfork A)&\cong [P,\A(B,A)]\\
			&\cong P^*\otimes \A(B,A)\\
			&\cong \A(B,P^*\cdot A)\\
		\end{split}
	\end{equation*}
	where the last isomorphism holds since copowers by $P^*$ are absolute; therefore $P\pitchfork A\cong P^*\cdot A$. Finally, powers by $P$ are absolute because they are the same as copowers by $P^*$.
\end{proof}

\subsection{Locally dualizable categories}\label{locallydualiza}

We can now introduce the bases of enrichment considered in this section:

\begin{Def}\label{locdualiz}
	Let $\V=(\V_0,\otimes,I)$ be a cocomplete symmetric monoidal closed category; we say that $\V$ is {\em locally dualizable} if: 
	\begin{enumerate}\setlength\itemsep{0.25em}
		\item[(a)] $\V_0$ has finite direct sums;
		\item[(b)] The unit $I$ is regular projective and finitely presentable;
		\item[(c)] $\V_0$ has a strong generator $\G$ made of dualizable objects;
		\item[(d)] for every $A,B\in\V$ and every arrow $z\colon I\to A\otimes B$ there exists a dualizable object $P\in\V$ and maps $x\colon P\to A$ and $y\colon P^*\to B$ such that
		\begin{center}
			\begin{tikzpicture}[baseline=(current  bounding  box.south), scale=2]

				\node (b0) at (1.3,0.7) {$P\otimes P^*$};
				\node (c0) at (0,0.7) {$I$};
				\node (d0) at (1.3,0) {$A\otimes B$};
				
				\path[font=\scriptsize]
				
				(c0) edge [->] node [above] {$\eta_P$} (b0)
				(b0) edge [->] node [right] {$x\otimes y$} (d0)
				(c0) edge [->] node [below] {$z\ $} (d0);
				
			\end{tikzpicture}	
		\end{center}
		commutes. 
	\end{enumerate}
\end{Def}

{\bf Notation:} From now on we write simply $x\stackrel{\scriptscriptstyle{P}}{\otimes}y$ in place of the composite $(x\otimes y)\circ\eta_P$, for any dualizable object $P$, $x\colon P\to A$, and $y\colon P^*\to B$. Note in particular that if $P=I$ then $x\stackrel{\scriptscriptstyle{I}}{\otimes} y$ is just $x\otimes y$ up to the isomorphism $I\otimes I\cong I$.

By condition $(a)$, we know in particular that $\V_0$ is the underlying category of a $\bo{CMon}$-enriched category $\bar{\V}$; this notation will also be used later on.

For every $P\in \G$ we have $[P,-]\cong P^*\otimes-$, so that $[P,-]$ is cocontinuous and hence $\V_0(P,-)\cong \V_0(I,[P,-])$ is finitary and preserves regular epimorphisms. Therefore $\G$ is a strong generator made of finitely presentable regular projective objects and $\V_0$ is hence a finitary quasivariety \cite[Definition~4.4]{LT20:articolo}, and in particular locally finitely presentable. Moreover, for any $P,Q\in\G$, the functor $\V_0(P\otimes Q,-)\cong \V_0(P,[Q,-])$ is finitary and preserves regular epimorphisms as well. This means that $\V$ is actually a symmetric monoidal finitary quasivariety in the sense of \cite[Definition~4.11]{LT20:articolo}, and in particular $\V$ is locally finitely presentable as a closed category.

\begin{prop}\label{four-prime}
	Assume that $\V=(\V_0,I,\otimes)$ satisfies the conditions $(a),(b)$ and $(c)$ above. Then $(d)$ holds if and only if:
	\begin{itemize}
		\item[$(d^*)$] for every $A,B\in\V$, any arrow $z\colon I\to A\otimes B$ can be written as 
		$$ z=\sum_{i=1}^n (x_i\stackrel{\scriptscriptstyle{P_i}}{\otimes} y_i)$$
		for some $P_i\in\G$, $x_i\colon P_i\to A$, and $y_i\colon P_i^*\to B$. 
	\end{itemize}
	When writing the sum above we are seeing $\V_0$ as the $\bo{CMon}$-category $\bar{\V}$.
\end{prop}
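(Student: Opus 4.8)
The plan is to prove the equivalence $(d)\Leftrightarrow(d^*)$ directly. The direction $(d^*)\Rightarrow(d)$ is the straightforward one: given a representation $z=\sum_{i=1}^n(x_i\stackrel{\scriptscriptstyle{P_i}}{\otimes}y_i)$ with $x_i\colon P_i\to A$ and $y_i\colon P_i^*\to B$, I would set $P:=\bigoplus_{i=1}^n P_i$, which is again dualizable (a finite direct sum of dualizables is dualizable, with $P^*\cong\bigoplus_i P_i^*$, using that $[-,-]$ sends finite direct sums in the first variable to finite direct products, which coincide with direct sums here since $\V_0$ is additive-like via condition $(a)$). The coproduct injections and product projections assemble $x\colon P\to A$ from the $x_i$ and $y\colon P^*\to B$ from the $y_i$, and one checks that $\eta_P$ decomposes through the $\eta_{P_i}$ so that $(x\otimes y)\circ\eta_P=\sum_i(x_i\otimes y_i)\circ\eta_{P_i}=z$. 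This is a routine diagram chase in the $\bo{CMon}$-enriched category $\bar\V$.

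For $(d)\Rightarrow(d^*)$, the idea is to start from the single dualizable $P$ and maps $x\colon P\to A$, $y\colon P^*\to B$ provided by $(d)$ with $z=(x\otimes y)\circ\eta_P$, and then \emph{resolve $P$ by objects of $\G$}. Since $\G$ is a strong generator of finitely presentable objects (as noted just before Proposition~\ref{four-prime}), the canonical cocone from the objects of $\G$ mapping into $P$ is jointly a regular epimorphism onto $P$; more usefully, because $P$ is finitely presentable and $\G$-objects are finitely presentable projectives, $P$ is a \emph{retract of a finite direct sum} $\bigoplus_{i=1}^n Q_i$ of objects $Q_i\in\G$. (Concretely: the regular epi $\bigoplus_{Q\in\G, f\in\V_0(Q,P)} Q\twoheadrightarrow P$ splits since $P$ is regular projective, then one restricts to a finite subsum by finite presentability.) So there are $r\colon \bigoplus_i Q_i\to P$ and $s\colon P\to\bigoplus_i Q_i$ with $rs=1_P$. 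Dualizing, $P^*$ is a retract of $\bigoplus_i Q_i^*$ via $s^*,r^*$. Then I would write $\eta_P$ through this retract: $\eta_P=(r\otimes r^*)\circ(s\otimes s^*)\circ\eta_P$... but a cleaner route is to use the standard fact that for a retract $r\colon Q\to P$, $s\colon P\to Q$ with $Q$ dualizable, one has $\eta_P=(r\otimes s^*)\circ\eta_Q$ (this is the way retracts of dualizables are dualizable). Hence
\begin{align*}
z=(x\otimes y)\circ\eta_P=(x\otimes y)\circ(r\otimes s^*)\circ\eta_Q=((xr)\otimes(s^*y))\circ\eta_Q,
\end{align*}
where $Q=\bigoplus_i Q_i$. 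Now I expand $\eta_Q$ over the direct sum: $\eta_Q\colon I\to Q\otimes Q^*=(\bigoplus_i Q_i)\otimes(\bigoplus_j Q_j^*)\cong\bigoplus_{i,j}Q_i\otimes Q_j^*$, and the coevaluation for a direct sum is the ``diagonal'' $\sum_i(\iota_i\otimes\iota_i)\circ\eta_{Q_i}$ landing only in the diagonal summands (this uses additivity: $\epsilon$ and $\eta$ for $\bigoplus Q_i$ are built from those of the $Q_i$ together with zero cross-terms). Composing, $z=\sum_{i=1}^n\bigl((xr\iota_i)\stackrel{\scriptscriptstyle{Q_i}}{\otimes}(\iota_i^* s^* y)\bigr)$ — wait, I must be careful with which projection/injection, but the upshot is $z=\sum_{i=1}^n(x_i\stackrel{\scriptscriptstyle{Q_i}}{\otimes}y_i)$ with $x_i\colon Q_i\to A$ and $y_i\colon Q_i^*\to B$ the evident composites, and each $Q_i\in\G$. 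That is exactly $(d^*)$.

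\textbf{Main obstacle.} The delicate point is the additive/$\bo{CMon}$-bookkeeping around the coevaluation of a finite direct sum: one must verify that $\eta_{\bigoplus_i Q_i}$ really is the sum of the ``diagonal'' composites $(\iota_i\otimes\iota_i')\circ\eta_{Q_i}$ with vanishing off-diagonal contributions, and that this interacts correctly with the retraction $s,r$ and with the formula $\eta_P=(r\otimes s^*)\circ\eta_Q$ for retracts. All of this is elementary once one works consistently inside $\bar\V$ and uses that $\otimes$ is additive in each variable (which follows from $\V$ being closed, so $-\otimes X$ is a left adjoint and preserves finite direct sums), but it is easy to get signs/variances tangled; I would set up notation carefully with the canonical iso $(\bigoplus_i Q_i)\otimes(\bigoplus_j Q_j^*)\cong\bigoplus_{i,j}(Q_i\otimes Q_j^*)$ fixed once and for all, and then the computation is forced. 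Everything else — existence of the finite retract resolution, dualizability of finite direct sums and of retracts — is standard and has been essentially recorded in the paragraph preceding the statement.
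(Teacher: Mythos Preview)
Your proposal is correct and follows essentially the same route as the paper: for $(d^*)\Rightarrow(d)$ take $P=\bigoplus_i P_i$ and assemble $x,y$ from the $x_i,y_i$; for $(d)\Rightarrow(d^*)$ realize $P$ as a retract of a finite direct sum $Q=\bigoplus_i Q_i$ with $Q_i\in\G$, use the retract formula $\eta_P=(r\otimes s^*)\circ\eta_Q$, and then take components over the direct sum. The only slip is notational --- you write $s^*y$ where you mean $y\circ s^*$ (and similarly later), but you flag the variance issue yourself and the argument is unaffected.
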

\begin{proof}
	Fix $A,B\in\V$ and an arrow $z\colon I\to A\otimes B$. If $(d^*)$ holds then it's enough to consider $P:=\textstyle\bigoplus_{i=1}^nP_i$, $x:=\textstyle\sum_{i=1}^nx_i$, and $y:=\textstyle\sum_{i=1}^ny_i$. Then $P$ is still dualizable, with dual $P^*\cong \textstyle\bigoplus_{i=1}^nP^*_i$, and by construction $x\stackrel{\scriptscriptstyle{P}}{\otimes} y=z$.
	
	Conversely, assume that $(d)$ holds. Since $\G$ is a strong generator made of finitely presentable and projective objects, and $P$ is finitely presentable and regular projective as well, we can find $P_1,\dots,P_n\in\G$ and a split epimorphism $q\colon Q:=\textstyle\bigoplus_{i=1}^nP_i\twoheadrightarrow P$, with section $s\colon P\rightarrowtail Q$ (see for example \cite[Proposition~4.8.(4)]{LT20:articolo}). It follows that the following triangle commutes
	\begin{center}
		\begin{tikzpicture}[baseline=(current  bounding  box.south), scale=2]

			\node (b0) at (1.3,0.8) {$\ Q\otimes Q^*$};
			\node (c0) at (0,0.8) {$I$};
			\node (d0) at (1.3,0) {$\ P\otimes P^*$};
			
			\path[font=\scriptsize]
			
			(c0) edge [->] node [above] {$\eta_{Q} $} (b0)
			(b0) edge [->] node [right] {$q\otimes s^*$} (d0)
			(c0) edge [->] node [below] {$\eta_P\ $} (d0);
			
		\end{tikzpicture}	
	\end{center}
	in other words $\eta_P=q\stackrel{\scriptscriptstyle{Q}}{\otimes}  s^*$. Therefore $z=x\stackrel{\scriptscriptstyle{P}}{\otimes}  y= (x\circ q)\stackrel{\scriptscriptstyle{Q}}{\otimes}  (y\circ s^*)$. Now it's enough to define $x_i$ and $y_i$ as the $i$-th components of $(x\circ q)$ and $(y\circ s^*)$ respectively.
\end{proof}

\begin{obs}
	Assume that $\V$ satisfies conditions $(a),(b),$ and $(c)$. Define the map
	$$q_{A,B}:=\sum (x\stackrel{\scriptscriptstyle{P}}{\otimes}  y)\colon \sum_{(P,x\colon P\to A,\\ y\colon P^*\to B)}I\longrightarrow A\otimes B. $$
	It's then easy to see that $\V$ satisfies $(d^*)$ if and only if $\V_0(I,q_{A,B})$ is a surjection. It is not true in general, not even for graded abelian groups, that the map $q_{A,B}$ is a regular epimorphism in $\V_0$. What is true in some cases, which include graded abelian groups, is that the following
	$$\sum_{(P,Q,x\colon P\to A,\\ y\colon P^*\otimes Q\to B)}Q\longrightarrow A\otimes B $$
	is a regular epimorphism in $\V_0$; where the component $(P,Q,x,y)$ is given by the composite
	$$ Q\cong I\otimes Q\stackrel{i_P}{\longrightarrow}(P\otimes P^*)\otimes Q\cong P\otimes (P^*\otimes Q)\stackrel{x\otimes y}{\xrightarrow{\hspace*{0.8cm}}} A\otimes B. $$
	But condition $(d)$ is all we need.
\end{obs}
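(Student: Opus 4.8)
The plan is to identify, for each fixed pair $A,B\in\V$, the image of the function $\V_0(I,q_{A,B})$ with the set of finite sums $\textstyle\sum_{i=1}^n (x_i\stackrel{\scriptscriptstyle{P_i}}{\otimes}y_i)$ appearing in $(d^*)$. Once this identification is in hand the equivalence is immediate: $(d^*)$ asks precisely that every $z\colon I\to A\otimes B$ be such a finite sum, while surjectivity of $\V_0(I,q_{A,B})$ asks precisely that every such $z$ lie in the image. Here the coproduct $\textstyle\sum_{(P,x,y)}I$ defining $q_{A,B}$ is understood to be indexed by triples with $P$ ranging over the small generating family $\G$, so that it is a genuine small coproduct in $\V_0$.

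First I would exploit the $\bo{CMon}$-enrichment. Since $\V_0$ has finite direct sums (condition $(a)$) it underlies the $\bo{CMon}$-category $\bar{\V}$, so each $\V_0(I,X)$ is a commutative monoid and, by bilinearity of composition, each $\V_0(I,g)$ is a monoid homomorphism. In particular the image of $\V_0(I,q_{A,B})$ is a submonoid of $\V_0(I,A\otimes B)$. Composing $q_{A,B}$ with the coprojection at a triple $(P,x,y)$ returns exactly $x\stackrel{\scriptscriptstyle{P}}{\otimes}y$, so every such generator lies in the image, and hence so does every finite sum of generators. This already settles the easy implication $(d^*)\Rightarrow$ surjective.

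For the converse I must prove the reverse inclusion, that every element of the image is a finite sum of generators. Given $w\colon I\to\textstyle\sum_{(P,x,y)}I$, I would use that $I$ is finitely presentable (condition $(b)$) to factor $w$ through a finite subcoproduct $\textstyle\sum_j I$; since this finite coproduct is a biproduct, writing its identity as $\textstyle\sum_j\iota_j\pi_j$ in $\bar{\V}$ expresses $w=\textstyle\sum_j\iota_j w_j$ with scalars $w_j:=\pi_j w\colon I\to I$. Applying the homomorphism $\V_0(I,q_{A,B})$ then yields $q_{A,B}\circ w=\textstyle\sum_j (x_j\stackrel{\scriptscriptstyle{P_j}}{\otimes}y_j)\circ w_j$. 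It remains to check that each term is again a single generator, and this is the one genuinely non-formal point: one absorbs the scalar $w_j$ into the left-hand tensor factor of $\eta_{P_j}$, using the standard fact that scalars $I\to I$ slide freely across tensor factors in a symmetric monoidal category, which rewrites $(x_j\stackrel{\scriptscriptstyle{P_j}}{\otimes}y_j)\circ w_j$ as $x_j'\stackrel{\scriptscriptstyle{P_j}}{\otimes}y_j$ for a suitably modified $x_j'\colon P_j\to A$. Thus $q_{A,B}\circ w$ is a finite sum of generators; the two inclusions give the claimed identification, and so surjectivity forces $(d^*)$.

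The only real obstacle is this scalar-absorption step; everything else is bookkeeping about the $\bo{CMon}$-enrichment and the factorization through a finite subcoproduct. In the concrete bases of interest this step is transparent, since $\mathrm{End}(I)$ is the base ring acting centrally, which is why the statement is \emph{easy to see}; in general one simply invokes the centrality of scalars in a symmetric monoidal category rather than verifying it by hand.
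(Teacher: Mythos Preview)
Your argument is correct. The paper gives no proof of this remark beyond the phrase ``easy to see'', so there is nothing to compare against; what you have written is a clean unpacking of that phrase. The forward implication is immediate once one observes that the image of $\V_0(I,q_{A,B})$ is a submonoid containing each $x\stackrel{\scriptscriptstyle P}{\otimes}y$, and your treatment of the converse---factoring through a finite subcoproduct via finite presentability of $I$, decomposing via the biproduct structure, and absorbing each scalar $w_j$ into the left tensor leg---is exactly right. The scalar-absorption step, which you correctly identify as the only non-formal point, reduces to the standard fact that $s\cdot 1_{X\otimes Y}=(s\cdot 1_X)\otimes 1_Y$ and that scalar multiplication is natural, so that $\eta_{P_j}\circ w_j=((w_j\cdot 1_{P_j})\otimes 1_{P_j^*})\circ\eta_{P_j}$; your invocation of centrality of scalars in a symmetric monoidal category is precisely this. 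Your reading of the indexing set as $P\in\G$ is also the natural one given that $(d^*)$ is stated for $P_i\in\G$.
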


\begin{prop}
	Let $\V$ be locally dualizable with strong generator $\G$, and let $\C$ be a small compact closed $\V$-category. Then $\W_0:=[\C,\V]_0$, endowed with Day's convolution as tensor products, is locally dualizable with strong generator $$\G':=\{P\cdot Yg\ |\ P\in\G, g\in\C\},$$ where $Yg=\C(g,-)$. 
\end{prop}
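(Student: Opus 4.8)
The plan is to verify, in turn, the four conditions $(a)$--$(d)$ of Definition~\ref{locdualiz} for $\W=[\C,\V]$ with Day convolution $*$. First I would record the standard preliminaries: since $\V$ is cocomplete symmetric monoidal closed and $\C$, being compact closed, is small symmetric monoidal, $\W$ is again cocomplete symmetric monoidal closed, with colimits computed pointwise and monoidal unit $I_\W=Y(I_\C)$, the representable at the unit object $I_\C$ of $\C$. Conditions $(a)$ and $(b)$ then drop out quickly: finite direct sums in $\W_0$ are pointwise, so $(a)$ follows from $(a)$ for $\V$; and $\W_0(I_\W,-)\cong\V_0(I,\tx{ev}_{I_\C}-)$ is the composite of the cocontinuous $\tx{ev}_{I_\C}$ with $\V_0(I,-)$, which preserves filtered colimits and regular epimorphisms by $(b)$ for $\V$, so $I_\W$ is finitely presentable and regular projective.

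For $(c)$ I would use that dualizable objects of $\W$ are closed under Day products, together with two observations: the representable $Yg$ is dualizable with dual $Y(g^*)$, since the Yoneda embedding is strong monoidal for $*$ and $g$ is dualizable in the compact closed $\C$; and $P\cdot I_\W$ is dualizable with dual $P^*\cdot I_\W$, directly from $\eta_P,\epsilon_P$ and the interchange $(P\cdot F)*(P'\cdot F')\cong(P\otimes P')\cdot(F*F')$ (valid since copowers and $-*F'$ are cocontinuous). Hence each $P\cdot Yg\cong(P\cdot I_\W)*Yg$ is dualizable, with dual $P^*\cdot Y(g^*)$. That $\G'$ is a strong generator I would get from the isomorphism $\W_0(P\cdot Yg,-)\cong\V_0(P,\tx{ev}_g-)$: the set $\G'$ is small (as $\G$ and $\tx{ob}\,\C$ are), the family $\{\tx{ev}_g\}_{g\in\C}$ is jointly conservative on $\W_0$, the family $\{\V_0(P,-)\}_{P\in\G}$ is jointly conservative on $\V_0$, and $\W_0$ is locally presentable, so this small jointly conservative family strongly generates.

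The real content is $(d)$. Given $(a)$--$(c)$, Proposition~\ref{four-prime} reduces it to $(d^*)$ for $\W$ relative to $\G'$. I would start from $z\colon I_\W\to A*B$ and pass, via the enriched Yoneda lemma, to $\tilde z\colon I\to(A*B)(I_\C)$. Using that $\C$ is compact closed one has $\C(a\otimes b,I_\C)\cong\C(b,a^*)$ naturally, so by co-Yoneda in $b$ the defining coend simplifies to $(A*B)(I_\C)\cong\int^{a}Aa\otimes B(a^*)$. This coend is a regular quotient of $\bigoplus_{a\in\C}Aa\otimes B(a^*)$; since $I$ is regular projective $\tilde z$ lifts to $\hat z\colon I\to\bigoplus_{a\in\C}Aa\otimes B(a^*)$, and since $I$ is finitely presentable this lift factors through a finite subsum $\bigoplus_{a\in S}Aa\otimes B(a^*)$, whence decomposing along the biproduct gives $\tilde z=\sum_{a\in S}\lambda_a\circ w_a$ with $w_a\colon I\to Aa\otimes B(a^*)$ and $\lambda_a$ the coend injection. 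Applying $(d^*)$ for $\V$ to each $w_a$ yields $w_a=\sum_j x_{a,j}\stackrel{\scriptscriptstyle{P_{a,j}}}{\otimes}y_{a,j}$ with $P_{a,j}\in\G$, $x_{a,j}\colon P_{a,j}\to Aa$, $y_{a,j}\colon P_{a,j}^*\to B(a^*)$; transposing along $\W_0(P\cdot Yg,-)\cong\V_0(P,\tx{ev}_g-)$ produces $\bar x_{a,j}\colon P_{a,j}\cdot Ya\to A$ and $\bar y_{a,j}\colon(P_{a,j}\cdot Ya)^*=P_{a,j}^*\cdot Y(a^*)\to B$, with $R_{a,j}:=P_{a,j}\cdot Ya\in\G'$. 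It then remains to check $z=\sum_{a\in S}\sum_j\bar x_{a,j}\stackrel{\scriptscriptstyle{R_{a,j}}}{\otimes}\bar y_{a,j}$.

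I expect this last reconciliation to be the main obstacle: one must show that, under the isomorphism $\W_0(I_\W,A*B)\cong\V_0(I,\int^{a}Aa\otimes B(a^*))$, the element $\bar x\stackrel{\scriptscriptstyle{R}}{\otimes}\bar y$ goes to $\lambda_c\circ(x\stackrel{\scriptscriptstyle{P}}{\otimes}y)$ whenever $R=P\cdot Yc$. This is a diagram chase unwinding the unit $\eta_R$ in terms of $\eta_P$ and the duality unit $Y\eta_c$, together with the interchange iso for copowers and the naturality of the Yoneda and co-Yoneda identifications; it is conceptually routine but notationally heavy. Everything else in the proof — pointwise colimits, strong monoidality of $Y$, and the compact closed structure of $\C$ — is bookkeeping.
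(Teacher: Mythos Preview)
Your proposal is correct and follows essentially the same route as the paper: both compute $(A*B)(I_\C)\cong\int^{a}Aa\otimes B(a^*)$ via compact closedness, lift through the regular epimorphism $\bigoplus_a Aa\otimes B(a^*)\twoheadrightarrow\int^a Aa\otimes B(a^*)$ using that $I$ is finitely presentable and regular projective, apply condition $(d)$/$(d^*)$ in $\V$ componentwise, and transpose back to $\W$ to conclude via Proposition~\ref{four-prime}. You are more explicit than the paper about verifying that $\G'$ is a strong generator and about the final reconciliation step (which the paper simply asserts), but the architecture is the same.
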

\begin{proof}
	The category $\W$ is symmetric monoidal closed and cocomplete by construction.
	Observe that the unit of $\W_0$ is $Ye=\C(e,-)$, where $e$ is the unit of $\C$; this is regular projective and finitely presentable because
	$$ \W_0(Ye,-)\cong \V_0(I,[\C,\V](Ye,-))\cong \V_0(I,\tx{ev}_e-) $$
	and the unit $I$ of $\V$ is such. Moreover $\W$ has finite direct sums since $\V$ has them and the representables have duals $(Yg)^*=Y(g^*)$. For each $P\in\G$ and $A\in\W$ we have
	$$ [\C,\V]_0(P\cdot Yg,A)\cong \V_0(P,[\C,\V](Yg,A))\cong \V_0(P,A(g)).$$
	In particular an arrow $z\colon Ye\to A\otimes B$ in $\W$ corresponds to an arrow $\bar{z}\colon I\to (A\otimes B)(e)$ in $\V$. Now note that by definition 
	\begin{equation*}
		\begin{split}
			(A\otimes B)(e) &\cong \int^{g,h}\C(g\otimes h,e)\otimes A(g)\otimes B(h)\\
			&\cong \int^{g,h}\C(h,g^*)\otimes A(g)\otimes B(h)\\
			&\cong \int^{g}A(g)\otimes B(g^*)\\
		\end{split}
	\end{equation*}
	and that moreover we have a regular epimorphism in $\V$
	$$ \sum_{g}A(g)\otimes B(g^*) \twoheadrightarrow \int^{g}A(g)\otimes B(g^*). $$
	Since $I$ is finitely presentable and projective, $\bar{z}$ factors as a finite direct sum $\bar{z}=\textstyle\sum_i\bar{y}_{i}$ for some $\bar{y}_{i}\colon I\to A(g_i)\otimes B(g_i^*)$; by hypothesis we have $\bar{y}_i= \bar{x}_{i}\stackrel{\scriptscriptstyle{P_i}}{\otimes}  \bar{y_{i}}$ for some dualizable $P_{i}$ in $\V$, $\bar{x}_{i}\colon P_{i}\to A(g_i)$ and $\bar{y}_{i}\colon P_{i}^*\to B(g_i^*)$. Let $x_{i}\colon P_{i}\cdot Yg_i\to A$ and $y_{i}\colon (P_{i}\cdot Yg_i)^*\cong (P_{i}^*\cdot Y(g_i^*))\to B$ be the induced morphisms in $\W$. Then $$z=\sum_{i} x_{i}\stackrel{\scriptscriptstyle{P_{i}\cdot Yg_i}}{\otimes}  y_{i}, $$ and therefore $\W$ is locally dualizable by Proposition~\ref{four-prime} with strong generator $\G':=\{P\cdot Yg\ |\ P\in\G, g\in\C\}$.
\end{proof}

As a consequence, for any compact closed $\bo{CMon}$-category $\C$ the presheaf category $[\C^{op},\bo{CMon}]$ is locally dualizable with strong generator given by the representables.

All the examples below can be constructed as above starting from the first.

\begin{ese}\label{locduaexample}$ $\
	\begin{itemize}\setlength\itemsep{0.25em}
		\item The symmetric monoidal category $\bo{CMon}$ of commutative monoids is locally dualizable with strong generator $\G=\{\mathbb{N}\}$.
		\item The symmetric monoidal category $\bo{Ab}$ of abelian groups; more generally the symmetric monoidal closed category $R\tx{-}\bo{Mod}$ of modules over a commutative ring $R$ with $\G=\{R\}$.
		\item The symmetric monoidal category $\bo{G}\tx{-}\bo{Gr}(R\tx{-}\bo{Mod})$ of $\bo{G}$-graded $R$-modules, for an abelian group $\bo{G}$ and a commutative ring $R$, with $\G=\{S_gR\}_{g\in\bo{G}}$.
	\end{itemize}
\end{ese}

\begin{prop}
	Let $\bar{\V}$ be a cocomplete symmetric monoidal closed $\bo{CMon}$-category; then $\V$ is locally dualizable if and only if there exists a small compact closed $\G$ and a monoidal adjunction
	\begin{center}
		
		\begin{tikzpicture}[baseline=(current  bounding  box.south), scale=2]

			\node (f) at (0,0.4) {$\bar{\V}$};
			\node (g) at (1.8,0.4) {$[\G^{op},\bo{CMon}]$};
			\node (h) at (0.66, 0.48) {$\perp$};
			\path[font=\scriptsize]

			([yshift=-1.3pt]f.east) edge [->] node [below] {$U$} ([yshift=-1.3pt]g.west)
			([yshift=2pt]f.east) edge [bend left,<-] node [above] {$F$} ([yshift=2pt]g.west);
		\end{tikzpicture}
		
	\end{center}
	(where $[\G^{op},\bo{CMon}]$ has Day's convolution as monoidal structure) with $U$ a conservative, filtered-colimit-preserving, and regular $\bo{CMon}$-functor for which the comparison maps
	$$ UA\otimes UB\to U(A\otimes B) $$
	are regular epimorphisms for any $A,B\in\V_0$. The last requirement is also equivalent to 
	$$ UFX\otimes UFY\to UF(X\otimes Y) $$
	being regular epimorphisms for any $X,Y\in[\G^{op},\bo{CMon}]$.
\end{prop}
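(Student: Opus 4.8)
The plan is to prove the two implications separately, using throughout the structural facts recorded after Definition~\ref{locdualiz} (such a $\V$ is a symmetric monoidal finitary quasivariety and $[P,-]\cong P^*\otimes-$ for dualizable $P$) together with the standard facts that Day convolution on $[\G^{op},\bo{CMon}]$ over a compact closed $\G$ makes the Yoneda embedding strong monoidal with $(Yg)^*\cong Y(g^*)$, that $[\G^{op},\bo{CMon}]$ is the free symmetric monoidal cocompletion of $\G$, and that strong monoidal functors preserve duals.

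\emph{Locally dualizable $\Rightarrow$ the adjunction.} Starting from a locally dualizable $\V$ with strong generator $\G_0$ of dualizable objects, I would first replace $\G_0$ by the (still small) full subcategory $\G\subseteq\bar{\V}$ obtained by closing $\G_0\cup\{I\}$ under $\otimes$ and $(-)^*$; it is still a strong generator and is now compact closed, and by Proposition~\ref{four-prime} condition $(d^*)$ holds with respect to it. The inclusion $i\colon\G\hookrightarrow\bar{\V}$ is strong monoidal into a cocomplete symmetric monoidal closed category, so $F:=\tx{Lan}_Y i$ exists, is cocontinuous, and is strong monoidal (universal property of Day convolution); hence it has a right adjoint $U$, computed pointwise as $\bar{\V}(i-,-)$, and $F\dashv U$ is a monoidal adjunction. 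Conservativity of $U$ is immediate from $\G$ being a strong generator, and preservation of filtered colimits and of regular epimorphisms holds pointwise because each $\bar{\V}(g,-)\cong\bar{\V}(I,g^*\otimes-)$ is finitary and preserves regular epimorphisms (condition $(b)$). The one substantive point is that each comparison map $\mu_{A,B}\colon UA\otimes UB\to U(A\otimes B)$ is a regular epimorphism; since regular epimorphisms in $[\G^{op},\bo{CMon}]$ are pointwise, this means every $z\colon P\to A\otimes B$ with $P\in\G$ is a finite sum $\sum_k(u_k\otimes v_k)\circ\sigma_k$ with $\sigma_k\colon P\to a_k\otimes b_k$, $u_k\colon a_k\to A$, $v_k\colon b_k\to B$, $a_k,b_k\in\G$. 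I would deduce this by transposing $z$ across the dualizability of $P$ to $\bar z\colon I\to (P^*\otimes A)\otimes B$, applying $(d^*)$ to write $\bar z=\sum_k \xi_k\stackrel{\scriptscriptstyle{Q_k}}{\otimes}\eta_k$ with $Q_k\in\G$, $\xi_k\colon Q_k\to P^*\otimes A\cong[P,A]$, $\eta_k\colon Q_k^*\to B$, and then taking $a_k:=P\otimes Q_k$, $b_k:=Q_k^*$, $u_k$ the transpose of $\xi_k$, $v_k:=\eta_k$, $\sigma_k:=1_P\otimes\eta_{Q_k}$, the resulting identity being a triangle-identity computation.

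\emph{The adjunction $\Rightarrow$ locally dualizable.} Given the adjunction, $(a)$ holds since a cocomplete $\bo{CMon}$-category has biproducts. For $(b)$: $I=F(Ye)$ and $\bar{\V}(I,-)\cong(U-)(e)=\tx{ev}_e\circ U$ preserves filtered colimits and regular epimorphisms, as $U$ and $\tx{ev}_e$ do. For $(c)$: each $F(Yg)$ is dualizable ($Yg$ is, and $F$ is strong monoidal), and $\{F(Yg)\}_{g\in\G}$ is a strong generator since $\bar{\V}(F(Yg),-)\cong\tx{ev}_g\circ U$ with $U$ conservative; the same isomorphism shows each $F(Yg)$ is finitely presentable and regular projective. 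For $(d)$, by Proposition~\ref{four-prime} it suffices to check $(d^*)$ relative to $\G':=\{F(Yg)\mid g\in\G\}$; this is where the regular-epi hypothesis on $\mu$ enters. Given $z\colon I\to A\otimes B$, transpose it to $\bar z\colon Ye\to U(A\otimes B)$; as $Ye$ is regular projective in $[\G^{op},\bo{CMon}]$ (because $\tx{ev}_e$ preserves colimits) and $\mu_{A,B}$ is a regular epimorphism, $\bar z$ lifts to $\widetilde z\colon Ye\to UA\otimes UB$. Evaluating at $e$ and using $(UA\otimes UB)(e)\cong\int^{g}UA(g)\otimes UB(g^*)$ (compact closedness of $\G$ and coend calculus), which is a quotient in $\bo{CMon}$ of $\bigoplus_g UA(g)\otimes UB(g^*)$, I would lift once more to a finite sum of pure tensors $a_k\in UA(g_k)=\bar{\V}(F(Yg_k),A)$, $b_k\in UB(g_k^*)=\bar{\V}(F(Yg_k^*),B)$. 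Tracing what $\mu$ does on these representatives, and using that $F$ and $Y$ carry the duality data of $g_k$ onto that of $P_k:=F(Yg_k)$, one gets $z=\sum_k a_k\stackrel{\scriptscriptstyle{P_k}}{\otimes} b_k$ with $P_k\in\G'$, which is $(d^*)$.

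\emph{The final equivalence, and the main obstacle.} One direction is immediate: with $A=FX$, $B=FY$ and $FX\otimes FY\cong F(X\otimes Y)$, a comparison map becomes $UFX\otimes UFY\to UF(X\otimes Y)$. For the converse, fix $A,B$ and $z\colon P\to A\otimes B$ with $P\in\G$; choose a regular epimorphism $FX'\otimes FY'\twoheadrightarrow A\otimes B$ (from regular epimorphisms $FX'\twoheadrightarrow A$, $FY'\twoheadrightarrow B$ with $X',Y'$ coproducts of representables, which exist as $\G'$ is a strong generator, plus cocontinuity of $\otimes$), lift $z$ through it (as $P$ is regular projective), apply the hypothesis at $P$ to decompose the lift as $\sum_k(u'_k\otimes v'_k)\circ\sigma_k$ with $a_k,b_k\in\G$, and push the decomposition back down to $A\otimes B$; over all $P$ this says $\mu_{A,B}$ is a regular epimorphism. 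The crux of the whole statement is this regular-epi condition on the comparison maps: in the first implication it requires the somewhat delicate passage between a map $P\to A\otimes B$ and the decomposition furnished by $(d^*)$, with the Day-convolution coend and the dualizability data carefully tracked; in the second it is exactly the extra input needed to lift transposed morphisms and reconstruct $(d^*)$. A secondary point to keep in mind is that $\bo{CMon}$ is not balanced, so shortcuts such as declaring the counit of $F\dashv U$ a regular epimorphism are unavailable.
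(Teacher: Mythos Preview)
Your proof is correct and follows essentially the same route as the paper's. The forward direction is identical: close $\G$ under $I$, $\otimes$, and duals to make it compact closed, take $U=\bar\V(i-,-)$ with left adjoint $F=\tx{Lan}_Yi$, and verify the regular-epi condition on $\mu$ by transposing a map $P\to A\otimes B$ (with $P\in\G$) across the dualizability of $P$ and applying $(d^*)$; the paper transposes to $I\to A\otimes(B\otimes P^*)$ rather than $I\to(P^*\otimes A)\otimes B$, but this is immaterial.

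Two small points of comparison. In the converse direction, where you verify $(d^*)$, you unwind the coend $(UA\otimes UB)(e)\cong\int^g UA(g)\otimes UB(g^*)$ directly; the paper instead invokes the preceding proposition (that $[\G^{op},\bo{CMon}]$ with Day convolution is locally dualizable with representable generators) to decompose the lifted map $z'$ in one step. These are the same computation, yours just inlines it. For the final equivalence, the paper takes the shortcut you explicitly avoid: it observes that conservativity of $U$ makes each counit component $\epsilon_A\colon FUA\to A$ a strong epimorphism, and that in $[\G^{op},\bo{CMon}]$ strong and regular epimorphisms coincide; the naturality square for $\mu$ with respect to $\epsilon_A,\epsilon_B$ then reduces the general case to the $FX,FY$ case. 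Your version with an arbitrary regular epi $FX'\twoheadrightarrow A$ is really the same move with $X'=UA$, so your closing caveat is a bit overcautious: the relevant fact is not balancedness of $\bo{CMon}$ but rather that once $(a)$--$(c)$ are in hand, $\bar\V$ is a finitary quasivariety, so strong and regular epimorphisms coincide there too, and the counit route goes through.
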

\begin{proof}
	Assume first that $\V$ is locally dualizable; as specified at the beginning we can assume $\G$ to contain the unit and be closed in $\V_0$ under tensor product and duals. For any $P\in\G$ the $\bo{CMon}$-functor $\bar{\V}(G,-)$ preserves finite direct sums, filtered colimits, regular epimorphisms, and so also all coproducts. We can then consider the induced $\bo{CMon}$-functor $U=\bar{\V}(\G,1)\colon \bar{\V}\longrightarrow[\G^{op},\bo{CMon}]$ which is then conservative, continuous, and preserves filtered colimits, direct sums and regular epimorphism.  Moreover, since $\V_0$ is cocomplete, $U$ has a left adjoint $F$ which sends the representables to $\G$. This is a monoidal adjunction because $F$ is strong monoidal: the restriction of $F$ to $Y\G$ is strong monoidal by construction (being isomorphic to the identity) and hence, since $Y\G$ generates $[\G^{op},\bo{CMon}]$ under colimits, the tensor product is cocontinuous in each variable, and $F$ is itself cocontinuous, it follows that $F$ is strong monoidal too.
	We are only left to show that the comparisons $UA\otimes UB\to U(A\otimes B)$, induced by the monoidal structure on $U$, are regular epimorphisms in $[\G^{op},\bo{CMon}]$. This will be the case if and only if, for each $G\in\G$, the map $ (UA\otimes UB)G\to U(A\otimes B)G $ is a surjection (of monoids); that is, if and only if the canonical map
	$$ \int^{P,H\in\G}\G(G,P\otimes H)\otimes \bar{\V}(P,A)\otimes \bar{\V}(H,B)\longrightarrow\bar{\V}(G,A\otimes B)  $$
	is surjective. But $\G(G,P\otimes H)\cong\G(P^*\otimes G,H)$, so the coend on the left hand side can be rewritten as 
	$$ \int^{P\in\G}\bar{\V}(P,A)\otimes \bar{\V}(P^*\otimes G,B),  $$
	and this is covered by the coproduct of its components. As a consequence the morphism $ (UA\otimes UB)G\to U(A\otimes B)G $ is a regular epimorphism if and only if the induced map
	$$ \sum_{P\in\G} \bar{\V}(P,A)\otimes \bar{\V}(P^*\otimes G,B)\longrightarrow \bar{\V}(G,A\otimes B)$$
	is surjective. For that, given any $G\in\G$ and a map $f\colon G\to A\otimes B$, we can transpose $f$ to obtain $f^t\colon I\to A\otimes (B\otimes G^*)$. By hypothesis this can be expressed as $ f^t=\textstyle\sum_{i=1}^n (x_i\stackrel{\scriptscriptstyle{P_i}}{\otimes} y^t_i) $ for some $P_i\in\G$, $x_i\colon P_i\to A$ and $y^t_i\colon P_i^*\to B\otimes G^*$. Transposing again we can then write 
	\begin{equation}\label{G-sums}
		f=\sum_{i=1}^n (x_i\otimes y_i)\circ(\eta_{P_i}\otimes G)
	\end{equation}
	where $y_i\colon P_i^*\otimes G\to B$ is the transpose of $y_i^t$, and $i_{P_i}\otimes G\colon G\to (P_i\otimes P_i^*)\otimes G\cong P_i\otimes (P_i^*\otimes G)$. This proves that the desired map is a surjection of monoids.
	
	Assume conversely that we have such an adjunction. $\V_0$ has direct sums by hypothesis. Denote by $\G(-,J)$ the unit of $[\G^{op},\bo{CMon}]$; then $I\cong F\G(-,J)$ and $\V_0(I,-)\cong  \bo{CMon}(\mathbb{N},\tx{ev}_JU-)$ preserves filtered colimits and regular epimorphisms because $U$ does. Moreover, since $F$ is strong monoidal the image $F\G$ of the representables under $F$ consists of dualizable objects; these form a strong generator of $\V_0$ since $U$ is conservative and $\G$ is a strong generator of $[\G^{op},\bo{CMon}]$. It remains to check property $(d^*)$. 
	Consider $z\colon I\to A\otimes B$, since $I\cong F\G(-,J)$ this corresponds to a map $z^t\colon \G(-,J)\to U(A\otimes B)$. By hypothesis the comparison $UA\otimes UB\to U(A\otimes B)$ is a regular epimorphism; thus $z^t$ factors through it (since representables are projective), giving a map $z'\colon  \G(-,J)\to UA\otimes UB$. Since $[\G^{op},\bo{CMon}]$ is locally dualizable with strong generator given by the representables it follows that $ z'=\textstyle\sum_{i=1}^n (x^t_i\stackrel{\scriptscriptstyle{P_i}}{\otimes} y^t_i) $ for some $P_i\in\G$, $x^t_i\colon \G(-,P_i)\to UA$ and $y^t_i\colon \G(-,P_i)^*\to UB$. Transposing again through the adjunction we obtain maps $x_i\colon F\G(-,P_i)\to A$ and $y_i\colon F\G(-,P_i)^*\to B$ for which $ z=\textstyle\sum_{i=1}^n (x_i\stackrel{\scriptscriptstyle{P_i}}{\otimes}  y_i) $. Therefore $\V_0$ is locally dualizable.
	
	The last part of the statement is an easy consequence of the fact that whenever $U$ is conservative the counit of the adjunction is pointwise a strong epimorphism and in $\bo{CMon}$ strong and regular epimorphisms coincide.
	
\end{proof}

\begin{obs}
	When $\V_0$ is exact this adjunction is monadic by Duskin's monadicity theorem \cite[Theorem~9.1.3]{BW2000toposes}.
\end{obs}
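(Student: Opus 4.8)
The plan is to apply Duskin's monadicity theorem \cite[Theorem~9.1.3]{BW2000toposes}, so the task reduces to checking its hypotheses for $F\dashv U\colon\bar{\V}\to[\G^{op},\bo{CMon}]$ when $\V_0$ is in addition Barr-exact. By the Proposition just proved, $U$ has the left adjoint $F$, is conservative, and preserves regular epimorphisms; being a right adjoint it also preserves all limits, and since $\V_0$ is cocomplete it has coequalizers of all pairs, in particular of all reflexive pairs and all internal equivalence relations. The codomain $[\G^{op},\bo{CMon}]$ is a category of $\bo{CMon}$-valued presheaves on a small category, hence a locally finitely presentable Barr-exact category, and in particular regular.

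Thus the three ingredients Duskin's theorem consumes --- a left adjoint, conservativity, and preservation of the coequalizers of equivalence relations that the domain possesses --- are all present, once one notes that in the exact setting the last of these is equivalent to preservation of regular epimorphisms, which we already have. Concretely, given an internal equivalence relation $R\rightrightarrows A$ in $\V_0$ with coequalizer $q\colon A\to Q$, Barr-exactness of $\V_0$ makes $q$ a regular epimorphism with kernel pair $R$; applying $U$, which preserves finite limits and regular epimorphisms, one gets that $Uq$ is a regular epimorphism with kernel pair $UR\rightrightarrows UA$, and since $[\G^{op},\bo{CMon}]$ is regular, $Uq$ is then the coequalizer of $UR\rightrightarrows UA$. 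Hence $U$ preserves every coequalizer of an equivalence relation, a fortiori every $U$-contractible one, and Duskin's theorem yields that the comparison functor $\bar{\V}\to[\G^{op},\bo{CMon}]^{\mathbb{T}}$ for the induced monad $\mathbb{T}=UF$ is an equivalence.

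Since the remark is genuinely a direct application of the cited theorem, I do not expect a substantial mathematical obstacle; the only care needed is bibliographic, namely to match the precise formulation of \cite[Theorem~9.1.3]{BW2000toposes} --- whether it is phrased via $U$-split coequalizer pairs, reflexive pairs, or equivalence relations, and exactly which category it demands to be exact --- with the preservation statement above. In every such formulation the bridge is the same observation: exactness of $\V_0$ rebuilds the relevant coequalizer as the quotient of a kernel pair, and $U$ preserves kernel pairs (finite limits) and regular epimorphisms, hence preserves that rebuilding; if the version cited asks only that $U$ reflect isomorphisms and preserve regular epimorphisms with exact domain, then even the second paragraph is superfluous and the statement is immediate from the Proposition.
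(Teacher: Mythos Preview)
Your proposal is correct and matches the paper's approach: the paper gives no proof beyond the citation, and you have simply unpacked why Duskin's theorem applies, using precisely the properties of $U$ established in the preceding Proposition together with the assumed exactness of $\V_0$. The argument that exactness lets one rebuild coequalizers of equivalence relations from kernel pairs and regular epimorphisms, both preserved by $U$, is the standard verification and is exactly what the bare citation is pointing to.
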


An immediate consequence is:

\begin{prop}
	Let $\V=(\V_0,\otimes,I)$ be locally dualizable and $\bar{\W}$ be a symmetric monoidal closed $\bo{CMon}$-category together with a monoidal $\bo{CMon}$-adjunction
	\begin{center}
		
		\begin{tikzpicture}[baseline=(current  bounding  box.south), scale=2]

			\node (f) at (0,0.4) {$\bar{\W}$};
			\node (g) at (1.3,0.4) {$\bar{\V}$};
			\node (h) at (0.66, 0.48) {$\perp$};
			\path[font=\scriptsize]

			([yshift=-1.3pt]f.east) edge [->] node [below] {$U$} ([yshift=-1.3pt]g.west)
			([yshift=2pt]f.east) edge [bend left,<-] node [above] {$F$} ([yshift=2pt]g.west);
		\end{tikzpicture}
		
	\end{center}
	with $U$ a conservative, filtered colimit and regular epimorphism preserving $\V$-functor for which the comparison maps $ UA\otimes UB\to U(A\otimes B) $
	are regular epimorphisms. Then $\W$ is locally dualizable.
\end{prop}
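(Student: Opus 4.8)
The plan is to reduce everything to the characterization of locally dualizable categories in the previous proposition, and then simply compose adjunctions. Since $\V$ is locally dualizable, that proposition supplies a small compact closed category $\G$ together with a monoidal $\bo{CMon}$-adjunction $F'\dashv U'$, with $F'\colon[\G^{op},\bo{CMon}]\to\bar{\V}$ and $U'\colon\bar{\V}\to[\G^{op},\bo{CMon}]$, in which $U'$ is conservative, preserves filtered colimits and regular epimorphisms, and for which every comparison map $U'X\otimes U'Y\to U'(X\otimes Y)$ (for $X,Y\in\bar{\V}$) is a regular epimorphism. Composing this with the given adjunction $F\dashv U$ produces an adjunction $FF'\dashv U'U$, with $FF'\colon[\G^{op},\bo{CMon}]\to\bar{\W}$ and $U'U\colon\bar{\W}\to[\G^{op},\bo{CMon}]$. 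I would first note that this composite is again a monoidal $\bo{CMon}$-adjunction: the left adjoint of a monoidal adjunction is strong monoidal, so $F$ and $F'$ are strong monoidal, hence so is $FF'$, and all the functors involved are $\bo{CMon}$-functors.

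The next step is to check that $U'U$ satisfies the hypotheses of the previous proposition, so that it may be fed back in to conclude that $\W$ is locally dualizable. Being conservative and preserving filtered colimits and regular epimorphisms are each closed under composition of functors, so these are immediate. The only point requiring a little care is that the comparison maps of $U'U$ are regular epimorphisms. For this I would use the usual factorization of the lax structure on a composite of lax monoidal functors: for $A,B\in\bar{\W}$ the comparison $U'UA\otimes U'UB\to U'U(A\otimes B)$ equals $U'(\mu_{A,B})\circ\mu'_{UA,UB}$, where $\mu$ and $\mu'$ denote the lax structures of $U$ and $U'$. Here $\mu'_{UA,UB}$ is a regular epimorphism by the comparison-map property of $U'$ recalled above, applied to $UA,UB\in\bar{\V}$; and $\mu_{A,B}\colon UA\otimes UB\to U(A\otimes B)$ is a regular epimorphism by hypothesis, so $U'(\mu_{A,B})$ is one because $U'$ preserves regular epimorphisms.

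The main (and essentially only) obstacle is that a composite of two regular epimorphisms need not be a regular epimorphism in an arbitrary category. This is resolved by recalling, exactly as in the proof of the previous proposition, that in $[\G^{op},\bo{CMon}]$ the regular epimorphisms are precisely the pointwise surjections --- equivalently the strong epimorphisms --- and that these are closed under composition; hence $U'(\mu_{A,B})\circ\mu'_{UA,UB}$ is itself a regular epimorphism. With the comparison maps established as regular epimorphisms, the converse direction of the previous proposition applies to the adjunction $FF'\dashv U'U$ and yields that $\W$ is locally dualizable. (As throughout, $\bar{\W}$ is taken to be cocomplete, which is in any case part of what it means for $\W$ to be locally dualizable.)
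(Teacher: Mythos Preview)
Your proof is correct and follows exactly the approach the paper intends: the paper states this proposition as ``an immediate consequence'' of the previous one without further proof, and your argument spells out precisely that immediate consequence by composing the two adjunctions and verifying the hypotheses. Your observation that regular epimorphisms in $[\G^{op},\bo{CMon}]$ coincide with pointwise surjections (hence with strong epimorphisms) and are therefore closed under composition is the one nontrivial detail, and it is correct.
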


\begin{obs}\label{transpose-dualizable}
	As a final observation before the next step take two $\V$-functors $ M \colon \C^{op}\to\V$ and $H\colon \C\to\V$, where $\C$ has $\G$-copowers. Consider any $P\in\G$, $C\in\C$, and maps $x\colon P\to  M  C$ and $y\colon P^*\to HC$. Since $P\cong I\otimes P$ we can consider the transposes of $x$ to get a map $I\to [P, M  C]$ which, since $ M $ preserves such powers (being absolute), corresponds to a map $x'\colon I\to  M (P\cdot C)$. Similarly, $y$ corresponds to a morphism $I\to [P^*,HC]\cong P\otimes HC$ which, since $H$ preserves such copowers, is the same as a map $y'\colon I\to H(P\cdot C)$. Then it is easy to see that the square below commutes.
	
	\begin{center}
		\begin{tikzpicture}[baseline=(current  bounding  box.south), scale=2]
			
			\node (a) at (0,0.8) {$I$};
			\node (b) at (-1,0) {$ M (C)\otimes HC$};
			\node (c) at (1,0) {$ M (P\cdot C)\otimes H(P\cdot C)$};
			\node (d) at (0,-0.8) {$ M * H$};

			\path[font=\scriptsize]
			
			(a) edge [->] node [left] {$x\stackrel{\scriptscriptstyle{P}}{\otimes}  y\ \ $} (b)
			(a) edge [->] node [right] {$\ \ x'\otimes y'$} (c)
			(b) edge [->] node [left] {$q_C\ \ $} (d)
			(c) edge [->] node [right] {$\ \ q_{P\cdot C}$} (d);
			
		\end{tikzpicture}
	\end{center}
\end{obs}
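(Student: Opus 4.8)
The square to be established is the identity $q_{P\cdot C}\circ(x'\otimes y')=q_C\circ(x\stackrel{\scriptscriptstyle{P}}{\otimes}y)$ in $\V$, where by definition $q_C\circ(x\stackrel{\scriptscriptstyle{P}}{\otimes}y)=q_C\circ(x\otimes y)\circ\eta_P$. The plan is to evaluate the left-hand side: make the transposes $x'$ and $y'$ explicit, then identify the coprojection $q_{P\cdot C}$ in terms of $q_C$ and the counit $\epsilon_P$, and finally collapse the resulting composite using a triangle identity for $P$.

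First I would unwind the definitions. Writing $P^*$ for the dual of $P$, we have $M(P\cdot C)\cong P\pitchfork MC\cong MC\otimes P^*$ and $H(P\cdot C)\cong P\cdot HC=P\otimes HC$, the comparison maps being invertible because powers and copowers by dualizable objects are absolute and $M,H$ are $\V$-functors. Transposing $x\colon P\to MC$ through $P\cong I\otimes P$ and chasing along these isomorphisms (using one of the triangle identities for $P$) shows that $x'$ corresponds to $(x\otimes 1_{P^*})\circ\eta_P\colon I\to MC\otimes P^*$, and similarly $y'$ corresponds to $(1_P\otimes y)\circ\eta_P\colon I\to P\otimes HC$.

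The only non-formal step is the identification of $q_{P\cdot C}$. The copower $P\cdot C$ carries a coprojection $\iota_P\colon P\to\C(C,P\cdot C)$, and the composites $P\otimes M(P\cdot C)\to MC$ and $P\otimes HC\to H(P\cdot C)$ obtained by tensoring $\iota_P$ into the $\V$-functorial actions of $M$ and $H$ are, under the identifications above, the evaluation $P\otimes(MC\otimes P^*)\to MC$ and the canonical isomorphism $P\otimes HC\cong H(P\cdot C)$ respectively. Feeding $\iota_P$ into the $\V$-extranaturality of the colimiting cocone $(q_D)_{D\in\C}$ for the coend $M*H\cong\int^{D}MD\otimes HD$ then shows that, modulo associativity and symmetry isomorphisms, $q_{P\cdot C}$ is the composite
\[
MC\otimes P^*\otimes P\otimes HC\ \xrightarrow{\,1_{MC}\otimes\epsilon_P\otimes 1_{HC}\,}\ MC\otimes HC\ \xrightarrow{\,q_C\,}\ M*H.
\]
This requires the \emph{enriched} form of the universal property of the colimit, since $\iota_P$ is a morphism out of $P$ rather than out of $I$; this is where care is needed. (One could instead argue directly on the presentation of $M*H$ as a coequalizer.)

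Combining the last two paragraphs, $q_{P\cdot C}\circ(x'\otimes y')$ becomes $q_C\circ(1_{MC}\otimes\epsilon_P\otimes 1_{HC})$ precomposed with $\big((x\otimes 1_{P^*})\circ\eta_P\big)\otimes\big((1_P\otimes y)\circ\eta_P\big)$, which rearranges by bifunctoriality of $\otimes$ to $q_C\circ(x\otimes y)\circ\theta$, where
\[
\theta\colon\ I\cong I\otimes I\ \xrightarrow{\,\eta_P\otimes\eta_P\,}\ (P\otimes P^*)\otimes(P\otimes P^*)\ \cong\ P\otimes(P^*\otimes P)\otimes P^*\ \xrightarrow{\,1\otimes\epsilon_P\otimes 1\,}\ P\otimes I\otimes P^*\ \cong\ P\otimes P^*.
\]
A triangle identity for the dual pair $(P,P^*)$ gives $\theta=\eta_P$, so $q_{P\cdot C}\circ(x'\otimes y')=q_C\circ(x\otimes y)\circ\eta_P=q_C\circ(x\stackrel{\scriptscriptstyle{P}}{\otimes}y)$, which is precisely the commutativity of the square. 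I expect the identification of $q_{P\cdot C}$ in the third paragraph to be the only delicate point; everything else is bookkeeping with the monoidal coherence isomorphisms.
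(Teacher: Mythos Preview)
Your argument is correct. The paper does not actually prove this observation: it merely asserts that ``it is easy to see that the square below commutes'', so there is no competing proof to compare against. What you have written is exactly the computation one must do to justify the claim, and you have correctly isolated the one genuinely nontrivial ingredient, namely the identification of $q_{P\cdot C}$ via the enriched extranaturality of the coend cocone $(q_D)_D$ with respect to the coprojection $\iota_P\colon P\to\C(C,P\cdot C)$; the rest is, as you say, bookkeeping plus a triangle identity for the dual pair $(P,P^*)$.
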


\subsection{The characterization theorem}\label{theorem-locallydualiz}

We are now ready to prove our results starting with an adapted version of Lemma~\ref{goodunit}. For the remainder of this section $\V$ is assumed to be locally dualizable.

\begin{lema}\label{multigoounit}
	Let $\C$ be a $\V$-category with finite direct sums and $\G$-copowers. Let $ M \colon \C^{op}\to\V$ and $H\colon \C\to\V$ be two $\V$-functors. Then for each arrow $x\colon I\to   M *H$ there exist $C\in \C$, $y\colon I\to M  C$, and $z\colon I\to HC$ for which the triangle
	\begin{center}
		\begin{tikzpicture}[baseline=(current  bounding  box.south), scale=2]

			\node (b0) at (1.3,0.7) {$ M  C\otimes HC$};
			\node (c0) at (0,0.7) {$I$};
			\node (d0) at (1.3,0) {$ M *H$};
			
			\path[font=\scriptsize]
			
			(c0) edge [dashed, ->] node [above] {$ y\otimes z$} (b0)
			(b0) edge [->] node [right] {$ \rho_C$} (d0)
			(c0) edge [->] node [below] {$x\ $} (d0);
			
		\end{tikzpicture}	
	\end{center}
	commutes, where the vertical map is taken from the colimiting cocone defining $ M *H$.
\end{lema}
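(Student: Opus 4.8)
The plan is to follow the proof of Lemma~\ref{goodunit} almost verbatim, letting condition $(d)$ of Definition~\ref{locdualiz} --- in the equivalent form $(d^{*})$ of Proposition~\ref{four-prime} --- together with Remark~\ref{transpose-dualizable} play the parts that the weak cocontinuity and weak strong monoidality of $\V_0(I,-)$ played there. First I would note, exactly as in the proof of Lemma~\ref{goodunit}, that the colimiting cocone $(\rho_C\colon MC\otimes HC\to M*H)_{C\in\C}$ of the coend $M*H$ is jointly a regular epimorphism in $\V_0$. Since by condition $(b)$ the unit $I$ is finitely presentable and regular projective, $\V_0(I,-)$ preserves jointly-regular-epic families, so the given arrow $x\colon I\to M*H$ factors as $x=\rho_{C_0}\circ h$ for some $C_0\in\C$ and some $h\colon I\to MC_0\otimes HC_0$.

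Next I would apply Proposition~\ref{four-prime} to write $h=\textstyle\sum_{i=1}^{n}\bigl(u_i\stackrel{\scriptscriptstyle{P_i}}{\otimes}v_i\bigr)$ with each $P_i\in\G$, $u_i\colon P_i\to MC_0$ and $v_i\colon P_i^{*}\to HC_0$. Set $P:=\bigoplus_{i=1}^{n}P_i$; this is dualizable with $P^{*}\cong\bigoplus_{i=1}^{n}P_i^{*}$, and taking $u\colon P\to MC_0$ and $v\colon P^{*}\to HC_0$ to be the maps whose $i$-th components are $u_i$ and $v_i$ gives $u\stackrel{\scriptscriptstyle{P}}{\otimes}v=\textstyle\sum_i u_i\stackrel{\scriptscriptstyle{P_i}}{\otimes}v_i=h$, just as in the proof of Proposition~\ref{four-prime}. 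Because $\C$ has $\G$-copowers and finite direct sums, the copower $P\cdot C_0\cong\bigoplus_{i=1}^{n}(P_i\cdot C_0)$ exists in $\C$, and the argument of Remark~\ref{transpose-dualizable} --- which uses only that $P$ is dualizable and that $\C$ has $P$-copowers, not that $P$ lies in $\G$ --- yields transposes $u'\colon I\to M(P\cdot C_0)$ and $v'\colon I\to H(P\cdot C_0)$ with $\rho_{C_0}\circ\bigl(u\stackrel{\scriptscriptstyle{P}}{\otimes}v\bigr)=\rho_{P\cdot C_0}\circ(u'\otimes v')$. Combining, $x=\rho_{C_0}\circ h=\rho_{P\cdot C_0}\circ(u'\otimes v')$, so the triangle in the statement commutes with $C:=P\cdot C_0$, $y:=u'$ and $z:=v'$.

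If one would rather not invoke Remark~\ref{transpose-dualizable} beyond $\G$, one can instead apply it to each $P_i$ separately, obtaining $u_i'\colon I\to M(P_i\cdot C_0)$ and $v_i'\colon I\to H(P_i\cdot C_0)$ with $\rho_{C_0}\circ\bigl(u_i\stackrel{\scriptscriptstyle{P_i}}{\otimes}v_i\bigr)=\rho_{P_i\cdot C_0}\circ(u_i'\otimes v_i')$, so that $x=\textstyle\sum_i\rho_{P_i\cdot C_0}\circ(u_i'\otimes v_i')$; then set $C:=\bigoplus_{i=1}^{n}(P_i\cdot C_0)$ with biproduct injections $\iota_i$ and projections $\pi_i$, put $y:=\textstyle\sum_i M(\pi_i)\circ u_i'$ and $z:=\textstyle\sum_i H(\iota_i)\circ v_i'$, and check $\rho_C\circ(y\otimes z)=x$ using that $\otimes$ and composition are additive in each variable (the former since $\V$ is closed, the latter since $\V_0$ is semiadditive), the coend relations $\rho_{D}\circ(Mf\otimes\mathrm{id})=\rho_{D'}\circ(\mathrm{id}\otimes Hf)$ for $f\colon D\to D'$, and $\pi_j\iota_i=\delta_{ij}$, which annihilates the off-diagonal terms. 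I expect the only genuine work to be this last piece of bookkeeping, together with the identity $u\stackrel{\scriptscriptstyle{P}}{\otimes}v=\textstyle\sum_i u_i\stackrel{\scriptscriptstyle{P_i}}{\otimes}v_i$ (already implicit in Proposition~\ref{four-prime}) and the input, imported from the proof of Lemma~\ref{goodunit}, that the colimiting cocone of $M*H$ is jointly a regular epimorphism; everything else is a direct appeal to results already in place.
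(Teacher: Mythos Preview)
Your argument has a gap at the very first step. You claim that since $I$ is finitely presentable and regular projective, $\V_0(I,-)$ preserves jointly-regular-epic families, and hence $x$ factors through a \emph{single} component $\rho_{C_0}$. This is precisely what fails in the locally dualizable setting and is the whole point of the extra hypotheses on $\C$. Regular projectivity gives only that $\V_0(I,\textstyle\sum_C MC\otimes HC)\to\V_0(I,M*H)$ is surjective; to factor through one summand you would need the comparison $\textstyle\sum_C\V_0(I,MC\otimes HC)\to\V_0(I,\textstyle\sum_C MC\otimes HC)$ to be surjective as well, i.e.\ weak preservation of coproducts, which condition~(b) of Definition~\ref{locdualiz} does not provide. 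For instance in $\V=\bo{Ab}$, a map $\mathbb{Z}\to A_1\oplus A_2$ need not factor through either summand. What finite presentability does give is a factorisation through a \emph{finite} subcoproduct $\textstyle\bigoplus_{k=1}^m MC_k\otimes HC_k$, so that the lift of $x$ is a finite sum $\textstyle\sum_k h_k$ with $h_k\colon I\to MC_k\otimes HC_k$ and the $C_k$ possibly distinct. This is exactly what the paper does, working with the $\bo{CMon}$-enriched hom $\bar{\V}(I,-)$, which \emph{does} preserve coproducts.

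Once this is corrected, the rest of your proposal is fine and matches the paper's argument: apply $(d^*)$ to each $h_k$, transpose each resulting $u_i\stackrel{\scriptscriptstyle{P_i}}{\otimes}v_i$ via Remark~\ref{transpose-dualizable} to maps $I\to M(P_i\cdot C_k)$ and $I\to H(P_i\cdot C_k)$, and then take $C$ to be the finite direct sum of all the $P_i\cdot C_k$ with $y$ and $z$ the sums of the components --- which is precisely your ``alternative'' paragraph, only now with several $C_k$ in place of a single $C_0$. The bookkeeping you outline there, using additivity of $\otimes$ and the coend relations, is exactly what is needed and is what the paper records as $\rho(y\otimes z)=\textstyle\sum_j\rho(y_j\otimes z_j)=x$.
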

\begin{proof}
	The weighted colimit $ M * H$ can be seen as a coend; hence we have a regular epimorphism in $\V_0$
	$$ \sum_{C\in \C} M  C\otimes HC\stackrel{\rho}{\longrightarrow} \int^{C\in\C} M  C\otimes HC\cong  M *H. $$
	Since $\bar{\V}(I,-)$ preserves regular epimorphisms and coproducts it follows that $x$ factors through an element $h\in \textstyle\sum_{C\in \C}\bar{\V}(I, M  C\otimes HC)$ in $\bo{CMon}$. Such an element is given by $h=\textstyle\sum_{i=1}^n h_i$ with $h\colon I\to  M  D_i\otimes HD_i$ (by definition of coproduct in $\bo{CMon}$). By the hypotheses on $\V$ we can write each $h_i$ as a finite direct sum of elements of the form $y'_j\stackrel{\scriptscriptstyle{P_j}}{\otimes}  z'_j$, for $y'_j\colon P_j\to  M (D_i)$ and $z'_j\colon P_j^*\to H(D_i)$. In other words, relabelling the objects and the morphisms, we can write 
	$h=\textstyle\sum_{j=1}^n (y'_j\stackrel{\scriptscriptstyle{P_j}}{\otimes}  z'_j)$ for some $P_j\in\P$, $y'_j\colon P_j\to  M (D_j)$, and $z'_j\colon P_j^*\to H(D_j)$.\\
	Remember now that powers by $P_i$ and $P_i^*$ are absolute, it then follows that each $y'_j$ corresponds by transposition to $y_j\colon I\to  M (P_j\cdot D_j)$, since $ M $ is contravariant, and each $z'_j$ corresponds to $z_j\colon I\to H(P_j\cdot D_j)$. Therefore
	$$ x=\rho(h)=\rho(\sum_{j=1}^n y'_j\stackrel{\scriptscriptstyle{P_j}}{\otimes}  z'_j) =\sum_{j=1}^n \rho(y'_j\stackrel{\scriptscriptstyle{P_j}}{\otimes}  z'_j)=\sum_{j=1}^n \rho(y_j\otimes z_j)$$
	as an element of $\bar{\V}(I, M *H)$, where the last equality holds thanks to Remark~\ref{transpose-dualizable}.
	Consider then the object of $\C$ given by
	$$ C:=\bigoplus_{j=1}^n  P_j\cdot D_j;$$ 
	we can define $y:=\textstyle\sum_{j=1}^n y_j\colon I\to M (C)$ and $z:=\textstyle\sum_{j=1}^n z_j\colon I\to H(C)$ so that by construction we have 
	$$\rho(y\otimes z)=\sum_{j=1}^n \rho(y_j\otimes z_j)=x$$ as desired.
\end{proof}

With the same hypotheses as in the Lemma~above we can reduce flat colimits to filtered colimits, giving (under assumptions on $\C$) a proof of condition (I) from the Introduction.

\begin{cor}
	Let $\C$ be a $\V$-category with finite direct sums and $\G$-copowers. Let $ M \colon \C^{op}\to\V$ be a $\V$-functor for which $ M *-\colon [\C,\V]\to \V$ preserves $\alpha$-small conical limits of representables, and define $ M _I:=\V_0(I, M _0-)\colon \C_0^{op}\longrightarrow\bo{Set}$ as usual.  
	Then the ordinary category $\tx{El}( M _I)$ is $\alpha$-filtered, and so $M_I$ is $\alpha$-flat.
\end{cor}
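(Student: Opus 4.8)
The argument is a direct transcription of the proof of Corollary~\ref{filtered-elements}, with Lemma~\ref{multigoounit} used in place of Lemma~\ref{goodunit}; the extra hypotheses on $\C$ (finite direct sums and $\G$-copowers) are exactly what is needed to invoke the former. First I would take an $\alpha$-small category $\B$ together with a functor $H\colon \B\to\tx{El}( M _I)$ and aim to produce a cocone under $H$ in $\tx{El}( M _I)$; by the classical characterization of $\alpha$-flatness for $\bo{Set}$-valued functors, this suffices. Writing $\pi\colon \tx{El}( M _I)\to\C_0$ for the projection and $Hb=(\pi Hb,x_b)$ with $x_b\colon I\to  M (\pi Hb)$, the naturality of $H$ assembles the $x_b$ into a single arrow $x\colon I\to \lim_{b\in\B} M (\pi Hb)$.

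Next, since $ M *-$ preserves $\alpha$-small conical limits of representables and $\B$ is $\alpha$-small, I would rewrite
$$ \lim_{b\in\B} M (\pi Hb)\;\cong\;\lim_{b\in\B}\bigl( M *\C(\pi Hb,-)\bigr)\;\cong\; M *\bigl(\lim_{b\in\B}\C(\pi Hb,-)\bigr), $$
the first isomorphism being the Yoneda lemma. Now $H':=\lim_{b\in\B}\C(\pi Hb,-)$ is a genuine $\V$-functor $\C\to\V$, so Lemma~\ref{multigoounit} applies to the pair $( M ,H')$ --- this is precisely where the assumption that $\C$ has finite direct sums and $\G$-copowers is used --- and produces $C\in\C$, $y\colon I\to  M C$, and $z\colon I\to H'C=\lim_{b\in\B}\C(\pi Hb,C)$ whose image under the colimiting cocone $\rho_C\colon  M C\otimes H'C\to  M *H'$ is $x$.

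Finally I would unwind this data exactly as in Corollary~\ref{filtered-elements}. Applying $\V_0(I,-)$ identifies $z$ with a cone $\Delta I\Rightarrow\C(\pi H-,C)$, that is, a cocone $(\eta_b\colon \pi Hb\to C)_{b\in\B}$ in $\C_0$; tracing the equality $\rho_C\circ(y\otimes z)=x$ through the projections of the limit $\lim_{b\in\B} M (\pi Hb)$ shows that $ M (\eta_b)\circ y=x_b$ for every $b$, so each $\eta_b$ is a morphism $(\pi Hb,x_b)\to(C,y)$ in $\tx{El}( M _I)$ and $\eta$ is the required cocone. Hence $\tx{El}( M _I)$ is $\alpha$-filtered and $ M _I$ is $\alpha$-flat.

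The only non-formal point is checking that the conclusion of Lemma~\ref{multigoounit} genuinely unwinds to a cocone in $\tx{El}( M _I)$ --- i.e.\ that $\rho_C\circ(y\otimes z)=x$ encodes exactly the triangle identities $ M (\eta_b)\circ y=x_b$ --- but this is the same bookkeeping already carried out in the proof of Corollary~\ref{filtered-elements}. In other words, all the genuinely new work (handling the passage from $z\colon I\to A\otimes B$ to a dualizable $P$ and a sum of terms $x\stackrel{\scriptscriptstyle{P}}{\otimes}y$) has already been absorbed into Lemma~\ref{multigoounit}, and nothing further is needed here.
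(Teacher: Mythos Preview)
Your proposal is correct and follows exactly the approach of the paper, which simply states that the proof is the same as that of Corollary~\ref{filtered-elements} once Lemma~\ref{multigoounit} is used in place of Lemma~\ref{goodunit}. You have also correctly identified that the hypotheses on $\C$ are precisely what is needed to invoke Lemma~\ref{multigoounit}.
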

\begin{proof}
	Using the Lemma~above the proof is exactly the same as that of Corollary~\ref{filtered-elements}.
\end{proof}

As a consequence we obtain the following proposition, showing that (II) also holds under these conditions.

\begin{prop}\label{flat+absolute=filtered}
	Let $\C$ be a $\V$-category with finite direct sums and $\G$-copowers, and let $ M \colon \C^{op}\to\V$ be a $\V$-functor. The following are equivalent:\begin{enumerate}\setlength\itemsep{0.25em}
		\item $ M $ is $\alpha$-flat;
		\item $ M *-\colon [\C,\V]\to \V$ preserves $\alpha$-small conical limits of representables;
		\item $\tx{El}( M _I)$ is $\alpha$-filtered;
		\item $ M $ is an $\alpha$-filtered colimit of representables.
	\end{enumerate}
	Moreover, in this case the canonical map:
	$$ \tx{colim} \left(\tx{El}( M _I)_{\V} \stackrel{\pi_{\V}}{\longrightarrow} \C \stackrel{Y}{\longrightarrow} [\C^{op},\V]\right) \longrightarrow  M $$
	is invertible.
\end{prop}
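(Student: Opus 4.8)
The plan is to establish the cycle of implications $(1)\Rightarrow(2)\Rightarrow(3)\Rightarrow(4)\Rightarrow(1)$ and to obtain the final assertion as a byproduct of the step $(3)\Rightarrow(4)$.

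Three of the four implications are short. For $(1)\Rightarrow(2)$: an $\alpha$-small conical limit of representables is in particular an $\alpha$-small weighted limit, so $\alpha$-flatness of $ M $ forces $ M *-$ to preserve it. For $(2)\Rightarrow(3)$: this is exactly the Corollary established just above (the locally dualizable counterpart of Corollary~\ref{filtered-elements}), which uses Lemma~\ref{multigoounit} and hence the finite direct sums of $\C$. For $(4)\Rightarrow(1)$: each representable weight $\C(-,D)$ is $\alpha$-flat, since $\C(-,D)*-$ is evaluation at $D$ and so preserves all limits; conical $\alpha$-filtered colimits are $\alpha$-flat; and the $\alpha$-flat weights are closed in $[\C^{op},\V]$ under $\alpha$-flat colimits; therefore an $\alpha$-filtered colimit of representables is again $\alpha$-flat.

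The substance is in $(3)\Rightarrow(4)$ and the final assertion. Assuming $\tx{El}( M _I)$ is $\alpha$-filtered, I would show that the canonical comparison $\colim(Y\circ\pi_\V)\to  M $ is invertible; since colimits in $[\C^{op},\V]$ are pointwise this reduces to showing that $c\colon\colim_{(D,x)\in\tx{El}( M _I)}\C(C,D)\to  M (C)$ is invertible in $\V$ for every $C\in\C$, and since $\G$ is a strong generator it is enough to check that $\V_0(X,c)$ is a bijection for every $X\in\G$. The point is that dualizability of $X$ together with the presence of $\G$-copowers in $\C$ lets me compute both sides explicitly. On the left, $X$ is dualizable, hence finitely presentable, so $\V_0(X,-)$ preserves the $\alpha$-filtered colimit $\colim_{(D,x)}\C(C,D)$; moreover $\V_0(X,\C(C,D))\cong\V_0(I,[X,\C(C,D)])\cong\C_0(X\cdot C,D)$ naturally in $D$. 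On the right, the copower $X\cdot C$ in $\C$ is the power $X\pitchfork C$ in $\C^{op}$, a power by a dualizable object and therefore an absolute limit, which the $\V$-functor $ M $ preserves; hence $\V_0(X, M (C))\cong\V_0(I,[X, M (C)])\cong\V_0(I, M (X\cdot C))= M _I(X\cdot C)$. Under these identifications $\V_0(X,c)$ becomes the canonical map $\colim_{(D,x)\in\tx{El}( M _I)}\C_0(X\cdot C,D)\to  M _I(X\cdot C)$, which is an isomorphism by the standard density presentation of the $\bo{Set}$-valued presheaf $ M _I$ evaluated at $X\cdot C$. So $c$ is invertible, which simultaneously proves the claimed isomorphism $ M \cong\colim(Y\circ\pi_\V)$ and exhibits $ M $ as a conical colimit of representables over the $\alpha$-filtered category $\tx{El}( M _I)$, that is, condition $(4)$.

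The only delicate points I anticipate are bookkeeping ones: checking that the chain of natural isomorphisms above really composes to $\V_0(X,-)$ applied to the canonical comparison (and not to some variant), and confirming that the pointwise $\alpha$-filtered colimit in $\V_0$ that appears coincides with the conical colimit in $[\C^{op},\V]$ named in the statement — both routine, given that $\V_0$ is cocomplete with a strong generator of finitely presentable objects. No idea beyond trading powers by generators for copowers via dualizability, combined with the $\G$-copowers assumed on $\C$ and the density formula for $ M _I$, is required.
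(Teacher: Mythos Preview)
Your proposal is correct and follows essentially the same route as the paper: the same cycle $(1)\Rightarrow(2)\Rightarrow(3)\Rightarrow(4)\Rightarrow(1)$, with $(3)\Rightarrow(4)$ proved by testing the comparison map against the strong generator $\G$, using dualizability of $X\in\G$ and the assumed $\G$-copowers in $\C$ to reduce $\V_0(X,M(C))$ to $M_I(X\cdot C)$, and then invoking the density formula for the ordinary presheaf $M_I$. The paper writes out the chain of isomorphisms a bit more explicitly, but the idea and the ingredients are identical.
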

\begin{proof}
	$(1)\Rightarrow (2)$ is trivial, $(2)\Rightarrow (3)$ is true by the Corollary~above, while $(4)\Rightarrow (1)$ follows from the fact that representables $\V$-functors are $\alpha$-flat and the $\alpha$-flat $\V$-functors are closed under $\alpha$-filtered colimits in $[\C^{op},\V]$.
	
	It only remains to show  $(3)\Rightarrow (4)$. Since the category of elements is $\alpha$-filtered, we only need to prove that the isomorphism in the statement holds. Colimits are computed pointwise in $[\C^{op},\V]$; thus it's enough to show that the canonical map 
	$$ \tx{colim} \left(\tx{El}( M _I)_{\V} \stackrel{\pi_{\V}}{\xrightarrow{\hspace*{0.6cm}}} \C \stackrel{\C(C,-)}{\xrightarrow{\hspace*{1cm}}} \V\right)\longrightarrow  M (C) $$
	is invertible for any $C\in\C$. For that consider the strong generator $\G$; we know that $\G$-powers are absolute and that $\G$ consists of finitely presentable objects in $\V_0$. Moreover the ordinary functor  $ M _I=\V_0(I, M _0-)$ is the colimit of the corresponding ordinary diagram based on $\tx{El}( M _I)$. Thus for each $P\in\G$ we can write
	\begin{equation*}
		\begin{split}
			\V_0(P, M (C))&\cong \V_0(I,[P, M (C)])\\
			&\cong \V_0(I, M (P\cdot C))\\
			&\cong  M _I(P\cdot C)\\
			&\cong \underset{(x,D)\in \tx{El}( M _I)}{\tx{colim}}\ \C_0(P\cdot C,D)\\
			&\cong \underset{(x,D)\in \tx{El}( M _I)}{\tx{colim}}\ \V_0(I,\C(P\cdot C,D))\\
			&\cong \underset{(x,D)\in \tx{El}( M _I)}{\tx{colim}}\ \V_0(I,[P,\C(C,D)])\\
			&\cong \underset{(x,D)\in \tx{El}( M _I)}{\tx{colim}}\ \V_0(P,\C(C,D))\\
			&\cong \V_0(P,\underset{(x,D)\in \tx{El}( M _I)}{\tx{colim}}\ \C(C,D)).\\
		\end{split}
	\end{equation*}
	As a consequence, since the family $\G$ is strongly generating, it follows at once that $ M (C)\cong \tx{colim}\ \C(C,\pi_\V-)$ as desired.
\end{proof}

\begin{obs}
	When $\alpha=\aleph_0$ it's enough to ask that $ M *-$ preserve equalizers, since it already preserves finite products (being direct sums). Moreover, for  $\V=\bo{Ab}$ and $\alpha=\aleph_0$ this is Theorem~3.2 of \cite{ObRo70:articolo}.
\end{obs}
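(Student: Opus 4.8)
The plan is to reduce the finite conical limits of representables appearing in condition~(2) of Proposition~\ref{flat+absolute=filtered} to finite products together with equalizers, and then to observe that the finite products are handled automatically by the additive structure of $\V$.

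First I would recall that for $\alpha=\aleph_0$ condition~(2) asks precisely that $M*-$ preserve all finite conical limits of representables. Since $\V$, and hence $[\C,\V]$, is complete, every finite limit can be constructed as the equalizer of a pair of maps between finite products: for a finite diagram $D\colon \J\to[\C,\V]$ the limit is the equalizer of the two canonical maps $\prod_{j}D(j)\rightrightarrows \prod_{u\colon j\to j'}D(j')$. Hence a functor out of $[\C,\V]$ preserves every finite conical limit as soon as it preserves finite products and equalizers, and condition~(2) splits into these two requirements.

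Next I would show that the finite-product half comes for free. Because $\V$ is locally dualizable, $\V_0$ has finite direct sums, so finite products and finite coproducts coincide there as biproducts; this biproduct structure is computed pointwise in $[\C,\V]$, whence a finite product of representables $\prod_i\C(C_i,-)$ is simultaneously their coproduct, and---since $\C$ too has finite direct sums---is again representable, namely $\C(\bigoplus_i C_i,-)$. The functor $M*-\cong\tx{Lan}_Y M$ is a weighted-colimit functor, so it is cocontinuous in its argument and in particular preserves finite coproducts; as these agree with finite products and $M*\C(C,-)\cong M(C)$, it preserves finite products of representables. Combining the two steps, the only nontrivial part of condition~(2) that remains is the preservation of equalizers, which is exactly the reduction claimed.

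There is no real obstacle here; the single point to get right is that the coincidence of finite products with finite coproducts is inherited pointwise by $[\C,\V]$, so that the cocontinuity of $M*-$ already secures the finite products. For the closing sentence, specializing to $\V=\bo{Ab}$---where the direct sums are the usual ones, $M*-\cong M\otimes-$, and $\alpha$-flatness for $\alpha=\aleph_0$ is ordinary flatness---turns the statement into precisely \cite[Theorem~3.2]{ObRo70:articolo}.
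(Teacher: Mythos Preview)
Your argument is correct and matches the paper's reasoning. The paper treats this as a brief remark without proof, so you have simply spelled out the justification it leaves implicit: finite conical limits decompose into finite products and equalizers, and since $\V$ (and hence $[\C,\V]$) has biproducts, the cocontinuous functor $M*-$ automatically preserves the finite products, leaving only equalizers to check.
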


Therefore we obtain a characterization of $\alpha$-flat $\V$-functors in general:

\begin{prop}\label{flat-ab}
	Let $ M \colon \C^{op}\to\V$ be a $\V$-functor; the following are equivalent:\begin{enumerate}\setlength\itemsep{0.25em}
		\item $ M $ is $\alpha$-flat, or equivalently $ M *-\colon [\C,\V]\to \V$ preserves $\alpha$-small limits;
		\item $ M *-\colon [\C,\V]\to \V$ preserves $\alpha$-small limits of representables;
		\item $ M $ lies in the closure of the representables under $\G$-copowers, finite direct sums, and $\alpha$-filtered colimits.
	\end{enumerate}
\end{prop}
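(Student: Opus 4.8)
The plan is to reduce the general case to the ``nice'' case of Proposition~\ref{flat+absolute=filtered} by freely adjoining the required absolute colimits. The implications $(1)\Rightarrow(2)$ is trivial and $(3)\Rightarrow(1)$ follows from the fact that representables are $\alpha$-flat, that $\alpha$-flat weights are closed under $\alpha$-flat colimits (in particular under $\alpha$-filtered colimits) by the Proposition quoted from \cite{Kel82:articolo}, and that $\G$-copowers and finite direct sums are absolute, hence Cauchy, hence $\alpha$-flat. So the content is in $(2)\Rightarrow(3)$.

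For $(2)\Rightarrow(3)$, given $ M \colon \C^{op}\to\V$ whose left Kan extension preserves $\alpha$-small limits of representables, first I would form the closure $\C'$ of $\C$ inside $[\C^{op},\V]$ under $\G$-copowers and finite direct sums; this is still a small $\V$-category (since $\G$ is small and finite direct sums are finitary), the inclusion $J\colon \C\to\C'$ is fully faithful, and $\C'$ has finite direct sums and $\G$-copowers by construction. Now consider $ M ':=\tx{Lan}_{J^{op}} M \colon \C'^{op}\to\V$. Since $J$ is fully faithful, $\tx{Lan}_{J^{op}} M $ restricts back to $ M $ along $J^{op}$, and we have the usual formula $ M '*F\cong  M *(FJ)$ for $F\colon \C'\to\V$ (as in the proof of Lemma~\ref{flat-restriction}). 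Using this, one checks that $ M '*-$ preserves $\alpha$-small conical limits of representables on $\C'$: an $\alpha$-small conical limit of representables $\C'(C_i',-)$ can be analysed using that each $C_i'$ is built from objects of $\C$ by $\G$-copowers and finite direct sums, and the hypothesis that $ M *-$ preserves $\alpha$-small limits of representables on $\C$, together with the fact that $ M '$ (being a left Kan extension whose domain has, and whose codomain $\V$ has, these absolute colimits) automatically sends $\G$-copowers and finite direct sums in $\C'^{op}$ to the corresponding limits. Hence $ M '$ satisfies hypothesis $(2)$ of Proposition~\ref{flat+absolute=filtered}, so $ M '$ is an $\alpha$-filtered colimit of representables on $\C'$, i.e.\ $ M '\cong\colim(Y'K)$ for some $K\colon \D_\V\to\C'$ with $\D$ $\alpha$-filtered.

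Finally I would transport this back to $\C$. Since $ M \cong  M '\circ J^{op} = [J^{op},\V]( M ')$ and restriction $[J^{op},\V]\colon [\C'^{op},\V]\to[\C^{op},\V]$ is cocontinuous, we get $ M \cong\colim\big([J^{op},\V](Y'K)\big)$, an $\alpha$-filtered colimit of the presheaves $\C'(K-,J-)\colon \C^{op}\to\V$. Each such presheaf $\C'(Kd,J-)$ is, by the construction of $\C'$ as the closure of (the representables on) $\C$ under $\G$-copowers and finite direct sums inside $[\C^{op},\V]$, a presheaf obtained from representables $\C(C,-)$ by finitely many $\G$-copowers and finite direct sums. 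Therefore $ M $ lies in the closure of the representables under $\G$-copowers, finite direct sums, and $\alpha$-filtered colimits, which is $(3)$. The equivalence with ``$ M *-$ preserves all $\alpha$-small limits'' in $(1)$ then follows since, once $(3)$ holds, $ M $ is $\alpha$-flat by the $(3)\Rightarrow(1)$ argument above, and being $\alpha$-flat means exactly that $ M *-$ preserves all $\alpha$-small limits.

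The step I expect to be the main obstacle is verifying that $ M ':=\tx{Lan}_{J^{op}} M $ genuinely satisfies condition $(2)$ of Proposition~\ref{flat+absolute=filtered}, i.e.\ that $ M '*-$ preserves $\alpha$-small conical limits of representables on $\C'$: one must check carefully that adjoining the absolute colimits to $\C$ does not destroy the weak preservation property, using that $\G$-copowers and finite direct sums are absolute (so the Kan extension behaves well with respect to them) and that an $\alpha$-small limit of representables on $\C'$ can be rewritten, via the compatibility of $ M '*-$ with these absolute colimits in $\C'^{op}$, in terms of $\alpha$-small limits of representables on $\C$ where the hypothesis on $ M $ applies. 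An alternative, possibly cleaner, route for this verification is to go through the category of elements directly: show $\tx{El}( M '_I)$ is $\alpha$-filtered by relating it to $\tx{El}( M _I)$ (the functor $\C_0\to\C'_0$ induces a final functor on categories of elements, since every object of $\C'$ is a finite direct sum of $\G$-copowers of objects of $\C$ and $I$ is projective and finitely presentable), and then invoke Proposition~\ref{flat+absolute=filtered} in the form $(3)\Rightarrow(4)$.
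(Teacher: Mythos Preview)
Your proposal is correct and follows essentially the same approach as the paper: form the free cocompletion $\C'$ (the paper calls it $\D$) of $\C$ under finite direct sums and $\G$-copowers, show that $M':=\tx{Lan}_{J^{op}}M$ still satisfies condition~(2), apply Proposition~\ref{flat+absolute=filtered} to write $M'$ as an $\alpha$-filtered colimit of representables on $\C'$, and restrict back along $J$. For the step you flag as the main obstacle, the paper's argument is exactly your first suggestion, phrased slightly more crisply: since finite direct sums and $\G$-copowers are themselves $\alpha$-small, any $\alpha$-small conical limit of representables on $\C'$ can be rewritten as an $\alpha$-small \emph{weighted} limit of representables coming from $\C$, and then the hypothesis~(2) on $M$ (which concerns $\alpha$-small weighted limits of representables) applies via $M'*F\cong M*(FJ)$.
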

\begin{proof}
	$(1)\Rightarrow (2)$ is trivial, while $(3)\Rightarrow (1)$ follows from the fact that representables functors are $\alpha$-flat and these are closed under $\alpha$-filtered and absolute colimits in $[\C,\V]$.
	
	$(2)\Rightarrow (3)$. Let $ M \colon \C^{op}\to\V$ be as in $(2)$ and $J\colon \C\hookrightarrow\D$ be the inclusion of $\C$ into its free completion under finite direct sums and $\G$-copowers. Consider the weight $ M ':=\tx{Lan}_{J^{op}} M $, this still has the property that $ M '*-\colon [\D,\V]\to \V$ preserves $\alpha$-small conical limits of representables: since finite direct sums  and $\G$-copowers are $\alpha$-small, the limit of any $\alpha$-small diagram landing in $\D^{op}$ can be rewritten as the weighted limit of one landing in $\C^{op}$; thus $ M '*-$ preserves $\alpha$-small conical limits of representables by condition $(2)$. Now the domain of $ M '\colon \D^{op}\to\V$ satisfies the conditions of Proposition~\ref{flat+absolute=filtered}, therefore we can write $ M '\cong\colim (YK)$ with $K\colon \B_\V\to\D$ an $\alpha$-filtered diagram, and $Y\colon \D\to[\D^{op},\V]$ the Yoneda embedding. Then one concludes since each element of $\D$ is a finite direct sum of $\G$-copowers of objects from $\C$.
\end{proof}

\begin{ese}$ $
	\begin{itemize}\setlength\itemsep{0.25em}
		\item For $\V=\bo{Ab}$ and $\alpha=\aleph_0$ we recover the characterization of flat additive functors from \cite[Theorem~3.2]{ObRo70:articolo}.
		\item When $\V=\bo{GAb}$ we obtain that a $\V$-functor is flat if and only if it is a filtered colimit of suspensions of finite direct sums of representables.
		\item When $\V=R\tx{-}\bo{Mod}$, $\alpha=\aleph_0$, and $\C=\I$ we recover Lazard's criterion \cite{Laz64:articolo}: an $R$-module is flat if and only if it is a filtered colimit of free modules.
	\end{itemize}
\end{ese}

This allows us to characterize $\alpha$-flat colimits in terms of absolute and $\alpha$-filtered ones.

\begin{teo}\label{con-abs=flat}
	A $\V$-category $\A$ has $\alpha$-flat colimits if and only if it has finite direct sums, $\G$-copowers and $\alpha$-filtered colimits. A $\V$-functor from such an $\A$ preserves $\alpha$-flat colimits if and only if it preserves $\alpha$-filtered colimits.
\end{teo}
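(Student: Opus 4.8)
The plan is to deduce this formally from Proposition~\ref{flat-ab}, in exactly the way that Theorem~\ref{flat-preservation} was deduced from Proposition~\ref{flat=filtered}. Recall that Proposition~\ref{flat-ab} identifies the $\alpha$-flat weights $ M \colon\C^{op}\to\V$ with precisely those lying in the closure of the representables in $[\C^{op},\V]$ under $\G$-copowers, finite direct sums and $\alpha$-filtered colimits. One implication of each assertion is then immediate: finite direct sums and $\G$-copowers are absolute (Section~\ref{dualizable}), hence $\alpha$-flat, and conical $\alpha$-filtered colimits are $\alpha$-flat; so a $\V$-category with $\alpha$-flat colimits has all three classes of colimit, and a $\V$-functor preserving $\alpha$-flat colimits certainly preserves $\alpha$-filtered ones.

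For the converse, suppose $\A$ has finite direct sums, $\G$-copowers and $\alpha$-filtered colimits, let $ M \colon\C^{op}\to\V$ be $\alpha$-flat, and let $H\colon\C\to\A$. I would consider the class $\W$ of weights $ N \colon\C^{op}\to\V$ for which $ N *H$ exists in $\A$. It contains every representable, since $\C(C,-)*H\cong HC$; and, because weighted colimits are cocontinuous in the weight variable (cf.\ \cite{Kel82:libro}), $\W$ is closed under $\G$-copowers, finite direct sums and $\alpha$-filtered colimits, since each such colimit of weights in $\W$ is carried by $-*H$ to the corresponding colimit in $\A$, which exists by hypothesis. Hence $\W$ contains the closure described above, so $ M \in\W$ and $ M *H$ exists. (Equivalently, and closer to the proof of Theorem~\ref{flat-preservation}: embed $\C$ into its free completion $J\colon\C\hookrightarrow\D$ under finite direct sums and $\G$-copowers; as in the proof of Proposition~\ref{flat-ab}, the weight $ M ':=\tx{Lan}_{J^{op}} M $ satisfies condition~$(2)$ of Proposition~\ref{flat+absolute=filtered}, hence $ M '\cong\colim(YK)$ for some $\alpha$-filtered $K\colon\B_\V\to\D$; extending $H$ to the direct-sum- and $\G$-copower-preserving $\bar H\colon\D\to\A$ and using \cite[4.19]{Kel82:libro} one gets $ M *H\cong M '*\bar H\cong\colim(\bar HK)$, which exists.)

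For the preservation statement, let $F\colon\A\to\K$ be a $\V$-functor preserving $\alpha$-filtered colimits. Since finite direct sums and $\G$-copowers are absolute, $F$ preserves them automatically, and hence $F$ preserves all three classes of colimit; running the same argument with ``$ N *H$ exists'' replaced by ``$F$ preserves $ N *H$'' shows that this class of weights too contains the representables and is closed under $\G$-copowers, finite direct sums and $\alpha$-filtered colimits, hence contains $ M $, so that $F( M *H)\cong M *FH$. The only delicate point — more a matter of bookkeeping than a genuine obstacle — is checking that $F$ really commutes with the identifications used: in the free-completion formulation one notes that, because $F$ preserves the absolute colimits out of which $\D$ is built, the composite $F\bar H$ is canonically the extension $\overline{FH}$ of $FH$, so that $F$ carries $ M *H\cong\colim(\bar HK)$ to $ M *FH\cong\colim(\overline{FH}K)$. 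Everything else is routine, the substance of the result being already contained in Proposition~\ref{flat-ab}.
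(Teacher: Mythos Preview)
Your proof is correct and essentially follows the paper's approach: the parenthetical ``free-completion formulation'' you sketch is exactly what the paper does (embed $\C$ into its cocompletion $\D$ under finite direct sums and $\G$-copowers, extend both $M$ and $H$, apply Proposition~\ref{flat+absolute=filtered} to write $M'$ as an $\alpha$-filtered colimit of representables, and reduce $M*H$ to $\colim(H'K)$). Your primary closure argument is a mild repackaging of the same idea and is equally valid.
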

\begin{proof}
	Since $\alpha$-filtered colimits and absolute colimits are $\alpha$-flat, any $\V$-category $\A$ with $\alpha$-flat colimits has all of them.
	
	Vice versa, assume that $\A$ has the colimits above and consider an $\alpha$-flat weight $ M \colon \C^{op}\to\V$ together with a diagram $H\colon \C\to\A$ in $\A$. Let $J\colon \C\hookrightarrow\D$ be the inclusion of $\C$ into its free cocompletion under finite direct sums and $\G$-copowers. Since $\A$ has these colimits we can consider $H':=\tx{Lan}_JH$, while on the weighted side we take $ M ':=\tx{Lan}_{J^{op}} M $. By Lemma~\ref{flat-restriction} the weight $ M '$ is still $\alpha$-flat and, by construction, its domain satisfies the hypotheses of Proposition~\ref{flat+absolute=filtered}. Thus we can write $ M '\cong\tx{colim}YF$ as an $\alpha$-filtered colimit of representables; here $Y\colon \D\to[\D^{op},\V]$ is Yoneda and $F\colon \B_{\V}\to\D$ is a functor with $\alpha$-filtered domain.
	As a consequence $ M *H$ exists if and only if $ M '*H'$ exists, and so if and only if $\tx{colim} (H'F)$ exists (see the isomorphisms in the proof of \ref{flat-preservation}) and they coincide. 
	Thus the existence and preservation of the $\alpha$-flat colimit $ M *H$ is equivalent to the existence and preservation of the $\alpha$-filtered colimit $\tx{colim} H'F$.
\end{proof}

Therefore:

\begin{teo}\label{acc=conacc+cauchy}
	A $\V$-category $\A$ is $\alpha$-accessible if and only if it has finite direct sums and $\G$-copowers, and is conically $\alpha$-accessible.
\end{teo}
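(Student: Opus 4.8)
The plan is to derive this from Theorem~\ref{con-abs=flat}, in exactly the way Theorem~\ref{2-cataccessibility} was derived from Theorem~\ref{flat-preservation}. The two ingredients we get for free from Theorem~\ref{con-abs=flat} are: first, that having $\alpha$-flat colimits is the same as having finite direct sums, $\G$-copowers, and $\alpha$-filtered colimits; and second, since $\A(A,-)$ preserves $\alpha$-flat colimits if and only if it preserves $\alpha$-filtered ones, that the $\alpha$-presentable and conically $\alpha$-presentable objects coincide, so $\A_\alpha=\A_\alpha^c$ in any such $\A$.

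For the forward direction, I would start from an $\alpha$-accessible $\A$; it has $\alpha$-flat colimits, hence by Theorem~\ref{con-abs=flat} it has finite direct sums, $\G$-copowers, $\alpha$-filtered colimits, and $\A_\alpha=\A_\alpha^c$. Pick a small $\C\subseteq\A_\alpha$ witnessing $\alpha$-accessibility, and let $\C'$ be the closure of $\C$ in $\A$ under finite direct sums and $\G$-copowers; this is still small (only finitely many summands at a time, and $\G$ is a set), and since in a base that is locally $\alpha$-presentable as a closed category finite products and the functors $[P,-]$ for $P\in\G\subseteq\V_\alpha$ preserve $\alpha$-filtered colimits, every object of $\C'$ is again conically $\alpha$-presentable, i.e. $\C'\subseteq\A_\alpha^c$. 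Given $A\in\A$, write it as an $\alpha$-flat colimit $M*H$ of objects of $\C$, and repeat the argument in the proof of Theorem~\ref{con-abs=flat}: take $J\colon\C\hookrightarrow\D$ the inclusion into the free cocompletion under finite direct sums and $\G$-copowers, form $H':=\tx{Lan}_J H$ and $M':=\tx{Lan}_{J^{op}}M$, invoke Lemma~\ref{flat-restriction} and Proposition~\ref{flat+absolute=filtered} to write $M'\cong\colim YF$ with $F$ of $\alpha$-filtered domain, and conclude that $A\cong M'*H'\cong\colim(H'F)$ is an $\alpha$-filtered colimit of a diagram whose values are finite direct sums of $\G$-copowers of objects of $\C$, hence land in $\C'$. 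So $\A$ is conically $\alpha$-accessible.

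The converse is the quick half: if $\A$ has finite direct sums and $\G$-copowers and is conically $\alpha$-accessible, then it has $\alpha$-filtered colimits, so Theorem~\ref{con-abs=flat} again gives $\alpha$-flat colimits and $\A_\alpha=\A_\alpha^c$; a small $\C\subseteq\A_\alpha^c=\A_\alpha$ witnessing conical $\alpha$-accessibility then also witnesses $\alpha$-accessibility, because every $\alpha$-filtered colimit is $\alpha$-flat. The only point needing genuine care is the closure step in the forward direction — verifying that $\C'$ stays small and stays inside $\A_\alpha^c$, and that the $\alpha$-filtered replacement of $M*H$ supplied by Theorem~\ref{con-abs=flat} really does take values in $\C'$ rather than in some larger part of $\A_\alpha^c$ — but this is bookkeeping already carried out in the proofs of Theorems~\ref{con-abs=flat} and~\ref{2-cataccessibility}, so no new difficulty is expected.
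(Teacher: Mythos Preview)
Your proposal is correct and follows essentially the same route as the paper: both use Theorem~\ref{con-abs=flat} to get $\A_\alpha=\A_\alpha^c$, then rerun the free-cocompletion/Kan-extension argument from that theorem's proof to convert an $\alpha$-flat colimit of $\alpha$-presentables into an $\alpha$-filtered one whose diagram still lands among the $\alpha$-presentables. The only difference is cosmetic: the paper works directly with $\A_\alpha$ (noting it is closed under finite direct sums and $\G$-copowers), whereas you explicitly form the small closure $\C'$ of $\C$ under these absolute colimits; both amount to the same verification.
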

\begin{proof}
	By Theorem~\ref{con-abs=flat} above an object $A$ of $\A$ is $\alpha$-presentable if and only if is conically $\alpha$-presentable, so that $\A_{\alpha}=\A_\alpha^c$. Arguing as above, for any $\alpha$-flat $ M \colon \C^{op}\to\V$ and diagram $H\colon \C\to\A_{\alpha}\subseteq\A$, the colimit $ M *H$ can be replaced by an $\alpha$-filtered one $\tx{colim} (H'F)$, where $H'$ is the left Kan extension of 
	$H$ along the free cocompletion $\D$ of $\C$ under finite direct sums and $\G$-copowers, and $F\colon \B_\V\to\D$ has $\alpha$-filtered domain. Then $H'F\colon \B_\V\to\A$ still lands in $\A_\alpha$ since this is closed in $\A$ under finite direct sums and $\G$-copowers. Thus an object of $\A$ is an $\alpha$-flat colimit of $\alpha$-presentables if and only if it is an $\alpha$-filtered colimit of (conically) $\alpha$-presentables. The result then follows.
\end{proof}

\begin{obs}
	For $\V=\bo{Ab}$ and $\alpha=\aleph_0$ see Example~9.2 of \cite{BQR98}.
\end{obs}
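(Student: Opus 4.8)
The plan is to recognize that this Remark is a verification rather than a new result: it records that Theorem~\ref{acc=conacc+cauchy} recovers, in the special case $\V=\bo{Ab}$ and $\alpha=\aleph_0$, a phenomenon already isolated in \cite{BQR98}. Thus the task is to specialize our theorem correctly and then match it against the cited example. First I would recall that for $\V=\bo{Ab}$ one may take the strong generator to be $\G=\{\mathbb{Z}\}$, where $\mathbb{Z}=I$ is the unit. Since $[\mathbb{Z},-]\cong\mathrm{id}$, copowers by $\mathbb{Z}$ are trivial, that is $\mathbb{Z}\cdot A\cong A$ for every $A$, and so the hypothesis that $\A$ admit $\G$-copowers is automatically satisfied by every $\bo{Ab}$-category. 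Consequently Theorem~\ref{acc=conacc+cauchy} asserts in this case that an $\bo{Ab}$-category is finitely accessible if and only if it has finite direct sums and is conically finitely accessible.

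Next I would explain the role of Example~9.2 of \cite{BQR98}, which is to witness that the finite-direct-sum hypothesis is genuinely needed and not an artifact of our formulation. As already noted in the Introduction, finite direct sums in $\bo{Ab}$ are absolute, hence $\aleph_0$-flat, yet they are not $\aleph_0$-filtered colimits; the cited example exhibits a concrete separation between the two classes. This is precisely the obstruction that our theorem quantifies: over $\bo{Ab}$ the gap between $\aleph_0$-flat and $\aleph_0$-filtered colimits is measured exactly by these absolute-but-not-filtered direct sums, so a conically finitely accessible $\bo{Ab}$-category fails to be finitely accessible in the flat-weighted sense unless it is also closed under finite direct sums.

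The only genuine check, and the sole potential obstacle, is the bookkeeping needed to confirm that the notions of finite accessibility and conical finite accessibility used in \cite{BQR98} coincide with ours under this specialization, so that the two statements are literally identical. Once that correspondence of definitions is in place, no further argument is required: the equivalence is simply the $\V=\bo{Ab}$, $\alpha=\aleph_0$ instance of Theorem~\ref{acc=conacc+cauchy}, with Example~9.2 of \cite{BQR98} furnishing the example that shows the finite-direct-sum condition cannot be dropped.
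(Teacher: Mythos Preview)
Your explanation is correct in content: for $\V=\bo{Ab}$ one may take $\G=\{\mathbb Z\}=\{I\}$, so $\G$-copowers are trivial and Theorem~\ref{acc=conacc+cauchy} specializes to the statement that an $\bo{Ab}$-category is finitely accessible if and only if it has finite direct sums and is conically finitely accessible; and Example~9.2 of \cite{BQR98} is indeed the place in the literature where the gap between flat and filtered colimits in the additive case was noted.

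That said, there is nothing here to compare against. In the paper this remark carries no proof at all: it is a bare one-line citation, placed immediately after Theorem~\ref{acc=conacc+cauchy} to indicate that the special case $\V=\bo{Ab}$, $\alpha=\aleph_0$ was already observed in \cite{BQR98}. Your paragraphs unpacking the specialization and explaining why finite direct sums are the obstruction are accurate and helpful commentary, but they go well beyond what the paper itself offers. If you intend to match the paper, simply note that the remark is a pointer to the literature and requires no argument.
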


Once more, using Theorem~\ref{acc=conacc+cauchy} and the results of \cite[Section~7]{BQR98}, we can compare conical accessible $\V$-categories and models of $\V$-sketches as follows: 

\begin{teo}
	Let $\A$ be a $\V$-category; the following are equivalent:\begin{enumerate}
		\item $\A$ is accessible;
		\item $\A$ has finite direct sums and $\G$-copowers, and is conically accessible;
		\item $\A$ is equivalent to the $\V$-category of models of a $\V$-sketch.
	\end{enumerate}
\end{teo}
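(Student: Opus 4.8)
The plan is to deduce all three equivalences formally from two inputs already at our disposal: Theorem~\ref{acc=conacc+cauchy}, and the characterization of accessible $\V$-categories as $\V$-categories of models of $\V$-sketches from \cite[Corollary~7.9]{BQR98}. No genuinely new argument should be needed; the only point requiring care is the implicit existential quantifier over the regular cardinal, since ``accessible'' and ``conically accessible'' each mean ``$\alpha$-accessible'' (resp.\ ``conically $\alpha$-accessible'') for \emph{some} $\alpha$, whereas Theorem~\ref{acc=conacc+cauchy} is an equivalence at a \emph{fixed} $\alpha$.

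For $(1)\Leftrightarrow(3)$ I would simply cite \cite[Corollary~7.9]{BQR98}, which states that a $\V$-category is accessible if and only if it is equivalent to the $\V$-category of models of a $\V$-sketch; this applies here since a locally dualizable $\V$ is in particular locally finitely presentable as a closed category. For $(1)\Rightarrow(2)$: if $\A$ is $\alpha$-accessible for some regular cardinal $\alpha$, then Theorem~\ref{acc=conacc+cauchy} applied at that $\alpha$ immediately gives that $\A$ has finite direct sums and $\G$-copowers and is conically $\alpha$-accessible, hence conically accessible.

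The implication that will require a remark is $(2)\Rightarrow(1)$. Here I would start from the assumption that $\A$ has finite direct sums and $\G$-copowers and is conically $\beta$-accessible for some regular cardinal $\beta$, and observe that the hypotheses on finite direct sums and $\G$-copowers do not refer to $\beta$ at all --- $\G$ is the fixed strong generator of dualizable objects of $\V$, and ``finite'' means finite --- so that it suffices to apply Theorem~\ref{acc=conacc+cauchy} at the cardinal $\beta$ to conclude that $\A$ is $\beta$-accessible, and hence accessible. The main (and essentially only) obstacle is therefore to check that Theorem~\ref{acc=conacc+cauchy} is genuinely available at an arbitrary regular cardinal $\beta$, not just at $\aleph_0$: for this one notes that a locally dualizable $\V$ is locally $\beta$-presentable as a closed category for every regular $\beta$ (passing from $\aleph_0$ to $\beta$ preserves local presentability of $\V_0$, keeps $I$ $\beta$-presentable, and, since each variable of $\otimes$ is cocontinuous, keeps tensor products of $\beta$-presentables $\beta$-presentable), that every object of $\G$, being finitely presentable, is $\beta$-presentable, and that finite direct sums and $\G$-copowers are $\beta$-small weighted colimits; hence all the intermediate results of Section~\ref{flat=absolute+filtered} leading to Theorem~\ref{acc=conacc+cauchy} go through with $\beta$ in place of $\aleph_0$. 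Once this cardinal-insensitivity is in hand, the three equivalences follow at once.
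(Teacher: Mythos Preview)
Your proposal is correct and follows the same route as the paper, which simply records the theorem as an immediate consequence of Theorem~\ref{acc=conacc+cauchy} together with \cite[Section~7]{BQR98} without spelling out a proof. Your additional care about the existential quantifier over the cardinal, and the verification that Theorem~\ref{acc=conacc+cauchy} is available at every regular $\beta$ because a locally dualizable $\V$ is locally finitely presentable as a closed category, is a point the paper leaves implicit but which is handled correctly in your argument.
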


As a direct consequence we characterize the Cauchy complete $\V$-categories.

\begin{cor}\label{Cauchy}
	Let $\C$ be a $\V$-category; the following are equivalent:
	\begin{enumerate}\setlength\itemsep{0.25em}
		\item $\C$ is Cauchy complete;
		\item $\C$ has finite direct sums, copowers (and hence powers) by dualizable objects, and splitting idempotents;
		\item $\C$ has finite direct sums, $\G$-copowers, and splitting of idempotents.
	\end{enumerate}
\end{cor}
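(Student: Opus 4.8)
The plan is to prove the cycle $(1)\Rightarrow(3)\Rightarrow(1)$ together with the equivalence $(2)\Leftrightarrow(3)$. For $(1)\Rightarrow(3)$ I would simply observe that finite direct sums (as $\V$ has finite direct sums), copowers by dualizable objects (by the Proposition of Section~\ref{dualizable}), and splittings of idempotents are all absolute — hence Cauchy — colimits, so that any Cauchy complete $\V$-category has all three. For $(2)\Leftrightarrow(3)$: the implication $(2)\Rightarrow(3)$ is trivial, since $\G$ consists of dualizable objects, and powers by a dualizable $P$ coincide with copowers by the dualizable object $P^*$. For $(3)\Rightarrow(2)$ I would use that a dualizable $P$ has $[P,-]\cong P^*\otimes-$ cocontinuous, so that $\V_0(P,-)\cong\V_0(I,[P,-])$ is finitary and preserves regular epimorphisms, whence $P$ is finitely presentable and regular projective and — by the argument in the proof of Proposition~\ref{four-prime} — a retract of a finite direct sum $Q=\bigoplus_i P_i$ of objects of $\G$. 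Then for $A\in\C$ the copower $Q\cdot A\cong\bigoplus_i(P_i\cdot A)$ exists by hypothesis; the retraction of $P$ off $Q$ induces, via $\C(Q\cdot A,-)\cong[Q,\C(A,-)]$, an idempotent on $Q\cdot A$ whose splitting exists because $\C$ splits idempotents and represents $[P,\C(A,-)]$, hence is the copower $P\cdot A$. Thus $\C$ has copowers — and so also powers — by all dualizable objects.

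The substantive step is $(3)\Rightarrow(1)$, and here I would follow the proof of Theorem~\ref{con-abs=flat}. Given a Cauchy weight $M\colon\B^{op}\to\V$ and a $\V$-functor $H\colon\B\to\C$, we must show that $M*H$ exists. Let $J\colon\B\hookrightarrow\D$ be the inclusion of $\B$ into its free cocompletion under finite direct sums and $\G$-copowers; since $\C$ has these colimits we may form $H':=\tx{Lan}_JH\colon\D\to\C$, and on the weight side we set $M':=\tx{Lan}_{J^{op}}M\colon\D^{op}\to\V$. By Lemma~\ref{flat-restriction}(1) the weight $M'$ is $\alpha$-flat for every regular cardinal $\alpha$ (because $M$ is), i.e. $M'$ is again Cauchy.

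Now $\D$ does have finite direct sums and $\G$-copowers, so Proposition~\ref{flat+absolute=filtered} applies to $M'$ at every $\alpha$: it shows that $\tx{El}(M'_I)$ is $\alpha$-filtered for all $\alpha$ — hence absolute in the ordinary sense — and that the canonical map $\tx{colim}\big(\tx{El}(M'_I)_\V\xrightarrow{\pi_\V}\D\xrightarrow{Y}[\D^{op},\V]\big)\to M'$ is invertible. Combining this with the isomorphisms in the proof of Theorem~\ref{flat-preservation} and with $H'J\cong H$ (which holds because $J$ is fully faithful), one gets $M*H\cong M'*H'\cong\tx{colim}(H'\circ\pi_\V)$, a conical colimit indexed by the ordinary absolute category $\tx{El}(M'_I)$. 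To conclude I would note that a $\V$-category which splits idempotents has all conical colimits indexed by ordinary absolute categories: such a small category $\mathcal{E}$ admits an object $e$ and a cocone $\lambda\colon\tx{id}_{\mathcal{E}}\Rightarrow\Delta e$ (apply the defining condition of ``$\alpha$-filtered'' with $\alpha$ larger than the size of $\mathcal{E}$), the component $\lambda_e$ is then idempotent, and for any $D\colon\mathcal{E}\to\C$ one checks directly that the splitting of $D\lambda_e$ on $De$ is $\tx{colim}\,D$; since this splitting is preserved by every $\C(-,B)_0$, it is also the enriched conical colimit. Hence $M*H$ exists, and $\C$ is Cauchy complete.

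The main obstacle is bookkeeping rather than a new idea. One must remember that Proposition~\ref{flat+absolute=filtered} can only be invoked after replacing $\B$ by the completion $\D$, since its hypotheses require the domain of the weight to already carry the relevant absolute colimits; and one must verify that the category $\tx{El}(M'_I)$ so obtained is genuinely absolute, so that an only idempotent-complete $\C$ still admits its conical colimit. Both points are dealt with exactly as in the ordinary ($\bo{Set}$-enriched) case and as in the proof of the analogous Cauchy-completeness corollary in Section~\ref{con=flat}.
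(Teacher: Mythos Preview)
Your proposal is correct and follows essentially the same approach as the paper: the paper's proof is a one-liner invoking Theorem~\ref{con-abs=flat} together with the observation that a Cauchy weight is $\alpha$-flat for every $\alpha$, and your $(3)\Rightarrow(1)$ argument is precisely an unpacking of that invocation (pass to the completion $\D$, apply Proposition~\ref{flat+absolute=filtered} at every $\alpha$ to see that $\tx{El}(M'_I)$ is absolute, reduce to an idempotent splitting). The only difference is organizational: the paper runs the cycle $(1)\Rightarrow(2)\Rightarrow(3)\Rightarrow(1)$ and so never needs $(3)\Rightarrow(2)$ directly, whereas you supply an explicit argument for $(3)\Rightarrow(2)$ via the retract-of-$\bigoplus_i P_i$ description of dualizable objects --- a pleasant bonus, but not required.
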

\begin{proof}
	$(1)\Rightarrow (2)\Rightarrow (3)$ are trivial, and $(3)\Rightarrow (1)$ is a consequence of Theorem~\ref{con-abs=flat}, using the fact that an absolute weight is one that is $\alpha$-flat for any $\alpha$.\\
\end{proof}

For the next result consider the set $\langle\G\rangle$ given by the closure of $\G\cup\{I\}$ under tensor product.

\begin{prop}
	Let $\C$ be any small $\V$-category. A weight $ M \colon \C^{op}\to\V$ is absolute if and only if there exist objects $C_1,\dots,C_n\in \C$ and $P_1,\dots. P_n\in\langle\G\rangle$ such that $ M $ is a split subobject of $$\bigoplus_{i=1}^{n}P_i\cdot \C(-,C_i).$$
\end{prop}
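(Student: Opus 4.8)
The plan is to recognise the full subcategory $\bar\C\subseteq[\C^{op},\V]$ spanned by the absolute weights as the Cauchy completion of $\C$, and then to compute that completion by means of Corollary~\ref{Cauchy}. Recall from \cite{Str1983:articolo} that $\bar\C$ is indeed the Cauchy completion of $\C$. Since $\bar\C$ is Cauchy complete it is, by Corollary~\ref{Cauchy}, closed in $[\C^{op},\V]$ under finite direct sums, $\G$-copowers and split subobjects; and since conversely any replete full subcategory of $[\C^{op},\V]$ which contains the representables and is closed under these three operations is itself Cauchy complete (again by Corollary~\ref{Cauchy}), it follows that $\bar\C$ is the closure of the representables under those three operations. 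Writing $\D$ for the replete full subcategory on the presheaves isomorphic to some $\bigoplus_{i=1}^n P_i\cdot\C(-,C_i)$ with $P_i\in\langle\G\rangle$ and $C_i\in\C$, and $\overline\D$ for the full subcategory of split subobjects of objects of $\D$, the claim to prove is precisely $\bar\C=\overline\D$.

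For $\overline\D\subseteq\bar\C$: first, $\D$ is the closure of the representables under finite direct sums and $\G$-copowers. Indeed $\D$ contains each representable, since $\C(-,C)\cong I\cdot\C(-,C)$ with $I\in\langle\G\rangle$; it is closed under finite direct sums trivially; and it is closed under $\G$-copowers because, for $R\in\G$, one has $R\cdot\big(\bigoplus_i P_i\cdot\C(-,C_i)\big)\cong\bigoplus_i(R\otimes P_i)\cdot\C(-,C_i)$ with each $R\otimes P_i\in\langle\G\rangle$; conversely every $P\cdot\C(-,C)$ with $P\in\langle\G\rangle$ arises from $\C(-,C)$ by iterated $\G$-copowers, using $(A\otimes B)\cdot X\cong A\cdot(B\cdot X)$. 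Hence $\D\subseteq\bar\C$, and as $\bar\C$ is closed under split subobjects we get $\overline\D\subseteq\bar\C$. This inclusion already gives the ``if'' half of the statement; alternatively one could argue it directly, noting that every object of $\langle\G\rangle$ is dualizable (the unit is, the objects of $\G$ are, and dualizables are closed under tensor), so each $P_i\cdot\C(-,C_i)$ is a copower of a representable by a dualizable object and therefore absolute by the proposition of Section~\ref{dualizable}, while finite direct sums and split subobjects of absolute weights are absolute.

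For $\bar\C\subseteq\overline\D$: by the description of $\bar\C$ above it is enough to check that $\overline\D$ contains the representables and is closed under finite direct sums, $\G$-copowers and split subobjects. It clearly contains the representables. A finite direct sum (resp. a $\G$-copower) of split subobjects of objects of $\D$ is a split subobject of the corresponding finite direct sum (resp. $\G$-copower), which lies in $\D$ since $\D$ is closed under these operations; so $\overline\D$ is closed under them. And a split subobject of a split subobject of some $N\in\D$ is again a split subobject of $N$, so $\overline\D$ is closed under split subobjects. Hence $\overline\D\supseteq\bar\C$, and together with the previous paragraph this yields $\bar\C=\overline\D$, which is the statement.

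The one point needing genuine care is the identification of the absolute weights with the Cauchy completion of $\C$ together with the resulting book-keeping --- in particular, that a single pass ``finite direct sums and $\G$-copowers, then split subobjects'' already produces a Cauchy-complete subcategory, so that no iteration of the three constructions is required; the remaining verifications (distributivity of $\G$-copowers over finite direct sums, composition of copowers, and stability of split subobjects under $\G$-copowers) are routine.
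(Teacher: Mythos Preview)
Your argument is correct, and takes a genuinely different route from the paper's. The paper argues the ``only if'' direction by left-Kan-extending $M$ along the inclusion $J\colon\C\hookrightarrow\D$ (where $\D$ is exactly your $\D$), invoking Lemma~\ref{flat-restriction} to see that $M':=\tx{Lan}_{J^{op}}M$ remains absolute, and then applying Proposition~\ref{flat+absolute=filtered} to $M'$: since $\D$ has finite direct sums and $\G$-copowers and $M'$ is $\alpha$-flat for every $\alpha$, the category $\tx{El}(M'_I)$ is absolute in the ordinary sense, so $M'$ is a retract of a single representable $\D(-,D)$; restricting along $J$ gives the result. You instead work entirely at the level of Corollary~\ref{Cauchy}: identify the absolute weights with the Cauchy completion, recognise the latter as the closure of the representables under the three named classes of colimits, and then do the closure bookkeeping to show that a single pass ``direct sums and $\G$-copowers, then retracts'' already stabilises. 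Your approach avoids the Kan-extension machinery and is arguably cleaner once Corollary~\ref{Cauchy} is in hand; the paper's approach has the virtue of reusing verbatim the technique (Kan extend, then apply Proposition~\ref{flat+absolute=filtered}) that drives Proposition~\ref{flat-ab} and Theorem~\ref{con-abs=flat}, so no new idea is needed. Both ultimately rest on Proposition~\ref{flat+absolute=filtered}, since Corollary~\ref{Cauchy} is derived from it.
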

\begin{proof}
	If the latter holds then $ M $ is an absolute colimit of absolute weights; thus it is absolute itself.
	
	Assume now that $ M $ is absolute. Let $\D$ be the full subcategory of $ [\C^{op},\V]$ spanned by the finite direct sums of $\langle\G\rangle$-copowers of representables;
	denote by $J\colon \C\hookrightarrow\D$ and note that $\D$ is closed in $[\C^{op},\V]$ under finite direct sums and $\G$-copowers. Consider now $ M ':=\tx{Lan}_{J^{op}}( M )$; then by Lemma~\ref{flat-restriction} the weight $ M '$ is still absolute and, by construction, its domain satisfies the hypotheses of Proposition~\ref{flat+absolute=filtered}. It follows that $ M '$, seen in $[\D^{op},\V]_0$ is an ordinary absolute colimit of representables; therefore $ M '$ is a split subobject of $\D(-,D)$ for some $D\in\D$. 
	As a consequence $ M \cong M '\circ J^{op}$ is a split subobject of $\D(J-,D)$. Now, by construction of $\D$, the object $D$ can be written as $\textstyle\sum_{i=1}^{n}(P_i\cdot JC_i)$ for some $C_i\in\C$ and $P_i\in\langle\G\rangle$. Therefore $\D(J-,D)\cong \textstyle\sum_{i=1}^{n}P_i\cdot\C(-,C_i) $ and the result follows.
\end{proof}

\begin{obs}
	If we consider $\V=\bo{GAb}$ we recover the results of section 6 from \cite{NST2020cauchy}. In fact we can consider $\G=\{S^nI\}_{n\in\mathbb{Z}}$ given by the suspensions of the unit. Then the proposition above is \cite[Proposition~6.1]{NST2020cauchy} and Corollary~\ref{Cauchy} is \cite[Proposition~6.2]{NST2020cauchy}.
\end{obs}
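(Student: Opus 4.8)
The plan is to verify that all the structure appearing in the statements above specializes, for $\V=\bo{GAb}$, to the combinatorial data used in \cite{NST2020cauchy}. First I would record that $\bo{GAb}=\mathbb{Z}\tx{-}\bo{Gr}(\mathbb{Z}\tx{-}\bo{Mod})$ is locally dualizable by Example~\ref{locduaexample}, with the strong generator $\G=\{S^nI\}_{n\in\mathbb{Z}}$ obtained by specializing $\{S_g R\}_{g\in\bo{G}}$ to $\bo{G}=\mathbb{Z}$ and $R=\mathbb{Z}$; here $I$ is the unit $\mathbb{Z}$ concentrated in degree $0$ and $S^n$ denotes the $n$-fold suspension (degree shift).

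Next I would make the dualizable structure explicit. Each $S^nI$ is dualizable with dual $(S^nI)^*\cong S^{-n}I$, and since the tensor product adds degrees we have $S^nI\otimes S^mI\cong S^{n+m}I$. In particular the unit satisfies $I=S^0I\in\G$, so the closure $\langle\G\rangle$ of $\G\cup\{I\}$ under tensor product is just $\G$ itself (up to isomorphism). Moreover, copowering by $S^nI$ is the $n$-fold suspension, so that each summand $P_i\cdot\C(-,C_i)$ with $P_i\in\langle\G\rangle$ is a suspension $S^{n_i}\C(-,C_i)$ of a representable.

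With these identifications in hand the two translations are immediate. The proposition above asserts that $M$ is absolute if and only if it is a split subobject of $\bigoplus_{i=1}^n S^{n_i}\C(-,C_i)$, i.e.\ a retract of a finite direct sum of suspensions of representables; this is exactly \cite[Proposition~6.1]{NST2020cauchy}. Likewise Corollary~\ref{Cauchy} says that $\C$ is Cauchy complete if and only if it has finite direct sums, $\G$-copowers (that is, all suspensions/shifts), and split idempotents, which is precisely \cite[Proposition~6.2]{NST2020cauchy}.

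The only point requiring care, and what I expect to be the main obstacle, is confirming that the terminology matches on the nose: one must check that the shift functors used in \cite{NST2020cauchy} agree with copowers by the objects $S^nI$, and that their notion of absolute (Cauchy) weight coincides with ours. Once the dictionary $S^nI\cdot(-)\leftrightarrow S^n(-)$ and the identity $\langle\G\rangle=\G$ are established, no further computation is needed and the remark follows by direct comparison.
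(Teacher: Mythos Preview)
Your proposal is correct and in fact goes beyond what the paper does: this statement is a Remark in the paper, stated without proof, so there is no argument to compare against. Your elaboration of the dictionary---that $\langle\G\rangle=\G$ because $S^nI\otimes S^mI\cong S^{n+m}I$, and that $\G$-copowers are exactly suspensions---is precisely what is needed to see the specialization, and nothing in it is problematic.
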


\section{Flat does not equal filtered plus absolute in general} \label{counterexample}

In this section we consider the base of enrichment to be the cartesian closed category $\V=\bo{Set}^G$ of $G$-sets, for a non-trivial finite group $G$. We will prove that in this case the flat $\V$-functors do not lie in the closure of representable functors under absolute and filtered colimits.

First of all note that we have an adjunction
\begin{center}
	
	\begin{tikzpicture}[baseline=(current  bounding  box.south), scale=2]

		\node (f) at (0,0.4) {$\bo{Set}^G$};
		\node (g) at (1.2,0.4) {$\bo{Set}$};
		\node (h) at (0.63, 0.45) {$\perp$};
		\path[font=\scriptsize]

		([yshift=-1.3pt]f.east) edge [->] node [below] {$U$} ([yshift=-1.3pt]g.west)
		([yshift=2pt]f.east) edge [bend left,<-] node [above] {$F$} ([yshift=2pt]g.west);
	\end{tikzpicture}
	
\end{center} 
where $U=\bo{Set}^G(G,-)$ takes the underlying sets, $F=G\times -$ sends a set $A$ to the $G$-set $G\times A$ with the free action, and we are denoting with $G\in\bo{Set}^G$ also the representable functor corresponding to the only object of the group $G$. Note that $U$ is conservative, continuous, cocontinuous, strong monoidal, and strong closed. 

The object $G$ is finitely presentable and a strong generator for $\bo{Set}^G$; moreover the functors $\bo{Set}^G(1,-)$   and $\bo{Set}^G(G\times G,-)\cong \bo{Set}(G,U-)$ are finitary (since $G$ is finite). Therefore $\bo{Set}^G$ is locally finitely presentable as a closed category.

Since $U$ is strong monoidal it follows that there is an induced 2-functor:
$$ U_*\colon \V\tx{-}\bo{Cat}\longrightarrow \bo{Cat}$$
and a $\V$-functor $\hat{U}=(U_*\V)(1,-)\colon U_*\V\to\bo{Set}$ which acts as $U$ on objects. Now, for each $\V$-weight $ M \colon \C^{op}\to\V$ we can define
$$  M _U\colon  U_*\C^{op}\stackrel{U_* M }{\xrightarrow{\hspace*{0.6cm}}}U_*\V \stackrel{\hat{U}}{\xrightarrow{\hspace*{0.4cm}}}\bo{Set}. $$
Since $U$ is moreover cocontinuous it follows that for any $\V$-category $\C$, any $M\colon\C^{op}\to\V$ and $H\colon\C\to\V$, we have an isomorphism 
$$U(M*H)\cong M_U*H_U$$
where the colimit on the right is an ordinary weighted colimit. See also Section~\ref{DG-setting} for related properties about change of base.

Note that to give a $\V$-category $\C$ is equivalently to give an ordinary category $\C$ whose homs are endowed with group actions $G\times \C(A,B)\to \C(A,B)$ for which the identities are fixed points and the composition maps are equivariant. It follows that $U_*\C$ is the same as $\C$ with the only difference being that the group actions on the homs are forgotten. The category $U_*\C$ should not be mistaken with the underlying category $\C_0$ of $\C$ which has homs  $\C_0(A,B)=\tx{Fix}\ \C(A,B)$ given by the fixed points of the action on $\C(A,B)$.

Let's start with a result comparing enriched and ordinary flatness which can be seen as a consequence of \cite[Theorem~18]{BR08}.

\begin{prop}\label{G-setsflatness}
	Let $\C$ be a $\V$-category and $M\colon\C^{op}\to\V$ be a $\V$-functor. Then $M$ is $\alpha$-flat if and only if $M_U$ is $\alpha$-flat. 
\end{prop}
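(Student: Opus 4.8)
The plan is to reduce both flatness conditions to the $\alpha$-filteredness of one and the same ordinary category. The crucial observation is that, although $\V=\bo{Set}^G$ does not satisfy condition $(a)$ of Section~\ref{con=flat} on its unit --- for $G$ non-trivial the terminal $G$-set is not projective --- it does satisfy the analogues of $(a)$ and $(b)$ with the strong generator $G$ in place of $I$. Indeed $\V_0(G,-)=U$ is not merely weakly cocontinuous and weakly strong monoidal but genuinely cocontinuous and strong monoidal, and since the monoidal structure is cartesian the comparison maps $\V_0(G,X)\times\V_0(G,Y)\to\V_0(G,X\otimes Y)$ are even bijections. So the arguments of Section~\ref{con=flat} should run through with $U=\V_0(G,-)$ in the role of $\V_0(I,-)$ and $G$ in the role of $I$.

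Concretely, I would first prove the $G$-version of Lemma~\ref{goodunit}: for $M\colon\C^{op}\to\V$, $H\colon\C\to\V$ and $x\colon G\to M*H$, the map $x$ factors through some leg $\rho_C\colon MC\otimes HC\to M*H$ of the colimiting cocone via a pair $(y\colon G\to MC,\ z\colon G\to HC)$, using that $U$ preserves the jointly-regular-epimorphic cocone and that $MC\otimes HC=MC\times HC$. Feeding this into the proofs of Corollary~\ref{filtered-elements} and Proposition~\ref{flatimpliesfiltered}, run with the $\alpha$-presentable strong generator $\{G\}$ (note $G$ is finite, hence finitely presentable), should give: if $M*-$ preserves $\alpha$-small conical limits of representables, then the category of elements of $M_G$ --- whose objects are pairs $(C,x\colon G\to MC)$ and whose morphisms are $\V$-arrows $f\colon G\to\C(C,D)$ making the evident triangle commute, as in Remark~\ref{X-elements-0} --- is $\alpha$-filtered, and moreover $M\cong\tx{colim}\,(\tx{El}(M_G)_{\V}\stackrel{\pi_\V}{\longrightarrow}\C\stackrel{Y}{\longrightarrow}[\C^{op},\V])$, which exhibits $M$ as a conical $\alpha$-filtered colimit of representables and hence as $\alpha$-flat. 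Since the reverse implication ($\alpha$-flat $\Rightarrow$ $M*-$ preserves $\alpha$-small limits of representables) is trivial, this proves that $M$ is $\alpha$-flat if and only if $\tx{El}(M_G)$ is $\alpha$-filtered.

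It then remains to observe that $\tx{El}(M_G)$ is exactly $\tx{El}(M_U)$: unwinding Remark~\ref{X-elements-0}, a pair $(C,x\colon G\to MC)$ in $\V$ is the same as a pair $(C,x)$ with $x\in U(MC)=M_U(C)$, and a $\V$-arrow $f\colon G\to\C(C,D)$ is the same as an arrow $f\in U_*\C(C,D)$, the triangle condition matching the one defining morphisms of $\tx{El}(M_U)$. Since $M_U$ is an ordinary presheaf on $U_*\C$, the classical characterization of flat set-valued presheaves gives that $M_U$ is $\alpha$-flat if and only if $\tx{El}(M_U)$ is $\alpha$-filtered. Chaining the two equivalences yields that $M$ is $\alpha$-flat if and only if $M_U$ is $\alpha$-flat.

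The main obstacle I expect is in the $G$-adaptation of Corollary~\ref{filtered-elements} and Proposition~\ref{flatimpliesfiltered}: the diagram $M\circ(\pi H)^{op}$ attached to a functor $H\colon\B\to\tx{El}(M_G)$ is not a $\V$-diagram, since the legs $\pi H\beta$ are general elements of the hom-$G$-sets rather than fixed points, so one cannot literally quote those proofs and must instead work with the $U_*\C$-indexed data throughout, using only that $U$ preserves jointly-regular-epimorphic cocones and that $U$ is strong monoidal. Carrying this out cleanly is precisely the content of the general change-of-base principle for flat weights along a strong monoidal, cocontinuous functor, so the whole proposition can alternatively --- and perhaps more economically --- be obtained as an instance of \cite[Theorem~18]{BR08}, whose hypotheses ($U$ strong monoidal, cocontinuous, continuous, conservative) are all satisfied here.
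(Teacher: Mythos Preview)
Your forward direction ($M$ $\alpha$-flat $\Rightarrow$ $M_U$ $\alpha$-flat) matches the paper's: both adapt Lemma~\ref{goodunit} and Corollary~\ref{filtered-elements} with $G$ in place of $I$, using that $U=\V_0(G,-)$ is cocontinuous and strong monoidal, to conclude that $\tx{El}(M_U)$ is $\alpha$-filtered. Your identification $\tx{El}(M_G)=\tx{El}(M_U)$ is also correct.

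The backward direction, however, has a genuine gap --- and not merely the technical obstacle you flag. Your colimit formula $M\cong\tx{colim}\,(\tx{El}(M_G)_{\V}\stackrel{\pi_\V}{\longrightarrow}\C\stackrel{Y}{\longrightarrow}[\C^{op},\V])$ does not type-check: the projection $\pi$ lands in $(U_*\C)_0$, not in $\C_0$, so there is no $\V$-functor $\pi_\V\colon\tx{El}(M_G)_\V\to\C$ through which to form this diagram. More decisively, the whole strategy of exhibiting $M$ as a conical $\alpha$-filtered colimit of representables must fail for $\V=\bo{Set}^G$: Section~\ref{counterexample} constructs a $\V$-category $\C$ with $\C_0$ discrete, for which $\Delta 1$ is flat but every filtered colimit of representables is representable. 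So the implication ``$\tx{El}(M_G)$ $\alpha$-filtered $\Rightarrow$ $M$ $\alpha$-flat'' cannot be proved by the route you propose, and with it your equivalence ``$M$ $\alpha$-flat $\Leftrightarrow$ $\tx{El}(M_G)$ $\alpha$-filtered'' collapses.

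The paper's backward argument avoids this entirely: rather than reconstructing $M$, it directly checks that $M*-$ preserves $\alpha$-small conical limits and powers by $G$. For conical limits one uses $U(M*-)\cong M_U*(-)_U$ (since $U$ is cocontinuous and strong monoidal), continuity of $(-)_U$, and $\alpha$-flatness of $M_U$; for powers by $G$ one computes $U(M*(G\pitchfork H))\cong U(G\pitchfork(M*H))$ using that $U$ is strong closed and that $UG$ is finite. Your fallback to \cite[Theorem~18]{BR08} is legitimate --- the paper itself invokes it --- but if you want a self-contained argument, this is the one; the filtered-colimit-of-representables route is not available over this base.
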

\begin{proof}
	Assume first that $M$ is $\alpha$-flat; it's enough to prove that $\tx{El}(M_U)$ is $\alpha$-filtered. Note that, since $U=\bo{Set}^G(G,-)$, the category $\tx{El}(M_U)$ can be described as in Remark~\ref{X-elements-0} with $G$ in place of $X$. Using that and the fact that $\bo{Set}^G(G,-)$ is cocontinuous and strong monoidal, one can easily adapt the proofs of Lemma~\ref{goodunit} and Corollary~\ref{filtered-elements} to show that 
	$\tx{El}(M_U)$ is $\alpha$-filtered (just replace with $G$ all instances of $I$ in the proofs).
	
	Conversely assume that $M _U$ is $\alpha$-flat; we need to show that $M *-\colon [\C,\V]\to \V$ preserves all $\alpha$-small conical limits and powers by $G$. Since $U$ is continuous and conservative, $M *-$ preserves $\alpha$-small conical limits if and only if $U( M*-)\colon [\C,\V]_0\to \bo{Set}$ preserves them. But $U( M*-)\cong  M_U*(-)_U$, where $(-)_U$ is continuous (since $U$ is) and $ M_U*-$ is $\alpha$-continuous because $ M _U$ is $\alpha$-flat. Thus we are left to prove that $M *-$ preserves powers by $G$. Let $H\colon\C\to\V$ be any $\V$-functor; then the comparison map  $M*(G\pitchfork H)\to G\pitchfork(M*H)$ is invertible if and only if its image under $U$ is so. Therefore 
	\begin{align}
		U(M*(G\pitchfork H)) &\cong M_U* (G\pitchfork H)_U\nonumber \\
		&\cong M_U* \bo{Set}(UG,H_U-)\tag{1} \\
		&\cong \bo{Set}(UG,M_U* H_U)\tag{2} \\
		&\cong \bo{Set}(UG,U(M*H))\nonumber \\
		&\cong U(G\pitchfork (M*H))\tag{3}
	\end{align}
	where we used that $(G\pitchfork H)_U\cong \bo{Set}(UG,H_U-)$ for $(1)$, that $M_U$ is $\alpha$-flat and $G$ is finite for $(2)$, and that $U$ is strong closed for $(3)$. It follows that $M$ is $\alpha$-flat.
\end{proof}

We are now ready to provide an explicit example of a $\V$-category for which flat presheaves on $\C$ are not in the closure of the representables under absolute and filtered colimit.

Define $\C$ as follows: the objects $\tx{Ob}(\C)=\mathbb{N}$ are natural numbers; for each $n,m\in\mathbb{N}$ we set $\C(n,m)=\emptyset$ if $n>m$, while $\C(n,m)=\{1_n\}$ consists only of the identity map (with trivial action) if $n=m$, and $\C(n,m)=G=F1$ (with action given by multiplication) if $n<m$. Composition $-\circ -\colon \C(m,l)\times\C(n,m)\to\C(n,l)$ is non-trivial only when $n<m<l$ and in that case is given by $g\circ h:=g$, for any $g,h\in G$. It's now easy to see that the composition maps are equivariant and well defined, and that the identities are fixed points of the action; therefore we obtain a $\V$-category $\C$. 

\begin{prop}
	The $\V$-category $\C$ has absolute colimits and filtered colimits, and the Yoneda embedding $Y\colon\C\to[\C^{op},\V]$ preserves them. 
\end{prop}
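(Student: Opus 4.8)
The plan rests on one preliminary computation: the underlying ordinary category $\C_0$ is the \emph{discrete} category on $\mathbb N$. Indeed $\C_0(n,m)=\tx{Fix}\,\C(n,m)$, and $\tx{Fix}(1)=1$, $\tx{Fix}(\emptyset)=\emptyset$, while $\tx{Fix}(F1)=\emptyset$ since the nontrivial group $G$ acts freely on $F1=G$; hence $\C_0(n,m)=1$ if $n=m$ and $\emptyset$ otherwise. From here the two parts of the statement go quite differently.

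For the \emph{filtered colimits}, first note that a $\V$-functor $T\colon\B_\V\to\C$ is the same as an ordinary functor $\bar T\colon\B\to\C_0$; if $\B$ is filtered it is connected, so $\bar T$ is constant at some object $n_0$, i.e.\ $T\cong\Delta n_0$. I would then check directly that the conical colimit of a constant diagram over a connected $\V$-category is the constant value --- using $\{\Delta I,\Delta\C(n_0,D)\}\cong\C(n_0,D)$ naturally in $D$, so that $n_0$ with the identity cocone has the required universal property --- whence $\tx{colim}\,T=n_0$ exists in $\C$. Running the identical computation inside the cocomplete $\V$-category $[\C^{op},\V]$ (in which representables preserve conical colimits, since it has all powers) shows $\tx{colim}(YT)=Yn_0=Y(\tx{colim}\,T)$. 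Thus $\C$ has conical $\alpha$-filtered colimits for every regular $\alpha$ and $Y$ preserves them.

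For the \emph{absolute colimits}, recall that $\C$ has all absolute weighted colimits iff every absolute weight on $\C^{op}$ is representable (the absolute weights are precisely the absolute colimits of representables, and $Y$, being a $\V$-functor, preserves any absolute colimit that exists --- which also disposes of preservation by $Y$ once existence is known). So the plan is to prove: every absolute weight $M\colon\C^{op}\to\V$ is representable. Such an $M$ is in particular $\aleph_0$-flat, so Proposition~\ref{cartesian-flat-characterization}, applied with the strong generator $\G=\{G\}$ (the object $G$ being finitely presentable), tells us that $\tx{El}(M_1)$ is $\aleph_0$-filtered and that $J_G\colon\tx{El}(M_1)\to\tx{El}(M_G)$ is final. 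Since $\C_0$ is discrete, $\tx{El}(M_1)$ is itself a discrete category, and a discrete $\aleph_0$-filtered category has exactly one object; so there is a \emph{unique} pair $(m_0,x_0)$ with $x_0\in\tx{Fix}(Mm_0)$, and in particular $Mn$ has no fixed point for $n\ne m_0$. The comma categories measuring finality of $J_G$ are again discrete, so finality forces every hom-set $\tx{El}(M_G)\big((n,\xi),(m_0,x_0)\big)$ to be a singleton. Unwinding this with the explicit description of $\tx{El}(M_G)$ in Remark~\ref{X-elements-0}, and using that each $\C(n,m_0)$ is one of $0$, $1$, $F1$: there can be no object $(n,\xi)$ with $n>m_0$, forcing $Mn=0$ there; for $n=m_0$ it forces $Mm_0=1$; and for $n<m_0$ it identifies the hom-set with $\{a\in G:a\cdot z_n=\bar\xi\}$, where $\bar\xi\in U(Mn)$ is the element underlying $\xi$ and $z_n\in U(Mn)$ is the image of the generator of $F1$ under $\tx{ev}_M(-,x_0)\colon\C(n,m_0)=F1\to Mn$. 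Demanding exactly one such $a$ for every $\bar\xi\in U(Mn)$ says precisely that the $G$-action on $Mn$ is transitive and free, i.e.\ $Mn\cong F1$. Hence $M$ agrees objectwise with $\C(-,m_0)$, and the $\V$-natural transformation $\C(-,m_0)\Rightarrow M$ classified by $x_0$ under the enriched Yoneda lemma is a levelwise isomorphism, so $M\cong\C(-,m_0)$. The main obstacle is exactly this last translation of ``$J_G$ is final'' into the pointwise conclusion $Mn\cong F1$ for $n<m_0$; the remainder is discreteness bookkeeping plus the quoted results.
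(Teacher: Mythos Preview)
Your treatment of filtered colimits is correct and matches the paper's: once $\C_0$ is seen to be discrete, every filtered diagram is constant and the rest is routine.

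The argument for absolute colimits, however, has a genuine gap. You invoke Proposition~\ref{cartesian-flat-characterization}, but that result lives in Section~\ref{con=flat}, whose standing hypothesis is condition~$(a)$: that $\V_0(I,-)$ is weakly cocontinuous. For $\V=\bo{Set}^G$ with $G$ nontrivial, the unit is the terminal object and $\V_0(1,-)=\tx{Fix}(-)$ is \emph{not} weakly cocontinuous: the quotient $G\twoheadrightarrow 1$ is a regular epimorphism whose image under $\tx{Fix}$ is $\emptyset\to 1$. Indeed the entire point of Section~\ref{counterexample} is that this $\V$ is a counterexample to the conclusions of Section~\ref{con=flat}; the paper even exhibits a flat weight $\Delta 1$ on $\C^{op}$ for which $\tx{El}((\Delta 1)_1)$ is the discrete category $\mathbb N$, hence not filtered. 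So the proposition you are quoting genuinely fails in this setting, and your claim that $\tx{El}(M_1)$ is filtered for absolute $M$ is left unjustified (it happens to be true a posteriori, since such $M$ turn out to be representable, but that is exactly what you are trying to prove).

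The paper avoids this by working with $U=\bo{Set}^G(G,-)$ rather than $\bo{Set}^G(1,-)$. Since $U$ is conservative, cocontinuous, strong monoidal and strong closed, Proposition~\ref{G-setsflatness} gives that $M$ is Cauchy iff the ordinary functor $M_U\colon U_*\C^{op}\to\bo{Set}$ is Cauchy. The ordinary category $U_*\C$ (with hom-sets the underlying sets of the $\C(n,m)$) has no nontrivial idempotents, so it is Cauchy complete, and hence $M_U\cong(U_*\C)(-,n)$ for some $n$; one then lifts this to $M\cong\C(-,n)$ using that $U$ is conservative. If you want to repair your approach without changing its shape, you could rerun the Section~\ref{con=flat} arguments with $G$ in place of the unit---essentially this amounts to working with $\tx{El}(M_G)$ directly, which is legitimate because $\bo{Set}^G(G,-)$ \emph{does} satisfy the analogues of conditions~$(a)$ and~$(b)$---but as written, the appeal to Proposition~\ref{cartesian-flat-characterization} is invalid.
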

\begin{proof}
	To begin with note that the underlying category $\C_0$ of $\C$ is the discrete category $\mathbb{N}$; this is because $\C_0(n,n)=\{1_n\}$ and $\C_0(n,m)=\tx{Fix}\ \C(n,m)=\emptyset$ for any $n\neq m$ (here we are using the fact that the group $G$ is non trivial). As a consequence the only filtered diagrams that exist in $\C$ are the constant ones, and these have as colimit the same object they pick; these colimits are clearly preserved by $Y$. (Thus in fact the filtered colimits in $\C$ are absolute.)
	
	To conclude it's enough to show that every Cauchy $\V$-functor $M\colon\C^{op}\to\V$ is representable. Let then $M$ be Cauchy; by Proposition~\ref{G-setsflatness} the ordinary functor $M_U\colon U_*\C^{op}\to\bo{Set}$ is Cauchy as well (as usual use that a weight is Cauchy if and only if it is $\alpha$-flat for every $\alpha$). The category $U_*\C$ is Cauchy complete in the ordinary sense (since the only idempotents are the identities); therefore there exists $n\in U_*\C$ such that $M_U\cong (U_*\C)(-,n)$. We wish to prove that actually $M\cong \C(-,n)$. For that, let $m\in\C$ be any other object and 
	$$ M_{n,m}\colon\C(m,n)\longrightarrow [Mn,Mm] $$
	be the action of $M$ on morphisms in $\bo{Set}^G$. Since $UMn\cong M_Un\cong (U_*\C)(n,n)=\{1_n\}$, it follows that $Mn=1$ is the terminal object in $\bo{Set}^G$. As a consequence the maps $M_{n,m}$ are actually of the form
	$$ M_{n,m}\colon\C(m,n)\longrightarrow Mm $$
	and define a $\V$-natural transformation $\C(-,n)\to M$. Since $M_U\cong (U_*\C)(-,n)$ the maps $U( M_{n,m})$ are bijections of sets and hence, since $U$ is conservative, the $\V$-natural transformation $M_{n,m}$ is an isomorphism. This proves that $M$ is representable.
\end{proof}

\begin{teo}
	The terminal object $\Delta 1$ in $[\C^{op},\V]$ is flat but doesn't lie in the closure of the representables under absolute and filtered colimits.
\end{teo}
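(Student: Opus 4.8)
The plan is to verify the two halves of the statement separately: that $\Delta 1$ is flat, and that it does not lie in the closure of the representables under absolute and filtered colimits. Flatness will follow immediately from Proposition~\ref{G-setsflatness}. For the second half, the key observation is that for this particular $\C$ the asserted closure degenerates --- the full subcategory of $[\C^{op},\V]$ spanned by the representables is \emph{already} closed under both absolute and conical filtered colimits, hence equals the closure --- so that $\Delta 1$ is excluded simply because it is not representable.

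First I would prove flatness. By Proposition~\ref{G-setsflatness}, $\Delta 1$ is flat if and only if the ordinary presheaf $(\Delta 1)_U\colon U_*\C^{op}\to\bo{Set}$ is flat. Since $\Delta 1$ is constant at the terminal object of $\V$ and $\hat{U}$ acts as $U$ on objects, $(\Delta 1)_U$ is the constant functor at the one-point set; hence $\tx{El}((\Delta 1)_U)$ is (isomorphic to) $U_*\C$ itself, since each morphism of $U_*\C$ determines a unique morphism of the category of elements. It then remains to observe that $U_*\C$ is filtered: it is nonempty; any two objects $n\le m$ admit a cocone at $m$, as $\C(n,m)\neq\emptyset$; and any parallel pair $f,g\colon n\to m$ with $n<m$ is coequalized by postcomposition with any arrow $m\to m+1$, because the composition law of $\C$ is $g\circ h=g$. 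Thus $(\Delta 1)_U$ is flat, and so is $\Delta 1$.

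For the second half I would show that a representable stays representable after forming an absolute colimit, or a conical filtered colimit, of representables. For conical filtered colimits this uses that $\C_0$ is the discrete category $\mathbb{N}$ (already established in the text): such a colimit is the image under the fully faithful $Y$ of a filtered, hence connected, diagram in $\C_0$, which must be constant, so the colimit is again a representable. For absolute colimits I would invoke the previous Proposition, which says that $\C$ has all absolute colimits and that $Y$ preserves them: since $Y$ is fully faithful, a diagram of representables is $YK$ for some $K\colon\D\to\C$, the colimit $W*K$ (for a Cauchy weight $W$) exists in $\C$, and $Y(W*K)\cong W*(YK)$; equivalently, an absolute colimit of representables is a Cauchy presheaf on $\C$, and every such was shown (in the proof of that Proposition) to be representable. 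As the class of representables contains the representables and is closed under both operations, it coincides with the closure of the representables under absolute and filtered colimits.

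Finally, $\Delta 1$ is not representable: an isomorphism $\Delta 1\cong\C(-,n)$ evaluated at $n+1$ would give $1\cong\C(n+1,n)=\emptyset$. Hence $\Delta 1$ lies outside that closure while being flat, as required. I expect the only genuinely delicate point to be the collapse of the closure, and in particular the need to argue this for \emph{iterated} applications of the two operations: it is precisely here that both the discreteness of $\C_0$ (killing filtered colimits of representables) and the Cauchy completeness of $\C$ (killing absolute colimits of representables) are used, and everything else is routine.
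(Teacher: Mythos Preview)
Your proposal is correct and follows essentially the same approach as the paper: both use Proposition~\ref{G-setsflatness} to reduce flatness of $\Delta 1$ to the filteredness of $U_*\C$, and both rely on the preceding Proposition (that $\C$ has, and $Y$ preserves, absolute and filtered colimits) to collapse the closure in question to the representables themselves, leaving only the check that $\Delta 1$ is not representable. Your write-up simply unpacks these steps in more detail, and your observation that $\C(n+1,n)=\emptyset$ is exactly the paper's ``$\C$ has no terminal object''.
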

\begin{proof}
	By the proposition above the closure of $\C$ in $[\C^{op},\V]$ under absolute and filtered colimits is $\C$ itself; therefore it's enough to prove that $\Delta 1$ is flat but not representable.
	
	By Proposition~\ref{G-setsflatness}, the $\V$-functor $\Delta 1$ is flat if and only if the functor $\Delta 1\colon U_*\C^{op}\to\bo{Set}$ is; and the latter is flat since its category of elements is equal to $U_*\C$, which is filtered. Finally, $\Delta 1\colon\C^{op}\to\V$ is not representable since $\C$ doesn't have a terminal object.
\end{proof}

\begin{cor}\label{acc-not-conacc+absolute}
	The $\V$-category $\C$ is Cauchy complete and conically finitely accessible, but doesn't have all flat colimits. In particular $\C$ is not finitely accessible. 
\end{cor}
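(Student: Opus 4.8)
The plan is to treat the three assertions of the statement separately; the first two are essentially recollections of facts established above, and only the third requires a small computation.

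Cauchy completeness is already contained in the preceding Proposition: its proof shows that every Cauchy weight $M\colon\C^{op}\to\V$ is representable, which is precisely the assertion that $\C$ is Cauchy complete. For conical finite accessibility I would use the other fact recorded there, namely that the underlying ordinary category $\C_0$ is the discrete category $\mathbb{N}$ (this is where the nontriviality of $G$ enters). It follows that every conical filtered diagram in $\C$ is constant --- a filtered category is connected, and a functor from a connected category into a discrete one is constant --- so $\C$ has all conical filtered colimits, each being simply the constant value of the diagram, and these are plainly preserved by every representable $\C(n,-)$, hence are genuine conical colimits. The same observation shows that each object $n$ is conically finitely presentable and that it is the constant conical filtered colimit of itself; since $\C$ is small, taking $\C$ itself as the generating subcategory exhibits $\C$ as conically finitely accessible (in fact $\C=\C_{\aleph_0}^c$).

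For the remaining assertion I would argue by contradiction. By the Theorem above the terminal weight $\Delta 1\colon\C^{op}\to\V$ is flat, and since $\C$ is small, $\Delta 1*\mathrm{id}_{\C}$ is a flat-weighted colimit of a diagram in $\C$. If $\C$ had all flat colimits this colimit would exist, say $L:=\Delta 1*\mathrm{id}_{\C}\in\mathbb{N}$, and its defining universal property would yield, $\V$-naturally in $A\in\C$,
$$\C(L,A)\;\cong\;[\C^{op},\V](\Delta 1,\C(-,A)).$$
But a $\V$-natural transformation $\Delta 1\Rightarrow\C(-,A)$ --- indeed, after applying the conservative forgetful functor $U$, even a compatible family indexed by the objects of $\C$ --- would have to single out an element of the $G$-set $\C(m,A)$ for every $m\in\mathbb{N}$, which is impossible once $m>A$, since then $\C(m,A)=\emptyset$. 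Hence the right-hand side is the empty $G$-set for every $A$; taking $A=L$ contradicts $\C(L,L)=\{1_L\}$. Therefore $\C$ does not admit the flat colimit $\Delta 1*\mathrm{id}_{\C}$, so it fails to have all flat colimits, and in particular it is not finitely accessible, since a finitely accessible $\V$-category has all flat colimits by definition.

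The delicate point, and the one I would be careful not to fudge, is that one cannot shortcut the last step by claiming that a $\V$-category with all flat colimits must coincide, through the Yoneda embedding, with its $\V$-category of flat presheaves, which would contradict the Theorem at once: the Yoneda embedding preserves only absolute colimits, not flat ones, so a flat colimit computed inside $\C$ need not be carried to the corresponding colimit of $[\C^{op},\V]$. This is precisely why the argument must go through the universal property of $\Delta 1*\mathrm{id}_{\C}$ directly, the crux being the genuine emptiness of the hom-object $[\C^{op},\V](\Delta 1,\C(-,A))$.
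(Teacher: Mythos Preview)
Your proof is correct and follows essentially the same route as the paper. The only cosmetic difference is the choice of test object in the final contradiction: the paper takes $A=n+1$ (where $n$ is the putative colimit) and observes that $\C(n,n+1)=G$ is nonempty while $[\C^{op},\V](\Delta 1,\C(-,n+1))$ is empty because $\C(n+2,n+1)=\emptyset$; you instead take $A=L$ and use $\C(L,L)=\{1_L\}$. Both exploit the same fact, that $\C(m,A)=\emptyset$ for $m>A$ forces the hom-object $[\C^{op},\V](\Delta 1,\C(-,A))$ to be empty.
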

\begin{proof}
	Filtered colimits are trivial in $\C$; therefore every object is conically finitely presentable. Since $\C$ is small this is enough to imply that it is conically finitely accessible. 
	
	By the theorem above, the terminal $\V$-functor $\Delta 1\colon\C^{op}\to\V$ is flat; thus to conclude it's enough to show that the colimit $\Delta 1*\tx{id}_\C$ doesn't exist in $\C$. In order to obtain a contradiction assume that  $n\cong \Delta 1*\tx{id}_\C$ exists; then 
	$$ \C(n,n+1)\cong [\C^{op},\V](\Delta 1,\C(-,n+1)) $$
	by the universal property of the colimit. But $ \C(n,n+1)$ is not empty, while the underlying set of $[\C^{op},\V](\Delta 1,\C(-,n+1))$ is empty since there are no maps $1\to \C(n+2,n+1)$. Therefore $\Delta 1*\tx{id}_\C$ doesn't exist.
\end{proof}

\section{When flat equals protofiltered plus absolute}\label{DG}

We begin this section by introducing the bases we are interested in, the main example to keep in mind being the symmetric monoidal closed category $\bo{DGAb}$ of chain complexes. In Section~\ref{DGtheorem} we show that in this context the $\alpha$-flat colimits are generated by the absolute colimits together with what we call {\em $\alpha$-protofiltered colimits}. These include, but may not reduce to, the usual $\alpha$-filtered colimits. In Section~\ref{appendix} we prove that $\alpha$-protofiltered colimits are genuine $\alpha$-flat colimits.

\subsection{Setting}\label{DG-setting}

Let $\V=(\V_0,\otimes,I)$ and $\W=(\W_0,\otimes, J)$ be symmetric monoidal closed complete and cocomplete categories for which $\V$ is locally dualizable with strong generator $\G\ni I$, and $\W$ has finite direct sums. Moreover we assume that there is a functor $ U\colon\W_0\to\V_0 $ with adjoints $L\dashv U\dashv R$ such that:\begin{enumerate}
	\item[(a)] $U$ is conservative, strong monoidal, and strong closed;
	\item[(b)] $UL\G$ is still a strong generator of $\V$;
	\item[(c)] for any $X\in\G$ the objects $ULX$ is still dualizable.  
\end{enumerate}  

\begin{obs}
	It's useful to note the following properties:\begin{enumerate}
		\item[(i)] $U$ is conservative if and only if $L\G$ is a strong generator.
		\item[(ii)] Given $(a)$, similarly $R$ is conservative if and only if $UL\G$ is a strong generator.
		\item[(iii)] Given $(a)$, an object $Y\in\W$ is dualizable if and only if $UY$ is so, thus $(c)$ is equivalent to the objects of $L\G$ being dualizable. And if $UL\cong ULI\otimes-$ then this is equivalent to $ULI$ (or $LI$) being dualizable.
	\end{enumerate}
\end{obs}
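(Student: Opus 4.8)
The plan is to derive all three claims formally from the adjunctions $L\dashv U\dashv R$ and the hypothesis that $U$ is strong monoidal and strong closed. I would use throughout that in a complete category a small family of objects is a strong generator precisely when the corresponding representable functors into $\bo{Set}$ are jointly conservative --- a criterion applicable both to $\V_0$ and to $\W_0$. Two bookkeeping isomorphisms do most of the work: from $L\dashv U$ one gets the natural isomorphism $\W_0(L\G,-)\cong\V_0(\G,U-)$ of functors into $\bo{Set}^{\G}$, and from $U\dashv R$ composed with $L\dashv U$ one gets $\V_0(UL\G,-)\cong\W_0(L\G,R-)\cong\V_0(\G,UR-)$.

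\textbf{Claims $(i)$ and $(ii)$.} For $(i)$ I would note that the first isomorphism factors the representables of $L\G$ as $\V_0(\G,-)\circ U$; since $\G$ strongly generates $\V_0$ the functor $\V_0(\G,-)$ is conservative, so $U$ conservative forces $L\G$ to be a strong generator, and conversely if $L\G$ is a strong generator then $Uf$ invertible makes $\W_0(L\G,f)\cong\V_0(\G,Uf)$ invertible and hence $f$ invertible. For $(ii)$ I would use the second isomorphism to factor the representables of $UL\G$ as $\V_0(\G,-)\circ UR$: if $UL\G$ strongly generates then $Rf$ invertible gives $URf$ invertible, so $\V_0(UL\G,f)\cong\V_0(\G,URf)$ is invertible and $f$ is too; conversely, if $R$ is conservative and $\V_0(UL\G,f)$ is invertible, then $\V_0(\G,URf)$ is invertible, so $URf$ is invertible (as $\G$ generates $\V_0$), so $Rf$ is invertible because $U$ is conservative --- this is the step that uses $(a)$ --- and so $f$ is invertible.

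\textbf{Claim $(iii)$.} One direction is standard: a strong monoidal functor preserves dualizable objects and their duals, so $Y$ dualizable implies $UY$ dualizable. For the converse I would invoke the criterion recalled in Section~\ref{dualizable}: $Y\in\W$ is dualizable iff the canonical comparison $c_X\colon[Y,J]\otimes X\to[Y,X]$ is invertible for all $X\in\W$. I would apply $U$ to $c_X$ and use that $U$ is strong monoidal (with $UJ\cong I$) and strong closed to obtain canonical identifications $U([Y,J]\otimes X)\cong[UY,I]\otimes UX$ and $U[Y,X]\cong[UY,UX]$ turning $Uc_X$ into the canonical comparison $[UY,I]\otimes UX\to[UY,UX]$, which is invertible since $UY$ is dualizable; conservativity of $U$ then gives $c_X$ invertible, so $Y$ is dualizable. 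Specializing to $Y=LX$ shows that $(c)$ --- dualizability of each $ULX$, $X\in\G$ --- is the same as dualizability of each $LX$. Finally, when $UL\cong ULI\otimes-$ we have $ULX\cong ULI\otimes X$; since the objects of $\G$ are dualizable and tensor products of dualizable objects are dualizable, dualizability of $ULI$ yields $(c)$, while the case $X=I\in\G$ gives the reverse implication, so $(c)$ amounts to $ULI$ (equivalently $LI$) being dualizable.

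\textbf{Main obstacle.} Almost every step is immediate once the two bookkeeping isomorphisms are set up. The one point needing genuine care is the coherence claim inside $(iii)$: that $U$, being strong monoidal and strong closed, carries the canonical comparison $[Y,J]\otimes X\to[Y,X]$ to the canonical comparison $[UY,I]\otimes UX\to[UY,UX]$. Verifying this requires unwinding both comparisons as transposes of evaluation maps and checking compatibility with the monoidal and closed structure constraints of $U$; it is routine but slightly tedious, and is the only real obstacle.
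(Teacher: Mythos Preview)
Your argument is correct. The paper states this remark without proof, so there is nothing to compare against; your proposal supplies exactly the kind of routine verification the authors presumably had in mind.

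Two small comments. First, the criterion you invoke for $(iii)$ --- that $Y$ is dualizable iff the canonical map $[Y,J]\otimes X\to[Y,X]$ is invertible for all $X$ --- is not literally what Section~\ref{dualizable} says (the paper only records the existence form $[P,-]\cong P^*\otimes-$), but the two are of course equivalent and your version is the one suited to a conservativity argument. Second, the coherence check you flag as the main obstacle is indeed routine: ``strong closed'' for $U$ means precisely that the isomorphism $U[Y,Z]\cong[UY,UZ]$ is compatible with evaluation, and this, together with strong monoidality, forces $U$ to send the canonical comparison to the canonical comparison. So your identification of the only non-formal step is accurate, and it does go through.
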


\begin{ese}$ $
	\begin{itemize}
		\item Let $\W=\bo{DGAb}$, $\V=\bo{GAb}$, and $U$ be the forgetful functor. Then $U$ has both adjoints and $LI$ is the chain complex having $(0)$ in all the degrees but $(LI)_0=(LI)_{-1}=\mathbb{Z}$, with differential $d=\tx{id}$ between them. Moreover $U$ is conservative, strong monoidal, and strong closed \cite[Section~6]{EK66closed}. Let now $\G$ be the strong generator of $\bo{GAb}$ consisting of the dualizable objects $S_nI$, for $n\in\mathbb{Z}$. Then $UL\G=\{S_nI\oplus S_{n-1}I\}_{n\in\mathbb{Z}}$ is still a strong generator of $\bo{GAb}$ made of dualizable objects. It follows that $\bo{DGAb}$ is an example of such base of enrichment.
		
		\item Let $\V$ be locally dualizable and $H$ be a cocommutative Hopf algebra in $\V$ which is dualizable as an object of $\V$. We can consider $\W$ to be the symmetric monoidal closed category of $H$-modules with $U\colon\W\to\V$ the forgetful functor. Then $U$ has both adjoints, satisfies $UL\cong H\otimes -$, and $(a)$ holds. If $\G$ is a strong generator made of dualizable objects for $\V$, then the elements of $UL\G$ are of the form $H\otimes X$, for $X\in\G$. Then $(c)$ holds by the remark above since $H$ is dualizable, moreover $UL\G$ is still a strong generator since for every $X\in\G$ we have a split epimorphism $\epsilon\otimes 1\colon H\otimes X\to X$, where $\epsilon\colon H\to I$ is the counit of $H$. (The facts about Hopf algebras mentioned above can be found in Chapter 15 of \cite{street_2007}).
	\end{itemize}
	
\end{ese}

	The following are a consequence of $(a)$-$(c)$:\begin{itemize}
		\item $L\dashv U$ is an op-monoidal adjunction \cite[Theorem~1.2]{Kel-procedings} and $L(UY\otimes X)\cong A\otimes LX$;
		\item $U\dashv R$ is a monoidal adjunction \cite[Theorem~1.2]{Kel-procedings} and $R[UY,X]\cong [Y,RX]$;
		\item $U$ is monadic by Beck's monadicity theorem \cite[Theorem~4.4.4]{Bor94:libro};
		\item $T=UL$ is an op-monoidal Hopf monad \cite[Proposition~2.14]{BLV11}.
	\end{itemize} 
	In particular $LU\cong LI\otimes -$ and $RU\cong [LI,-]$.

Since $U$ is strong monoidal it follows that there is an induced 2-functor:
$$ U_*\colon \W\tx{-}\bo{Cat}\longrightarrow \V\tx{-}\bo{Cat}$$
and a $\V$-functor $\hat{U}=(U_*\W)(I,-)\colon U_*\W\to\V$ which acts as $U$ on objects. Now, for each $\W$-weight $ M \colon \C\to\W$ we can consider the composite
$$  M _U\colon  U_*\C\stackrel{U_* M }{\xrightarrow{\hspace*{0.7cm}}}U_*\W \stackrel{\hat{U}}{\xrightarrow{\hspace*{0.5cm}}}\V $$
as a $\V$-weight. Given a $\W$-category $\C$ we denote by $S_\C\colon \C_0\to(U_*\C)_0$ the identity-on-objects functor which acts by applying $U$ on morphisms: given a morphism $f\colon J\to\C(A,B)$ in $\C_0$ we define $S_\C(f):=Uf\colon I\cong UJ\to U\C(A,B)=(U_*\C)(A,B)$. 

The following lemmas are standard results about change of base along a monoidal functor which is continuous, cocontinuous, strong monoidal and strong closed.

\begin{lema}\label{U-copowers}
	Let $\C$ be a $\W$-category. Then:\begin{enumerate}
		\item for any ordinary $H\colon\E\to\C_0$, if the limit of $H$ exists in $\C$ then it is also the limit of $S_\C H$ in $U_*\C$;
		\item if the power $X\pitchfork A$ exists in $\C$ then it is also the power $UX\pitchfork A$ in $U_*\C$.
	\end{enumerate}
	The same property holds with the corresponding conical colimits and copowers.
\end{lema}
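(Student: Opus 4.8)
The plan is to verify the relevant universal properties directly, reducing everything to the hom-object descriptions of enriched (conical) limits, powers, colimits and copowers, and then transporting the defining isomorphisms along $U$. Recall from Section~2 that a conical limit $L=\lim H$ in the $\W$-category $\C$ is characterised by (i) the cone being a limit cone in $\C_0$, together with (ii) each representable $\C(C,-)_0$ sending it to a limit in $\W_0$, i.e. $\C(C,L)\cong\lim_E\C(C,HE)$; and the power $X\pitchfork A$ is characterised by the $\W_0$-natural isomorphism $\C(C,X\pitchfork A)\cong[X,\C(C,A)]$. The corresponding statements in $U_*\C$ read the same way, but with hom-objects $U\C(-,-)$ and with the internal hom and (conical) limits taken in $\V$. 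So the whole proof is a matter of pushing the $\C$-side isomorphisms through $U$ and recognising the result as the $U_*\C$-side isomorphisms.

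For part~(1), I would start from $L=\lim H$ in $\C$ with limiting cone $\lambda$, and apply $U$ — which is continuous, being a right adjoint since $L\dashv U$ — to the isomorphisms $\C(C,L)\cong\lim_E\C(C,HE)$, obtaining $U\C(C,L)\cong\lim_E U\C(C,HE)$ in $\V_0$ for every $C$. A short bookkeeping check, using that composition and identities in $U_*\C$ are obtained by applying $U$ (together with the strong monoidal comparison isomorphisms) to those of $\C$, identifies this with the canonical comparison map for the cone $S_\C\lambda$; hence every representable of $U_*\C$ sends $S_\C\lambda$ to a limit, and composing further with the continuous functor $\V_0(I,-)$ shows that $S_\C\lambda$ is also a limit cone in $(U_*\C)_0$. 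By the characterisation recalled above, $L$ is then the conical limit of $S_\C H$ in $U_*\C$. The statement for conical colimits is obtained by the identical argument with covariant representables replaced by contravariant ones.

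For part~(2), I would apply $U$ to the defining isomorphism $\C(C,X\pitchfork A)\cong[X,\C(C,A)]$ and use that $U$ is strong closed, so that $U[X,\C(C,A)]\cong[UX,U\C(C,A)]$; this gives $(U_*\C)(C,X\pitchfork A)\cong[UX,(U_*\C)(C,A)]$ naturally in $C$, which is precisely the assertion that $X\pitchfork A$ is the power $UX\pitchfork A$ in $U_*\C$. The copower case is dual, starting from $\C(X\cdot A,C)\cong[X,\C(A,C)]$ and again invoking that $U$ is strong closed.

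The argument is genuinely routine, so I do not expect a real obstacle; the two points that deserve care are: first, checking that the comparison maps one gets after applying $U$ really are the canonical comparison maps of $U_*\C$ — this is exactly where strong monoidality of $U$ is used, through the description of composition in $U_*\C$; and second, using the hypothesis ``the (conical) limit of $H$ exists in $\C$'' in its full enriched strength, i.e. including clause~(ii) about representables, since the underlying ordinary category of $U_*\C$ is not $\C_0$ in general and so one cannot simply transport a bare limit of $\C_0$.
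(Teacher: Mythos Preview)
Your proposal is correct and follows essentially the same route as the paper's proof: apply $U$ to the defining hom-object isomorphisms, use that $U$ is continuous for part~(1) and strong closed for part~(2), and then identify the result with the $U_*\C$-side universal property; the colimit/copower cases are handled dually. Your added remarks about checking that the transported comparison maps are the canonical ones in $U_*\C$, and about needing the enriched (not merely ordinary) hypothesis on $\lim H$, are apt but do not change the argument.
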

\begin{proof}
	$(1)$ Assume that $\lim H$ exists in $\C$; we need to prove that it is also the limit of $(S_\C H)$, if seen as an object of $U_*\C$. Let $C$ be any object of $U_*\C$; then
	\begin{equation*}
		\begin{split}
			(U_*\C)(C,\lim H) &\cong U(\lim\C(C,H-)_0)\\
			&\cong \lim U\circ \C(C,H-)_0\\
			&\cong \lim (U_*\C)(C,S_\C H-)_0\\
		\end{split}
	\end{equation*}
	in $\V_0$, where we used that $U$ is continuous and that $(U_*\C)(C,S_\C-)_0\cong U\circ \C(C,-)_0$. It follows that $\lim H$ is $\lim (S_\C H)$ in $U_*\C$.
	
	$(2)$ Assume that $X\pitchfork A$ exists in $\C$; then
	\begin{equation*}
		\begin{split}
			(U_*\C)(C,X\pitchfork A) &\cong U[X,\C(C,A)]\\
			&\cong [UX,U\C(C,A)]\\
			&\cong [UX,(U_*\C)(C,A)]\\
		\end{split}
	\end{equation*} 
	naturally in $C\in U_*\C$. It follows that $X\pitchfork A$, seen in $U_*\C$, is the power of $A$ by $UX$.
	
	The dual property involving colimits holds by the arguments above just replacing $\C$ with $\C^{op}$.
\end{proof}

\begin{lema}\label{underlyingGfunct}
	Let $\C$ be a $\W$-category. The ordinary functor 
	$$(-)_U\colon [\C,\W]_0\to[U_*\C,\V]_0$$ 
	is continuous; moreover $\C(C,-)_U\cong (U_*\C)(C,-)$.
\end{lema}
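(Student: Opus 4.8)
The plan is to recognise $(-)_U$ as a composite of two functors each with an evident limit-preservation property, and to prove the representable identification by directly unwinding definitions. Since $M_U=\hat U\circ U_*M$ by definition, $(-)_U$ is the action on hom-categories of
$$[\C,\W]_0\ \xrightarrow{\ U_*\ }\ [U_*\C,U_*\W]_0\ \xrightarrow{\ \hat U\circ(-)\ }\ [U_*\C,\V]_0,$$
the first arrow being induced by the change-of-base 2-functor $U_*\colon\W\tx{-}\bo{Cat}\to\V\tx{-}\bo{Cat}$ and the second being post-composition with $\hat U=(U_*\W)(I,-)$; in particular $(-)_U$ is functorial, sending a $\W$-natural $\alpha$ to $\hat U\cdot U_*\alpha$.

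For continuity I would argue pointwise. Because $\W$ and $\V$ are complete, limits in $[\C,\W]_0$ and in $[U_*\C,\V]_0$ are computed pointwise: a $\W$- (resp. $\V$-) natural transformation into a pointwise limit is exactly a compatible family of such transformations into the pieces. So if $(M_i)_i$ is a diagram in $[\C,\W]_0$ with limit $M$, then for each object $C$ we have $M(C)=\lim_i M_i(C)$ in $\W_0$, with limit cone given by the components $p_i(C)\colon M(C)\to M_i(C)$ of the projections. Now $M_U(C)=\hat U(M(C))=U\W(I,M(C))\cong U(M(C))$, naturally in $M$; more precisely, the underlying ordinary functor of $\hat U$, restricted along $S_\W\colon\W_0\to(U_*\W)_0$, is isomorphic to $U$, since $\W(I,-)\cong\tx{Id}_\W$. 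As $U$ is a right adjoint ($L\dashv U$) it preserves limits, so the family $(U(p_i(C)))_i$, and hence $((p_i)_U(C))_i$, is a limit cone in $\V_0$. This holds for every $C$, so $M_U$ together with the cone $(p_i)_U$ is the limit of the diagram $(M_i)_U$ in $[U_*\C,\V]_0$ (such limits being computed pointwise); thus $(-)_U$ preserves limits.

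For the identification $\C(C,-)_U\cong(U_*\C)(C,-)$ I would unwind the two $\V$-functors. On objects, $\C(C,-)_U$ sends $D$ to $\hat U(\C(C,D))=U\W(I,\C(C,D))\cong U\C(C,D)=(U_*\C)(C,D)$, which is the object assignment of the $\V$-representable. On hom-objects, the structure map of $\C(C,-)_U$ is $U$ applied to the $\W$-enriched composition map $\C(D,D')\to\W(\C(C,D),\C(C,D'))$, followed by the strong-closedness comparison $U\W(X,Y)\xrightarrow{\cong}[UX,UY]$; since $U$ is strong monoidal and strong closed, transposing this back along the internal-hom adjunction of $\V$ returns precisely $U$ applied to the composition $\C(D,D')\otimes\C(C,D)\to\C(C,D')$, that is the composition map of $U_*\C$, whose transpose is by definition the structure map of $(U_*\C)(C,-)$. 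Hence the two $\V$-functors coincide up to the natural iso above. I expect this last, coherence-heavy verification to be the fussiest part of the write-up, but it is entirely standard change-of-base bookkeeping along a strong monoidal, strong closed functor; the only genuinely load-bearing inputs are the pointwise computation of limits in functor $\V$-categories and the fact that $U$, being a right adjoint, preserves limits.
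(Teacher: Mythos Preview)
Your argument is correct, and for the continuity claim it is actually more direct than the paper's. The paper factors $(-)_U$ as a three-step composite
\[
[\C,\W]_0 \xrightarrow{S_{[\C,\W]}} (U_*[\C,\W])_0 \xrightarrow{K} [U_*\C,U_*\W]_0 \xrightarrow{\hat U\circ-} [U_*\C,\V]_0,
\]
and argues separately that each piece is continuous: $S_{[\C,\W]}$ by the change-of-base lemma for conical limits, $\hat U\circ-$ because $\hat U$ is representable, and $K$ via the observation that it is fully faithful with image containing the representables, whence it preserves all limits existing in its domain. Your approach bypasses this decomposition entirely: since both functor categories have limits computed pointwise and $(-)_U$ acts on objects as $U$, the continuity of $U$ (as a right adjoint) does all the work. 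This is shorter and avoids the slightly delicate claim about $K$; the paper's factorisation, on the other hand, yields the representable identification $\C(C,-)_U\cong(U_*\C)(C,-)$ as an immediate byproduct of $K\circ U_*(Y_\C)\cong Y_{U_*\C}$, whereas you verify it by a separate unwinding of the structure maps. Both routes are valid; yours trades one coherence check (strong closedness intertwining composition) for a cleaner continuity argument. One notational wrinkle: when you write $\W(I,-)\cong\tx{Id}_\W$ you mean the unit $J$ of $\W$, not $I$; the paper has the same slip in defining $\hat U$, but the intended meaning is clear.
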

\begin{proof}
	To prove the first assertion note that $(-)_U$ can be written as the composite
	$$ [\C,\W]_0 \stackrel{S_{[\C,\W]}}{\xrightarrow{\hspace*{0.8cm}}} (U_*[\C,\W])_0 \stackrel{K}{\xrightarrow{\hspace*{0.5cm}}} [U_*\C,U_*\W]_0 \stackrel{\hat{U}\circ -}{\xrightarrow{\hspace*{0.8cm}}} [U_*\C,\V]_0$$
	where $K=(U_*)_{\C,\W}$ is the action of $U_*$ on homs. Now, $S_{[\C,\W]}$ is continuous by Lemma~\ref{U-copowers}, and $\hat{U}\circ -$ preserves all limits that exist in $[U_*\C,U_*\W]_0$ since $\hat{U}$ does so (being representable). Finally, observe that $K$ is fully faithful (since $U$ is continuous and strong closed) and $(U_*[\C,\W])_0$ contains the representables (since $K \circ U_*(Y_\C)\cong Y_{U_*\C}$); therefore $K$ preserves all limits that exists in $(U_*[\C,\W])_0$ (this is a general fact about full subcategories of presheaf categories which contain the representables). It follows at once that $(-)_U$ is continuous. Finally, that $\C(C,-)_U\cong (U_*\C)(C,-)$ is an immediate consequence of fact that $K \circ U_*(Y_\C)\cong Y_{U_*\C}$.
\end{proof}

\begin{lema}\label{DG-G-limits}
	Let $ M \colon \C^{op}\to\W$ be a $\W$-weight and $H\colon \C\to\W$ be a $\W$-functor. Then $ U( M *H)\cong  M _U* H_U$.
\end{lema}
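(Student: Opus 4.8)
The plan is to reduce everything to two features of $U$ that we already have in hand: it is cocontinuous (being the left adjoint in $U\dashv R$) and strong monoidal. Since $H$ takes values in the cocomplete base $\W$, the weighted colimit $M*H$ is computed by the coend
$$M*H\;\cong\;\int^{C\in\C} MC\otimes HC,$$
because the copower of an object of $\W$ by another object of $\W$ is their tensor product; likewise $M_U*H_U\cong\int^{C\in U_*\C}M_UC\otimes H_UC$ in $\V$. So it suffices to show that $U$ carries the first coend to the second. Here one uses the definition of the change-of-base $2$-functor: $(U_*\C)(C,D)=U\C(C,D)$, $M_UC=U(MC)$ and $H_UC=U(HC)$ on objects.

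The main step is then the standard presentation of an enriched coend as an ordinary colimit. Namely, $\int^{C}MC\otimes HC$ is the coequalizer in $\W_0$ of the two canonical maps
$$\sum_{C,D}\C(C,D)\otimes MD\otimes HC\;\rightrightarrows\;\sum_{C}MC\otimes HC,$$
one built from the contravariant action of $M$ and the other from the covariant action of $H$. Applying $U$: since $U$ is a left adjoint it preserves these coproducts and this coequalizer, so $U(M*H)$ is the coequalizer in $\V_0$ of the images of the two maps; since $U$ is strong monoidal, it sends each summand $\C(C,D)\otimes MD\otimes HC$ to $U\C(C,D)\otimes UMD\otimes UHC=(U_*\C)(C,D)\otimes M_UD\otimes H_UC$ and each $MC\otimes HC$ to $M_UC\otimes H_UC$. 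Naturality and coherence of the monoidal constraints identify the two resulting parallel maps with exactly those presenting $\int^{C\in U_*\C}M_UC\otimes H_UC$ as a coequalizer. Hence $U(M*H)\cong M_U*H_U$.

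I expect no genuine obstacle here: the statement is precisely the well-known fact that a cocontinuous strong monoidal functor preserves weighted colimits whose weight is valued in the monoidal category, and the only care needed is the routine bookkeeping of checking that $U$ applied to the two structure maps of the coequalizer yields the two structure maps of the target coequalizer (which follows from coherence together with compatibility of $U$ with composition in $U_*\C$). An alternative, coend-free route is available via the universal property: using that $U\dashv R$ is a monoidal adjunction (so $\W(HC,RV)=[HC,RV]\cong R[UHC,V]=R\,\V(H_UC,V)$) together with the continuity of $(-)_U$ from Lemma~\ref{underlyingGfunct}, one checks that $\V_0(U(M*H),V)\cong[\C^{op},\W]_0(M,\W(H-,RV))\cong[U_*\C^{op},\V]_0(M_U,\V(H_U-,V))\cong\V_0(M_U*H_U,V)$ naturally in $V$, and concludes by Yoneda; but the coend computation above is the shortest.
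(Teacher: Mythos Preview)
Your proposal is correct and follows essentially the same approach as the paper: express $M*H$ as the coequalizer presenting the coend $\int^C MC\otimes HC$, then use that $U$ preserves coproducts and coequalizers and is strong monoidal to identify the image with the corresponding coequalizer presenting $M_U*H_U$. The paper's proof is the same argument in fewer words; your justification via the right adjoint $R$ for why $U$ preserves the relevant colimits is in fact more precise than the paper's, which invokes continuity of $U$ at this step (true, but not the property actually being used).
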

\begin{proof}
	The weighted colimit $M *H$ can be seen as a coend
	\begin{center}
		\begin{tikzpicture}[baseline=(current  bounding  box.south), scale=2]

			\node (b) at (0,0) {$\underset{D,E\in\C}{\sum} \C(D,E)\otimes M D\otimes HE$};
			\node (c) at (2.5,0) {$\underset{C\in\C}{\sum} M C\otimes HC$};
			\node (a) at (4,0) {$\underset{}{M*H.}$};
			
			\path[font=\scriptsize]
			
			([yshift=2.5pt]c.east) edge [->>] node [above] {} ([yshift=2.5pt]a.west)
			([yshift=5pt]b.east) edge [->] node [above] {} ([yshift=5pt]c.west)
			([yshift=1pt]b.east) edge [->] node [below] {} ([yshift=1pt]c.west);
		\end{tikzpicture}
	\end{center}
	Since $U$ is continuous and strong monoidal, the image of this under it leads to the coequalizer
	\begin{center}
		\begin{tikzpicture}[baseline=(current  bounding  box.south), scale=2]

			\node (b) at (0,0) {$\underset{D,E\in U_*\C}{\sum} (U_*\C)(D,E)\otimes M_U D\otimes H_UE$};
			\node (c) at (3,0) {$\underset{C\in U_*\C}{\sum} M_U C\otimes M_UC$};
			\node (a) at (4.8,0) {$\underset{}{U(M*H),}$};
			
			\path[font=\scriptsize]
			
			([yshift=2.5pt]c.east) edge [->>] node [above] {} ([yshift=2.5pt]a.west)
			([yshift=5pt]b.east) edge [->] node [above] {} ([yshift=5pt]c.west)
			([yshift=1pt]b.east) edge [->] node [below] {} ([yshift=1pt]c.west);
		\end{tikzpicture}
	\end{center}
	where we used that $U\C(D,E)\cong (U_*\C)(D,E)$ and that $UMC\cong M_UC$, similarly for $H$. Since $M_U*H_U$ can be seen as the coend above, it follows that $U(M*H)\cong M_U*H_U$.
\end{proof}

Before moving on let us point out some properties of the base $\W$. First note that, even though $L\G$ is a strong generator of $\W$ made of dualizable objects, $\W$ may not be locally dualizable since the unit need not be projective.

Since the elements of $\G$ are finitely presentable and projective in $\V$, and $U$ is cocontinuous, the elements of $L\G$ are finitely presentable and projective as well; it follows that $\W$ is a finitary quasivariety \cite[Definition~4.4]{LT20:articolo}. Moreover $U$ sends a strong generator made of finitely presentable objects to one with the same property in $\V$. Finally note that for any $X,Y\in L\G$ the hom $\W_0(X\otimes Y,-)\cong \W_0(X,[Y,-]_0)$ preserves all colimits that $\W_0(X,-)$ preserves; therefore $\W$ is a symmetric monoidal closed finitary quasivariety and in particular locally finitely presentable as a closed category.

\subsection{The characterization theorem}\label{DGtheorem}

Fix $\W$, $\V$, and $U\colon\W\to\V$ as in the previous section.

\begin{obs}\label{powerswhocares}
	Note that the $\alpha$-small $\W$-weighted limits are generated by the $\alpha$-small conical ones and powers by the strong generator $L\G$. Since the latter are dualizable objects, and powers by them are absolute, it follows that a $\W$-functor preserves $\alpha$-small weighted limits of and only if it preserves $\alpha$-small conical ones. Therefore a $\W$-weight $ M $ is $\alpha$-flat if and only if $ M *-$ preserves all $\alpha$-small conical limits.
\end{obs}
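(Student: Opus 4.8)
The plan is to verify the three assertions of the remark in order; only the first requires a genuine argument. \emph{First} I would reduce arbitrary $\alpha$-small $\W$-weighted limits to powers by objects of $L\G$. By \cite[Section~4]{Kel82:articolo}, applied to $\W$ (which is locally finitely presentable, and hence locally $\alpha$-presentable, as a closed category), every $\alpha$-small weighted limit is built from conical $\alpha$-small limits together with powers by $\alpha$-presentable objects of $\W$. Since $L\G$ is a strong generator of $\W$ made of finitely presentable objects (recorded at the end of Section~\ref{DG-setting}), every $\alpha$-presentable $X\in\W$ is an $\alpha$-small colimit $\colim_i L G_i$ of objects of $L\G$, up to a retract; hence in any $\W$-category $\A$ with the relevant limits, $X\pitchfork A$ is a retract of $\lim_i (L G_i\pitchfork A)$, that is, it is obtained from powers by objects of $L\G$ by an $\alpha$-small conical limit followed by the splitting of an idempotent. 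Thus the $\alpha$-small weighted limits are generated by conical $\alpha$-small limits, powers by objects of $L\G$, and splittings of idempotents.

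\emph{Next}, I would observe that the last two of these classes are absolute: powers by dualizable objects are absolute by the proposition of Section~\ref{dualizable} and every object of $L\G$ is dualizable, while splittings of idempotents are absolute as well; so every $\W$-functor preserves both automatically. Combining this with the first step, a $\W$-functor $F$ preserves all $\alpha$-small weighted limits if and only if it preserves all $\alpha$-small conical ones --- the forward implication being trivial, and the converse holding because, once $F$ preserves $\alpha$-small conical limits, it preserves all three generating classes above (the other two being absolute) and hence everything they generate. Applying this to $F = M*-\colon [\C,\W]\to\W$, whose domain is complete with limits computed pointwise, and recalling that $M$ is by definition $\alpha$-flat precisely when $M*-\cong\tx{Lan}_YM$ is $\alpha$-continuous, gives the claim: $M$ is $\alpha$-flat if and only if $M*-$ preserves all $\alpha$-small conical limits.

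\emph{The one delicate point}, and the closest thing to an obstacle, is the passage in the first step from arbitrary $\alpha$-presentable objects to the fixed strong generator $L\G$. This rests on the standard description of the $\alpha$-presentable objects of a locally $\alpha$-presentable category as retracts of $\alpha$-small colimits of objects from any strong generator consisting of finitely presentable objects, together with the observation that the retraction contributes only a harmless (absolute) splitting of an idempotent; everything else is routine bookkeeping.
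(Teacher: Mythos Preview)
Your argument is correct and supplies the details the paper leaves implicit (the statement is a bare observation with no proof given). Two small refinements are worth noting. First, splittings of idempotents are already $\alpha$-small conical limits (being equalizers), so they need not be listed as a separate generating class; absorbing them into the conical class makes your generation statement match the paper's exactly. Second, the claim that each $\alpha$-presentable $X$ is a retract of a \emph{single} $\alpha$-small colimit of objects of $L\G$ is slightly stronger than needed and would itself want justification; it is cleaner to say that $X$ lies in the closure of $L\G$ under $\alpha$-small colimits. This closure contains retracts (as split coequalizers), hence all finitely presentable objects (retracts of finite colimits of $L\G$'s), and is closed under $\alpha$-small colimits, hence contains every $\alpha$-presentable object (these being $\alpha$-small colimits of finitely presentable ones). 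Dually, $X\pitchfork A$ then lies in the closure of $\{LG\pitchfork A:G\in\G\}$ under $\alpha$-small conical limits, and the rest of your argument goes through unchanged.
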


Given $ M \colon \C^{op}\to\W$ we can consider two different categories of elements:\begin{itemize}\setlength\itemsep{0.25em}
	\item $\tx{El}( M _J)$: this has objects pairs $(C\in\C,x\colon J\to M (C))$ and arrows $f\colon (C,x)\to(D,y)$ given by $f\in\C_0(C,D)$ with $ M  f(y)=x$.
	\item $\tx{El}( M _{LI})$: objects are pairs $(C\in U_*\C,y\colon I\to M _U(C))$ and arrows $f\colon (C,x)\to(D,y)$ are given by $f\in (U_*\C)_0(C,D)$ with $ M  f(y)=x$.
	
\end{itemize}
Therefore we have an induced ordinary functor $S_ M \colon \tx{El}( M _J)\longrightarrow\tx{El}( M _{LI})  $
which makes the square below commute.
\begin{center}
	\begin{tikzpicture}[baseline=(current  bounding  box.south), scale=2]
		
		\node (a0) at (0,-0.8) {$\C_0$};
		\node (b0) at (1.1,-0.8) {$( U_*\C)_0$};
		\node (c0) at (0,0) {$\tx{El}( M _J)$};
		\node (d0) at (1.1,0) {$\tx{El}( M _{LI})$};
		
		\path[font=\scriptsize]
		
		(a0) edge [->] node [below] {$S_\C$} (b0)
		(c0) edge [->] node [left] {$\pi$} (a0)
		(d0) edge [->] node [right] {$\pi$} (b0)
		(c0) edge [->] node [above] {$S_ M $} (d0);
		
	\end{tikzpicture}	
\end{center}

\begin{prop}\label{underlyingleftadj}
	Let $\C$ be a $\W$-category with copowers by $LI$ and $ M \colon \C^{op}\to\W$ be a $\W$-weight. Then:\begin{enumerate}
		\item the functor $S_\C\colon \C_0\to(U_*\C)_0$ has a left adjoint $T_\C$ given by $T_\C C\colon =LI\cdot C$; 
		\item the functor $S_ M \colon \tx{El}( M _J)\to\tx{El}( M _{LI})$ has a left adjoint $T_ M $ which makes the square below commute.
	\end{enumerate}
	
	\begin{center}
		\begin{tikzpicture}[baseline=(current  bounding  box.south), scale=2]
			
			\node (a0) at (0,-0.8) {$\C_0$};
			\node (b0) at (1.1,-0.8) {$( U_*\C)_0$};
			\node (c0) at (0,0) {$\tx{El}( M _J)$};
			\node (d0) at (1.1,0) {$\tx{El}( M _{LI})$};
			
			\path[font=\scriptsize]
			
			(b0) edge [->] node [below] {$T_\C$} (a0)
			(c0) edge [->] node [left] {$\pi$} (a0)
			(d0) edge [->] node [right] {$\pi$} (b0)
			(d0) edge [->] node [above] {$T_ M $} (c0);
			
		\end{tikzpicture}	
	\end{center}
\end{prop}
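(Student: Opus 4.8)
The plan is to prove the two statements in turn, with (2) resting on (1) together with the fact that $M$ preserves powers by $LI$. Throughout, the hypothesis that $\C$ has copowers by $LI$ is what makes $LI\cdot C$ available.

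\emph{Part (1).} I would show that for each $C$ the object $LI\cdot C$ of $\C_0$ represents the functor $(U_*\C)_0(C,S_\C-)\colon\C_0\to\bo{Set}$; the left adjoint $T_\C$, together with its action on the morphisms of $(U_*\C)_0$, is then forced. The representation is the chain of isomorphisms, natural in $E\in\C_0$,
\begin{align*}
\C_0(LI\cdot C,E)&\cong\W_0(J,\C(LI\cdot C,E))\cong\W_0(J,[LI,\C(C,E)])\\
&\cong\W_0(LI,\C(C,E))\cong\V_0(I,U\C(C,E))=(U_*\C)_0(C,S_\C E),
\end{align*}
using in order: $\C_0=\W_0(J,\C(-,-))$; the universal property of the copower; closedness of $\W$ with unit $J$; the adjunction $L\dashv U$; and that $(U_*\C)(C,E)=U\C(C,E)$ with $S_\C$ the identity on objects. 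Hence $T_\C:=LI\cdot(-)$ is left adjoint to $S_\C$.

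\emph{Part (2), construction.} Note first that $LI$ is dualizable: since $I\in\G$, the object $ULI$ is dualizable by $(c)$, and therefore $LI$ is dualizable because $U$ is strong monoidal and strong closed. Consequently powers by $LI$ are absolute and so preserved by the $\W$-functor $M\colon\C^{op}\to\W$, giving a natural isomorphism $M(LI\cdot C)\cong[LI,MC]$. For $(C,y)\in\tx{El}(M_{LI})$, so $y\colon I\to M_U(C)=U(MC)$, I would set $T_M(C,y):=(LI\cdot C,\bar y)$, where $\bar y\colon J\to M(LI\cdot C)$ is the image of $y$ under
\begin{align*}
\V_0(I,U(MC))\cong\W_0(LI,MC)\cong\W_0(J,[LI,MC])\cong\W_0(J,M(LI\cdot C)).
\end{align*}
On objects the square $\pi\circ T_M=T_\C\circ\pi$ then holds by construction. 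To see that $T_M$ extends to a functor left adjoint to $S_M$ compatibly with this square, I would prove that $(LI\cdot C,\bar y)$ represents $\tx{El}(M_{LI})\big((C,y),S_M-\big)\colon\tx{El}(M_J)\to\bo{Set}$. Given $(D,x)\in\tx{El}(M_J)$, write $x'\colon I\to M_U(D)$ for $Ux$ precomposed with the canonical iso $I\cong UJ$, so that $S_M(D,x)=(D,x')$; unwinding the definitions,
\begin{align*}
\tx{El}(M_J)\big((LI\cdot C,\bar y),(D,x)\big)&=\{\,f\in\C_0(LI\cdot C,D)\mid M(f)(x)=\bar y\,\},\\
\tx{El}(M_{LI})\big((C,y),S_M(D,x)\big)&=\{\,g\in(U_*\C)_0(C,D)\mid M_U(g)(x')=y\,\}.
\end{align*}
The bijection $\theta\colon\C_0(LI\cdot C,D)\cong(U_*\C)_0(C,D)$ from Part (1) should restrict to a bijection between these two subsets, i.e.\ for $g=\theta(f)$ one should have $M(f)(x)=\bar y$ iff $M_U(g)(x')=y$. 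Granting this, $(LI\cdot C,\bar y)$ represents the functor, hence $T_M$ is the required left adjoint; and since the $\tx{El}$-level bijection is a restriction of $\theta$, which by Part (1) is the transposition for $T_\C\dashv S_\C$, the square commutes as functors.

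\emph{Main obstacle.} The hard part is exactly the equivalence ``$M(f)(x)=\bar y\iff M_U(g)(x')=y$'' for $g=\theta(f)$. This is a compatibility between the hom--tensor transposes defining $\theta$ and $\bar y$ and the functorial action maps $\C(C,D)\otimes MD\to MC$ of $M$ and $U\C(C,D)\otimes U(MD)\to U(MC)$ of $M_U$. I expect it to reduce to a diagram chase whose inputs are that $U$ is strong monoidal and strong closed — so that the action of $M_U$ on morphisms is obtained from that of $M$ by applying $U$, coherently with all the relevant hom--tensor adjunctions and with $UJ\cong I$ — together with the fact that the isomorphism $M(LI\cdot C)\cong[LI,MC]$ used to define $\bar y$ is precisely the one witnessing that $M$ preserves the absolute power $LI\pitchfork(-)$. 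Everything else is routine bookkeeping.
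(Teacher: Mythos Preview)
Your proposal is correct and follows essentially the same approach as the paper: Part~(1) is the same chain of isomorphisms (the paper writes it starting from $(U_*\C)_0(C,S_\C-)$ rather than from $\C_0(LI\cdot C,E)$, but the content is identical), and for Part~(2) both you and the paper define $T_M(C,y)=(LI\cdot C,\bar y)$ via the same transposition, using that $M$ preserves the absolute power $LI\pitchfork(-)$. What you call the ``main obstacle'' is precisely what the paper dismisses with ``a routine verification shows that the resulting $T_M$ is left adjoint to $S_M$''; your outline of why it reduces to a compatibility of hom--tensor transposes with the strong monoidal/closed structure of $U$ is exactly right, and no further idea is needed.
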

\begin{proof}
	$(1)$. For any $C\in(U_*\C)_0$ we obtain:
	\begin{equation*}
		\begin{split}
			(U_*\C)_0(C,S_\C-) &\cong \V_0(I,(U_*\C)(C,S_\C-)_0)\\
			&\cong \V_0(I,U\circ \C(C,-)_0)\\
			&\cong \W_0(LI,\C(C,-)_0)\\
			&\cong \C_0(LI\cdot C,-)\\
		\end{split}
	\end{equation*}
	where we used the fact that $(U_*\C)(C,S_\C-)_0\cong U\circ \C(C,-)_0$. Therefore $(U_*\C)_0(C,S_\C-)$ is represented by the object $LI\cdot C$ of $\C_0$, and hence $LI\cdot(-)\colon (U_*\C)_0\to\C_0$ is a left adjoint to $S_\C$.
	
	$(2)$. By point $(1)$ applied to $\C=\W^{op}$ the functor $S_\W\colon \W_0\to (U_*\W)_0$ has a right adjoint given pointwise by $[LI,-]$; notice that to give an arrow $X\to Y$ in $(U_*\W)_0$ is the same as giving $UX\to UY$ in $\V_0$. Now, since $ M $ preserves powers by $LI$ (these being absolute), the left adjoint $T_\C$ to $S_\C$ extends to a left adjoint of $S_ M $ as follows: $T(C,y):=(T_\C C, y^t)$ where $T_\C C=LI\cdot C$ as above and $y^t\colon J\to  M (T_\C C)\cong [LI, M  C]$ is the transpose of $y\colon I\cong UJ\to \phi_U C\cong U M (C)$ seen as a map $J\to  M (C)$ in $(U_*\W)_0$. A routine verification shows that the resulting $T_ M $ is left adjoint to $S_ M $.
\end{proof}

In the presence of such a nice change of base we can reduce $\W$-flatness to $\V$-flatness:

\begin{prop}\label{DG-G-flat}
	Let $\C$ be a $\W$-category with finite direct sums and copowers by $L\G$; let $ M \colon \C^{op}\to\W$ be a $\W$-weight. The following are equivalent:\begin{enumerate}\setlength\itemsep{0.25em}
		\item $ M $ is $\alpha$-flat;
		\item $ M *-$ preserves conical $\alpha$-small limits of representables;
		\item $ M _U$ is $\alpha$-flat as a $\V$-weight;
	\end{enumerate} 
\end{prop}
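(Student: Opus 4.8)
The plan is to establish the cycle of implications $(1)\Rightarrow(2)\Rightarrow(3)\Rightarrow(1)$. The implication $(1)\Rightarrow(2)$ is immediate, since conical $\alpha$-small limits of representables are in particular $\alpha$-small weighted limits. For $(3)\Rightarrow(1)$, by Remark~\ref{powerswhocares} it is enough to show that $M*-\colon[\C,\W]\to\W$ preserves $\alpha$-small conical limits; and since $U\colon\W_0\to\V_0$ is continuous and conservative (being monadic), it suffices to check that $U\circ(M*-)$ preserves such limits. But Lemma~\ref{DG-G-limits} gives a natural isomorphism $U(M*H)\cong M_U*H_U$, so $U\circ(M*-)$ is isomorphic to $(M_U*-)\circ(-)_U\colon[\C,\W]_0\to\V_0$, in which $(-)_U$ is continuous by Lemma~\ref{underlyingGfunct} and $M_U*-$ preserves all $\alpha$-small limits because $M_U$ is $\alpha$-flat. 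Hence $M*-$ preserves $\alpha$-small conical limits, and $M$ is $\alpha$-flat.

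The real work is $(2)\Rightarrow(3)$. First I would observe that, by Lemma~\ref{U-copowers}, the $\V$-category $U_*\C$ has finite direct sums and copowers by every object of $UL\G$; since $UL\G$ is a strong generator of $\V$ consisting of dualizable (hence finitely presentable and regular projective) objects, $\V$ is itself locally dualizable with generator $UL\G$, so Proposition~\ref{flat+absolute=filtered} applies to $U_*\C$ and $M_U$ with $UL\G$ in place of $\G$. Thus it suffices to prove that $M_U*-\colon[U_*\C,\V]\to\V$ preserves $\alpha$-small conical limits of representables. Given such a diagram $D$ with $D(b)=(U_*\C)(-,C_b)$, its transition maps are ordinary $\V$-natural transformations between representables, hence elements of $(U_*\C)_0=\W_0(LI,\C(-,-))$; using the copower adjunction $T_\C\dashv S_\C$ of Proposition~\ref{underlyingleftadj}, each such element $g\colon LI\to\C(C_b,C_{b'})$ corresponds to an honest $\C_0$-morphism $\tilde g\colon LI\cdot C_b\to C_{b'}$. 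Combining this with $\C(C,-)_U\cong(U_*\C)(C,-)$ (Lemma~\ref{underlyingGfunct}), the identity $(U_*\C)(-,LI\cdot C)\cong ULI\cdot(U_*\C)(-,C)$ together with the dualizability of $ULI$ (which makes this copower absolute, hence respected by $M_U*-$), the continuity of $(-)_U$, and Lemma~\ref{DG-G-limits}, I would exhibit the comparison map $M_U*\lim D\to\lim(M_U*D)$ as the image under $U$ of the comparison map for $M*-$ on a suitable $\alpha$-small conical limit of representables in $[\C,\W]$; the latter is invertible by hypothesis $(2)$, and since $U$ is continuous and conservative so is the former. Then Proposition~\ref{flat+absolute=filtered} yields that $M_U$ is $\alpha$-flat.

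The main obstacle is precisely this last rewriting. The functor $(-)_U$ does not reach an arbitrary diagram of representables on $U_*\C$: its image uses only transition maps of the form $S_\C(f)$ with $f\in\C_0$, whereas a general diagram has its transition maps in the strictly larger hom-set $\W_0(LI,\C(-,-))$, and these genuinely need not factor through the canonical map $LI\to J$. Bridging this gap is exactly what the $L\G$-copower structure on $\C$, encoded in the adjunction $T_\C\dashv S_\C$ and the dualizability of the objects of $UL\G$, is for; making the comparison maps and their naturality match up across the change of base is the only delicate bookkeeping in the proof.
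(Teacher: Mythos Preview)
Your arguments for $(1)\Rightarrow(2)$ and $(3)\Rightarrow(1)$ are correct and coincide with the paper's. The gap is in $(2)\Rightarrow(3)$.

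You propose to verify condition~(2) of Proposition~\ref{flat+absolute=filtered} for $M_U$: that $M_U*-$ preserves every $\alpha$-small conical limit of representables on $U_*\C$. The difficulty you identify is real, but it is not ``only delicate bookkeeping''. A conical diagram $D\colon\B\to[U_*\C,\V]_0$ of representables has transition maps living in $(U_*\C)_0$, and the transpose of a single such map $g\colon C_b\to C_{b'}$ is a $\C_0$-morphism $\tilde g\colon LI\cdot C_b\to C_{b'}$. But the assignment $g\mapsto\tilde g$ is not functorial: for a composable pair $g,h$ the transposes $\tilde g$ and $\tilde h$ do not compose, so there is no evident diagram $\B\to\C_0$ of the same shape whose image under $(-)_U$ recovers $D$. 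Invoking the dualizability of $ULI$ and the isomorphism $(U_*\C)(-,LI\cdot C)\cong ULI\cdot(U_*\C)(-,C)$ does not address this: it lets you move individual objects around, but not the pattern of morphisms. As stated, you have not constructed the ``suitable $\alpha$-small conical limit of representables in $[\C,\W]$'', and it is not clear that one exists with the property you need.

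The paper sidesteps this by aiming instead at condition~(3) of Proposition~\ref{flat+absolute=filtered}: it shows directly that $\tx{El}(M_{LI})=\tx{El}((M_U)_I)$ is $\alpha$-filtered. For the cocone-on-objects condition it rewrites $\prod_i UM(C_i)$ as $M_U*\prod_i(U_*\C)(C_i,-)$ using Lemma~\ref{DG-G-limits}, Lemma~\ref{underlyingGfunct}, and hypothesis~(2), then applies Lemma~\ref{multigoounit} (which is available because $U_*\C$ has finite direct sums and $UL\G$-copowers). For the coequalizing condition it first uses the adjunction $T_M\dashv S_M$ of Proposition~\ref{underlyingleftadj}: postcomposing with the unit replaces an arbitrary parallel family $(f_i)$ in $\tx{El}(M_{LI})$ by one of the form $(S_M g_i)$ with $g_i\in\C_0$, and for such families the equalizer computation goes through exactly as for products. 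This unit-of-adjunction trick is precisely the missing ingredient that lets one pass from $(U_*\C)_0$-data to $\C_0$-data; your sketch gestures at the adjunction but does not use it in this way.
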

\begin{proof}
	$(1)\Rightarrow (2)$ is trivial. Let's consider $(2)\Rightarrow (3)$. To begin with, note that by Lemma~\ref{U-copowers} the $\V$-category $U_*\C$ has finite direct sums and copowers by $UL\G$, which is a strong generator of $\V$ made of dualizable objects. Thus, by Proposition~\ref{flat+absolute=filtered}, it's enough to prove that the category $\tx{El}( M _{LI})=\tx{El}(( M _U)_I)$ is $\alpha$-filtered.
	
	Consider therefore an $\alpha$-small family of objects $(C_i,x_i)_{i\in I}$ in $\tx{El}( M _{LI})$; we need to find a cocone for that. Note that the family $(x_i)_i$ corresponds to a map $x\colon I\to \prod_i U M (C_i)$ in $\V$. Moreover
	\begin{equation*}
		\begin{split}
			\prod_i U M (C_i) &\cong U(\prod_i  M * \C(C_i,-))\\
			&\cong U(  M * \prod_i\C(C_i,-))\\
			&\cong  M _U* (\prod_i\C(C_i,-))_U\\
			&\cong  M _U* \prod_i U_*\C(C_i,-)\\
		\end{split}
	\end{equation*}
	As a consequence, $x$ corresponds to a map $x'\colon I\to  M _U* \prod_i U_*\C(C_i,-)$. By Lemma~\ref{multigoounit} there exist then $D\in U_*\C$, $y\colon I\to  M _U (D)$ and $f=(f_i)_i\in \prod_i (U_*\C)_0(C_i,D)$ which map down to $x'$. In other words we obtained $(D,y)$ and maps $f_i\colon (C_i,x,i)\to (D,y)$ in $\tx{El}( M _{LI})$, as desired.
	
	Consider now an $\alpha$-small family of parallel maps $\{f_i\colon (C,x)\to (D,y)\}_{i\in I}$ in $\tx{El}( M _{LI})$; we need to find an arrow coequalizing them. Since $S_ M $ has a left adjoint $T_ M $ we can consider the following square
	\begin{center}
		\begin{tikzpicture}[baseline=(current  bounding  box.south), scale=2]
			
			\node (a) at (0,0.7) {$S_ M  T_ M (C,x)$};
			\node (b) at (1.9,0.7) {$S_ M  T_ M (D,y)$};
			\node (c) at (0, 0) {$(C,x)$};
			\node (d) at (1.9, 0) {$(D,y)$};

			\path[font=\scriptsize]
			
			(c) edge [->] node [left] {$\eta_{(C,x)} $} (a)
			(d) edge [->] node [right] {$\eta_{(D,y)} $} (b)
			(a) edge [->] node [above] {$S_ M  T_ M (f_i)$} (b)
			(c) edge [->] node [below] {$f_i$} (d);
			
		\end{tikzpicture}	
	\end{center} 
	It's then enough to find a map out of $S_ M  T(D,y)$ which coequalizes the $S_ M  T(f_i)$'s. Therefore, without loss of generality, we can assume $f_i=S_ M (g_i)$ for some $g_i$ in $\C_0$. Now we can argue as in the previous case: $y$ defines an arrow $\bar{y}\colon I \to \tx{Eq}\ U( M (g_i))_i$ in $\V$ and we have
	\begin{equation*}
		\begin{split}
			\tx{Eq}\ U( M (g_i))_i &\cong U\left( \tx{Eq}( M * \C(g_i,-))_i\right)\\
			&\cong U\left( M * \tx{Eq}\ \C(g_i,-)_i\right)\\
			&\cong  M _U* \tx{Eq}\ (U_*\C)(f_i,-)_i\\
		\end{split}
	\end{equation*}
	As a consequence, $\bar{y}$ corresponds to a map $y'\colon I\to  M _U* \tx{Eq}(U_*\C)(f_i,-)_i$. By Lemma~\ref{multigoounit} there exist then $E\in U_*\C$, $z\colon I\to  M _U (E)$ and $f\in \tx{Eq}\ (U_*\C)_0(f_i,E)$ which map down to $x'$. In other words we obtained an object $(E,z)$ and a map $g\colon (D,y)\to (E,z)$ in $\tx{El}( M _{LI})$ coequalizing the $f_i$'s. It follows that $\tx{El}( M _{LI})$ is $\alpha$-filtered.
	
	$(3)\Rightarrow (1)$. Assume now that $ M _U$ is $\alpha$-flat; it's enough to prove that the $\W$-functor $ M *-\colon [\C,\W]\to \W$ preserves all $\alpha$-small conical limits (by Remark~\ref{powerswhocares}). Since $U$ is continuous and conservative, that holds if and only if  $U( M *-)\colon [\C,\W]_0\to \V_0$ preserves all $\alpha$-small conical limits. But $U( M *-)\cong  M _U*(-)_U$, where $(-)_U$ is continuous (by Lemma~\ref{underlyingGfunct}) and $ M _U*-$ preserves $\alpha$-small conical limits because $ M _U$ is $\alpha$-flat. Thus the result follows.
\end{proof}

The following is a consequence of the results above and is what justifies the notion of {\em protofiltered} colimit introduced below.

\begin{cor}\label{protoelements}
	Let $\C$ be a $\W$-category with finite direct sums and copowers by $L\G$, and let $ M \colon \C^{op}\to\W$ be an $\alpha$-flat $\W$-weight. Then:\begin{itemize}
		\item[(i)] $\tx{El}( M _{LI})$ is $\alpha$-filtered;
		\item[(ii)] $S_ M \colon \tx{El}( M_J )\to\tx{El}( M _{LI})$ is final.
	\end{itemize}
\end{cor}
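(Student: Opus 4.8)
The plan is to deduce both statements from results already established for the locally dualizable base $\V$, by transporting everything along the change of base $U\colon\W\to\V$. Part (i) will follow by moving the $\alpha$-flatness of $M$ down to $M_U$ and then invoking the characterization of $\alpha$-flat $\V$-weights; part (ii) will follow from the observation that $S_M$ is a right adjoint, and right adjoints are final.

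For (i), I would first apply Proposition~\ref{DG-G-flat}: since $M$ is $\alpha$-flat, so is the associated $\V$-weight $M_U$. By Lemma~\ref{U-copowers} the $\V$-category $U_*\C$ has finite direct sums and copowers by $UL\G$, and $UL\G$ is by hypothesis a strong generator of $\V$ made of dualizable objects (any dualizable $P$ in a locally dualizable category is finitely presentable, since $\V_0(P,-)\cong\V_0(I,P^*\otimes-)$ is a composite of finitary functors). Hence $U_*\C$ satisfies the hypotheses of Proposition~\ref{flat+absolute=filtered} for the generator $UL\G$, and applying that proposition to $M_U$ shows that $\tx{El}\big((M_U)_I\big)$ is $\alpha$-filtered. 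Unwinding the definitions, $(M_U)_I=\V_0(I,(M_U)_0-)$ has a category of elements with objects the pairs $(C,\,y\colon I\to M_U C)$ and with morphisms $f\in(U_*\C)_0(C,D)$ satisfying $M_U(f)\,y=x$, and this is exactly $\tx{El}(M_{LI})$; this yields (i). (In fact this is precisely the computation already carried out inside the proof of Proposition~\ref{DG-G-flat}.)

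For (ii), since $I\in\G$ the $\W$-category $\C$ has copowers by $LI$, so Proposition~\ref{underlyingleftadj}(2) supplies a left adjoint $T_M$ to $S_M\colon\tx{El}(M_J)\to\tx{El}(M_{LI})$. It then suffices to use the standard fact that a right adjoint functor is final: for each object $(D,y)$ of $\tx{El}(M_{LI})$, the unit component $\eta_{(D,y)}\colon(D,y)\to S_M T_M(D,y)$ is an initial object of the comma category $(D,y)/S_M$, and a category with an initial object is connected; hence $S_M$ is final.

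Since the argument is an assembly of earlier results, I do not expect a serious obstacle. The most delicate point is identifying $\tx{El}(M_{LI})$ with $\tx{El}((M_U)_I)$ in part (i), so that Proposition~\ref{flat+absolute=filtered} can be applied, and checking that the strong generator $UL\G$ (rather than the original $\G$) is an admissible choice there --- which it is, because the proof of that proposition uses only that the generator consists of finitely presentable dualizable objects.
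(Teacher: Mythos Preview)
Your proof is correct and matches the paper's approach essentially verbatim: the paper derives (i) by pointing back to the proof of Proposition~\ref{DG-G-flat} (where $\tx{El}(M_{LI})=\tx{El}((M_U)_I)$ is shown to be $\alpha$-filtered), and derives (ii) from the left adjoint to $S_M$ provided by Proposition~\ref{underlyingleftadj}. Your only cosmetic deviation is that for (i) you invoke the \emph{statement} of Proposition~\ref{DG-G-flat} to get $M_U$ $\alpha$-flat and then apply Proposition~\ref{flat+absolute=filtered}, whereas the paper refers directly to the internal computation; your remark that Proposition~\ref{flat+absolute=filtered} may be applied with the generator $UL\G$ in place of $\G$ is exactly how the paper itself uses it.
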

\begin{proof}
	The first point follows directly by the proof of $(1)\Rightarrow (3)$ above. The second point holds since $S_ M $ has a left adjoint by Proposition~\ref{underlyingleftadj}.
\end{proof}

Given a $\W$-category $\A$ consider the ordinary functor $S_\A\colon \A_0\to( U_*\A)_0$ introduced near the beginning of Section~\ref{DG-setting}.

\begin{Def}
	We say that an ordinary functor $S\colon \E\to\F$ is an {\em $\alpha$-protofiltered index} if $\F$ is $\alpha$-filtered and $S$ is final. \\
	An {\em $S$-indexed diagram} in a $\W$-category $\A$ is a pair of functors $(H_1,H_2)$ making the diagram
	\begin{center}
		\begin{tikzpicture}[baseline=(current  bounding  box.south), scale=2]
			
			\node (a0) at (0,0.7) {$\F$};
			\node (b0) at (1,0.7) {$( U_*\A)_0$};
			\node (c0) at (0,0) {$\E$};
			\node (d0) at (1,0) {$\A_0$};
			
			\path[font=\scriptsize]
			
			(a0) edge [dashed, ->] node [above] {$H_2$} (b0)
			(c0) edge [->] node [left] {$S$} (a0)
			(d0) edge [->] node [right] {$S_\A$} (b0)
			(c0) edge [dashed, ->] node [below] {$H_1$} (d0);
			
		\end{tikzpicture}	
	\end{center}
	commute (strictly). We define its colimit, if it exists, as $\colim(H_1,H_2):=\colim H_1$ the conical colimit of $H_1$ in $\A$.
\end{Def}

When $\colim(H_1,H_2)$ exists in $\A$ then $\colim(S_\A H_1)$ exists as a conical colimit in $ U_*\A$ and 
$$S_\A(\colim H_1)\cong \colim (S_\A H_1)\cong \colim H_2$$ 
where the first isomorphism holds because $S_\A$ preserves all conical colimits that exist in $\A$ (by Lemma~\ref{U-copowers}), while the latter is true since $S$ is final.

\begin{obs}
	Note that if $S$ is an $\alpha$-protofiltered index the category $\C$ is in general not $\alpha$-filtered (the protosplit coequalizer below gives a counterexample); see also \ref{final-filtered-subcat}.
\end{obs}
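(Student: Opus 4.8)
Since Corollary~\ref{protoelements} tells us that for every $\alpha$-flat weight $M\colon\C^{op}\to\W$ on a $\W$-category $\C$ with finite direct sums and copowers by $L\G$ the comparison $S_M\colon\tx{El}(M_J)\to\tx{El}(M_{LI})$ is an $\alpha$-protofiltered index, it suffices to produce a single such $M$ for which $\tx{El}(M_J)$ fails to be $\alpha$-filtered. The plan is to obtain this from what the paper calls the \emph{protosplit coequalizer}: a $\W$-enriched analogue of the classical example of Remark~\ref{final-filtered-subcat}, where the inclusion of the free-living parallel pair into the free-living split parallel pair is final, its codomain is filtered (indeed absolute), but its domain is not filtered.

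I would work in the base $\W=\bo{DGAb}$ with $\V=\bo{GAb}$ and $U$ the forgetful functor, so that $LI$ is the contractible complex $\mathbb{Z}\xrightarrow{\mathrm{id}}\mathbb{Z}$ concentrated in degrees $0$ and $-1$. The mechanism to exploit is that $M_J=\W_0(J,M_0-)$ records, at each object, only the degree-$0$ \emph{cycles} of $M$, and the morphisms of $\tx{El}(M_J)$ must likewise be cycles in the hom-complexes, whereas $M_{LI}=\V_0(I,(M_U)_0-)$ records \emph{all} degree-$0$ elements and allows arbitrary degree-$0$ hom-elements as morphisms. I would therefore cook up a small $\C$ (closed under finite direct sums and $L\G$-copowers, or replaced by its closure) and a weight $M\colon\C^{op}\to\W$ so that the colimit $M*(-)$ is an absolute coequalizer whose splitting data are \emph{not} chain maps but become available after applying $U$; then $\tx{El}(M_{LI})$ is a genuine split-coequalizer diagram, hence absolute and in particular $\alpha$-filtered, while the ``same'' diagram seen through $\tx{El}(M_J)$ is merely a parallel pair, which is not $\alpha$-filtered.

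To see that such an $M$ is admissible one must check that it is absolute, hence $\alpha$-flat for every $\alpha$: by Proposition~\ref{DG-G-flat} this reduces to showing that the $\V$-weight $M_U$ is absolute, which can be read off either directly or from the characterization of absolute $\V$-weights over a locally dualizable $\V$ as split subobjects of finite direct sums of $\langle\G\rangle$-copowers of representables. The remaining verification is a routine inspection showing that $\tx{El}(M_J)$ contains a parallel pair as a final (full) subcategory admitting no cocone, and hence is not $\alpha$-filtered, while $S_M$ is still final with $\alpha$-filtered codomain, exactly as Corollary~\ref{protoelements} guarantees.

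The main obstacle is the construction itself: the weight $M$ must be chosen so that the colimit it defines is genuinely absolute --- which pins down the shape of $\C$ and of $M$ rather rigidly --- and yet the morphisms that split this colimit in $U_*\C$ must fail to be chain maps, so that they disappear from $\tx{El}(M_J)$. Arranging these two features simultaneously, by placing the splitting maps so that they are not cycles in the relevant hom-complexes (using the differential of $LI$), is the delicate point; once the protosplit coequalizer is written down, all the checks are elementary computations with chain complexes.
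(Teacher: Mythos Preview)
You are working far harder than the remark requires. An $\alpha$-protofiltered index is, by definition, simply an ordinary functor $S\colon\E\to\F$ that is final and has $\alpha$-filtered codomain; nothing in the definition says it must arise as an $S_M$ from Corollary~\ref{protoelements}. So to justify the remark one only needs \emph{any} such $S$ with non-filtered domain, and the paper's counterexample is the bare combinatorial one already flagged in Remark~\ref{final-filtered-subcat}: the inclusion of the free-living parallel pair $\E$ into the free-living split parallel pair $\F$. This inclusion is final, $\F$ is absolute (hence $\alpha$-filtered for every $\alpha$), and $\E$ is manifestly not filtered. That is the entire content of the remark and of Example~\ref{example-DG-index}'s ``protosplit index''; no weight, no $\W$-category, and no chain-complex computation is needed.

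Your route --- manufacturing an absolute $\W$-weight $M$ on a DG-category so that $S_M\colon\tx{El}(M_J)\to\tx{El}(M_{LI})$ realizes the protosplit situation --- would prove something strictly stronger, namely that even the protofiltered indices \emph{produced by} Corollary~\ref{protoelements} can have non-filtered domain. That is an interesting refinement, and your plan (arrange the splitting maps to be degree-$0$ non-cycles, so they appear in $(U_*\C)_0$ but not in $\C_0$) is the right idea for it. But you leave the actual construction of $M$ and the verification of its absoluteness as ``the delicate point'', so the proposal is a sketch rather than a proof even of this stronger claim. For the remark as stated, drop the enriched machinery entirely and just invoke the parallel-pair-into-split-pair example.
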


\begin{es}\label{example-DG-index}$ $
	\begin{itemize}\setlength\itemsep{0.25em}
		\item {\bf Protosplit coequalizers}. The {\em protosplit index} is defined as the inclusion  $H$ of the free-living pair into the free-living split pair. Then protosplit coequalizers are just $H$-indexed colimits; these are $\alpha$-filtered indexes for any $\alpha$. For $\W=\bo{DGAb}$ protosplit coequalizers were first introduced in \cite{NST2020cauchy}.
		\item {\bf Categories of elements}. Let $ M \colon \C^{op}\to\W$ be as in Proposition~\ref{DG-G-flat} above; then the functor $S_ M \colon \tx{El}( M _J)\to\tx{El}( M _{LI})$ is an $\alpha$-protofiltered index. 
	\end{itemize}
\end{es}

There is a way to express $\alpha$-protofiltered colimits as honest $\alpha$-flat weighted colimits: see Section~\ref{appendix}.

The following is needed for the characterization of $\alpha$-flat $\W$-functors.

\begin{prop}
	Let $\C$ be a $\W$-category with finite direct sums and copowers by $L\G$, and let $ M \colon \C^{op}\to\W$ be an $\alpha$-flat $\W$-weight; then 
	$$  M \cong\tx{colim} \left(\tx{El}( M _J)_{\W} \stackrel{\pi_{\W}}{\longrightarrow} \C \stackrel{Y}{\longrightarrow} [\C^{op},\W]\right). $$
\end{prop}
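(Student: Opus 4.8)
The plan is to transport the statement, via the change of base along $U$, to Proposition~\ref{flat+absolute=filtered} applied to the $\V$-weight $M_U$. Since colimits in $[\C^{op},\W]$ are computed pointwise and $[\C^{op},\W]$ is cocomplete ($\W$ being so), I would first reduce to showing that for each fixed $C\in\C$ the canonical comparison map
$$ c_C\colon \colim\!\left(\tx{El}(M_J)\stackrel{\pi}{\longrightarrow}\C_0\stackrel{\C(C,-)_0}{\longrightarrow}\W_0\right)\longrightarrow M(C) $$
is invertible, the colimit on the left being the conical colimit (in $\W$) of $\C(C,-)\circ\pi_\W$. Since $U$ is conservative it suffices to check that $Uc_C$ is invertible; and since $U$ is cocontinuous (it is the left adjoint in $U\dashv R$) it preserves this colimit, so $Uc_C$ is identified with the canonical map out of $\colim\!\left(\tx{El}(M_J)\stackrel{\pi}{\to}\C_0\stackrel{U\circ\C(C,-)_0}{\to}\V_0\right)$.

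Next I would rewrite this colimit using the change of base machinery of Section~\ref{DG-setting}. One has $U\circ\C(C,-)_0\cong(U_*\C)(C,S_\C-)_0$, and the defining square for $S_M$ (displayed just before Proposition~\ref{underlyingleftadj}) gives $\pi\circ S_M=S_\C\circ\pi$; hence the functor $\tx{El}(M_J)\to\V_0$ above is exactly the composite
$$ \tx{El}(M_J)\stackrel{S_M}{\longrightarrow}\tx{El}(M_{LI})\stackrel{\pi}{\longrightarrow}(U_*\C)_0\stackrel{(U_*\C)(C,-)_0}{\longrightarrow}\V_0. $$
By Corollary~\ref{protoelements}(ii) the functor $S_M$ is final (it has a left adjoint, by Proposition~\ref{underlyingleftadj}), so precomposition with $S_M$ does not change the colimit; this identifies $Uc_C$ with the canonical map $\colim\!\left(\tx{El}(M_{LI})\stackrel{\pi}{\to}(U_*\C)_0\stackrel{(U_*\C)(C,-)_0}{\to}\V_0\right)\to U M(C)=M_U(C)$.

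Finally I would invoke Proposition~\ref{flat+absolute=filtered} for the $\V$-weight $M_U\colon(U_*\C)^{op}\to\V$. By Lemma~\ref{U-copowers} the $\V$-category $U_*\C$ has finite direct sums and copowers by $UL\G$, which by hypothesis is a strong generator of $\V$ made of dualizable objects; by Proposition~\ref{DG-G-flat} the weight $M_U$ is $\alpha$-flat; and $\tx{El}(M_{LI})=\tx{El}((M_U)_I)$ by definition. Hence the ``moreover'' clause of Proposition~\ref{flat+absolute=filtered}, evaluated at $C$, says precisely that the last displayed map is invertible, so $Uc_C$ and therefore $c_C$ is invertible, which is the claim. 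The step requiring the most care is the bookkeeping of the canonical cocones: one must check that the identifications of $Uc_C$ produced by cocontinuity of $U$, by finality of $S_M$, and by Proposition~\ref{flat+absolute=filtered} are all compatible with the respective comparison maps. This is routine once one notes, using Lemma~\ref{underlyingGfunct} and the isomorphism $\C(C,-)_U\cong(U_*\C)(C,-)$, that every functor and natural transformation in sight arises from the corresponding data over $\C$ by the change of base, so that the cocone defining $c_C$ is carried by $U$ exactly onto the one appearing in Proposition~\ref{flat+absolute=filtered}.
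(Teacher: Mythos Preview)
Your proof is correct and follows essentially the same approach as the paper: reduce pointwise, use that $U$ is conservative and cocontinuous to transport the comparison map to $\V$, use the commutative square $S_\C\circ\pi=\pi\circ S_M$ together with finality of $S_M$ (Corollary~\ref{protoelements}) to replace $\tx{El}(M_J)$ by $\tx{El}(M_{LI})$, and then invoke Proposition~\ref{flat+absolute=filtered} for $M_U$ (which is $\alpha$-flat by Proposition~\ref{DG-G-flat}). Your write-up is simply more explicit about the bookkeeping of canonical cocones than the paper's terse version.
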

\begin{proof}
	As usual it's enough to prove that for any $C\in\C$ the comparison map between
	$  M (C)$ and the colimit of $\C(C,\pi-)_0\colon \tx{El}( M _J)\to  \W_0$ is an isomorphism.
	For that, note that we can consider the commutative square below.
	\begin{center}
		\begin{tikzpicture}[baseline=(current  bounding  box.south), scale=2]
			
			\node (a0) at (0,0.7) {$\tx{El}( M _{LI})$};
			\node (b0) at (1.1,0.7) {$(U_*\C)_0$};
			\node (c0) at (0,0) {$\tx{El}( M _J)$};
			\node (d0) at (1.1,0) {$\C_0$};
			\node (e0) at (2.5,0) {$\W_0$};
			\node (f0) at (2.5,0.7) {$\V_0$};
			
			\path[font=\scriptsize]
			
			(a0) edge [->] node [above] {$\pi$} (b0)
			(c0) edge [->] node [left] {$S_ M $} (a0)
			(d0) edge [->] node [right] {$S_\C$} (b0)
			(c0) edge [->] node [below] {$\pi'$} (d0)
			(e0) edge [->] node [right] {$U$} (f0)
			(d0) edge [->] node [below] {$\C(-,C)_0$} (e0)
			(b0) edge [->] node [above] {$(U_*\C)(C,-)_0$} (f0);
			
		\end{tikzpicture}	
	\end{center}
	Since $S_ M $ is final (Corollary~\ref{protoelements}) and $U$ is conservative and cocontinuous, it's then enough to show that $ M _U(C)=U M (C)$ is the colimit of $(U_*\C)(C,\pi'-)_0$. But this is a consequence of Proposition~\ref{flat+absolute=filtered} since $ M _U$ is $\alpha$-flat by Proposition~\ref{DG-G-flat}.
\end{proof}

Then, under the presence of some absolute colimits, we can express every $\alpha$-flat colimit as an $\alpha$-protofiltered colimit of representables:

\begin{cor}\label{DG-flat-proto}
	Let $\C$ be a $\W$-category with finite direct sums and copowers by $L\G$, and let $ M \colon \C^{op}\to\W$ be an $\alpha$-flat $\W$-weight. Then $ M $ is an $\alpha$-protofiltered colimit of representables; more precisely $ M $ is the $\alpha$-protofiltered colimit of the diagram below.
	\begin{center}
		\begin{tikzpicture}[baseline=(current  bounding  box.south), scale=2]
			
			\node (a0) at (0,0.7) {$\tx{El}( M _{LI})$};
			\node (b0) at (1.1,0.7) {$(U_*\C)_0$};
			\node (c0) at (0,0) {$\tx{El}( M _J)$};
			\node (d0) at (1.1,0) {$\C_0$};
			\node (e0) at (2.5,0) {$[\C^{op},\W]_0$};
			\node (f0) at (2.5,0.7) {$(U_*[\C^{op},\W])_0$};
			
			\path[font=\scriptsize]
			
			(a0) edge [->] node [above] {$\pi$} (b0)
			(c0) edge [->] node [left] {$S_ M $} (a0)
			(d0) edge [->] node [right] {$S_\C$} (b0)
			(c0) edge [->] node [below] {$\pi$} (d0)
			(e0) edge [->] node [right] {$R_{[\C,\W]}$} (f0)
			(d0) edge [->] node [below] {$Y_0$} (e0)
			(b0) edge [->] node [above] {$(U_*Y)_0$} (f0);
			
		\end{tikzpicture}	
	\end{center}
\end{cor}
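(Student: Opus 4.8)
The plan is to recognize that this corollary merely repackages the preceding Proposition, with Corollary~\ref{protoelements} supplying the $\alpha$-protofiltered structure. Unwinding the definition of an $S$-indexed diagram, what must be produced is: an $\alpha$-protofiltered index $S$, a functor $H_1$ landing in $[\C^{op},\W]_0$, a functor $H_2$ landing in $(U_*[\C^{op},\W])_0$, a commuting square $S_\A\circ H_1=H_2\circ S$ for $\A=[\C^{op},\W]$, and an identification $\colim H_1\cong M$. I would take $S:=S_M\colon\tx{El}(M_J)\to\tx{El}(M_{LI})$, $H_1:=Y_0\circ\pi$, and $H_2:=(U_*Y)_0\circ\pi$, exactly as displayed in the statement.

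That $S_M$ is an $\alpha$-protofiltered index is immediate from Corollary~\ref{protoelements}: the codomain $\tx{El}(M_{LI})$ is $\alpha$-filtered, and $S_M$ is final (via the left adjoint constructed in Proposition~\ref{underlyingleftadj}). For the commuting square, i.e.\ $S_{[\C^{op},\W]}\circ H_1=H_2\circ S_M$ --- where $S_{[\C^{op},\W]}$ denotes the identity-on-objects functor acting by $U$ on hom-objects --- I would paste together two commuting squares: the left-hand one, $S_\C\circ\pi=\pi\circ S_M$, which holds by the construction of $S_M$ recorded just before Proposition~\ref{underlyingleftadj}; and the identity $(U_*Y)_0\circ S_\C=S_{[\C^{op},\W]}\circ Y_0$, which is the case $F=Y$ of the general fact that $(U_*F)_0\circ S_\A=S_\B\circ F_0$ for every $\W$-functor $F\colon\A\to\B$. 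This last fact is immediate: both composites send an arrow $f\colon I\to\A(A,A')$ to $U(F_{A,A'}\circ f)$, by $2$-functoriality of $U_*$.

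Finally, by definition $\colim(H_1,H_2)=\colim H_1$, the conical colimit of $H_1$ in $[\C^{op},\W]$; and this is exactly the colimit computed in the preceding Proposition, which gives $M\cong\colim(\tx{El}(M_J)_\W\xrightarrow{\pi_\W}\C\xrightarrow{Y}[\C^{op},\W])$ --- a conical colimit that, under the standard identification of conical colimits with ordinary colimits of the transpose, is precisely $\colim H_1$ computed in $[\C^{op},\W]_0$. Combining the three points yields the statement. There is no genuine obstacle; the only mildly fiddly point is the $2$-functorial bookkeeping behind the commuting square, which is routine once one spells out how $S_\A$ and $U_*F$ act on arrows.
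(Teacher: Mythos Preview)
Your proposal is correct and follows exactly the paper's approach: the paper's proof is a one-liner citing the preceding Proposition (giving $\colim H_1\cong M$) together with Example~\ref{example-DG-index} (which records, via Corollary~\ref{protoelements}, that $S_M$ is an $\alpha$-protofiltered index), and you have simply unpacked those two ingredients and added the routine verification of the commuting square. The only cosmetic difference is that the paper labels the right-hand vertical arrow $R_{[\C,\W]}$ rather than $S_{[\C^{op},\W]}$, but your identification of it with the canonical functor $S_\A$ for $\A=[\C^{op},\W]$ is what the definition of $S$-indexed diagram requires.
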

\begin{proof}
	This is a direct consequence of the proposition above and Example~\ref{example-DG-index}.
\end{proof}

In conclusion:

\begin{teo}\label{flat-dgab}
	If $ M \colon \C^{op}\to\W$ is a $\W$-functor, the following are equivalent:\begin{enumerate}\setlength\itemsep{0.25em}
		\item $ M $ is $\alpha$-flat;
		\item $ M *-\colon [\C,\W]\to \W$ preserves $\alpha$-small weighted limits of representables;
		\item $ M $ lies in the closure of the representables under copowers by $L\G$, finite direct sums, and $\alpha$-protofiltered colimits.
	\end{enumerate}
\end{teo}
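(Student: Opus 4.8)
The strategy is to reproduce the proof of Proposition~\ref{flat-ab}, using Corollary~\ref{DG-flat-proto} (via Proposition~\ref{DG-G-flat}) in place of Proposition~\ref{flat+absolute=filtered}, and the main result of Section~\ref{appendix} in place of the observation that $\alpha$-filtered and absolute colimits are $\alpha$-flat. I would establish the cycle $(1)\Rightarrow(2)\Rightarrow(3)\Rightarrow(1)$. The implication $(1)\Rightarrow(2)$ is immediate, an $\alpha$-flat weight being by definition one for which $M*-$ preserves \emph{all} $\alpha$-small weighted limits, in particular those of representables.

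For $(3)\Rightarrow(1)$, recall that the $\alpha$-flat $\W$-weights form a replete, full subcategory of $[\C^{op},\W]$ which is closed under $\alpha$-flat colimits. Representables are $\alpha$-flat (indeed Cauchy, since $\C(-,C)*-\cong\tx{ev}_C$ is continuous); copowers by the dualizable objects of $L\G$ and finite direct sums are absolute, hence $\alpha$-flat, colimits; and $\alpha$-protofiltered colimits are genuine $\alpha$-flat colimits by the main result of Section~\ref{appendix}. Hence the closure of the representables under these three constructions remains inside the $\alpha$-flat weights, which gives $(3)\Rightarrow(1)$.

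The substance is in $(2)\Rightarrow(3)$. Since a conical $\alpha$-small limit of representables is in particular an $\alpha$-small weighted limit of representables, condition $(2)$ implies that $M*-$ preserves conical $\alpha$-small limits of representables. Let $J\colon\C\hookrightarrow\D$ be the inclusion of $\C$ into its free cocompletion under finite direct sums and $L\G$-copowers, and set $M':=\tx{Lan}_{J^{op}}M$. Exactly as in Proposition~\ref{flat-ab}, because finite direct sums and $L\G$-copowers are $\alpha$-small weighted (co)limits, any $\alpha$-small conical limit of representables in $[\D^{op},\W]$ is reexpressible as an $\alpha$-small weighted limit of representables in $[\C^{op},\W]$, and since $(\tx{Lan}_{J^{op}}M)*F\cong M*FJ$ it follows that $M'*-$ still preserves $\alpha$-small conical limits of representables. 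The $\W$-category $\D$ has finite direct sums and copowers by $L\G$ by construction, so Proposition~\ref{DG-G-flat} shows that $M'$ is $\alpha$-flat, whence Corollary~\ref{DG-flat-proto} exhibits $M'$ as an $\alpha$-protofiltered colimit of representables in $[\D^{op},\W]$; in particular $M'$ lies in the closure of the representables of $[\D^{op},\W]$ under $L\G$-copowers, finite direct sums and $\alpha$-protofiltered colimits. Now transport along $J$: the restriction functor $[J^{op},\W]\colon[\D^{op},\W]\to[\C^{op},\W]$ is a cocontinuous $\W$-functor (it has a right adjoint $\tx{Ran}_{J^{op}}$), it carries a representable $\D(-,d)$ with $d\cong\bigoplus_i L X_i\cdot JC_i$ to $\bigoplus_i LX_i\cdot\C(-,C_i)$, and — by naturality of the comparison functors $S_{(-)}$ with respect to the $2$-functor $U_*$ — it preserves $\alpha$-protofiltered colimits as well as finite direct sums and $L\G$-copowers. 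Since $J$ is fully faithful we have $M\cong M'\circ J^{op}=[J^{op},\W](M')$, and hence $M$ lies in the closure of the representables under $L\G$-copowers, finite direct sums and $\alpha$-protofiltered colimits, which is precisely $(3)$.

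The step I expect to be the main obstacle is verifying that $M':=\tx{Lan}_{J^{op}}M$ inherits the hypothesis of $(2)$ after the passage to the free cocompletion $\D$; this rests on the slightly delicate but routine point that $\alpha$-small conical diagrams in $\D^{op}$ are, modulo the $\alpha$-small weighted (co)limits used to build $\D$, governed by $\alpha$-small weighted diagrams in $\C^{op}$, together with the identity $(\tx{Lan}_{J^{op}}M)*F\cong M*FJ$. The remaining pieces are formal: the closure statements in $(3)\Rightarrow(1)$ follow from Section~\ref{appendix} and the standard closure properties of $\alpha$-flat weights, and the compatibility of restriction along $J$ with $\alpha$-protofiltered colimits is a formal consequence of $[J^{op},\W]$ being a cocontinuous $\W$-functor.
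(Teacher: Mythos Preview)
Your proposal is correct and follows essentially the same approach as the paper, whose proof consists of the single sentence ``Same as that of Proposition~\ref{flat-ab}''; you have correctly unfolded what that means, replacing Proposition~\ref{flat+absolute=filtered} by Proposition~\ref{DG-G-flat} together with Corollary~\ref{DG-flat-proto}, and invoking Section~\ref{appendix} for the flatness of $\alpha$-protofiltered colimits in $(3)\Rightarrow(1)$. Your transport step via the cocontinuous restriction $[J^{op},\W]$ is a slightly more explicit phrasing of the paper's concluding remark that each object of $\D$ is a finite direct sum of $L\G$-copowers of objects from $\C$, but the argument is the same.
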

\begin{proof}
	Same as that of Proposition~\ref{flat-ab}.
\end{proof}

\begin{teo}\label{flat=protofilt+abs}
	A $\W$-category $\A$ has $\alpha$-flat colimits if and only if it has finite direct sums, copowers by $L\G$, and $\alpha$-protofiltered colimits. A $\V$-functor from such an $\A$ preserves $\alpha$-flat colimits if and only if it preserves $\alpha$-protofiltered colimits.
\end{teo}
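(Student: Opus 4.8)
The plan is to adapt the proof of Theorem~\ref{con-abs=flat}, replacing ``$\alpha$-filtered'' by ``$\alpha$-protofiltered'' throughout and using Corollary~\ref{DG-flat-proto} in place of Proposition~\ref{flat+absolute=filtered}. For the ``only if'' direction this is immediate: conical $\alpha$-filtered colimits and Cauchy colimits are $\alpha$-flat; each $LX$ with $X\in\G$ is dualizable in $\W$ (cf.\ the remark in Section~\ref{DG-setting}), so copowers by $L\G$ and finite direct sums are absolute, hence $\alpha$-flat; and $\alpha$-protofiltered colimits are $\alpha$-flat by the results of Section~\ref{appendix}. Thus a $\W$-category with $\alpha$-flat colimits has all the colimits listed, and a $\W$-functor preserving $\alpha$-flat colimits preserves $\alpha$-protofiltered ones.

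For the converse, assume $\A$ has finite direct sums, copowers by $L\G$, and $\alpha$-protofiltered colimits, and let $M\colon\C^{op}\to\W$ be $\alpha$-flat and $H\colon\C\to\A$ a $\W$-functor. Let $J\colon\C\hookrightarrow\D$ be the inclusion of $\C$ into its (small) free cocompletion under finite direct sums and $L\G$-copowers, and put $M':=\tx{Lan}_{J^{op}}M$ and $H':=\tx{Lan}_JH$; the latter exists because $\A$ has exactly these colimits, $M'$ is again $\alpha$-flat by Lemma~\ref{flat-restriction}(1), and $\D$ has finite direct sums and copowers by $L\G$ by construction. Since $J$ is fully faithful, $H'J\cong H$, whence $M'*H'\cong M*(H'J)\cong M*H$, either side existing if the other does. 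By Corollary~\ref{DG-flat-proto} we may write $M'\cong\colim(Y\circ\pi_{\W})$ as a conical colimit of representables over $\tx{El}(M'_J)_{\W}$; applying $-*H'$, which preserves conical colimits of weights, and using $Yd*H'\cong H'd$, we get that $M'*H'$ is the conical colimit of $H_1:=H'_0\circ\pi\colon\tx{El}(M'_J)\to\A_0$ (again either side existing if the other does), where $\pi$ is the projection.

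It remains to see that this conical colimit is an $\alpha$-protofiltered colimit in $\A$. By Corollary~\ref{protoelements} the functor $S_{M'}\colon\tx{El}(M'_J)\to\tx{El}(M'_{LI})$ has $\alpha$-filtered codomain and is final, i.e.\ is an $\alpha$-protofiltered index; and setting $H_2:=(U_*H')_0\circ\pi\colon\tx{El}(M'_{LI})\to(U_*\A)_0$ one checks $H_2\circ S_{M'}=S_\A\circ H_1$, using $\pi\circ S_{M'}=S_\D\circ\pi$ (Proposition~\ref{underlyingleftadj}) and $(U_*H')_0\circ S_\D=S_\A\circ H'_0$ (naturality of $S_{(-)}$ in $\W$-functors). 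This exhibits $(H_1,H_2)$ as an $S_{M'}$-indexed diagram in $\A$ — precisely the diagram of Corollary~\ref{DG-flat-proto} with $H'$ in place of the Yoneda embedding — so $\colim H_1\cong M'*H'\cong M*H$ exists because $\A$ has $\alpha$-protofiltered colimits. For the preservation statement, a $\W$-functor $F\colon\A\to\K$ preserves $M*H$ if and only if it preserves the conical $\alpha$-protofiltered colimit $\colim H_1$: running the reduction above for $FH'$ in place of $H'$ gives $M'*(FH')\cong\colim(F_0H_1)$, while $M'*(FH')\cong M*((FH')J)\cong M*(FH)$ since $H'J\cong H$, so the canonical comparison $F(M*H)\to M*(FH)$ is identified with $F(\colim H_1)\to\colim(F_0H_1)$; combined with the fact that $\alpha$-protofiltered colimits are $\alpha$-flat, this gives the claimed equivalence. (Note that at no point is $F$ required to commute with a Kan extension: we only left-Kan-extend diagrams, and relate everything through $H'J\cong H$.)

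The one genuinely new point, beyond the bookkeeping already done for Theorem~\ref{con-abs=flat} and Proposition~\ref{flat-ab}, is the verification in the third paragraph that the reduced conical colimit $M'*H'$ underlies an $\alpha$-protofiltered colimit in $\A$ — constructing the upper leg $H_2$ and checking its compatibility square from Proposition~\ref{underlyingleftadj} and the naturality of $S_{(-)}$. Everything else, including the smallness of $\D$ and the existence of $\tx{Lan}_JH$ in $\A$, is routine.
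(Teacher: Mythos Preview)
Your proposal is correct and follows exactly the approach the paper intends: its proof is literally ``Same as that of Theorem~\ref{con-abs=flat}'', and you have carried out that adaptation in full, correctly substituting Corollary~\ref{DG-flat-proto} for Proposition~\ref{flat+absolute=filtered} and invoking Section~\ref{appendix} for the $\alpha$-flatness of $\alpha$-protofiltered weights. The one piece of genuine extra work you supply --- constructing the second leg $H_2$ and verifying the compatibility square so that $\colim H_1$ is visibly an $\alpha$-protofiltered colimit in $\A$ --- is exactly what is needed beyond the bookkeeping of Theorem~\ref{con-abs=flat}, and your use of the commuting square for $S_{M'}$ together with the naturality of $S_{(-)}$ is correct (note that the square $\pi\circ S_{M'}=S_\D\circ\pi$ is recorded just \emph{before} Proposition~\ref{underlyingleftadj} rather than in it).
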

\begin{proof}
	Same as that of Theorem~\ref{con-abs=flat}.
\end{proof}

Since in this case we don't know whether the flat colimits are generated by absolute and filtered colimits, we can't compare accessible and conically accessible $\V$-categories (as we did in the previous sections). What we can say is the following:

\begin{teo}
	Let $\A$ be a $\W$-category with $\alpha$-flat colimits; then:\begin{enumerate}
		\item $A\in\A_\alpha$ if and only if $\A(A,-)$ preserves $\alpha$-protofiltered colimits;
		\item $\A$ is $\alpha$-accessible if and only if $\A_\alpha$ is small and every object is an $\alpha$-protofiltered colimit of $\alpha$-presentable objects.
	\end{enumerate}
\end{teo}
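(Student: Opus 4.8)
The plan is to deduce both statements from the structural results of this section, chiefly Theorem~\ref{flat=protofilt+abs} and Corollary~\ref{DG-flat-proto}, together with the fact recorded in Section~\ref{appendix} that every $\alpha$-protofiltered colimit is a genuine $\alpha$-flat weighted colimit. Part~(1) is then essentially immediate: since $\A$ has $\alpha$-flat colimits it has finite direct sums, copowers by $L\G$, and $\alpha$-protofiltered colimits, so Theorem~\ref{flat=protofilt+abs} applies to the $\W$-functor $\A(A,-)\colon\A\to\W$ and says that this functor preserves $\alpha$-flat colimits if and only if it preserves $\alpha$-protofiltered colimits; as $A\in\A_\alpha$ means precisely that $\A(A,-)$ preserves $\alpha$-flat colimits, this is the claim.

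For part~(2), the ``if'' direction is the easy one. Assuming $\A_\alpha$ is small and every object of $\A$ is an $\alpha$-protofiltered colimit of objects of $\A_\alpha$, I would set $\C:=\A_\alpha$; by Section~\ref{appendix} each such $\alpha$-protofiltered colimit is also an $\alpha$-flat weighted colimit of the same objects, and since $\A$ already has $\alpha$-flat colimits by hypothesis, this exhibits $\A$ as $\alpha$-accessible.

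For the ``only if'' direction, suppose $\A$ is $\alpha$-accessible, witnessed by a small $\C\subseteq\A_\alpha$. First I would invoke the general theory of accessible $\V$-categories from \cite{BQR98} to get that $\A_\alpha$ is small (there it is recovered as the Cauchy completion of the closure of $\C$ under $\alpha$-small weighted colimits, which is a small $\V$-category). Next, write an arbitrary object of $\A$ as $M*H$ with $M\colon\C'^{op}\to\W$ an $\alpha$-flat weight and $H\colon\C'\to\A_\alpha$ a $\W$-functor. Let $\D$ be the free completion of $\C'$ under finite direct sums and $L\G$-copowers, with fully faithful $J\colon\C'\hookrightarrow\D$; then $M':=\tx{Lan}_{J^{op}}M$ is still $\alpha$-flat by Lemma~\ref{flat-restriction}, its domain meets the hypotheses of Corollary~\ref{DG-flat-proto}, so $M'$ is the $\alpha$-protofiltered colimit of $Y\circ\pi_{\W}\colon\tx{El}(M'_J)_{\W}\to\D\to[\D^{op},\W]$. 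Putting $H':=\tx{Lan}_JH\colon\D\to\A$, one has $M*H\cong M'*H'$ (using $(\tx{Lan}_{J^{op}}M)*H'\cong M*(H'J)$ and $H'J\cong H$), and, just as in the proof of Theorem~\ref{con-abs=flat}, the cocontinuous functor $-*H'$ carries this protofiltered colimit of representables to the $\alpha$-protofiltered colimit of $H'\circ\pi_{\W}$. Finally $H'$ lands in $\A_\alpha$: from $\A(P\cdot A,-)\cong P^*\otimes\A(A,-)$ for a dualizable $P$ and $\A(A\oplus B,-)\cong\A(A,-)\times\A(B,-)$, together with the facts that $\alpha$-flat colimits commute in $\W$ with finite products and that $P^*\otimes-$ is cocontinuous, one sees that $\A_\alpha$ is closed under finite direct sums and $L\G$-copowers (the copowering objects lie in $L\G$ and so are dualizable by hypothesis~(c) on $\W$). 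Hence every object of $\A$ is an $\alpha$-protofiltered colimit of $\alpha$-presentables.

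The step I expect to be the main obstacle is this ``only if'' direction of~(2): one must ensure that the absolute colimits used to pass from an $\alpha$-flat presentation to an $\alpha$-protofiltered one ($L\G$-copowers and finite direct sums) stay inside $\A_\alpha$, and, above all, one must correctly import the smallness of $\A_\alpha$ from the accessible-category machinery of \cite{BQR98}. Everything else is a bookkeeping application of Theorems~\ref{flat=protofilt+abs} and~\ref{flat-dgab} and Corollary~\ref{DG-flat-proto}.
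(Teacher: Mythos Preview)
Your proposal is correct and follows essentially the same route as the paper: part~(1) is deduced directly from Theorem~\ref{flat=protofilt+abs}, and for part~(2) you pass to the free cocompletion $\D$ under finite direct sums and $L\G$-copowers, extend both weight and diagram along $J\colon\C'\hookrightarrow\D$, invoke Corollary~\ref{DG-flat-proto} to rewrite the $\alpha$-flat colimit as an $\alpha$-protofiltered one, and observe that the extended diagram stays in $\A_\alpha$ because the latter is closed under those absolute colimits. Your treatment is in fact slightly more explicit than the paper's in two places: you spell out why $\A_\alpha$ is closed under finite direct sums and $L\G$-copowers, and you flag the smallness of $\A_\alpha$ as requiring an appeal to \cite{BQR98}, which the paper leaves implicit.
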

\begin{proof}
	$(1)$ is a consequence of the Theorem~above. To prove $(2)$ we use Theorem~\ref{flat-dgab} and argue as in the proof of Theorem~\ref{acc=conacc+cauchy}. Given an $\alpha$-flat $ M \colon \C^{op}\to\W$ and a diagram $K\colon \C\to\A_{\alpha}\subseteq\A$,
	we can consider the free cocompletion $\D$ of $\C$ under finite direct sums and $L\G$-copowers, with inclusion $J\colon\C\to\D$. Let $ M ':=\tx{Lan}_{J^{op}} M $ and $K':=\tx{Lan}_JK$; then: $M*K\cong M'*K'$, the diagram $K'$ still lands in $\A_\alpha$ (since this is closed in $\A$ under finite direct sums and $L\G$-copowers), the weight $M'$ is still $\alpha$-flat (Lemma~\ref{flat-restriction}), and the domain of $M'$ satisfies the hypotheses of Corollary~\ref{DG-flat-proto}. Thus we can write $ M '\cong\tx{colim}(Y_0H_1,(U_*Y)_0H_2)$ as an $\alpha$-protofiltered colimit of representables, where $(H_1,H_2)$ is an $\alpha$-protofiltered diagram in $\D$. It follows that 
	$$ M *K\cong \tx{colim}\ (K_0'H_1,(U_*K')_0H_2),$$
	either side existing if the other does. Thus an object of $\A$ is an $\alpha$-flat colimit of $\alpha$-presentables if and only if it is an $\alpha$-protofiltered colimit of $\alpha$-presentables. The result then follows.
\end{proof}

\begin{obs}
	Let $\A$ be an $\alpha$-accessible $\W$-category; then every object $A$ of $\A$ can be written canonically as the $\alpha$-protofiltered colimit of the diagram below.
	\begin{center}
		\begin{tikzpicture}[baseline=(current  bounding  box.south), scale=2]
			
			\node (a0) at (0,0.7) {$(U_*\A_\alpha)_0/A$};
			\node (b0) at (1.5,0.7) {$(U_*\A)_0$};
			\node (c0) at (0,0) {$(\A_\alpha)_0/A$};
			\node (d0) at (1.5,0) {$\A_0$};
			
			\path[font=\scriptsize]
			
			(a0) edge [->] node [above] {$\pi$} (b0)
			(c0) edge [->] node [left] {$S_A$} (a0)
			(d0) edge [->] node [right] {$S_\A$} (b0)
			(c0) edge [->] node [below] {$\pi$} (d0);
			
		\end{tikzpicture}	
	\end{center}

	Indeed, let $H\colon\A_\alpha\to\A$ be the inclusion; since $\A$ is $\alpha$-accessible, every object $A$ can be written as the $\alpha$-flat colimit $\A(H-,A)*H$. Then the result follows thanks to Corollary~\ref{DG-flat-proto} applied to $M=\A(H-,A)$ since $(\A_\alpha)_0/A=\tx{El}(\A(H-,A)_J)$ and $(U_*\A_\alpha)_0/A= \tx{El}(\A(H-,A)_{LI})$.
\end{obs}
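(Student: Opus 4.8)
The plan is to realize $A$ through its canonical presentation as an $\alpha$-flat colimit of $\alpha$-presentables and then to feed the weight effecting that presentation into Corollary~\ref{DG-flat-proto}. Write $H\colon\A_\alpha\hookrightarrow\A$ for the inclusion (replacing $\A_\alpha$ by a small skeleton, using that it is essentially small) and set $M:=\A(H-,A)\colon\A_\alpha^{op}\to\W$. The two facts I would establish first are that $M$ is $\alpha$-flat and that $M*H\cong A$. Granting these, $A$ is exhibited as $M*H$ with $M$ an $\alpha$-flat weight and $H$ landing in the $\alpha$-presentables, which is exactly the configuration to which Corollary~\ref{DG-flat-proto} applies once its hypotheses on the domain $\C=\A_\alpha$ are checked.

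For the $\alpha$-flatness of $M$, I would use that, $\A$ being $\alpha$-accessible, $A\cong N*K$ for some $\alpha$-flat $N\colon\D^{op}\to\W$ with $\D$ small and $K\colon\D\to\A_\alpha$. Each $HC=C$ is $\alpha$-presentable, so $\A(C,-)$ preserves the $\alpha$-flat colimit $N*K$, giving $M\cong N*\A(H-,K-)$ computed pointwise; since each $\A(H-,Kd)=Y(Kd)$ is representable, $M$ is an $\alpha$-flat colimit of representables and hence $\alpha$-flat. For the reconstruction $M*H\cong A$, I would invoke the associativity of weighted colimits together with the Yoneda formula $Y(C)*H\cong HC$, computing $M*H\cong \bigl(N*\A(H-,K-)\bigr)*H\cong N*\bigl(\A(H-,K-)*H\bigr)\cong N*K\cong A$. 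Equivalently, this is the density of $\A_\alpha$ in the $\alpha$-accessible category $\A$.

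Next I would verify the hypotheses of Corollary~\ref{DG-flat-proto} for $\C=\A_\alpha$: as $\A$ has $\alpha$-flat colimits it has finite direct sums and $L\G$-copowers by Theorem~\ref{flat=protofilt+abs}, and $\A_\alpha$ is closed in $\A$ under these absolute colimits, so $\A_\alpha$ itself has them. Corollary~\ref{DG-flat-proto} then presents $M$ as the $\alpha$-protofiltered colimit of the diagram assembled from $\tx{El}(M_J)\xrightarrow{\pi}\C_0$ and $\tx{El}(M_{LI})\xrightarrow{\pi}(U_*\C)_0$ followed by the Yoneda embeddings $Y_0$ and $(U_*Y)_0$, the index $S_M$ being $\alpha$-protofiltered by Corollary~\ref{protoelements} and Example~\ref{example-DG-index}. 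The remaining identifications are routine unwindings: an object of $\tx{El}(M_J)$ is a pair $(C,x\colon J\to\A(C,A))$, that is, a morphism $C\to A$ in $\A_0$, so $\tx{El}(M_J)\cong(\A_\alpha)_0/A$ with $\pi$ the domain projection; likewise $\tx{El}(M_{LI})\cong(U_*\A_\alpha)_0/A$, and under these isomorphisms $S_M$ becomes $S_A$. Transporting the presentation of $M$ through $-*H$, exactly as in the proof of the preceding theorem (replacing $Y$ by $H$ and using $Y(C)*H\cong HC$), turns the representables into the objects $\pi(-)$ of $\A$ and yields $A\cong M*H$ as the $\alpha$-protofiltered colimit of the displayed diagram.

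The main obstacle is the first step: establishing that the canonical weight $\A(H-,A)$ is $\alpha$-flat and reconstructs $A$, since it is here that the accessibility hypothesis is genuinely used, and the commutation of $\A(C,-)$ with the $\alpha$-flat colimit $N*K$ (valid precisely because $C$ is $\alpha$-presentable) together with the associativity/Yoneda manipulation for $M*H\cong A$ must be handled carefully. By contrast, checking the hypotheses of Corollary~\ref{DG-flat-proto}, identifying the two categories of elements with the comma categories $(\A_\alpha)_0/A$ and $(U_*\A_\alpha)_0/A$, and recognizing $S_M$ as $S_A$ are direct consequences of the definitions.
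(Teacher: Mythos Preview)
Your proposal is correct and follows essentially the same route as the paper: set $M=\A(H-,A)$, use that $A\cong M*H$ with $M$ $\alpha$-flat, apply Corollary~\ref{DG-flat-proto} to $M$ on $\C=\A_\alpha$, and identify the two categories of elements with the comma categories $(\A_\alpha)_0/A$ and $(U_*\A_\alpha)_0/A$. The only difference is one of emphasis: the paper treats the density statement $A\cong\A(H-,A)*H$ and the $\alpha$-flatness of $\A(H-,A)$ as a standard fact about $\alpha$-accessible $\V$-categories (see \cite{BQR98}), whereas you unwind it explicitly via an auxiliary presentation $A\cong N*K$ and the associativity/Yoneda manipulation---which is fine, but not a genuinely different argument.
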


We say that an index $J\colon \E\to\F$ is {\em protoabsolute} if it is $\alpha$-protofiltered for every $\alpha$; equivalently if $\F$-colimits are Cauchy in the ordinary sense. The following result then generalizes \cite[Theorem~7.2]{NST2020cauchy} to our setting:

\begin{cor}\label{DG-Cauchy}
	Let $\C$ be a $\W$-category; the following are equivalent:
	\begin{enumerate}\setlength\itemsep{0.25em}
		\item $\C$ is Cauchy complete;
		\item $\C$ has finite direct sums, powers and copowers by dualizable objects, and protoabsolute colimits;
		\item $\C$ has finite direct sums, copowers by $L\G$, and protoabsolute colimits.
	\end{enumerate}
\end{cor}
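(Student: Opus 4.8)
The plan is to mirror the proof of Corollary~\ref{Cauchy}, with Theorem~\ref{flat=protofilt+abs} and Corollary~\ref{DG-flat-proto} now playing the role that Theorem~\ref{con-abs=flat} played there. Recall that a weight is absolute exactly when it is $\alpha$-flat for every regular cardinal $\alpha$, and that $\C$ being Cauchy complete amounts to $\C$ admitting all absolute-weighted colimits. The implications $(1)\Rightarrow(2)\Rightarrow(3)$ are essentially formal; the content is $(3)\Rightarrow(1)$.

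For $(1)\Rightarrow(2)$: a Cauchy complete $\C$ has all absolute colimits. Finite direct sums are absolute; copowers by a dualizable $P$ are absolute, and so are powers by $P$, which coincide with copowers by $P^*$ (Section~\ref{dualizable}); and a protoabsolute colimit is an absolute $\W$-weighted colimit by the results of Section~\ref{appendix} --- a protoabsolute index is $\alpha$-protofiltered for every $\alpha$, hence realised by a weight which is $\alpha$-flat for every $\alpha$, i.e.\ Cauchy. For $(2)\Rightarrow(3)$: by condition~$(c)$ on the change of base, together with the fact that $Y\in\W$ is dualizable iff $UY$ is, every object in $L\G$ is dualizable in $\W$; so $L\G$-copowers are a particular case of copowers by dualizable objects, and the remaining two conditions are literally the same.

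For $(3)\Rightarrow(1)$, let $M\colon\B^{op}\to\W$ be Cauchy and $K\colon\B\to\C$ a $\W$-functor; I must show $M*K$ exists in $\C$. Let $J\colon\B\hookrightarrow\D$ be the free cocompletion of $\B$ under finite direct sums and $L\G$-copowers; since $\C$ has these colimits by $(3)$ we may form $K':=\tx{Lan}_JK\colon\D\to\C$ and $M':=\tx{Lan}_{J^{op}}M$, the latter still Cauchy by Lemma~\ref{flat-restriction}, so that $M*K\cong M'*K'$, either side existing if the other does (exactly as in the proof of Theorem~\ref{con-abs=flat}). Now $\D$ has finite direct sums and $L\G$-copowers and $M'_U$ is Cauchy by Proposition~\ref{DG-G-flat}; applying Corollary~\ref{protoelements} at every $\alpha$, the category $\tx{El}(M'_{LI})$ is $\alpha$-filtered for every $\alpha$ --- hence its conical colimits are absolute in the ordinary sense --- while $S_{M'}\colon\tx{El}(M'_J)\to\tx{El}(M'_{LI})$ is final, i.e.\ $S_{M'}$ is a protoabsolute index. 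Then Corollary~\ref{DG-flat-proto} exhibits $M'$ as the $S_{M'}$-indexed colimit of a diagram $(H_1,H_2)$ of representables, whence $M'*K'\cong\tx{colim}(K'_0H_1,(U_*K')_0H_2)$ is an $S_{M'}$-indexed, hence protoabsolute, colimit in $\C$, which exists by $(3)$. Therefore $M*K$ exists, and as $M$ and $K$ were arbitrary, $\C$ is Cauchy complete.

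The only delicate point, and the one I would take most care over, is the two-way dictionary between \emph{protoabsolute} and \emph{absolute}: for $(1)\Rightarrow(2)$ one needs that a protoabsolute colimit is a genuine absolute $\W$-weighted colimit, which is precisely the output of Section~\ref{appendix}; and for $(3)\Rightarrow(1)$ one needs the reverse, that for a Cauchy weight the index $S_{M'}$ produced by Corollary~\ref{DG-flat-proto} really is protoabsolute --- this rests on Corollary~\ref{protoelements}(i) used at every cardinal together with the standard fact that a small category whose conical colimit functor commutes with all small limits has absolute colimits. Everything else is a direct rerun of the arguments already carried out for Corollary~\ref{Cauchy}, Theorem~\ref{con-abs=flat}, and Theorem~\ref{flat=protofilt+abs}.
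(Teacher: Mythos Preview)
Your proposal is correct and follows exactly the approach the paper intends: the paper's proof says only ``completely analogous to that of Corollary~\ref{Cauchy}'', and you have carefully unpacked that analogy, replacing Theorem~\ref{con-abs=flat} by Theorem~\ref{flat=protofilt+abs}/Corollary~\ref{DG-flat-proto} and $\alpha$-filtered colimits by $\alpha$-protofiltered ones. Your extra care around the protoabsolute/absolute dictionary (via Section~\ref{appendix} for one direction and Corollary~\ref{protoelements} at every $\alpha$ for the other) is precisely what the paper's one-line proof leaves implicit.
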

\begin{proof}
	The proof is completely analogous to that of Corollary~\ref{Cauchy}.
\end{proof}

\subsection{Protofiltered colimits as weighted colimits}\label{appendix}

Let $\W,\V$, and $U\colon\W\to\V$ be as in Section~\ref{DG-setting}. Given an $\alpha$-protofiltered index $S\colon\E\to\F$ we construct a $\W$-category $\S $ and a weight $\Delta\colon\S ^{op}\to\W$ such that $S$-indexed colimits correspond to $\Delta$-weighted colimits and $\Delta$ is $\alpha$-flat.

\begin{obs}
	The 2-category $\W\tx{-}\bo{Cat}$ is locally finitely presentable as a 2-category. Indeed its underlying ordinary category $\W\tx{-}\bo{Cat}_0$ is locally finitely presentable by \cite[Theorem~4.5]{KL2001:articolo} and the finitely presentable objects are closed under copowers by $\mathbbm{2}$ \cite[Proposition~4.8]{KL2001:articolo}. It follows that every  object which is finitely presentable in the ordinary sense is also finitely presentable in the 2-categorical sense. Therefore $\W\tx{-}\bo{Cat}$ is locally finitely presentable by \cite[7.5]{Kel82:articolo}.
\end{obs}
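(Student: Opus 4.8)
The plan is to reduce the $2$-categorical claim to the ordinary local presentability of $\W\tx{-}\bo{Cat}_0$, which is already on record in \cite{KL2001:articolo}. As observed earlier in this section, $\W$ is a symmetric monoidal closed finitary quasivariety and so in particular locally finitely presentable as a closed category; hence \cite[Theorem~4.5]{KL2001:articolo} applies and $\W\tx{-}\bo{Cat}_0$ is locally finitely presentable, while by the same paper $\W\tx{-}\bo{Cat}$ is complete and cocomplete as a $2$-category. Writing $\A:=\W\tx{-}\bo{Cat}$, it therefore remains only to produce a small strong generator of $\A$ consisting of objects that are finitely presentable in the $\bo{Cat}$-enriched sense, i.e.\ objects $K$ for which $\A(K,-)\colon\A\to\bo{Cat}$ preserves $\aleph_0$-flat colimits; by Theorem~\ref{flat-preservation} applied with base $\bo{Cat}$, for such a functor this is equivalent to preserving conical filtered colimits.

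The key step is to identify the $\bo{Cat}$-enriched finitely presentable objects of $\A$ with its ordinary finitely presentable objects. Here I would use that, for each $K\in\A$, the hom-category $\A(K,B)$ has object set $\A_0(K,B)$ and morphism set $\A_0(\mathbbm{2}\cdot K,B)$, naturally in $B\in\A$, where $\mathbbm{2}\cdot K$ denotes the copower (which exists since $\A$ is cocomplete as a $2$-category). Since conical filtered colimits in $\A$ exist and agree with filtered colimits in $\A_0$ (because $\A$ has powers by $\mathbbm{2}$), and since filtered colimits in $\bo{Cat}$ are formed componentwise --- both $\tx{ob}(-)$ and $\bo{Cat}(\mathbbm{2},-)$ preserve them --- the $2$-functor $\A(K,-)$ preserves conical filtered colimits if and only if both ordinary functors $\A_0(K,-)$ and $\A_0(\mathbbm{2}\cdot K,-)$ do. In other words $K$ is finitely presentable in the $2$-categorical sense exactly when $K$ and $\mathbbm{2}\cdot K$ are both finitely presentable in $\A_0$. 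Now \cite[Proposition~4.8]{KL2001:articolo} tells us that the finitely presentable objects of $\W\tx{-}\bo{Cat}_0$ are closed under copowers by $\mathbbm{2}$, so the condition on $\mathbbm{2}\cdot K$ is automatic, and we conclude that finitely presentable in $\A_0$ is the same as finitely presentable in the $2$-categorical sense.

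It then remains to invoke \cite[7.5]{Kel82:articolo}: the small strong generator of ordinary finitely presentable objects furnished by \cite[Theorem~4.5]{KL2001:articolo} is, by the previous step, also a small strong generator of $\A$ made of $\bo{Cat}$-enriched finitely presentable objects, and, together with the $2$-categorical cocompleteness of $\A$, this yields that $\W\tx{-}\bo{Cat}$ is locally finitely presentable as a $2$-category; as a byproduct one sees that its $\bo{Cat}$-enriched finitely presentable objects coincide with its ordinary ones.

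I expect the only point that really needs care to be the second paragraph --- namely the observation that a $\bo{Cat}$-valued $2$-functor preserves conical filtered colimits precisely when its object part $\A_0(K,-)$ and its morphism part $\A_0(\mathbbm{2}\cdot K,-)$ both do. This is exactly where passing through the copower $\mathbbm{2}\cdot K$, and hence \cite[Proposition~4.8]{KL2001:articolo}, becomes essential; the rest is a matter of assembling the cited results and checking they fit the hypotheses of \cite[7.5]{Kel82:articolo}.
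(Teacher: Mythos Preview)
Your proof is correct and follows essentially the same approach as the paper: the paper's argument is contained in the statement of the observation, and you have simply spelled out the implicit step ``ordinary finitely presentable implies $2$-categorically finitely presentable'' by decomposing $\A(K,-)$ into its object- and arrow-parts $\A_0(K,-)$ and $\A_0(\mathbbm{2}\cdot K,-)$. The detour through Theorem~\ref{flat-preservation} is harmless but unnecessary, since Kelly's \cite[7.5]{Kel82:articolo} is already formulated in terms of conical filtered colimits.
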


Let us first define $\S $. Consider the 2-functor $S_*\colon\W\tx{-}\bo{Cat}\to \bo{Cat}$ defined pointwise as the pullback
\begin{center}
	\begin{tikzpicture}[baseline=(current  bounding  box.south), scale=2]
		
		\node (a0) at (0,0.8) {$S_*\A$};
		\node (b0) at (1.8,0.8) {$\bo{Cat}(\F,(U_*\A)_0)$};
		\node (c0) at (0,0) {$\bo{Cat}(\E,\A_0)$};
		\node (d0) at (1.8,0) {$\bo{Cat}(\E,(U_*\A)_0)$};
		\node (e0) at (0.2,0.6) {$\lrcorner$};
		
		\path[font=\scriptsize]
		
		(a0) edge [->] node [above] {} (b0)
		(a0) edge [->] node [left] {} (c0)
		(b0) edge [->] node [right] {$-\circ S$} (d0)
		(c0) edge [->] node [below] {$S_\A\circ -$} (d0);
	\end{tikzpicture}	
\end{center}
in $\bo{Cat}$ for any $\A\in\W\tx{-}\bo{Cat}$, where $S_\A\colon \A_0\to( U_*\A)_0$ is the ordinary functor introduced in Section~\ref{DG}.
Note that $U_*\colon\W\tx{-}\bo{Cat}\to \V\tx{-}\bo{Cat},\  (-)_0\colon\W\tx{-}\bo{Cat}\to\bo{Cat},$ and $(-)_0\colon\V\tx{-}\bo{Cat}\to\bo{Cat}$ are continuous as 2-functors; indeed they preserves all conical limits as well as powers by $\mathbbm{2}$ (since $U$ is continuous and strong closed). Moreover they preserve $\alpha$-filtered colimits for some $\alpha$. Similarly $\bo{Cat}(\E,-)$ and $\bo{Cat}(\F,-)$ are continuous and preserve $\beta$-filtered colimits for some $\beta\geq \alpha$ (as is true for every object of a locally presentable category).
It follows that $S_*$ is a finite limit of 2-functors which are continuous and preserve $\beta$-filtered colimits. Since finite limits commute with $\beta$-filtered colimits in $\bo{Cat}$, it follows that $S_*$ is continuous and preserves $\beta$-filtered colimits as well. As a consequence, since $\W\tx{-}\bo{Cat}$ and $\bo{Cat}$ are locally presentable 2-categories, it follows that $S_*$ has a left adjoint \cite[7.9]{Kel82:articolo} and therefore $S_*$ is a representable 2-functor. Now we can define $\S $ as the $\W$-category which represents $S_*$.

For any {\em small} $\W$-category $\A$ we have an isomorphism of categories $[\S ,\A]_0\cong S_*\A$; it's easy to see that this isomorphism actually holds for any (possibly large) $\W$-category $\A$. It follows in particular that to give a $\W$-functor $H\colon\S \to\A$ is the same as to give a $S$-indexed diagram $(H_1,H_2)$ in $\A$. The same holds for $\W$-functors out of $\S ^{op}$ just by moving the $(-)^{op}$ to the codomain.  

\begin{obs}\label{K_1K_2}
	Taking $\A=\S $, we note that the identity on $\S $ corresponds to functors $K_1\colon\E\to(\S )_0$ and $K_2\colon\F\to (U_*\S )_0$. These two functors induce the bijection just mentioned: given $H\colon\S \to\A$ then $H_1= H_0\circ K_1$ and $H_2= (U_*H)_0\circ K_2$.
	
	In particular it follows that $H_1^t= H\circ K_1^t$ and $H_2^t= U_*H\circ K_2^t$, where $K_1^t\colon\E_\W\to\S$ and $K_2^t\colon\F_\V\to U_*\S$ are the transposes of $K_1$ and $K_2$ respectively. Therefore for any $A\in \A$ we obtain $\A(H-,A)_1^t\cong\A(H_1^t-,A)$, and for any $X\in\V$ we have $X\pitchfork H_1^t\cong (X\pitchfork H)_1^t$ whenever such powers exist in $\A$.
\end{obs}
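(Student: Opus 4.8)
The plan is to obtain every assertion as a formal consequence of the representability of the 2-functor $S_*$ proved just above, i.e.\ of the natural isomorphism $[\S,\A]_0\cong S_*\A$, together with the standard Yoneda identification of its universal element. First I would unwind the pullback defining $S_*\A$: an object of $S_*\A$ is exactly a pair $(H_1,H_2)$ with $H_1\colon\E\to\A_0$, $H_2\colon\F\to(U_*\A)_0$ and $S_\A\circ H_1=H_2\circ S$, which is precisely an $S$-indexed diagram in $\A$. Taking $\A=\S$ and transporting $\tx{id}_\S$ across the isomorphism yields the pair $(K_1,K_2)$, with $K_1\colon\E\to(\S)_0$, $K_2\colon\F\to(U_*\S)_0$ and $S_\S\circ K_1=K_2\circ S$; this is the universal $S$-indexed diagram, which is the content of the first sentence of the remark.

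Next, for an arbitrary $H\colon\S\to\A$, I would use naturality of the isomorphism (the Yoneda lemma for the representable $S_*$): the $S$-indexed diagram attached to $H$ is the image of $(K_1,K_2)$ under $S_*(H)\colon S_*\S\to S_*\A$. Since $S_*$ is assembled as a pullback of $\bo{Cat}(\E,(-)_0)$ and $\bo{Cat}(\F,(U_*(-))_0)$ along the evident postcomposition transformations, $S_*(H)$ acts by postcomposition with $H_0\colon\S_0\to\A_0$ on the first leg and with $(U_*H)_0\colon(U_*\S)_0\to(U_*\A)_0$ on the second. This gives $H_1=H_0\circ K_1$ and $H_2=(U_*H)_0\circ K_2$. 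The transpose identities then follow from naturality, in the codomain, of the 2-adjunction $(-)_\W\dashv(-)_0$ between $\bo{Cat}$ and $\W\tx{-}\bo{Cat}$ and of its $\V$-enriched analogue: transposing $H_1=H_0\circ K_1$ yields $H_1^t=H\circ K_1^t$, and transposing $H_2=(U_*H)_0\circ K_2$ yields $H_2^t=U_*H\circ K_2^t$.

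For the last two consequences I would run the same argument one level up. Applying the ``dual'' form of the representability (pushing $(-)^{op}$ onto the codomain) to the weight $\A(H-,A)\colon\S^{op}\to\W$, its first component $\A(H-,A)_1\colon\E^{op}\to\W_0$ is obtained by composing the relevant underlying functor with $K_1^{op}$; using $H\circ K_1^t=H_1^t$ this is exactly the underlying ordinary functor of $\A(H_1^t-,A)$, so transposing back gives $\A(H-,A)_1^t\cong\A(H_1^t-,A)$. For powers I would use that powers in $[\S,\A]$ and in $[\E_\W,\A]$ are computed pointwise, so $(X\pitchfork H)\circ K_1^t=X\pitchfork(H\circ K_1^t)=X\pitchfork H_1^t$; combining this with the schematic identity $(-)_1^t=(-)\circ K_1^t$ applied to the $\W$-functor $X\pitchfork H$ gives $X\pitchfork H_1^t\cong(X\pitchfork H)_1^t$ whenever these powers exist.

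The only step that is not pure bookkeeping --- and hence the main obstacle --- is verifying that $S_*(H)$ genuinely restricts to postcomposition with $H_0$ and with $(U_*H)_0$ on the two factors of the pullback; equivalently, that the constraint $S_\A\circ H_1=H_2\circ S$ is preserved when one postcomposes $H_1$ with $H_0$ and $H_2$ with $(U_*H)_0$. This in turn reduces to the 2-naturality in $\A$ of $S_\A\colon\A_0\to(U_*\A)_0$, namely $(U_*H)_0\circ S_\S=S_\A\circ H_0$, which is immediate from the description of $S_\A$ as applying $U$ to hom-objects. Everything else is careful tracking of adjunction transposes and of the various $(-)^{op}$'s.
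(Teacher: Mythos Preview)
Your proposal is correct and is exactly the natural justification the paper leaves implicit: the statement in question is presented in the paper as a Remark without proof, so there is no separate argument to compare against. Your use of the Yoneda identification of $(K_1,K_2)$ as the universal element of the representable 2-functor $S_*$, together with 2-naturality of $S_\A$ and the free/underlying adjunction for transposes, is precisely the bookkeeping the paper is tacitly invoking.

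One minor remark: in the final clause the paper writes ``for any $X\in\V$'', but the only subsequent use (in the proof of the next Proposition) is with $X\in\W$, and your argument via pointwise powers in $[\S,\A]$ is the correct one for $X\in\W$; so your reading is the intended one.
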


Next we define the weight $\Delta\colon\S ^{op}\to\W$ as the one such that 
$\Delta^{op}$ corresponds under the isomorphism $\W\tx{-}\bo{Cat}(\S ,\W^{op})\cong S_*\W^{op}$ to the $S$-indexed diagram $$(\Delta J\colon\E\to\W^{op}_0,\Delta J\colon \F\to (U_*\W)^{op}_0).$$

We can now prove that limits and colimits weighted by $\Delta$ are the same as $S$-indexed limits and colimits.

\begin{prop}
	Let $H\colon\S ^{op}\to\W$ be any $\W$-functor with induced $S$-index $(H_1,H_2)$ in $\W$. Then
	$$ [\S ^{op},\W](\Delta,H)\cong [\E_{\W}^{op},\W](\Delta J^t,H_1^t) $$
	where $\Delta J^t,H_1^t\colon\E_{\W}^{op}\to\W$ are the transposes of $\Delta J,H_1\colon\E^{op}\to\W_0$, and the isomorphism in induced by precomposition with $K_1^t$. In other words $\{\Delta,H\}\cong \lim H_1$.
\end{prop}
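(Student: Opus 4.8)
The plan is to identify the weighted limit $\{\Delta,H\}=[\S^{op},\W](\Delta,H)$ with the conical limit $\lim H_1$ by unwinding the representation $[\S,\B]_0\cong S_*\B$ of $\S$ and invoking the finality of $S$. Since $\W$ is complete, $\lim H_1$ exists; and since $\Delta J^t$ is the constant weight at the unit $J$, the object $[\E_\W^{op},\W](\Delta J^t,H_1^t)$ is precisely this conical limit, so the two formulations of the statement agree and it remains to produce the displayed isomorphism.

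First I would compute $[\S^{op},\W]_0(\Delta,G)$ for an arbitrary $\W$-functor $G\colon\S^{op}\to\W$. Regarding $\W$-functors $\S^{op}\to\W$ as $\W$-functors $\S\to\W^{op}$ and using the isomorphism of categories $[\S,\W^{op}]_0\cong S_*\W^{op}$, a $\W$-natural transformation $\Delta\Rightarrow G$ corresponds to a morphism in the pullback $S_*\W^{op}$ from the $S$-indexed diagram $(G_1,G_2)$ of $G$ to that of $\Delta$; as the latter has both components constant at $J$, such a morphism is a pair $(\beta_1,\beta_2)$ with $\beta_1\colon G_1\Rightarrow\Delta J$ natural over $\E$, $\beta_2\colon G_2\Rightarrow\Delta J$ natural over $\F$, subject to $S_{\W^{op}}\cdot\beta_1=\beta_2\cdot S$.

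The crucial step is that $\beta_1$ determines $\beta_2$ uniquely and that every $\beta_1$ admits such a $\beta_2$. A natural transformation into a constant functor is a cocone, so $\beta_2$ runs over the cocones under $G_2$ with nadir $J$ and $\beta_2\cdot S$ over the cocones under $G_2S$; these are related by the canonical map $\colim(G_2S)\to\colim G_2$ in $(U_*\W^{op})_0$, which is invertible because $S$ is final (it is an $\alpha$-protofiltered index; note that only finality, not $\alpha$-filteredness of $\F$, enters here). Hence restriction along $S$ is a bijection on these cocones, and since $S_{\W^{op}}\cdot\beta_1$ is a cocone under $S_{\W^{op}}G_1=G_2S$ (by commutativity of the $S$-index square of $G$), there is a unique compatible $\beta_2$. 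Thus $(\beta_1,\beta_2)\mapsto\beta_1$, which is precomposition with $K_1^t$, is a bijection $[\S^{op},\W]_0(\Delta,G)\cong[\E_\W^{op},\W]_0(\Delta J^t,G_1^t)$, the target being the set of cones from $J$ over the $\W_0$-valued diagram $G_1\colon\E^{op}\to\W_0$.

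To upgrade this to an isomorphism of objects of $\W$, I would apply the above with $G=\W(A,H-)$ for varying $A\in\W$: using $\W_0(A,\{\Delta,H\})\cong[\S^{op},\W]_0(\Delta,\W(A,H-))$, the naturality of $[\S,-]_0\cong S_*(-)$ (which makes the $\E$-component of the $S$-index of $\W(A,H-)$ equal to $\W(A,H_1-)$), and the facts that $[A,-]$ preserves limits and $J$ is the unit, one gets $\W_0(A,\{\Delta,H\})\cong\W_0(J,[A,\lim H_1])\cong\W_0(A,\lim H_1)$ naturally in $A$; Yoneda then yields $\{\Delta,H\}\cong\lim H_1$, with the comparison induced by precomposition with $K_1^t$. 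I expect the only real difficulty to be keeping the various opposites straight --- the weight lives on $\S^{op}$, its $S$-index lives in $\W^{op}$, and $H_1$ is contravariant on $\E$ --- so that the finality of $S$ is used in exactly the form $\colim(G_2S)\cong\colim G_2$; everything else is a routine unwinding of the constructions of $\S$, $\Delta$, $K_1$ and $K_2$.
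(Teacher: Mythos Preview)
Your proof is correct and follows essentially the same approach as the paper's: both use the representation $[\S,\W^{op}]_0\cong S_*\W^{op}$ together with the finality of $S$ to obtain the bijection on underlying hom-sets, and then upgrade to an isomorphism in $\W$ by probing with powers $X\pitchfork H$ (which is precisely your choice $G=\W(A,H-)=A\pitchfork H$, invoking $(X\pitchfork H)_1^t\cong X\pitchfork H_1^t$ from Remark~\ref{K_1K_2}). You are slightly more explicit than the paper about the existence half of the finality step, but the structure is identical.
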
 
\begin{proof}
	The isomorphism above holds if and only if applying $\W_0(X,-)$ on each side we get a bijection of sets. But 
	$$\W_0(X,[\S ^{op},\W](\Delta,H))\cong [\S ^{op},\W]_0(\Delta,X\pitchfork H),$$ 
	similarly on the right-hand-side (using that $X\pitchfork H_1^t\cong (X\pitchfork H)_1^t$, see Remark~\ref{K_1K_2}). Therefore it's enough to prove that we have a bijection between $\W$-natural transformations $\eta\colon\Delta\Rightarrow H$ and $\eta_1\colon\Delta J\Rightarrow H_1$ for any $H$.
	
	Since we have an isomorphism of categories $[\S ,\W^{op}]_0\cong S_*\W^{op}$, it follows that to give a $\W$-natural transformation $\eta\colon\Delta\Rightarrow H$ is the same as giving a pair of ordinary natural transformations $\eta_i\colon\Delta J\Rightarrow H_i$, for $i=1,2$, such that $S_\W\eta_1= \eta_2 S$. Since $S$ is final, $\eta_2$ is uniquely determined by $S_\W\eta_1$; therefore it's enough to give $\eta_1$ and hence we have the desired bijection.
\end{proof}

An immediate consequence is the following:

\begin{cor}
	Let $\A$ be any $\W$-category. For any $H\colon\S \to\A$ let $(H_1,H_2)$ be the induced $S$-indexed diagram in $\A$; then 
	$$ \Delta* H\cong \colim(H_1,H_2) $$
	either side existing if the other does. 
\end{cor}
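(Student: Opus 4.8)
The plan is to deduce this corollary from the previous Proposition by applying it, for each fixed $A\in\A$, to the representable $\W$-functor $\A(H-,A)\colon\S^{op}\to\W$. Recall that, by the defining universal property of a weighted colimit, $\Delta*H$ exists and equals $L\in\A$ exactly when there is an isomorphism $\A(L,A)\cong[\S^{op},\W](\Delta,\A(H-,A))$ which is $\W$-natural in $A$; and, unwinding the Definition of $\colim(H_1,H_2)$, the object $\colim(H_1,H_2):=\colim H_1$ is, when it exists, the $\W$-colimit $\Delta J^t*H_1^t$ of the transpose $H_1^t\colon\E_\W\to\A$, so that it exists and equals $L$ exactly when $\A(L,A)\cong[\E_\W^{op},\W](\Delta J^t,\A(H_1^t-,A))$ $\W$-naturally in $A$. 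It therefore suffices to construct a $\W$-natural (in $A$) isomorphism between the two presheaves $A\mapsto[\S^{op},\W](\Delta,\A(H-,A))$ and $A\mapsto[\E_\W^{op},\W](\Delta J^t,\A(H_1^t-,A))$: then one is representable precisely when the other is, and the representing objects agree, which is the claim (one side existing iff the other).

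The first step is to identify the $S$-indexed diagram induced by $\A(H-,A)\colon\S^{op}\to\W$. By Remark~\ref{K_1K_2}, its first transposed component is $\A(H-,A)_1^t\cong\A(H_1^t-,A)$, the transpose of $\A(H_1-,A)\colon\E^{op}\to\W_0$. Now apply the Proposition above with its ``$H$'' taken to be $\A(H-,A)$; this yields, for each $A$, an isomorphism
$$[\S^{op},\W](\Delta,\A(H-,A))\;\cong\;[\E_\W^{op},\W](\Delta J^t,\A(H_1^t-,A))$$
given by precomposition with $K_1^t\colon\E_\W\to\S$. Since $K_1^t$ is a fixed $\W$-functor and both $A\mapsto\A(H-,A)$ and $A\mapsto\A(H_1^t-,A)$ are $\W$-functors into the respective presheaf $\W$-categories, precomposition with $K_1^t$ is $\W$-natural in $A$, so the displayed family of isomorphisms is $\W$-natural in $A$, as required. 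With the $\W$-natural isomorphism of presheaves established, the conclusion is immediate: the left-hand presheaf is representable iff $\Delta*H$ exists, the right-hand one iff $\colim(H_1,H_2)$ exists, and in that case the enriched Yoneda lemma forces $\Delta*H\cong\colim(H_1,H_2)$.

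I do not expect a genuine obstacle here. The one point that needs care is the bookkeeping of opposites and transposes: correctly reading off which $S$-indexed diagram in $\W$ is induced by $\A(H-,A)$ (this is exactly what Remark~\ref{K_1K_2} provides), and noting that $\Delta J^t$ is precisely the constant-at-$J$ weight that computes conical colimits over $\E_\W$. The other is to verify that the isomorphism handed to us by the previous Proposition is natural in the auxiliary variable $A$ and not merely given pointwise; this follows formally once one observes that the isomorphism is realised by precomposition with a fixed functor.
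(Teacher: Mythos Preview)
Your proof is correct and follows essentially the same approach as the paper: apply the preceding Proposition to $\A(H-,A)$ for each $A$, using Remark~\ref{K_1K_2} to identify $\A(H-,A)_1^t\cong\A(H_1^t-,A)$, and conclude by Yoneda. You are slightly more careful than the paper in handling the ``either side existing if the other does'' clause by working directly with the presheaves rather than presupposing the colimit exists, and in making naturality in $A$ explicit via precomposition with the fixed $K_1^t$; but the argument is the same.
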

\begin{proof}
	It's enough to consider the following chain of isomorphisms for any $A\in\A$:
	\begin{equation*}
		\begin{split}
			\A(\Delta*H,A) &\cong [\S^{op},\W](\Delta, \A(H-,A))\\
			&\cong [\E_\W^{op},\W](\Delta J^t, \A(H-,A)_1^t)\\
			&\cong [\E_\W^{op},\W](\Delta J^t, \A(H_1^t-,A))\\
			&\cong \A(\colim H_1,A)\\
		\end{split}
	\end{equation*}
	where the second isomorphism holds by the proposition above, while the third follows from the fact that $\A(H-,A)_1^t\cong\A(H_1^t-,A)$ from Remark~\ref{K_1K_2}.
\end{proof}

The final step is to prove that the weight $\Delta$ is $\alpha$-flat.

\begin{prop}
	Let $S\colon\E\to\F$ be an $\alpha$-protofiltered index; then $\Delta\colon\S ^{op}\to\W$ is an $\alpha$-flat $\W$-weight.
\end{prop}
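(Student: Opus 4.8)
The plan is to use the characterization of $\alpha$-flatness from Remark~\ref{powerswhocares} together with Proposition~\ref{DG-G-flat}: since $\S$ will have the absolute colimits we need (being, after all, a small representing object that we are free to replace by a Cauchy-completed version, or by noting that $\alpha$-flatness of $\Delta$ is invariant under adding absolute colimits to the domain via Lemma~\ref{flat-restriction}), it suffices to show that the $\V$-weight $\Delta_U\colon (U_*\S)^{op}\to\V$ is $\alpha$-flat. By Proposition~\ref{flat+absolute=filtered} (applied after freely adjoining finite direct sums and $UL\G$-copowers, using Lemma~\ref{flat-restriction} to transport $\alpha$-flatness back and forth), this reduces to showing that $\tx{El}((\Delta_U)_I)=\tx{El}(\Delta_{LI})$ is $\alpha$-filtered. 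So the real content is: \emph{the category $\tx{El}(\Delta_{LI})$ is $\alpha$-filtered whenever $S\colon\E\to\F$ is an $\alpha$-protofiltered index.}

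**The key step** is to identify $\tx{El}(\Delta_{LI})$ and $\tx{El}(\Delta_J)$ in terms of $\E$ and $\F$. Recall $\Delta^{op}$ corresponds under $\W\tx{-}\bo{Cat}(\S,\W^{op})\cong S_*\W^{op}$ to the $S$-indexed diagram $(\Delta J\colon\E\to\W^{op}_0,\ \Delta J\colon\F\to(U_*\W)^{op}_0)$. Unwinding the definition of $\Delta_J=\W_0(J,\Delta_0-)\colon(\S)_0\to\bo{Set}$ and $\Delta_{LI}=\V_0(I,\Delta_U-)\colon(U_*\S)_0\to\bo{Set}$, an element of $\tx{El}(\Delta_J)$ is an object $T$ of $\S$ together with a map $J\to\Delta(T)$ in $\W$, and similarly for $\tx{El}(\Delta_{LI})$ with $I\to\Delta_U(T)$ in $\V$. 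Using the universal property of $\S$ (maps out of $\S$ classify $S$-indexed diagrams, and maps out of $\E_\W$ classify $\E$-diagrams), I expect a clean computation showing
\begin{align*}
\tx{El}(\Delta_J)&\cong\E, & \tx{El}(\Delta_{LI})&\cong\F,
\end{align*}
with $S_\Delta\colon\tx{El}(\Delta_J)\to\tx{El}(\Delta_{LI})$ corresponding precisely to $S\colon\E\to\F$. Concretely: an element $(T,\,I\to\Delta_U T)$ of $\tx{El}(\Delta_{LI})$ is a map from the representing $\W$-category of $S_*$-at-one-point into $\S$ compatible with the structure, which by Yoneda is an object of $\F$; and morphisms match up because the vertical/horizontal cells of $S_*$ are built from $\bo{Cat}(\E,-)$ and $\bo{Cat}(\F,-)$.

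**Granting that identification**, the conclusion is immediate: $\tx{El}(\Delta_{LI})\cong\F$ is $\alpha$-filtered by hypothesis (it is the ``$\F$'' of the protofiltered index), so by Proposition~\ref{flat+absolute=filtered} the $\V$-weight $\Delta_U$ is $\alpha$-flat, and then by Proposition~\ref{DG-G-flat} (direction $(3)\Rightarrow(1)$, applied to $\S$ after adjoining the relevant absolute colimits to its domain and pulling back along Lemma~\ref{flat-restriction}) the $\W$-weight $\Delta$ is $\alpha$-flat. Alternatively, and perhaps more directly, one can invoke Corollary~\ref{protoelements} in reverse: the pair $(\tx{El}(\Delta_J),\tx{El}(\Delta_{LI}))\cong(\E,\F)$ together with $S_\Delta\cong S$ exhibits exactly the two properties (i) and (ii) that characterize when the diagram of representables is $\alpha$-protofiltered, and Corollary~\ref{DG-flat-proto}'s argument then runs backwards. **The main obstacle** I anticipate is the bookkeeping in the identification $\tx{El}(\Delta_{LI})\cong\F$: one must check that the comma-category structure (morphisms of elements) really corresponds to morphisms in $\F$ and not to something finer, which hinges on $\Delta J\colon\F\to(U_*\W)^{op}_0$ genuinely controlling the ``horizontal'' part of $S_*\W^{op}$; and one must be careful that $\S$ itself has enough absolute colimits (finite direct sums and $L\G$-copowers) for Proposition~\ref{DG-G-flat} to apply — if it does not, the clean fix is to replace $\S$ by its closure under these in $[\S^{op},\W]$, observing via Lemma~\ref{flat-restriction} that this changes neither the weighted colimits classified nor the $\alpha$-flatness of $\Delta$.
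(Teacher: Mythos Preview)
Your strategy is genuinely different from the paper's, and it has a real gap. The paper does not attempt to identify categories of elements at all: instead it uses the already-established formula $\Delta*H\cong\colim_{\E}H_1$ to compute
\[
U(\Delta*H)\ \cong\ \colim_{\E}(\hat U_0 S_\W H_1)\ \cong\ \colim_{\E}(\hat U_0 H_2 S)\ \cong\ \colim_{\F}(\hat U_0 H_2),
\]
the last step using finality of $S$. This exhibits $U(\Delta*-)$ as $\colim_{\F}$ of a continuous functor $[\S,\W]_0\to\V_0$; since $\F$ is $\alpha$-filtered, it preserves $\alpha$-small conical limits, and conservativity of $U$ plus Remark~\ref{powerswhocares} finish the job. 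No hypothesis on $\S$ is required.

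Your approach, by contrast, rests on the claimed identifications $\tx{El}(\Delta_{J})\cong\E$ and $\tx{El}(\Delta_{LI})\cong\F$, and these are \emph{false in general}. Take the trivial protofiltered index $S=\mathrm{id}\colon 1\to 1$. Then $S_*\A\cong\A_0$, so $\S\cong\I$ and $\Delta\cong J$; hence $\tx{El}(\Delta_{LI})$ has object-set $\V_0(I,I)$, which for $\V=\bo{Ab}$ is $\mathbb{Z}$, not the one-object category $\F$. More generally the objects of $\S$ are determined by the representability of $S_*$, not by the objects of $\E$ or $\F$, and your Yoneda sketch does not pin them down. A second, independent gap is the absolute-colimit workaround: to invoke Proposition~\ref{DG-G-flat} after replacing $\S$ by its closure $\D$ under finite direct sums and $L\G$-copowers you would need $\tx{El}((\tx{Lan}_{J^{op}}\Delta)_{LI})$ to be $\alpha$-filtered, and this is a category over $(U_*\D)_0$, strictly larger than $\tx{El}(\Delta_{LI})$; your argument supplies no control over the extra elements. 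The paper's direct computation sidesteps both issues entirely.
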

\begin{proof}
	First note that there is a commutative triangle as below,
	\begin{center}
		\begin{tikzpicture}[baseline=(current  bounding  box.south), scale=2]
			
			\node (a) at (0.7,-0.5) {$(U_*\W)_0$};
			\node (c) at (-0.1, 0) {$\W_0$};
			\node (d) at (1.5, 0) {$\V_0$};
			
			\path[font=\scriptsize]
			
			(c) edge [->] node [below] {$S_\W\ \ \ \ \ \ $} (a)
			(a) edge [->] node [below] {$\ \ \ \ \ \ \hat{U}_0$} (d)
			(c) edge [->] node [above] {$U$} (d);
			
		\end{tikzpicture}	
	\end{center} 
	where $\hat{U}=(U_*\W)(I,-)\colon U_*\W\to\V$ was introduced in Section~\ref{DG-setting} and is such that $(-)_U= \hat{U}\circ U_*(-)$.
	Let $H\colon\S \to\W$ be a $\W$-functor; then by the corollary above we know that
	$$ U(\Delta*H)\cong U(\colim_\E H_1)\cong \colim_\E (\hat{U}_0S_\W H_1)\cong \colim_\E(\hat{U}_0H_2S)\cong \colim_\F (\hat{U}_0H_2) $$
	and this can be rewritten as
	$$ U(\Delta*-)\cong \colim_\F ( (-)_U\circ K_2^t )\colon [\S ,\W]_0\to \V_0$$
	where $K_2^t\colon\F_\V\to U_*\E_J$ is the transpose of the ordinary functor $K_2\colon\F\to (U_*\S )_0$ introduced in Remark~\ref{K_1K_2}. Now, $(-)_U$ and $(-\circ K_2^t)$ are continuous and $\colim_\F(-)$ preserves $\alpha$-small ordinary limits ($\F$ is $\alpha$-filtered); therefore $U(\Delta*-)$ preserves $\alpha$-small limits as well. To conclude then note that, since $U$ is continuous and conservative, $\Delta*-$ preserves all $\alpha$-small conical limits, and this is enough to guarantee that $\Delta$ is $\alpha$-flat by Remark~\ref{powerswhocares}.  
\end{proof}


\end{document}